\DeclareSymbolFont{cmlargesymbols}{OMX}{cmex}{m}{n}
\DeclareMathSymbol{\mycoprod}{\mathop}{cmlargesymbols}{"60}
\let\coprod\mycoprod
\numberwithin{equation}{section}
\newtheorem{theorem}{Theorem}[section]
\newtheorem{lemma}[theorem]{Lemma}
\newtheorem{proposition}[theorem]{Proposition}
\newtheorem{corollary}[theorem]{Corollary}
\theoremstyle{definition}
\newtheorem{definition}[theorem]{Definition}
\theoremstyle{remark}
\newtheorem{Claim}[theorem]{Claim}
\newtheorem{algorithm}{Algorithm}[section]
\newtheorem{notation}{Notation}
\newtheorem{remark}{Remark}[section]
\newtheorem{example}{Example}[section]
\newtheorem{convention}{Convention}
\numberwithin{equation}{section}
\newcommand{\Tor}{{\rm Tor}}
\newcommand{\Hom}{{\rm Hom}}
\newcommand{\Ker}{{\rm Ker}}
\newcommand{\rank}{{\rm rank}}
\newcommand{\lcm}{{\rm lcm}}
\newcommand{\Span}{{\rm Span}}
\newcommand{\Spec}{{\rm Spec}}
\newcommand{\EExt}{{\mathbb E}{\rm xt}}
\newcommand{\RHom}{{\mathcal H}om}
\newcommand{\val}{\upsilon}
\newcommand{\CCC}{{\mathbb C}}
\newcommand{\FF}{{\mathbb F}}
\newcommand{\EE}{{\mathbb E}}
\newcommand{\RR}{{\mathbb R}}
\newcommand{\ZZ}{{\mathbb Z}}
\newcommand{\PP}{{\mathbb P}}
\newcommand{\NN}{{\mathbb N}}
\newcommand{\GG}{{\mathbb G}}
\newcommand{\DD}{{\mathbb D}}
\newcommand{\QQ}{{\mathbb Q}}
\newcommand{\KK}{{\mathbb K}}
\newcommand{\LL}{{\mathbb L}}
\newcommand{\AAA}{{\mathbb A}}
\newcommand{\kk}{{\Bbbk}}
\newcommand{\CL}{{\mathcal L}}
\newcommand{\CO}{{\mathcal O}}
\newcommand{\CM}{{\mathcal M}}
\newcommand{\CF}{{\mathcal F}}
\newcommand{\CE}{{\mathcal E}}
\newcommand{\CX}{{\mathcal X}}
\newcommand{\CB}{{\mathcal B}}
\newcommand{\CC}{{\mathcal C}}
\newcommand{\codim}{{\rm codim}}
\begin{document}

\title{Tropical geometry and correspondence theorems via toric stacks}
\author{Ilya Tyomkin}
\address{Department of Mathematics, Ben-Gurion University of the Negev, P. O. Box 653, Be'er Sheva, 84105, Israel}
\email{tyomkin@cs.bgu.ac.il}
\thanks{The research leading to these results has received funding from the European Union Seventh Framework Programme (FP7/2007-2013) under grant agreement 248826.}
\keywords{algebraic geometry, tropical geometry, correspondence theorems, toric stacks}

\begin{abstract}
In this paper we generalize correspondence theorems of Mikhalkin and Nishinou-Siebert providing a correspondence between algebraic and parameterized tropical curves. We also give a description of a canonical tropicalization procedure for algebraic curves motivated by Berkovich's construction of skeletons of analytic curves. Under certain assumptions, we construct a one-to-one correspondence between algebraic curves satisfying toric constraints and certain combinatorially defined objects, called ``stacky tropical reductions", that can be enumerated in terms of tropical curves satisfying linear constraints. Similarly, we construct a one-to-one correspondence between elliptic curves with fixed $j$-invariant satisfying toric constraints and ``stacky tropical reductions" that can be enumerated in terms of tropical elliptic curves with fixed tropical $j$-invariant satisfying linear constraints. Our theorems generalize previously published correspondence theorems in tropical geometry, and our proofs are algebra-geometric. In particular, the theorems hold in large positive characteristic.
\end{abstract}
\maketitle
\newpage
\section{Introduction.}

Recently, tropical varieties appeared in various fields of study, such as string theory, mirror symmetry, and enumerative geometry. Roughly speaking, tropical variety is an integral piece-wise linear polyhedral complex equipped with an integral affine structure. One can also think about tropical varieties as algebraic varieties over the $(\max, +)$ semi-ring. Till now several applications of tropical geometry to algebraic geometry have been found.

In 2005, Mikhalkin \cite{M05} discovered a ``tropical" formula for enumeration of curves of genus $g$ in a linear system $\CL$ on a toric surface $X$ passing through an appropriate number of points in general position. The main ingredient in the proof was a ``correspondence theorem" that provided a {\em one-to-one correspondence} between certain algebraic and parameterized complex tropical curves. Mikhalkin gave two descriptions of parameterized tropical curves: a combinatorial description as weighted balanced graphs in $\RR^n$, and an algebraic description as algebraic curves over $(\max, +)$ semi-ring. He showed that any algebraic curve on a toric surface defines a parameterized (complex) tropical curve in $\RR^2$. To assign a parameterized tropical curve to an algebraic curve Mikhalkin analyzed the Hausdorff limits of certain logarithmic degenerations of algebraic curves in the logarithmic image $\RR^2$ of the complex torus $(\CCC^*)^2$, and showed that these limits are piece-wise linear graphs in $\RR^2$ that can be equipped with weights turning them into parameterized tropical curves. Similarly, he associated complex tropical curves to algebraic curves. Then, by using analytic and symplectic techniques, Mikhalkin proved that under certain transversality assumptions there exists unique algebraic curve defining a given complex tropical curve. Finally, he described a couple of combinatorial formulae that count the number of complex tropical, hence also algebraic, curves in terms of parameterized tropical curves and lattice paths. In his ICM paper, Mikhalkin presents the correspondence theorem  \cite[Theorem 2]{M06} as an application of a result about realization of regular parameterized tropical curves by complex algebraic curves \cite[Theorem 1]{M06}, which holds true in arbitrary dimension.

An algebra-geometric proof of Mikhalkin's theorem based on Viro's patchworking method was proposed by Shustin \cite{Sh05} in 2005. He showed that over a non-Archimedean field, any algebraic curve on a toric surface defines a degeneration of the surface corresponding to a convex subdivision of the Newton polygon, and the subdivision is combinatorially dual to the corresponding parameterized tropical curve. By taking the closure of the curve in the family of toric surfaces, Shustin obtained a degenerated algebraic curve sitting in a degenerated toric surface, and called such a pair tropicalization. Then he introduced refined tropicalizations, and used patchworking techniques to prove that under certain conditions one can reconstruct uniquely the algebraic curve from its refined tropicalization.

In 2006, Nishinou and Siebert \cite{NS06} proved another correspondence theorem for rational curves in higher dimensional toric varieties, using the techniques of log-geometry. Similarly to Shustin, they constructed a toric degeneration of the ambient toric variety controlled by the parameterized tropical curve. Then they looked at the corresponding degeneration of the algebraic curve and equipped it with the natural log-structure coming from the degeneration of the toric variety. Finally, they proved that under certain conditions the algebraic curve can be reconstructed uniquely from its degeneration as log-variety.

In \cite{M05,NS06,Sh05}, the parameterized tropical curve $\Gamma$ corresponding to an algebraic curve $C$ was constructed in terms of the morphism from $C$ to the toric variety. As a result, the underlying tropical curve, i.e., the metric graph, depended on the toric variety.

The first goal of this paper is to describe a canonical procedure associating a tropical curve $\Gamma$ to an algebraic curve with marked points $(C,D)$ over the separable closure $\overline{\FF}$ of the field of fractions $\FF$ of a discrete valuation ring $R$. In Subsection~\ref{subsec:trcurvesforcurvewithmp}, we define the underlying graph of $\Gamma$ to be {\it the dual graph of the stable reduction of the pair $(C,D)$}, and we define the metric on $\Gamma$ in a natural way in terms of the singularities of the total space of the stable model. If, in addition, we are given a morphism $f\colon C\setminus D\to (\overline{\FF}^*)^n$, then in Subsection~\ref{subsec:paramtrcurforacurwithamap}, we construct a natural parameterized tropical curve $h\colon\Gamma\to \RR^n$. Our construction is canonical, and the parameterized tropical curves constructed in \cite{M05,NS06,Sh05} are obtained from $\Gamma$ above by contraction of maximal connected subgraphs contracted by $h$. We note here that similar approach to tropicalization of algebraic curves was used by Baker \cite{B08}. Note also that there is an alternative description of $\Gamma$. Namely, given a curve with marked points $(C,D)$, one considers the corresponding Berkovich analytification $(B,D)$. If $(C,D)$ is stable, then $B$ contains a distinguished skeleton, which is a metric graph; and it is possible to show that this graph is naturally isometric to $\Gamma$. In fact, our definition of $\Gamma$ was motivated by the Berkovich's construction of skeletons of analytic curves. We will not use the language of Berkovich spaces in this paper, but an interested reader may look at \cite{Ber90} for an introduction to Berkovich spaces, analytifications, and skeletons of analytic curves.

The second goal of this paper is to generalize the correspondence theorems of \cite{M05,NS06,Sh05}.
Our Theorem~\ref{thm:corrthm} is a generalization of the theorems of Mikhalkin and Nishinou-Siebert for curves satisfying toric constraints, e.g., passing through given points in general position, and Theorem~\ref{thm:yacs} gives an algebraic-tropical correspondence for elliptic curves satisfying toric constraints and having given $j$-invariant.

Our approach is as follows: Let $T_N$ be an algebraic torus, and $O_1,\dotsc, O_k$ be general orbits of some subtori of $T$. Let $(C,D)$ be an algebraic curve with marked points, and $f\colon C\setminus D\to T_N$ be a morphism, such that $f$ extends to the first $k$ marked points and maps them to the orbits $O_1,\dotsc, O_k$. Set $\Gamma$ to be the parameterized tropical curve associated to $(C,D,f)$. As a first step, we construct a minimal partial compactification $X$ of $T_N$ such that $f$ extends to $C$. Then, we construct a canonical integral model $f_{R_{\overline{\FF}}}\colon C_{R_{\overline{\FF}}}\to X_{R_{\overline{\FF}}}$ of $f\colon C\to X$ over the integers ${R_{\overline{\FF}}}\subset \overline{\FF}$. One must think about the integral model as a degeneration similar to the degenerations in \cite{NS06,Sh05}. We also construct an integral model $Y_{R_{\overline{\FF}}}$ of the constraint $Y=\cup O_i$. Furthermore, we introduce a natural structure of a Deligne-Mumford stack $\CX_{R_{\overline{\FF}}}$ and $\CC_{R_{\overline{\FF}}}$ on $X_{R_{\overline{\FF}}}$ and $C_{R_{\overline{\FF}}}$. We note here that the partial compactification $X$, the integral model $f_{R_{\overline{\FF}}}\colon C_{R_{\overline{\FF}}}\to X_{R_{\overline{\FF}}}$, and the stacky structures are determined by the parameterized tropical curve. We call the reduction of $(\CC_{R_{\overline{\FF}}},D_{R_{\overline{\FF}}})$ together with the morphism to $\CX_{R_{\overline{\FF}}}$ the stacky tropical reduction of $(C,D,f)$. Finally, we show that under certain assumptions, $(C,D,f)$ can be reconstructed uniquely from its stacky tropical reduction. Our approach to Theorem~\ref{thm:yacs} is similar.

Several remarks are in place here. First, in order to construct the natural stacky structures, we introduced singular toric Deligne-Mumford stacks generalizing toric stacks of Borisov, Chen, and Smith \cite{BCS05}. Second, one of the assumptions of the correspondence theorems is that $(C,D)$ is a simple Mumford curve, i.e., its stable reduction has rational components with precisely three special points on each component. Third, the number of stacky tropical reductions can be described combinatorially in terms of the corresponding parameterized tropical curve $\Gamma$; see Propositions \ref{prop:ParamOfTrLimitsConstr} and \ref{prop:ParamOfConstrStakyTrLimits}. Thus, under the assumptions of the correspondence theorems, one obtains a {\em one-to-one correspondence between the simple Mumford algebraic curves satisfying certain constraints and stacky tropical reductions satisfying the degenerations of those constraints}, which, in turn, can be enumerated in terms of the corresponding parameterized tropical curves combinatorially. Finally, note that our approach is algebra-geometric and works in large positive characteristics. The case of small characteristics involves technical difficulties since new phenomena occur, and it will be studied in a separate paper. We note here that in \cite{M05,NS06,Sh05} the authors assume the ground field to be of characteristic zero. Plainly, the approach of Mikhalkin does not work in positive characteristic. Similarly, Shustin's approach uses the characteristic assumption a lot. However, to the best of our understanding, Nishinou-Siebert's approach must work in large positive characteristic though this is not claimed in \cite{NS06}.

We wish to conclude the introduction by saying that the deformation-theoretic pattern developed in this paper can be used in other problems as well. For instance, one can prove that any regular tropical curve is representable (cf. Remark~\ref{rem:representabilityOfregularcurves}), which extends Mikhalkin's \cite[Theorem 1]{M06} to the case of large positive characteristic. Moreover, it is possible to obtain representability results for superabundant tropical curves, but this will be discussed in a separate paper.

Recently, Nishinou posted a preprint \cite{N09}, where he extends the logarithmic techniques of \cite{NS06}. He proves a version of the correspondence theorem over $\CCC$ for regular tropical curves, and also for superabundant genus-one curves. There is an overlap between the results presented in our paper and in the paper of Nishinou, but the results were obtained independently and the techniques are different.

\subsection*{Acknowledgements}
This research was initiated while I was visiting Max-Planck-Institut f\"ur Mathematik at Bonn in Summer 2007, and an essential part of it was done while being a Moore Instructor at MIT. I am very grateful to these institutions for their hospitality.
Many thanks are due to Dan Abramovich for a series of enlightening conversations we had, and for sharing his ideas with me. I would also like to thank V. Berkovich, K. Kremnizer, E.Shustin, and M. Temkin for helpful discussions.

\subsection{Conventions and notation}\label{subsec:convnot}
\begin{description}
\item[Non-Archimedean base field]
Throughout this paper, $\kk$ denotes an algebraic\-ally closed field, $R$ denotes a complete discrete valuation ring with residue field $\kk$ and field of fractions $\FF$, $\overline{\FF}$ denotes the separable closure of $\FF$, and $\val$ denotes the valuation on $\overline{\FF}$ normalized such that $\val(\FF^*)=\ZZ$. For an intermediate extension $\FF\subseteq\LL\subseteq \overline{\FF}$, $R_\LL$ denotes the ring of integers in $\LL$. Note that if $[\LL:\FF]<\infty$ then $R_\LL$ is a complete discrete valuation ring since $R$ is so. For two finite intermediate extensions $\FF\subseteq\KK\subseteq\LL\subseteq \overline{\FF}$, the relative ramification index $[\val(\LL^*):\val(\KK^*)]$ is denoted by $e_{\LL/\KK}$, and if $\KK=\FF$ then it is denoted simply by $e_{\LL}$. For a finite intermediate extension $\FF\subseteq\LL\subseteq \overline{\FF}$, $t_\LL$ denotes a uniformizer in $R_\LL$. Note that if $char (\kk)=0$ then $R\simeq \kk[[t]]$ and $R_\LL\simeq \kk[[t^{1/e_\LL}]]$. Hence, we may assume that $t_\LL=t^{1/e_\LL}$ in this case.

\item[Latices and toric varieties]
Throughout this paper, $M$ denotes a lattice of finite rank, $N=\Hom_\ZZ(M,\ZZ)$ denotes the dual lattice. For an abelian group $G$, we denote $M_G:=M\otimes_\ZZ G$ and $N_G:=N\otimes_\ZZ G$. All toric varieties are considered over $\ZZ$. In particular, $T_N$ denotes the torus $\Spec \ZZ[M]$, and $T_{N,\LL}$ denotes the torus $\Spec \LL[M]$. The monomials in $\ZZ[M]$ and $\LL[M]$ are denoted by $x^m$. If $\Sigma$ is a fan in $N_\RR$, and $\sigma,\tau\in\Sigma$, then $\Sigma^k$ denotes the set of cones of dimension $k$ in $\Sigma$, $X_\sigma$ denotes the toric variety $\Spec\ZZ[\check{\sigma}\cap M]$, and $X_{\sigma\tau}$ denotes the toric variety $X_\sigma\cap X_\tau=X_{\sigma\cap\tau}$.

\item[Graphs]
The graphs we consider in this paper are finite connected graphs. They are allowed to have loops and multiple edges. For a given graph $\Gamma$, the sets of vertices and edges of $\Gamma$ are denoted by $V(\Gamma)$ and $E(\Gamma)$. For $v\in V(\Gamma)$, $val(v)$ denotes the valency of $v$. $V_k(\Gamma)$ denotes the set of vertices of valency $k$. If $v,v'\in V(\Gamma)$ then $E_{vv'}(\Gamma)$ denotes the set of edges connecting $v$ and $v'$. Most graphs in the paper are topological graphs, i.e. CW complexes of dimension one consisting of: (i) a 0-dimensional cell for each vertex, and (ii) a 1-dimensional cell for each edge glued to the 0-dimensional cells corresponding to the boundary vertices of the edge.

\item[Curves]
Throughout this paper, $(C,D)$ denotes a smooth complete curve with marked points $D=\{q_1,\dotsc, q_{|D|}\}$ over the field $\overline{\FF}$, and $(C_{R_\LL}, D_{R_\LL})$ denotes a nodal model of $(C,D)$, i.e., $C_{R_\LL}\to\Spec R_\LL$ is a proper curve, where $\LL/\FF$ is a {\em finite separable} extension, $D_{R_\LL}$ is a finite ordered set of $R_\LL$-points in $C_{R_\LL}$, the total space of $C_{R_\LL}$ is normal, the reduction $(C_{R_\LL}, D_{R_\LL})\times_{\Spec R_\LL}\Spec\kk$ is a reduced nodal curve with marked points, and we are given an isomorphism $(C_{R_\LL}, D_{R_\LL})\times_{\Spec R_\LL}\Spec\overline{\FF}\simeq (C,D)$.
\end{description}

\subsection{Plan of the paper.}
In the appendix, we summarize the basic facts about the nodal and the semi-stable models of algebraic curves. As our definition and treatment of (parameterized) tropical curves is motivated by these facts, we suggest to start reading the paper by looking at the appendix.

Several different definitions of (parameterized) tropical curves can be found in the literature. In Section~\ref{sec:trcur}, we give a version of the definitions that are most convenient for our approach. In particular, since sometimes we work with nodal models of algebraic curves with marked points, we must allow tropical curves with vertices of valency less than three, and unbounded ends of zero slope. Thus, Section~\ref{sec:trcur} is devoted to the definitions of tropical and parameterized tropical curves, to the discussion of basic theory of (parameterized) tropical curves, toric and elliptic constraints, and deformation theory of parameterized tropical curves. In this section we also introduce most of the notation we use throughout the paper. Subsections~\ref{subsec:trcurvesforcurvewithmp} and \ref{subsec:paramtrcurforacurwithamap} give the motivating examples for our definitions, and describe the canonical tropicalization of algebraic curves. For the convenience of the reader, after each definition we give a remark comparing it to other definitions in the literature, and explaining the differences.

In Section~\ref{sec:trdegtrlim}, we construct the integral models of $f\colon C\to X$ and of the constraints and define the notion of tropical reduction. At the end of the section we explain the reason for introducing the stacky structures.

In Section~\ref{sec:toricstacksStackLimits}, we introduce singular Deligne-Mumford toric stacks, and use them to construct the natural stacky structure on the tropical degenerations and reductions.

In Section~\ref{sec:deftheory}, we discuss the deformation theory needed for the correspondence theorems.

Finally, we formulate and prove the correspondence theorems in Section~\ref{sec:CorrThm}.

\section{Tropical curves and parameterized tropical curves.}\label{sec:trcur}
\subsection{Tropical curves.}\label{subsec:deftrcur}

\begin{definition}\label{def:trcur}
$ $

\begin{enumerate}
    \item A {\it tropical curve} is a topological graph $\Gamma$ equipped with a complete, possibly degenerate, inner metric, and with the following structure (s1),(s2), and satisfying the following properties (p1),(p2),(p3):
        \begin{enumerate}
            \item[(s1)] the vertices of $\Gamma$ are subdivided into two groups: {\it finite vertices} and {\it infinite vertices},
            \item[(s2)] the set of infinite vertices is equipped with a total order, and is denoted by $V^\infty(\Gamma)$; the set of finite vertices is just a set, and is denoted by $V^f(\Gamma)$;
            \item[(p1)] $\Gamma$ has finitely many vertices and edges;
            \item[(p2)] any infinite vertex has valency one, and is connected to a finite vertex by an edge, called {\it unbounded edge}. Other edges are called {\it bounded edges}. The set of bounded edges is denoted by $E^b(\Gamma)$, and the set of unbounded edges is denoted by $E^\infty(\Gamma)$;
           \item[(p3)] any bounded edge $e$ is isometric to a closed interval $\left[0,|e|\right]$, where $|e|\in\RR$ denotes the length of $e$, and any unbounded edge $e$ is isometric to $[0,\infty]$, where the isometry maps the infinite vertex to $\infty$. Hence $|e|=\infty$ for any unbounded edge $e$, and the restriction of the metric to $\Gamma\setminus V^\infty(\Gamma)$ is non-degenerate.
         \end{enumerate}
      \item A {\it $\QQ$-tropical curve} is a tropical curve such that $|e|\in\QQ$ for any $e\in E^b(\Gamma)$.
      \item A tropical curve is called {\it irreducible} if the underlying graph $\Gamma$ is connected.
      \item The {\it genus} of a tropical curve $\Gamma$ is defined by $$g(\Gamma):= 1-\chi(\Gamma)=1-|V(\Gamma)|+|E(\Gamma)|.$$ If $\Gamma$ is irreducible then $g(\Gamma)=b_1(\Gamma)$.
     \item A tropical curve is called {\it stable} if all its finite vertices have valency at least three.
     \item An {\it isomorphism} of tropical curves is an isomorphism of metric graphs.
\end{enumerate}
\end{definition}
\begin{remark}
Among the definitions of tropical curves existing in the literature, \cite[Definition 1.1]{GK08} of Gathmann and Kerber is the closest to  Definition~\ref{def:trcur}. They also allow vertices of valency less than three, and add vertices at infinity. The only difference is that Gathmann and Kerber don't order the infinite vertices and call the unbounded edges unbounded ends.

We shall mention that from the point of view of our motivating example, as presented in Subsection~\ref{subsec:trcurvesforcurvewithmp} below, it would be more natural to consider tropical curves equipped with a function $g\colon V^f(\Gamma)\to \ZZ_+$ associating to each vertex a non-negative integer, called {\em the genus of the vertex}; and to modify the definition of the genus of a tropical curve by setting $g(\Gamma):=1-\chi(\Gamma)+\sum_{v\in V^f(\Gamma)} g(v)$. The notion of the stabilization defined below should then also be modified. However, since for the purpose of this paper the more standard definitions are sufficient, we decided not to change the standard definitions too much.
\end{remark}
\begin{remark}
Note that the isomorphism class of a tropical curve is completely determined by the underlying graph with the extra structure (s1)-(s2) on the set of vertices and the positive lengths of the bounded edges. Vice versa, given such data, one can easily construct a tropical curve in the corresponding class. Note also, that given an isomorphism $\phi$ of the underlying graphs of two tropical curves $\Gamma$ and $\Gamma'$, there exists at most one isomorphism of the tropical curves inducing $\phi$. In particular, there exist no non-trivial automorphism of a tropical curve inducing the identity maps on the sets of vertices and edges. However, in general, there may exist several isomorphisms between $\Gamma$ and $\Gamma'$. Thus, we will not identify tropical curves with their isomorphism classes.
\end{remark}

\begin{algorithm}\label{alg:trcurves}
Given a tropical curve $\Gamma$ one can construct a new tropical curve $\Gamma'$ using the following three steps (compare to Algorithm~\ref{alg:models} in the opposite order):
\begin{enumerate}
\item subdivide each bounded edge $e$ into finitely many pieces, i.e., mark $k_e\ge 0$ distinct points on the edge $e$, add them to the set of finite vertices, and replace the edge $e$ with the subintervals defined by the points and equipped with the induced metric;
\item in a similar way, subdivide each unbounded edge into finitely many pieces;
\item attach metric trees to certain finite vertices $v\in V^f(\Gamma)$, i.e., pick a metric tree $T_v$, such that all edges but maybe some of the leaves of $T_v$ have finite length, and identify the root of $T_v$ with $v$.
\end{enumerate}
\end{algorithm}
\begin{Claim}
Let $\Gamma$ be an irreducible tropical curve satisfying
\begin{equation}\label{stcond}
g(\Gamma)+\frac{|V^\infty(\Gamma)|+1}{2}\ge 2.
\end{equation}
Then there exists a unique stable tropical curve $\Gamma^{\rm st}$, such that $V^\infty(\Gamma)=V^\infty(\Gamma^{\rm st})$ and $\Gamma$ can be obtained from $\Gamma^{\rm st}$ by the three steps of Algorithm \ref{alg:trcurves}. In particular, if $\Gamma$ is stable then $\Gamma^{\rm st}=\Gamma$.
\end{Claim}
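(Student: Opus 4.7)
The plan is to construct $\Gamma^{\rm st}$ by iteratively reversing the moves of Algorithm~\ref{alg:trcurves}. I introduce two local operations on a tropical curve: \emph{leaf-removal}, which deletes a finite vertex of valency one together with its unique edge; and \emph{edge-smoothing}, which deletes a finite vertex $v$ of valency two whose two distinct incident edges $e_1,e_2$ are then replaced by a single new edge of length $|e_1|+|e_2|$, bounded iff both $e_i$ are bounded. I apply these operations to $\Gamma$ in any order for as long as some finite vertex has valency less than three; each move strictly decreases $|V(\Gamma)|$, so the process terminates at a tropical curve in which every finite vertex has valency at least three. This will be the desired $\Gamma^{\rm st}$; if $\Gamma$ was already stable no operation applies, so $\Gamma^{\rm st}=\Gamma$. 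The set $V^\infty(\Gamma)$ with its total order is untouched by construction, and since each move removes one vertex and one edge simultaneously, $\chi(\Gamma)$ and hence $g(\Gamma)$ are preserved throughout.

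Next I check that, under~\eqref{stcond}, both operations are always well-defined and that~\eqref{stcond} continues to hold at each stage. Leaf-removal could fail only if the unique edge at a valency-one finite vertex were unbounded; irreducibility of $\Gamma$ then forces $\Gamma$ to consist of this single edge between one finite and one infinite vertex, giving $g(\Gamma)+\tfrac{|V^\infty(\Gamma)|+1}{2}=1$, contradicting~\eqref{stcond}. Edge-smoothing could fail either because (a) the two incident edges are the two half-edges of a single loop at $v$ -- forcing $\Gamma$ to be a single looped vertex with $g=1$, $|V^\infty|=0$ -- or (b) both incident edges are unbounded -- forcing $\Gamma=\{v,v_\infty,v_\infty'\}$ with $g=0$, $|V^\infty|=2$; both cases give value $\tfrac{3}{2}$, again contradicting~\eqref{stcond}. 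Since leaf-removal is inverse to step (3) of Algorithm~\ref{alg:trcurves} applied to a one-edge tree and edge-smoothing is inverse to steps (1)--(2), reading the procedure backwards exhibits $\Gamma$ as the result of Algorithm~\ref{alg:trcurves} applied to $\Gamma^{\rm st}$, proving existence.

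For uniqueness, suppose $\Gamma$ also arises from a stable curve $\widetilde{\Gamma}^{\rm st}$ via Algorithm~\ref{alg:trcurves}. The algorithm inserts only subdivision points (of valency two) and attached subtrees; reversing its steps -- first contracting each attached tree by iterated leaf-removals (internal tree nodes drop to valency at most two as their descendants are pruned), then smoothing away all subdivision points -- gives a sequence of my two operations terminating at $\widetilde{\Gamma}^{\rm st}$. Confluence of leaf-removal and edge-smoothing therefore forces $\Gamma^{\rm st}=\widetilde{\Gamma}^{\rm st}$. Confluence is immediate when the supports are disjoint, and is a direct check in the adjacent configurations (a leaf next to a smoothable vertex, two adjacent smoothable vertices, two leaves sharing a chain of valency-two vertices), where in every case the terminal combinatorial graph and the summed edge-lengths are visibly order-independent. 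The main obstacle, and the sole use of hypothesis~\eqref{stcond}, is the three degenerate-case exclusions in the second paragraph; the rest is a standard termination-and-confluence argument on a finite graph.
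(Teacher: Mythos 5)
Your proof is correct and takes essentially the same route as the paper: prune away trees of finite vertices and smooth two-valent vertices, using that both moves preserve $g(\Gamma)$ and $V^\infty(\Gamma)$ so that \eqref{stcond} persists and rules out the degenerate terminal configurations (a single finite vertex with at most two unbounded edges, or a single vertex with a loop) -- precisely the cases (2)--(3) that the paper's proof excludes. The only real difference is organizational: the paper performs two global sweeps and declares uniqueness obvious, whereas you run the local moves in arbitrary order and justify uniqueness by termination plus local confluence, a welcome elaboration whose critical-pair check suffices because the genuinely problematic overlaps (such as a two-cycle of two-valent vertices) are themselves forbidden by \eqref{stcond}.
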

\begin{proof} To construct $\Gamma^{\rm st}$ we must first, remove from $\Gamma$ the maximal forest of trees all of whose leaves are finite vertices. Then we remove the two-valent vertices and ``glue'' the corresponding pairs of edges. As a result, we obtain an irreducible tropical curve $\Gamma^{\rm st}$, and there are three possibilities: (1) $\Gamma^{\rm st}$ has neither finite leaves nor two-valent vertices, i.e., $\Gamma^{\rm st}$ is stable; (2) $\Gamma^{\rm st}$ consists of a unique finite vertex, at most two infinite vertices, and no bounded edges; (3) $\Gamma^{\rm st}$ consists of a unique finite vertex and a loop, i.e., a unique bounded edge connecting the vertex to itself. However, since by the assumption and the construction of $\Gamma^{\rm st}$ it must satisfy \eqref{stcond}, it follows that $\Gamma^{\rm st}$ is stable. Note that by the construction $V^\infty(\Gamma)=V^\infty(\Gamma^{\rm st})$ and $\Gamma$ can be obtained from $\Gamma^{\rm st}$ using Algorithm \ref{alg:trcurves}. The uniqueness part of the claim is obvious.
\end{proof}
\begin{remark} Condition \eqref{stcond} is the analog of the stability condition for the algebraic curves: any rational curve must have at least three special points, and any elliptic curve must have at least one special point. For tropical curves of positive genus, \eqref{stcond} holds if and only if the Euler characteristic of the punctured graph is negative: $\chi(\Gamma\setminus V^\infty(\Gamma))<0$.
\end{remark}
\begin{definition}
$\Gamma^{\rm st}$ is called the {\it stabilization} of $\Gamma$.
\end{definition}
\begin{remark}
In \cite{GM08}, Gathmann and Markwig defined stabilization for tropical curves without one-valent finite vertices. In the latter case, the two stabilizations coincide.
\end{remark}
\begin{remark} If $\Gamma$ has no one-valent finite vertices then the underlying metric topological spaces of $\Gamma$ and $\Gamma^{\rm st}$ are naturally isometric. Vice versa, if the underlying metric topological spaces of $\Gamma$ and $\Gamma'$ are isometric then $\Gamma^{\rm st}\cong \Gamma'^{\rm st}$.
\end{remark}

\subsubsection{The $\QQ$-tropical curve assigned to a pair $(C,D)$.}\label{subsec:trcurvesforcurvewithmp}
Let $(C,D)$ be as in Subsection~\ref{subsec:convnot}, and $(C_{R_\LL}, D_{R_\LL})$ be a nodal model of $(C,D)$. One can associate to it a tropical curve, which will be denoted by $\Gamma_{C_{R_\LL},D_{R_\LL}}$. The underlying graph of $\Gamma_{C_{R_\LL},D_{R_\LL}}$ is defined as follows: the set of finite vertices is the set of irreducible components of the reduction of $C_{R_\LL}$, and the set of infinite vertices is the set of marked points $D\equiv D_{R_\LL}$.
The set of edges connecting two finite vertices is defined to be the set of common nodes of the corresponding components. In particular, if a component $C_v$ is singular then each singular point of $C_v$ corresponds to a loop at the corresponding finite vertex. Finally, if a marked point specializes to certain component then the corresponding vertices are connected by an unbounded edge.

\begin{notation}\label{not:qvCvpvpe} For $v\in V^f(\Gamma_{C_{R_\LL},D_{R_\LL}})$, the corresponding component is denoted by $C_v$, and for $v\in V^\infty(\Gamma_{C_{R_\LL},D_{R_\LL}})$, the corresponding marked point is denoted by $q_v$. If $e$ is a bounded (resp. unbounded) edge then the corresponding node (resp. specialization of the marked point) is denoted by $p_e$. Finally, $p_v$ denotes the specialization of $q_v$.
\end{notation}
\noindent It remains to specify the lengths of the bounded edges of $\Gamma_{C_{R_\LL},D_{R_\LL}}$. For a bounded edge $e$, set $|e|:=\frac{r_e+1}{e_\LL}$ if $C_{R_\LL}$ has singularity of type $A_{r_e}$ at $p_e$. Observe that the length $|e|$ is independent of $\LL$. Indeed, if $\LL\subset\LL'$,
$(C_{R_{\LL'}},D_{R_{\LL'}})=(C_{R_{\LL}},D_{R_{\LL}})\times_{\Spec R_\LL}\Spec R_{\LL'}$, and $C_{R_\LL}$ has singularity of type $A_r$ at a node $p$ then $C_{R_{\LL'}}$ has singularity of type $A_{e_{\LL'/\LL}(r+1)-1}$; hence $\frac{r+1}{e_\LL}=\frac{e_{\LL'/\LL}(r+1)-1+1}{e_{\LL'}}$.

Note that if $(C,D)$ is stable then it admits a distinguished model, namely {\em the stable model,} and the associated tropical curve is independent of the field extension $\LL$. Furthermore, the latter is the stabilization of the tropical curve associated to {\it any} nodal model $(C_{R_{\LL}},D_{R_{\LL}})$ of $(C,D)$.
\begin{notation} If $(C,D)$ is stable then the tropical curve associated to the stable model of $(C,D)$ is denoted by $\Gamma^{\rm st}_{C,D}$.
\end{notation}

\subsubsection{Existence of models with given metric graphs.}\label{subsec:modelforgraph}
It is natural to ask the following question: {\it Given a $\QQ$-tropical curve $\Gamma$, are there an extension $\LL$ and a nodal model $(C_{R_\LL},D_{R_\LL})$ such that $\Gamma=\Gamma_{C_{R_\LL},D_{R_\LL}}$?} The answer is given in the following proposition:

\begin{proposition}\label{prop:modelforgraph} Assume that $(C,D)$ is stable. Let $\Gamma$ be a $\QQ$-tropical curve. Then $\Gamma=\Gamma_{C_{R_\LL},D_{R_\LL}}$ for a nodal model $(C_{R_{\LL}}, D_{R_\LL})$ if and only if $\Gamma^{\rm st}=\Gamma^{\rm st}_{C, D}$. The model $(C_{R_{\LL}}, D_{R_\LL})$ is defined over any field $\LL$ satisfying the following two conditions: (a) the stable model is defined over $\LL$, and (b)
\begin{equation}\label{eq:resonlen}
|e|\in\frac{1}{e_\LL}\NN,
\end{equation}
for any $e\in E^b(\Gamma)$. Moreover, if $\Gamma$ can be obtained from $\Gamma^{\rm st}_{C, D}$ using only steps one and two of Algorithm \ref{alg:trcurves} then this model is unique up to unique isomorphism and field extensions.
\end{proposition}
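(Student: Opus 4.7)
The plan is as follows. For the ``only if'' direction, I would establish that the stable reduction $(C^{\rm st}_{R_\LL}, D^{\rm st}_{R_\LL})$ is obtained from any nodal model $(C_{R_\LL}, D_{R_\LL})$ of $(C,D)$ by iteratively contracting unstable rational components (those carrying at most two special points). On the level of dual graphs this amounts precisely to removing one-valent finite vertices together with their unique edges, and to smoothing two-valent finite vertices while adding the lengths of the two incident edges, which is exactly the tropical stabilization procedure $\Gamma\mapsto\Gamma^{\rm st}$ from the Claim. Combined with the additivity of the length formula $|e| = (r_e+1)/e_\LL$ under such merges, this gives $\Gamma^{\rm st}_{C_{R_\LL}, D_{R_\LL}} = \Gamma_{C^{\rm st}_{R_\LL}, D^{\rm st}_{R_\LL}} = \Gamma^{\rm st}_{C, D}$.

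For the converse and for existence, I would start from the stable model $(C^{\rm st}_{R_\LL}, D^{\rm st}_{R_\LL})$, which is defined over $\LL$ by assumption (a), and construct a nodal model realizing $\Gamma$ by inverting the three steps of Algorithm~\ref{alg:trcurves}. The key local operation for Step~1 is: at a node $p_e$ with $A_{r_e}$ singularity locally of the form $\Spec R_\LL[x,y]/(xy - t_\LL^{r_e+1})$, where $r_e+1 = e_\LL|e|$, a subdivision of the bounded edge $e$ into pieces $e_1,\dotsc,e_k$ is realized by a partial resolution producing a chain of $k-1$ new rational curves meeting in $A_{r_i}$ singularities with $r_i+1 = e_\LL|e_i|$; condition~(b) ensures all $r_i$ are non-negative integers, so that the construction is defined over $R_\LL$. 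For Step~2, an unbounded edge corresponds to a marked section passing through a smooth point of the closed fiber; the subdivision is realized by iteratively blowing up ideals of the form $(x, t_\LL^{e_\LL|e_i|})$ in a local parametrization of the section, producing a chain of rational components terminating in the one that carries the marked section. Step~3 is handled similarly: choose a smooth point on the relevant component $C_v$ (away from the existing special points) to serve as the root, and grow the tree by the analogous iterated blow-ups.

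The resulting total space is normal (since $A_r$ surface singularities are normal), has reduced nodal closed fiber, and its associated tropical curve coincides with $\Gamma$ by construction. For the uniqueness statement, when only Steps~1 and 2 are involved every birational modification above is canonically determined by the combinatorial data encoded in $\Gamma$: the partial resolution of an $A_{r_e}$ singularity into a prescribed chain of $A_{r_i}$ singularities is the unique partial contraction of its minimal desingularization, and the iterated blow-up along a marked section is canonically determined by the powers $t_\LL^{e_\LL|e_i|}$. Hence the model is unique up to unique isomorphism. In contrast, Step~3 requires the choice of a smooth point on each $C_v$, which is precisely the source of non-uniqueness when attachment of metric trees is needed.

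I expect the main obstacle to be Step~1: verifying that partial resolutions of the $A_{r_e}$ singularity exist over $R_\LL$ realizing any prescribed chain $(A_{r_1},\dotsc,A_{r_k})$ with $\sum_i (r_i+1) = r_e + 1$, that they produce exactly the required dual graph pattern with correct edge lengths, and that they are canonical in a strong enough sense to give the uniqueness assertion. This reduces to an explicit computation with iterated blow-ups of the local model $xy = t_\LL^{r_e+1}$, but the bookkeeping must be done carefully to keep track of the new components and the types of the remaining $A_{r_i}$ singularities at each stage.
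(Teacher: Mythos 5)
Your proposal is correct and follows the same overall strategy as the paper: the ``only if'' direction via the canonical contraction to the stable model (on dual graphs this is exactly tropical stabilization, with lengths additive because contracting a $\PP^1$ joining an $A_{r_1}$-point and an $A_{r_2}$-point produces an $A_{r_1+r_2+1}$-point), existence by birational modifications of the stable model dictated by $\Gamma$, and the ``moreover'' clause from the canonicity of the modifications realizing steps one and two. Where you genuinely differ is in how existence is organized. The paper first replaces $\Gamma$ by its full subdivision $\Gamma'$ into edges of length $1/e_\LL$, realizes $\Gamma'$ by ordinary blowups at smooth points of the reduction of the minimal regular semi-stable model $C^{\rm mrss}_{R_\LL}$ (for an unbounded edge subdivided into $k$ pieces, $k$ consecutive blowups at the reduction of the marked point), and only at the end blows down the $\PP^1$'s corresponding to $V^f(\Gamma')\setminus V^f(\Gamma)$ to create the prescribed $A_{r_e}$-singularities; this detour through a regular model avoids manipulating singular total spaces directly. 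You instead stay with the singular stable model and perform the modifications locally: partial resolutions of the $A_{r_e}$-points realizing a prescribed chain, and blowups of the centers $(x,t_\LL^{e_\LL|e_i|})$ along the marked sections. The two routes are equivalent --- your partial resolution is the paper's ``resolve, then contract'' done in one step, and your blowup of $(x,t_\LL^{a})$ is the composite of $a$ point blowups followed by contraction of the intermediate $(-2)$-chain --- and yours makes the canonicity of each local step, hence the uniqueness assertion, more visible, at the price of exactly the $A_r$ bookkeeping you flag as the main obstacle, which the paper's route sidesteps. One point to make explicit in either treatment (the paper is equally terse here, so this is not a gap relative to its own standard): canonicity of the construction yields uniqueness of the model only after noting, as in the ``only if'' analysis, that an arbitrary nodal model with graph $\Gamma$ dominates the stable model and that its exceptional fibers over the nodes and over the specializations of the marked points are then forced to be these canonical modifications.
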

\begin{proof}
Since any nodal model dominates the stable model, and the stable model can be obtained from it by Algorithm \ref{alg:models}, the ``only if'' part follows.
Let us now show the ``if'' part. Let $\LL$ be an extension over which the stable model is defined, and condition \eqref{eq:resonlen} is satisfied for any $e\in E^b(\Gamma)$. Let $\Gamma'$ be the metric graph obtained from $\Gamma$ by subdividing any bounded edge $e$ into a chain of $r_e+1=e_{\LL}|e|$ subintervals of length $\frac{1}{e_\LL}$. It is sufficient to construct the model $(C'_{R_{\LL}},D'_{R_\LL})$ with $\Gamma'=\Gamma_{C'_{R_\LL},D'_{R_\LL}}$, since $(C_{R_{\LL}}, D_{R_\LL})$ is obtained from $(C'_{R_{\LL}},D'_{R_\LL})$ by a uniquely defined sequence of blow-downs, namely one must blow down the projective lines on $C'_{R_{\LL}}$ corresponding to $V^f(\Gamma')\setminus V^f(\Gamma)$.

Note that $\Gamma'$ can be obtained from $\Gamma^{\rm st}_{C, D}$ by the three steps of Algorithm~\ref{alg:trcurves}. Moreover, it can be obtained from $\Gamma_{C^{\rm mrss}_{R_\LL},D_{R_\LL}}$ using only step two and step three with bounded trees, where $C^{\rm mrss}_{R_\LL}$ denotes the minimal regular semi-stable model dominating $C^{\rm st}_{R_\LL}$. It is easy to see that there exists a sequence of blowups along smooth points of the reduction of the minimal regular semi-stable model $C^{\rm mrss}_{R_\LL}\to C^{\rm st}_{R_\LL}$ such that the resulting regular semi-stable model $(C'_{R_{\LL}},D'_{R_\LL})$ has metric graph $\Gamma'=\Gamma_{C'_{R_\LL},D'_{R_\LL}}$.  For the moreover part, note that the sequence of blowups corresponding to the second step of the algorithm is uniquely defined. Indeed, if $e\in E^\infty(\Gamma_{C^{\rm mrss}_{R_\LL},D_{R_\LL}})$ is subdivided into $k$ pieces then the corresponding sequence consists of $k$ consecutive blowups along the reduction of $q_e$.
\end{proof}

\subsection{Parameterized tropical curves.}
\begin{definition}\label{def:partrcur}
Let $N$ be a lattice.
\begin{enumerate}
\item An {\it $N_\RR$-parameterized tropical curve} is a pair $(\Gamma, h_\Gamma)$ consisting of a tropical curve $\Gamma$ and a map $h_\Gamma\colon V(\Gamma)\to N_\RR$ that satisfy the following properties:
\begin{enumerate}
  \item $h_\Gamma(v)\in N$ for any infinite vertex $v\in V^\infty(\Gamma)$;
  \item $\frac{1}{|e|}(h_\Gamma(v)-h_\Gamma(v'))\in N$ for any bounded edge $e\in E_{vv'}(\Gamma)$;
  \item ({\it Balancing condition}) for any finite vertex $v$ the following holds: $$\sum_{v'\in V^f(\Gamma),\, e\in E_{vv'}(\Gamma)}\frac{1}{|e|}\left(h_\Gamma(v')-h_\Gamma(v)\right)+\sum_{v'\in V^\infty(\Gamma),\, e\in E_{vv'}(\Gamma)}h_\Gamma(v')=0.$$
\end{enumerate}
\item If $h_\Gamma(v)\in N_\QQ$ for any $v$ then $\Gamma$ is called {\it $N_\QQ$-parameterized $\QQ$-tropical curve}.
\end{enumerate}
\end{definition}
\begin{remark}
If no confusion is possible, we will often omit $h_\Gamma$ and, by abuse of language and notation, will refer to $\Gamma$ as $N_\RR$-parameterized tropical curve.
\end{remark}
\begin{remark}
Usually, one defines a parameterized tropical curve as a tropical curve $\Gamma$ equipped with a map $h\colon\Gamma\setminus V^\infty(\Gamma)\to N_\RR$ satisfying certain properties. Note, that after one identifies the edges with straight intervals, a parameterized tropical curve in the sense of Definition~\ref{def:partrcur} defines a usual parameterized tropical curve as follows: $h$ is the unique continuous map that coincides with $h_\Gamma$ on the set of finite vertices, maps boun\-ded edges $e\in E_{vv'}^b(\Gamma)$ linearly onto the intervals $[h_\Gamma(v), h_\Gamma(v')]$, and maps unbounded edges $e\in E_{vv'}^\infty(\Gamma)$ linearly onto the rays $\{h_\Gamma(v)+th_\Gamma(v')\,|\,t\in \RR_+\}$ if $v\in V^f(\Gamma)$ and $v'\in V^\infty(\Gamma)$.

Note, that although $h_\Gamma$ is defined for any vertex, it has different meanings for finite and for infinite vertices: If $v\in V^f(\Gamma)$ then one must think about $h_\Gamma(v)$ as {\em a point in the affine space} $N_\QQ$, and if ${v\in V^\infty(\Gamma)}$ then one must think about $h_\Gamma(v)$ as {\em a vector in the corresponding vector space} $N_\QQ$.
\end{remark}
\begin{definition}\label{def:proppartropcur} Let $\Gamma$ be an $N_\RR$-parameterized tropical curve, $v\in V^f(\Gamma)$ be a finite vertex, and $e\in E_{vv'}(\Gamma)$ be an edge.
\begin{enumerate}
  \item The {\it multiplicity $l(e)$} of an edge $e$ is the integral length of $h_\Gamma(v')$ if $e$ is unbounded, and is the integral length of $\frac{1}{|e|}(h_\Gamma(v)-h_\Gamma(v'))$ if $e$ is bounded; in the latter case, the multiplicity is exactly the factor by which $h_\Gamma$ stretches $e$ with respect to the lattice length on $N_\RR$.
  \item If $v'\in V^\infty(\Gamma)$ then the {\it multiplicity $l(v')$} of $v'$ is the integral length of $h_\Gamma(v')$.
  \item The {\it slope} of $e$ is $\RR\cdot(h_\Gamma(v)-h_\Gamma(v'))\subseteq N_\RR$ if $e$ is bounded, and $\RR\cdot h_\Gamma(v')\subseteq N_\RR$ if $e$ is unbounded. The slope of $e$ is denoted by $N_{\RR,e}$, and the lattice $N\cap N_{\RR,e}$ is denoted by $N_e$. If the slope $N_{\RR,e}$ is not trivial then $N_e$ and $N_{\RR,e}$ have a generator $n_e$, given by $n_e=\frac{1}{l(e)|e|}(h_\Gamma(v)-h_\Gamma(v'))$ if $e$ is bounded, and $n_e=\frac{1}{l(e)}h_\Gamma(v')$ if $e$ is unbounded. In the second case it is a {\it distinguished} generator, while in the first case it is defined only up-to a sign. However, if an {\em orientation} of the bounded edge is given then the generator is also distinguished.
  \item The {\it degree $\deg(\Gamma)$} of $\Gamma$ is the collection of pairs $(n_k,d_k)$, where $\{n_1,\dotsc ,n_s\}$ is the set of non-zero distinguished generators of slopes of unbounded edges, and $d_k=\sum_{e\in E^\infty(\Gamma), n_e=n_k}l(e)$.
\end{enumerate}
\end{definition}
\begin{remark}
Balancing condition implies $\sum_{(n,d)\in\deg(\Gamma)}dn=0$.
\end{remark}

\begin{proposition}\label{prop:paramtropcurandsubdivisions}
Let $\Gamma$ and $\Gamma'$ be tropical curves, such that $\Gamma'$ is obtained from $\Gamma$ using Algorithm~\ref{alg:trcurves}.
If $\Gamma$ has the structure of an $N_\RR$-parameterized tropical curve, then there exists a unique structure of an $N_\RR$-parameterized tropical curve on $\Gamma'$ satisfying the following two properties: (a) $h_{\Gamma'}(v)=0$ for all $v\in V^\infty(\Gamma')\setminus V^\infty(\Gamma)$, and (b) $h_{\Gamma'}(v)=h_\Gamma(v)$ for all $v\in V(\Gamma')\cap V(\Gamma)$. Vice versa, if $\Gamma'$ has the structure of an $N_\RR$-parameterized tropical curve such that $h_{\Gamma'}(v)=0$ for all $v\in V^\infty(\Gamma')\setminus V^\infty(\Gamma)$ then its restriction to $\Gamma$ defines the structure of an $N_\RR$-parameterized tropical curve on $\Gamma$.
\end{proposition}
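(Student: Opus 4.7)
The plan is to handle the three steps of Algorithm~\ref{alg:trcurves} separately, since any admissible $\Gamma'$ is obtained by composing such steps. At each step, the values of $h_{\Gamma'}$ on the newly added vertices are constrained by the integrality and balancing axioms of Definition~\ref{def:partrcur}, together with the hypotheses of the proposition that pin down the images of preserved vertices (to their $\Gamma$-values) and of newly added infinite vertices (to $0$). I will show that these constraints determine a unique extension and that the explicit candidate verifies all the axioms.

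For steps (1) and (2), suppose a bounded edge $e\in E_{vv'}^b(\Gamma)$ is subdivided into pieces $e_1,\dotsc,e_k$ with new finite vertices $v=v_0,v_1,\dotsc,v_k=v'$ and lengths $|e_i|$ summing to $|e|$. Integrality applied to each $e_i$ says $\frac{1}{|e_i|}\bigl(h_{\Gamma'}(v_i)-h_{\Gamma'}(v_{i-1})\bigr)\in N$, and balancing at each internal 2-valent vertex $v_i$ forces these per-unit slopes to agree for consecutive $i$. Telescoping across the chain then identifies the common value with $\frac{1}{|e|}(h_\Gamma(v')-h_\Gamma(v))$, so $h_{\Gamma'}(v_i)=h_\Gamma(v)+\frac{s_i}{|e|}\bigl(h_\Gamma(v')-h_\Gamma(v)\bigr)$ where $s_i:=|e_1|+\dotsb+|e_i|$. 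For an unbounded edge the analysis is identical except that the last piece $e_k$ is unbounded with preserved infinite endpoint $v_k=v'$; balancing at the finite vertex $v_{k-1}$ then fixes the common per-unit slope to be $h_\Gamma(v')$, giving $h_{\Gamma'}(v_i)=h_\Gamma(v)+s_i\, h_\Gamma(v')$. Verification of the $N_\RR$-parameterization axioms for these explicit values is a direct substitution.

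For step (3), let $T_v$ be a metric tree attached at a finite vertex $v$, with values $0$ prescribed at every infinite leaf by hypothesis. I claim $h_{\Gamma'}$ is forced to be constant equal to $h_\Gamma(v)$ on the finite vertices of $T_v$. For an arbitrary edge $e=(u,w)$ of $T_v$ with $u$ nearer the root, removing $e$ disconnects $T_v\cup\{v\}$; summing the balancing condition over every finite vertex of the component $S$ containing $w$ causes every internal bounded edge to contribute twice and cancel, and every internal unbounded edge to contribute $0$ (since its infinite endpoint lies in $S$ and has value $0$), so the sole surviving term is $\frac{1}{|e|}(h_{\Gamma'}(u)-h_{\Gamma'}(w))$, which must therefore vanish. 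Hence $h_{\Gamma'}(u)=h_{\Gamma'}(w)$, and varying $e$ yields the claim. For existence, assigning $h_{\Gamma'}\equiv h_\Gamma(v)$ on finite tree vertices and $h_{\Gamma'}\equiv 0$ on infinite leaves plainly satisfies integrality, balancing at every new vertex, and preservation of balancing at $v$ itself, since all tree contributions at $v$ are zero.

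The converse direction is obtained by reading the above analysis backwards. Given a parameterization on $\Gamma'$ with $h_{\Gamma'}(v)=0$ for $v\in V^\infty(\Gamma')\setminus V^\infty(\Gamma)$, the restriction $h_\Gamma:=h_{\Gamma'}|_{V(\Gamma)}$ inherits integrality at infinite vertices of $\Gamma$, while the step~(1)--(2) calculation shows that $\frac{1}{|e|}(h_\Gamma(v')-h_\Gamma(v))$ coincides with the integral per-unit slope of any subdividing piece, giving integrality for bounded edges of $\Gamma$. For balancing at $v\in V^f(\Gamma)$, the contribution of the first subdivided piece of each original edge to $\Gamma'$-balancing at $v$ reproduces the $\Gamma$-contribution of that edge, and the contribution of any attached tree $T_v$ vanishes by the step~(3) analysis; thus $\Gamma'$-balancing at $v$ is equivalent to $\Gamma$-balancing. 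The main delicate point is the summation argument in step~(3), which requires careful bookkeeping of which edges of $T_v$ are internal to, versus on the boundary of, the excised subtree $S$.
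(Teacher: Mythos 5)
Your proof is correct and follows essentially the same route as the paper: reduce to a single step of Algorithm~\ref{alg:trcurves}, use balancing at the new two-valent vertices to pin down the values on subdivided edges (yielding the same telescoping formulas), and show that vanishing of $h_{\Gamma'}$ at the new infinite leaves forces trivial slopes on attached trees, with the converse read off from the same identities. The only cosmetic difference is that for the tree step you use a global summation of the balancing conditions over a subtree rather than the paper's implicit leaf-to-root induction; both yield the same conclusion.
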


\begin{proof} It is sufficient to prove the proposition for $\Gamma'$ obtained using only one step of the algorithm; furthermore, we may assume that only one edge (resp. metric tree) is subdivided (resp. attached).

First, assume that $\Gamma'$ is obtained from $\Gamma$ by a subdivision of a bounded edge {$e\in E_{vv'}(\Gamma)$}. Let $v_1,\dotsc ,v_r$ be the new vertices, $v_0=v$, $v_{r+1}=v'$, and $e_k\in E_{v_kv_{k+1}}(\Gamma')$ be the new edges, $\sum_{k=0}^r|e_k|=|e|$. If $\Gamma'$ has a structure of an $N_\RR$-parameterized tropical curve then it follows from the balancing condition that $$\frac{h_{\Gamma'}(v_{k+1})-h_{\Gamma'}(v_k)}{|e_k|}=\frac{h_{\Gamma'}(v_k)-h_{\Gamma'}(v_{k-1})}{|e_{k-1}|}=\frac{h_{\Gamma'}(v')-h_{\Gamma'}(v)}{|e|}$$ for all $1\le k\le r$. Thus $h_{\Gamma'}(v_k)=\frac{\sum_{j=k}^{r}|e_j|}{|e|}h_{\Gamma'}(v)+\frac{\sum_{j=0}^{k-1}|e_j|}{|e|}h_{\Gamma'}(v')$ for  $1\le k\le r$, which implies the uniqueness and the vice versa parts of the proposition.

Second, assume that $\Gamma'$ is obtained from $\Gamma$ by a subdivision of an unbounded edge $e\in E_{vv'}(\Gamma)$ with $v'\in V^\infty(\Gamma)$. Let $v_1,\dotsc ,v_r$ be the new vertices, $v_0=v$, $v_{r+1}=v'$, and $e_k\in E_{v_kv_{k+1}}(\Gamma')$ be the new edges. If $\Gamma'$ has a structure of an $N_\RR$-parameterized tropical curve then it follows from the balancing condition that $$\frac{1}{|e_k|}\left(h_{\Gamma'}(v_{k+1})-h_{\Gamma'}(v_k)\right)=\frac{1}{|e_{k-1}|}\left(h_{\Gamma'}(v_k)-h_{\Gamma'}(v_{k-1})\right)=h_{\Gamma'}(v')$$ for all $1\le k\le r-1$. Thus $h_{\Gamma'}(v_k)=h_{\Gamma'}(v)+(\sum_{j=0}^{k-1}|e_j|)h_{\Gamma'}(v')$ for  $1\le k\le r$, which implies the uniqueness and the vice versa parts of the proposition.

Third, assume that $\Gamma'$ is obtained from $\Gamma$ by attaching a metric tree $T$ to a vertex $v\in V^f(\Gamma)$, and that it has a structure of an $N_\RR$-parameterized tropical curve satisfying (a) and (b). Then $h_{\Gamma'}$ vanishes on the infinite leaves of the tree $T$, which, by balancing condition, implies that the slopes of all edges of $T$ are trivial. Hence, $h_{\Gamma'}(w)=h_{\Gamma'}(v)$ for all vertices $w$ of $T$ but infinite leaves, which implies the uniqueness and the vice versa parts of the proposition.

To prove the existence part, we define $h_{\Gamma'}$ on the new vertices by the formulae obtained above. Namely, in the first case set $h_{\Gamma'}(v_k):=\frac{\sum_{j=k}^{r}|e_j|}{|e|}h_{\Gamma}(v)+\frac{\sum_{j=0}^{k-1}|e_j|}{|e|}h_{\Gamma}(v')$, in the second case set $h_{\Gamma'}(v_k):=h_{\Gamma}(v)+(\sum_{j=0}^{k-1}|e_j|)h_{\Gamma}(v')$, and in the third case set $h_{\Gamma'}(w):=h_{\Gamma}(v)$ for all new vertices $w$ but infinite leaves, for which set $h_{\Gamma'}$ to be zero. Then, it is easy to see that $(\Gamma',h_{\Gamma'})$ is an $N_\RR$-parameterized tropical curve.
\end{proof}
\begin{corollary}\label{cor:l(e)fordifferentmodels}
Let $\Gamma$ and $\Gamma'$ be as in Proposition~\ref{prop:paramtropcurandsubdivisions}. Then the following holds: $\{l(e)\}_{e\in E(\Gamma)}\subseteq\{l(e)\}_{e\in E(\Gamma')}\subseteq \{l(e)\}_{e\in E(\Gamma)}\cup\{0\}$. Moreover, if $\Gamma'$ is obtained from $\Gamma$ using only the first two steps of the algorithm then $\{l(e)\}_{e\in E(\Gamma)}=\{l(e)\}_{e\in E(\Gamma')}$.
\end{corollary}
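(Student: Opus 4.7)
The plan is to verify the corollary step by step along the three operations of Algorithm \ref{alg:trcurves}. Since the operations can be composed, and multiplicities depend only locally on the pieces into which an edge is subdivided, it suffices to analyze each step separately, subdividing only a single edge (or attaching a single tree at a single vertex).

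For step one, let $e \in E_{vv'}(\Gamma)$ be the bounded edge that is broken into edges $e_0,\dots,e_r$ with $v_0=v$, $v_{r+1}=v'$. From the formulas derived in the proof of Proposition~\ref{prop:paramtropcurandsubdivisions}, we have
$$\frac{1}{|e_k|}\bigl(h_{\Gamma'}(v_{k+1})-h_{\Gamma'}(v_k)\bigr)=\frac{1}{|e|}\bigl(h_\Gamma(v')-h_\Gamma(v)\bigr)$$
for every $k$, so the integral length of both sides coincide, giving $l(e_k)=l(e)$ for all $k$. For step two, let $e\in E_{vv'}(\Gamma)$ be the unbounded edge with $v'\in V^\infty(\Gamma)$, broken into $e_0,\dots,e_r$, where $e_0,\dots,e_{r-1}$ become bounded and $e_r$ remains unbounded. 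Again the proof of Proposition~\ref{prop:paramtropcurandsubdivisions} yields
$$\frac{1}{|e_k|}\bigl(h_{\Gamma'}(v_{k+1})-h_{\Gamma'}(v_k)\bigr)=h_\Gamma(v'),\qquad h_{\Gamma'}(v_{r+1})=h_\Gamma(v'),$$
so every new edge (bounded or not) again has multiplicity equal to the integral length of $h_\Gamma(v')$, i.e.\ $l(e_k)=l(e)$ for all $k$.

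For step three, attaching a metric tree $T$ at a finite vertex $v$, the proof of Proposition~\ref{prop:paramtropcurandsubdivisions} shows that $h_{\Gamma'}$ vanishes on the infinite leaves of $T$ and is constant equal to $h_\Gamma(v)$ on all other vertices of $T$. Consequently, for every edge $e'$ of $T$ the associated slope vector is zero, hence $l(e')=0$.

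Putting the three analyses together: steps one and two leave the set $\{l(e)\}_{e\in E(\Gamma)}$ unchanged (each subdivided edge is merely replaced by several copies of itself with respect to the multiplicity), while step three adjoins new edges all of whose multiplicity is zero without modifying the multiplicities of the edges of $\Gamma$ already present in $\Gamma'$. This gives both inclusions $\{l(e)\}_{e\in E(\Gamma)}\subseteq\{l(e)\}_{e\in E(\Gamma')}\subseteq\{l(e)\}_{e\in E(\Gamma)}\cup\{0\}$, with equality when only the first two steps are used. There is no serious obstacle here: the statement is a direct bookkeeping consequence of the explicit formulas already obtained in the proof of Proposition~\ref{prop:paramtropcurandsubdivisions}, and the only point one must be careful about is the unbounded remaining piece in step two, whose multiplicity is defined through $h_\Gamma(v')$ rather than through a finite difference quotient.
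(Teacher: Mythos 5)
Your proof is correct and matches the paper's intent: the paper dismisses this corollary as ``Obvious,'' and the only natural argument is exactly the bookkeeping you carry out, reading the multiplicities of the new edges off the explicit formulas in the proof of Proposition~\ref{prop:paramtropcurandsubdivisions} (pieces of a subdivided edge inherit $l(e)$, edges of an attached tree have trivial slope and hence multiplicity $0$). Your remark about the unbounded remaining piece in step two, whose multiplicity is read from $h_{\Gamma'}(v')=h_\Gamma(v')$ rather than from a difference quotient, is the right point of care and is handled correctly.
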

\begin{proof}
Obvious.
\end{proof}
\begin{corollary}\label{cor:lengthsfromweights}
Let $\Gamma$ be an $N_\RR$-parameterized tropical curve, and $\Gamma'$ be a graph obtained from $\Gamma$ by subdivision of some of the edges with non-trivial slopes. Assume that for each bounded (resp. unbounded) edge $e\in E_{vv'}(\Gamma)$ with $v\in V^f(\Gamma)$, which is subdivided into $r+1$ edges $e_k\in E_{v_kv_{k+1}}(\Gamma')$, $0\le k\le r$, we are given a sequence of numbers $0=\lambda_0<\lambda_1<\dotsb<\lambda_r<\lambda_{r+1}=1$ (resp. $0=\lambda_0<\lambda_1<\dotsb<\lambda_r$). Set $n_{v_k}:=\lambda_k h_{\Gamma}(v')+ (1-\lambda_k)h_{\Gamma}(v)$ ( resp. $n_{v_k}:=h_{\Gamma}(v)+\lambda_k h_{\Gamma}(v')$) for all $1\le k\le r$. Then there exists a unique structure of an $N_\RR$-parameterized tropical curve on $\Gamma'$ such that $h_{\Gamma'}(v)=h_{\Gamma}(v)$ for all $v\in V(\Gamma')\cap V(\Gamma)$, the lengths of non-subdivided edges in the two curves coincide, and $h_{\Gamma'}(v_k)=n_{v_k}$ for the new vertices $v_k\in V(\Gamma')\setminus V(\Gamma)$.
\end{corollary}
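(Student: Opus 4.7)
The statement is essentially a reparametrization of Proposition~\ref{prop:paramtropcurandsubdivisions}: rather than prescribing the lengths of the pieces $e_k$ and reading off the intermediate images, we prescribe the intermediate images (in the form of the parameters $\lambda_k$) and read off the lengths. My plan is therefore to deduce the corollary from Proposition~\ref{prop:paramtropcurandsubdivisions} by choosing the lengths $|e_k|$ correctly.

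First I would define the candidate lengths. For a bounded edge $e\in E_{vv'}(\Gamma)$ subdivided into pieces $e_0,\dotsc ,e_r$, set $|e_k|:=(\lambda_{k+1}-\lambda_k)|e|$; for an unbounded edge $e$ subdivided into $e_0,\dotsc ,e_{r-1},e_r$ (with $e_r$ the unbounded piece), set $|e_k|:=\lambda_{k+1}-\lambda_k$ for $k<r$ and $|e_r|:=\infty$. Since $0=\lambda_0<\lambda_1<\dotsb$, all these lengths are positive, so the resulting metric graph $\Gamma'$ is well defined and coincides as a graph with the one given. Note that subdividing edges does not create new infinite vertices, so $V^\infty(\Gamma')=V^\infty(\Gamma)$ and the condition (a) of Proposition~\ref{prop:paramtropcurandsubdivisions} is vacuous.

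Next I would apply Proposition~\ref{prop:paramtropcurandsubdivisions} to produce the unique $N_\RR$-parameterized tropical curve structure on $\Gamma'$ extending that of $\Gamma$. It remains to check that the resulting $h_{\Gamma'}(v_k)$ coincides with the prescribed $n_{v_k}$. This is an immediate substitution into the explicit formulas derived inside the proof of that proposition: for the bounded case $h_{\Gamma'}(v_k)=\frac{\sum_{j=k}^{r}|e_j|}{|e|}h_\Gamma(v)+\frac{\sum_{j=0}^{k-1}|e_j|}{|e|}h_\Gamma(v')=(1-\lambda_k)h_\Gamma(v)+\lambda_k h_\Gamma(v')$, and for the unbounded case $h_{\Gamma'}(v_k)=h_\Gamma(v)+\bigl(\sum_{j=0}^{k-1}|e_j|\bigr)h_\Gamma(v')=h_\Gamma(v)+\lambda_k h_\Gamma(v')$. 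Both agree with the definitions of $n_{v_k}$.

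For uniqueness, suppose $\Gamma'$ carries some parameterized structure with $h_{\Gamma'}(v)=h_\Gamma(v)$ on $V(\Gamma')\cap V(\Gamma)$ and $h_{\Gamma'}(v_k)=n_{v_k}$. The restriction to the non-subdivided edges recovers $\Gamma$, so by the ``vice versa'' part of Proposition~\ref{prop:paramtropcurandsubdivisions} applied edge by edge, the values $h_{\Gamma'}(v_k)$ are forced to equal the explicit formulas above in terms of $\sum_{j=0}^{k-1}|e_j|$. Here the hypothesis that only edges with \emph{non-trivial slope} are subdivided becomes essential: on such an edge the vector $h_\Gamma(v')-h_\Gamma(v)$ (resp.\ $h_\Gamma(v')$) is non-zero, so the equation $h_{\Gamma'}(v_k)=n_{v_k}$ determines $\sum_{j=0}^{k-1}|e_j|$ uniquely, and hence determines each $|e_k|$. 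Thus both the lengths of the new edges and the map $h_{\Gamma'}$ are uniquely pinned down, completing the proof. The only place that requires any real thought is this last step where non-triviality of the slope is used; once that is noted, the rest is a direct translation.
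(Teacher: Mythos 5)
Your proposal is correct and follows essentially the same route as the paper's own proof: for existence you set $|e_k|:=(\lambda_{k+1}-\lambda_k)|e|$ (resp.\ $\lambda_{k+1}-\lambda_k$, with $|e_r|=\infty$) and invoke Proposition~\ref{prop:paramtropcurandsubdivisions}, and for uniqueness you use the explicit formulas from that proposition's proof together with the non-triviality of the slope to recover the partial sums $\sum_{j=0}^{k-1}|e_j|$, exactly as the paper does. Nothing essentially new or missing relative to the paper's argument.
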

\begin{proof} Without loss of generality we may assume that $\Gamma'$ is obtained by subdivision of only one edge.
For the existence part, set $|e_k|:=(\lambda_{k+1}-\lambda_k)|e|$ for all $0\le k\le r$ (resp. $|e_k|:=\lambda_{k+1}-\lambda_k$ for any $0\le k\le r-1$, and $|e_r|=\infty$), and apply Proposition~\ref{prop:paramtropcurandsubdivisions}. For the uniqueness part, observe that $h_{\Gamma'}(v_k)=\frac{\sum_{j=k}^{r}|e_j|}{|e|}h_{\Gamma'}(v)+\frac{\sum_{j=0}^{k-1}|e_j|}{|e|}h_{\Gamma'}(v')$ (resp. $h_{\Gamma'}(v_k)=h_{\Gamma'}(v)+(\sum_{j=0}^{k-1}|e_j|)h_{\Gamma'}(v')$) for all $1\le k\le r$ (see the proof of Proposition~\ref{prop:paramtropcurandsubdivisions}). Note that $h_{\Gamma'}(v')=h_{\Gamma}(v')\ne h_{\Gamma}(v)=h_{\Gamma'}(v)$ (resp. $h_{\Gamma'}(v')=h_{\Gamma}(v')\ne 0$) since the slope of $e$ is non-trivial. Thus, $\lambda_k=\frac{\sum_{j=0}^{k-1}|e_j|}{|e|}$ (resp. $\lambda_k=\sum_{j=0}^{k-1}|e_j|$) for all $1\le k\le r$, which implies $|e_k|=(\lambda_{k+1}-\lambda_k)|e|$ for all $0\le k\le r$ (resp. $|e_k|=\lambda_{k+1}-\lambda_k$ for any $0\le k\le r-1$, and $|e_r|=\infty$).
\end{proof}
\begin{proposition}\label{prop:contOfTrCurve} Let $\Gamma$ be an $N_\RR$-parameterized tropical curve, and $\Gamma_0\subset \Gamma$ be the maximal metric subgraph satisfying the following two properties: (1)  $V(\Gamma_0)=V^f(\Gamma)$, $E(\Gamma_0)\subseteq E^b(\Gamma)$, and (2) $N_e=0$ for all edges $e\in E(\Gamma_0)$. Consider the weighted metric graph $\overline{\Gamma}=\Gamma/\Gamma_0$ obtained from $\Gamma$ by contracting the maximal connected subgraphs of $\Gamma_0$ to vertices. Then $\overline{\Gamma}$ is an $N_\RR$-parameterized tropical curve and $g(\overline{\Gamma})\le g(\Gamma)$.
\end{proposition}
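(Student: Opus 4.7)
The plan is to build $\overline{\Gamma}$ explicitly, verify the axioms of Definition~\ref{def:partrcur} directly, and then read off the genus inequality from an Euler-characteristic computation. Denoting by $\Gamma_0^1,\dots,\Gamma_0^c$ the maximal connected subgraphs of $\Gamma_0$, I would set $V^f(\overline{\Gamma}):=\{\overline{v}_1,\dots,\overline{v}_c\}$, keep $V^\infty(\overline{\Gamma}):=V^\infty(\Gamma)$ with its given order, take $E(\overline{\Gamma}):=E(\Gamma)\setminus E(\Gamma_0)$ with inherited lengths, and put $h_{\overline{\Gamma}}(v):=h_\Gamma(v)$ for $v\in V^\infty(\overline{\Gamma})$ and $h_{\overline{\Gamma}}(\overline{v}_i):=h_\Gamma(v)$ for any $v\in V(\Gamma_0^i)$. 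The last formula is well defined because every $e\in E_{vv'}(\Gamma_0^i)$ satisfies $N_e=0$, forcing $h_\Gamma(v)=h_\Gamma(v')$, so by connectedness $h_\Gamma$ is constant on $V(\Gamma_0^i)$.

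Next I would check properties (a), (b), (c) of Definition~\ref{def:partrcur}. Properties (a) and (b) are immediate: infinite vertices and their $h$-values are unchanged, and any $e\in E^b(\overline{\Gamma})$ is by construction a bounded edge of $\Gamma$ not in $\Gamma_0$, so the integrality of the divided difference at its endpoints is inherited from $\Gamma$. For the balancing condition at $\overline{v}_i$, I would sum the balancing equations of $\Gamma$ over every $v\in V(\Gamma_0^i)$. Any edge with both endpoints in $V(\Gamma_0^i)$ must already lie in $E(\Gamma_0^i)$: since $h_\Gamma$ is constant on $V(\Gamma_0^i)$, the slope of such an edge is trivial, so it belongs to $E(\Gamma_0)$, and by maximality of $\Gamma_0^i$ to $E(\Gamma_0^i)$. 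Every such internal edge contributes $0$ to the sum, either because $h_\Gamma(v)=h_\Gamma(v')$ (loop case) or by cancellation of the two opposite-sign occurrences at the two endpoints (non-loop case). Each remaining edge has exactly one endpoint in $V(\Gamma_0^i)$ and contributes $\frac{1}{|e|}(h_{\overline{\Gamma}}(\overline{v}_j)-h_{\overline{\Gamma}}(\overline{v}_i))$ if its other endpoint lies in $\Gamma_0^j$ with $j\neq i$, or $h_{\overline{\Gamma}}(v')$ if it is unbounded with the other endpoint $v'\in V^\infty(\Gamma)$. The sum is then exactly the balancing relation of $\overline{\Gamma}$ at $\overline{v}_i$.

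Finally, counting gives $|V(\Gamma)|-|V(\overline{\Gamma})|=|V^f(\Gamma)|-c$ and $|E(\Gamma)|-|E(\overline{\Gamma})|=|E(\Gamma_0)|$, so
\[g(\overline{\Gamma})-g(\Gamma)=\bigl(|V^f(\Gamma)|-c\bigr)-|E(\Gamma_0)|=-\sum_{i=1}^c g(\Gamma_0^i)\le 0,\]
using that each connected graph $\Gamma_0^i$ satisfies $g(\Gamma_0^i)=1-|V(\Gamma_0^i)|+|E(\Gamma_0^i)|\ge 0$. The only mildly delicate point in the whole argument is the observation, used in the balancing step, that any bounded edge with both endpoints in a single component $\Gamma_0^i$ must itself lie in $\Gamma_0^i$; once this is in hand the rest is bookkeeping.
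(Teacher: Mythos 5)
Your proof is correct: the well-definedness of $h_{\overline{\Gamma}}$ on contracted components, the summed balancing conditions (with the maximality of $\Gamma_0$ correctly invoked to show internal edges of a component contribute nothing), and the Euler-characteristic count giving $g(\overline{\Gamma})-g(\Gamma)=-\sum_i g(\Gamma_0^i)\le 0$ are all sound. The paper offers no argument at all here (its proof reads ``Obvious''), and your write-up is precisely the natural verification it leaves implicit, so there is no divergence in approach to report.
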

\begin{proof} Obvious.
\end{proof}

\subsubsection{The $N_\QQ$-parameterized $\QQ$-tropical curve assigned to an algebraic curve with a rational map to a torus.}\label{subsec:paramtrcurforacurwithamap}
Let $f\colon C\setminus D\to T_{N,\overline{\FF}}$ be a morphism, and let $(C_{R_\LL},D_{R_\LL})$ be a nodal model of $(C, D)$. Then the $\QQ$-tropical curve $\Gamma=\Gamma_{C_{R_{\LL}},D_{R_\LL}}$ inherits the structure of an $N_\QQ$-parameterized $\QQ$-tropical curve from $f$. Indeed, let $v$ be a vertex. To simplify the notation let us identify it with the corresponding marked point $q_v$ or the irreducible component $C_v$. Then, the order of vanishing ${\rm ord}_v(f^*(x^m))$ is a linear function on $M$, hence an element of $N$. Set $h_\Gamma(v):=\frac{1}{e_\LL}{\rm ord}_v(f^*(x^\bullet))\in N_\QQ$ if $v$ is finite, and $h_\Gamma(v):={\rm ord}_v(f^*(x^\bullet))\in N$ if $v$ is infinite.

\begin{remark}\label{rem:divisib}
$ $

\begin{enumerate}
    \item[(1)] For $v\in V^\infty$, $h_\Gamma(v)=0$ if and only if $f$ can be extended to $q_v$.
    \item[(2)] If $e\in E_{vv'}$ is a bounded edge then $l(e)(r_e+1)$ is equal to the integral length of $e_\LL(h_\Gamma(v)-h_\Gamma(v'))$; in particular, the latter is divisible by $l(e)$.
    \item[(3)] If $\LL\subset\LL'$ is a finite extension, and $(C_{R_{\LL'}},D_{R_{\LL'}})=(C_{R_{\LL}},D_{R_{\LL}})\times_{\Spec R_\LL}\Spec R_{\LL'}$ then there exists a cano\-nical isomorphism $\iota\colon \Gamma=\Gamma_{C_{R_\LL},D_{R_\LL}}\to\Gamma_{C_{R_{\LL'}},D_{R_{\LL'}}}=\Gamma'$ and $h_\Gamma=h_{\Gamma'}\circ\iota$.
\end{enumerate}
\end{remark}
\begin{lemma}\label{lem:bal} $(\Gamma, h_\Gamma)$ is an $N_\QQ$-parameterized $\QQ$-tropical curve.
\end{lemma}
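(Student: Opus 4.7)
The plan is to verify the three conditions in Definition~\ref{def:partrcur}(1). Condition (a), that $h_\Gamma(v)\in N$ for $v\in V^\infty(\Gamma)$, is immediate from the construction, since ${\rm ord}_{q_v}(f^*(x^m))$ is an integer for every $m\in M$ and depends linearly on $m$. For conditions (b) and (c), I will pass to the minimal regular semi-stable model $\widetilde{C}_{R_\LL}$ dominating $C_{R_\LL}$: at each bounded edge $e\in E_{vv'}(\Gamma)$ the $A_{r_e}$ singularity of $C_{R_\LL}$ at $p_e$ is resolved by a chain $E^e_1,\dotsc,E^e_{r_e}$ of exceptional $(-2)$-curves, so the tropical graph of $\widetilde{C}_{R_\LL}$ refines $\Gamma$ by subdividing each $e$ into $r_e+1$ edges of length $1/e_\LL$.

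For condition (b), fix $m\in M$, set $F:=f^*(x^m)$, and let $\alpha_0:={\rm ord}_v(F)$, $\alpha_{r_e+1}:={\rm ord}_{v'}(F)$, $\alpha_i:={\rm ord}_{E^e_i}(F)$ for $1\le i\le r_e$. Since $\text{div}(F)$ is a principal divisor on the smooth surface $\widetilde{C}_{R_\LL}$, its intersection number with each complete curve $E^e_i$ vanishes; combined with $(E^e_i)^2=-2$ and $E^e_i\cdot E^e_{i\pm 1}=1$, this gives $\alpha_{i-1}-2\alpha_i+\alpha_{i+1}=0$. Hence the $\alpha_i$ form an arithmetic progression, so $(r_e+1)\mid(\alpha_{r_e+1}-\alpha_0)$; since $\tfrac{1}{|e|}\bigl(h_\Gamma(v')-h_\Gamma(v)\bigr)(m)=(\alpha_{r_e+1}-\alpha_0)/(r_e+1)$, this yields the desired integrality.

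For the balancing condition at a finite vertex $v$, put $\alpha_v:={\rm ord}_{C_v}(F)$ and $G:=F\cdot t_\LL^{-\alpha_v}$. Then $G$ has zero order along $C_v$, so $G|_{C_v}$ pulls back to a nonzero rational function on the normalization of $C_v$, whose principal divisor has total degree zero. I would then compute the local contributions: at a node $p_e$ with $e\in E_{vv'}(\Gamma)$, $v'\in V^f(\Gamma)\setminus\{v\}$, the order equals $(\alpha_{v'}-\alpha_v)/(r_e+1)$ by the arithmetic-progression calculation applied on the semi-stable resolution; at the specialization $p_{v'}$ of a marked point with $v'\in V^\infty(\Gamma)$, the order equals ${\rm ord}_{q_{v'}}(F)=h_\Gamma(v')(m)$, computed in a smooth local chart where $F=z^{h_\Gamma(v')(m)}\cdot t_\LL^{\alpha_v}\cdot u$ with $z$ cutting out the marked section, $t_\LL$ cutting out $C_v$, and $u$ a unit; and at a self-node (loop $e\in E_{vv}$), the two local branches both see the global valuation $v_{C_v}(F)=\alpha_v$, so $G$ has order zero on each branch and the loop contributes nothing. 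Summing these contributions and equating the result to zero reproduces the balancing condition of Definition~\ref{def:partrcur} evaluated at $m\in M$.

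The principal obstacle is the local order computation at nodes where $C_{R_\LL}$ itself is singular: the formula $(\alpha_{v'}-\alpha_v)/(r_e+1)$ emerges cleanly only on the smooth semi-stable resolution, where intersection theory is well-behaved. The match between these algebraic contributions and the combinatorial lengths $|e|=(r_e+1)/e_\LL$ built into the definition of $\Gamma_{C_{R_\LL},D_{R_\LL}}$ is the essential bookkeeping point that aligns the algebraic balance with the tropical balancing formula.
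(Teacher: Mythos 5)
Your proof is correct, and its core coincides with the paper's: both arguments reduce the balancing condition to the vanishing of the degree of a principal divisor on (the normalization of) each component $C_v$, with contributions at nodes and at specializations of marked points. Where you diverge is in the treatment of non-regular models. The paper first proves the statement when $(C_{R_\LL},D_{R_\LL})$ is regular (there every bounded edge has length $\tfrac{1}{e_\LL}$, so integrality of $\tfrac{1}{|e|}(h_\Gamma(v')-h_\Gamma(v))=e_\LL(h_\Gamma(v')-h_\Gamma(v))$ is automatic), and then descends to an arbitrary nodal model via the minimal regular model dominating it together with the combinatorial subdivision statement, Proposition~\ref{prop:paramtropcurandsubdivisions}. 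You instead work directly on the given model and import from the resolution only the local information you need: intersecting $\mathrm{div}(f^*(x^m))$ with the chain of $(-2)$-curves over an $A_{r_e}$-point gives $\alpha_{i-1}-2\alpha_i+\alpha_{i+1}=0$, hence an arithmetic progression, which simultaneously yields the divisibility $(r_e+1)\mid(\alpha_{v'}-\alpha_v)$ (condition (b)) and the local order $(\alpha_{v'}-\alpha_v)/(r_e+1)$ of $G|_{C_v}$ at the node, including the vanishing contribution of loops (there $\alpha_0=\alpha_{r_e+1}$ forces $d=0$; your phrase ``both branches see the global valuation'' is slightly imprecise on its own, but the progression argument, or the cancellation of the two branch contributions $\pm d$, closes it). What your route buys is that it bypasses the descent through Proposition~\ref{prop:paramtropcurandsubdivisions}; what the paper's route buys is that no explicit intersection theory is needed, only the elementary degree count on the regular model. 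Two standard facts are used tacitly in your argument and are worth stating: the horizontal part of $\mathrm{div}(f^*(x^m))$ consists of the closures of the marked points, which meet the special fibre in smooth points away from the nodes and hence away from the exceptional chains (this is what makes the second-difference relation and the computation at $p_{v'}$ clean), and the special fibre of the minimal regular model is reduced, so that $\mathrm{ord}_{E_i}(t_\LL)=1$ when you pass from $F$ to $G=F\,t_\LL^{-\alpha_v}$. Neither is a gap, as both are part of the paper's conventions on nodal models and their resolutions.
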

\begin{proof} All we have to show is that (i) the vector $\frac{1}{|e|}(h_\Gamma(v)-h_\Gamma(v'))\in N_\QQ$ is integral for any bounded edge $e\in E_{vv'}$, and (ii) the balancing condition of Definition~\ref{def:partrcur} is satisfied.

Assume first, that $(C_{R_\LL},D_{R_\LL})$ is {\it regular}. Then all bounded edges of $\Gamma$ have length $\frac{1}{e_\LL}$. Note that $t_\LL^{-(e_\LL h_\Gamma(v),m)}f^*(x^m)$ is a rational function on $C_v$ for all $m\in M$, hence $\frac{1}{|e|}(h_\Gamma(v')-h_\Gamma(v))=e_\LL(h_\Gamma(v')-h_\Gamma(v))\in N$ for all bounded edges $e\in E_{vv'}$, and the degree of the divisor of the function $t_\LL^{-(e_\LL h_\Gamma(v),m)}f^*(x^m)|_{C_v}$ is equal to
$$\sum_{v'\in V^f,\, e\in E_{vv'}}(e_\LL h_\Gamma(v')-e_\LL h_\Gamma(v), m)+\sum_{v'\in V^\infty,\, e\in E_{vv'}}(h_\Gamma(v'), m).$$
However, the degree of a rational function is zero, thus $$\sum_{v'\in V^f,\, e\in E_{vv'}}\frac{1}{|e|}(h_\Gamma(v')-h_\Gamma(v))+\sum_{v'\in V^\infty,\, e\in E_{vv'}}h_\Gamma(v')=0,$$ and we are done.

In general, let $(C'_{R_\LL},D'_{R_\LL})$ be the minimal regular nodal model dominating the model $(C_{R_\LL},D_{R_\LL})$. Then the graph $\Gamma_{C'_{R_\LL},D'_{R_\LL}}$ is obtained from $\Gamma_{C_{R_\LL},D_{R_\LL}}$ by subdivision of any bounded edge $e\in E_{vv'}(\Gamma_{C_{R_\LL},D_{R_\LL}})$ into $r_e+1=e_\LL|e|$ subintervals of length $\frac{1}{e_\LL}$, and $\Gamma_{C'_{R_\LL},D'_{R_\LL}}$ is an $N_\QQ$-parameterized $\QQ$-tropical curve. Thus, by Proposition~\ref{prop:paramtropcurandsubdivisions}, $\Gamma_{C_{R_\LL},D_{R_\LL}}$ satisfies the balancing condition, and for any $e\in E_{vv'}^b(\Gamma_{C_{R_\LL},D_{R_\LL}})$ there exists $e_0\in E_{v_0v_1}^b(\Gamma_{C'_{R_\LL},D'_{R_\LL}})$ such that $\frac{1}{|e|}(h_\Gamma(v)-h_\Gamma(v'))=\frac{1}{|e_0|}(h_\Gamma(v_0)-h_\Gamma(v_1))\in N$.
\end{proof}

\begin{notation} Assume that $(C,D)$ is stable, and $f\colon C\setminus D\to T_{N,\overline{\FF}}$ is a morphism. We denote by $\Gamma^{\rm st}_{C, D,f}$ the  $N_\QQ$-parameterized $\QQ$-tropical curve associated to the stable model of $(C,D)$.
\end{notation}

\subsubsection{Two complexes associated to an $N_\RR$-parameterized tropical curve.}
In this subsection we will work with a fixed $N_\RR$-parameterized tropical curve $\Gamma$. Fix an orientation of the bounded edges of $\Gamma$, and let $G$ be an abelian group. Consider the following linear maps
\begin{equation}\label{eq:complex}
b_G\colon\left(\bigoplus_{v\in V^f(\Gamma)}N_G\right)\oplus\left(\bigoplus_{e\in E^b(\Gamma)}(N_e)_G\right)\to\bigoplus_{e\in E^b(\Gamma)}N_G
\end{equation}
given by $b_G\colon x_v\mapsto \sum_{e\in E^b(\Gamma)}\epsilon(e,v)x_v$ and $b_G\colon x_e\mapsto x_e$ where $\epsilon(e,v)=-1$ if $v$ is the initial point of $e$, $\epsilon(e,v)=1$ if $v$ is the target of $e$, and $\epsilon(e,v)=0$ otherwise; and
\begin{equation}\label{eq:stackycomplex}
\beta_G\colon\left(\bigoplus_{v\in V^f(\Gamma)}N_G\right)\oplus\left(\bigoplus_{e\in E^b(\Gamma)}(N_e)_G\right)\to\bigoplus_{e\in E^b(\Gamma)}N_G
\end{equation}
given by $\beta_G\colon x_v\mapsto \sum_{e\in E^b(\Gamma)}\epsilon(e,v)x_v$ and $\beta_G\colon x_e\mapsto l(e)x_e$.
\begin{notation} Denote by $\EE_G^1(\Gamma)$ and $\EE_G^2(\Gamma)$ the kernel and the cokernel of $b_G$, and by
$\CE_G^1(\Gamma)$ and $\CE_G^2(\Gamma)$ the kernel and the cokernel of $\beta_G$. Set $\EE^\bullet(\Gamma):=\EE^\bullet_\ZZ(\Gamma)$ and $\CE^\bullet(\Gamma):=\CE^\bullet_\ZZ(\Gamma)$, and denote by $c(\Gamma)$ the number of bounded edges of $\Gamma$ having trivial slope.
\end{notation}
\begin{remark}\label{rem:anotherformcomplex} Let $\bigoplus_{v\in V^f(\Gamma)}N_G\to\bigoplus_{e\in E^b(\Gamma)}(N/N_e)_G$ be the complex, in which the map is given by $x_v\mapsto\sum_{e\in E^b(\Gamma)}(\epsilon(e,v)x_v \bmod (N_e)_G)$.
It is naturally quasi-isomorphic to complex \eqref{eq:complex}. Hence, below we will think about $\EE_G^1(\Gamma)$ and $\EE_G^2(\Gamma)$ as the kernel and the cokernel of either of these complexes. Note also, that if $l(e)\colon G\to G$ is an isomorphism for all $e$ then the complex
$$\bigoplus_{v\in V^f(\Gamma)}N_G\to\bigoplus_{e\in E^b(\Gamma)}(N/l(e)N_e)_G$$
with the map is given by $x_v\mapsto\sum_{e\in E^b(\Gamma)}(\epsilon(e,v)x_v \bmod (l(e)N_e)_G)$ is naturally quasi-isomorphic to complex \eqref{eq:stackycomplex}.
\end{remark}
\begin{Claim} In the above notation,
$\EE^2_G(\Gamma)=\EE^2(\Gamma)\otimes_\ZZ G$, and there is a natural exact sequence
$$0\to \EE^1(\Gamma)\otimes_\ZZ G\to \EE^1_G(\Gamma)\to Tor^1_\ZZ(\EE^2(\Gamma), G)\to 0\, .$$
The same statement holds true if $\EE^\bullet$ is replaced by $\CE^\bullet$.
\end{Claim}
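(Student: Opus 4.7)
The plan is to reduce the claim to the standard universal coefficient theorem applied to the two-term complex \eqref{eq:complex} (respectively \eqref{eq:stackycomplex}). The only thing to verify is that both the source
\[
A:=\Bigl(\bigoplus_{v\in V^f(\Gamma)}N\Bigr)\oplus\Bigl(\bigoplus_{e\in E^b(\Gamma)}N_e\Bigr)
\]
and the target $B:=\bigoplus_{e\in E^b(\Gamma)}N$ of the maps $b$ and $\beta$ are free $\ZZ$-modules of finite rank; this is automatic since $N$ and each sublattice $N_e\subseteq N$ are free. Freeness is what allows the standard derived-functor argument to go through verbatim; I would carry out the proof only for $b_G$, the case of $\beta_G$ being identical.

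I would factor $b$ through its image $I:=b(A)\subseteq B$, obtaining two short exact sequences
\[
0\to\EE^1(\Gamma)\to A\to I\to 0,\qquad 0\to I\to B\to\EE^2(\Gamma)\to 0.
\]
Since $I$ is a subgroup of the free abelian group $B$, it is itself free; hence $A$, $B$, and $I$ are all $\Tor$-acyclic. Tensoring the two sequences with $G$ over $\ZZ$ therefore gives
\[
0\to\EE^1(\Gamma)\otimes_\ZZ G\to A\otimes_\ZZ G\to I\otimes_\ZZ G\to 0
\]
and
\[
0\to\Tor^1_\ZZ(\EE^2(\Gamma),G)\to I\otimes_\ZZ G\to B\otimes_\ZZ G\to\EE^2(\Gamma)\otimes_\ZZ G\to 0.
\]

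From here both assertions are immediate. Right exactness of $-\otimes_\ZZ G$ identifies $\EE^2_G(\Gamma)$, the cokernel of $b_G=b\otimes_\ZZ G$, with $\EE^2(\Gamma)\otimes_\ZZ G$. For the kernel, note that $b_G$ factors as $A\otimes_\ZZ G\twoheadrightarrow I\otimes_\ZZ G\to B\otimes_\ZZ G$, so that $\EE^1_G(\Gamma)=\Ker(b_G)$ is the preimage in $A\otimes_\ZZ G$, under this surjection, of $\Ker(I\otimes_\ZZ G\to B\otimes_\ZZ G)=\Tor^1_\ZZ(\EE^2(\Gamma),G)$. The resulting short exact sequence has kernel $\EE^1(\Gamma)\otimes_\ZZ G$, yielding the stated extension. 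The identical argument with $b$ replaced by $\beta$, $A\to I$ replaced by $A\to\beta(A)$, and $\EE^\bullet$ replaced by $\CE^\bullet$ handles the second half of the claim. There is no real obstacle; the only mildly non-formal point is the freeness of $I$, which follows from the fact that every subgroup of a free abelian group is free.
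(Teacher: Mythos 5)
Your proof is correct and is essentially the paper's argument: both are the universal coefficient computation of $\Tor^\bullet_\ZZ(\EE^2(\Gamma),G)$ from a free resolution, resting on the fact that subgroups of free abelian groups are free. The paper simply views $0\to\EE^1(\Gamma)\to A\to B\to\EE^2(\Gamma)\to 0$ directly as a free resolution of $\EE^2(\Gamma)$ and reads the two identities off its tensorization, whereas you split the map at its image $I=b(A)$ and tensor the two resulting short exact sequences; this is only a cosmetic reorganization of the same computation.
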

\begin{proof}
Since $0\to \EE^1(\Gamma)\to\left(\bigoplus_{v\in V^f(\Gamma)}N\right)\oplus\left(\bigoplus_{e\in E^b(\Gamma)}N_e\right)\to\bigoplus_{e\in E^b(\Gamma)}N\to 0$ is a free resolution of $\EE^2(\Gamma)$, the cohomology of
$$0\to\EE^1(\Gamma)\otimes_\ZZ G\to\left(\bigoplus_{v\in V^f(\Gamma)}N_G\right)\oplus\left(\bigoplus_{e\in E^b(\Gamma)}(N_e)_G\right)\to\bigoplus_{e\in E^b(\Gamma)}N_G\to 0$$
computes the torsion groups $Tor^\bullet_\ZZ(\EE^2(\Gamma), G)$. Note that $Tor^2_\ZZ(\EE^2(\Gamma), G)=0$ since $\EE^2(\Gamma)$ admits a free resolution of length two. This implies the two identities for $\EE^\bullet_G(\Gamma)$. The proof for $\CE^\bullet_G(\Gamma)$ is identical.
\end{proof}
\begin{corollary} Let $\KK$ be a field. Then
$\EE^1_\KK(\Gamma)=\EE^1(\Gamma)\otimes_\ZZ\KK$ if and only if the order of the torsion part of $\EE^2(\Gamma)$ is prime to the characteristic of $\KK\, .$ The same statement holds true if $\EE^\bullet$ is replaced by $\CE^\bullet$.
\end{corollary}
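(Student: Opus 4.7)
The plan is to derive this as an immediate consequence of the preceding Claim, which provides the natural exact sequence
\[
0 \to \EE^1(\Gamma)\otimes_\ZZ \KK \to \EE^1_\KK(\Gamma) \to Tor^1_\ZZ(\EE^2(\Gamma),\KK) \to 0.
\]
Since the left map is an injection that is always defined, the equality $\EE^1_\KK(\Gamma) = \EE^1(\Gamma)\otimes_\ZZ \KK$ holds if and only if the rightmost term vanishes. Hence the whole corollary is reduced to the statement that $Tor^1_\ZZ(\EE^2(\Gamma),\KK) = 0$ if and only if the order of the torsion subgroup of $\EE^2(\Gamma)$ is prime to $\chara(\KK)$.

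To establish the latter, I would first observe that $\EE^2(\Gamma)$ is a finitely generated abelian group, since it is the cokernel of a map between finitely generated free abelian groups by its very definition. By the structure theorem it decomposes as $\ZZ^r \oplus T$, where $T = \bigoplus_i \ZZ/n_i\ZZ$ is the torsion part, and the order of $T$ is $\prod_i n_i$. Using additivity of $Tor$ and the vanishing $Tor^1_\ZZ(\ZZ^r,\KK)=0$, one is reduced to computing $Tor^1_\ZZ(\ZZ/n\ZZ,\KK)$ for each cyclic summand, which via the free resolution $0\to\ZZ\xrightarrow{n}\ZZ\to\ZZ/n\ZZ\to 0$ equals the $n$-torsion subgroup $\KK[n] = \{x\in\KK : nx=0\}$.

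Now the characteristic dichotomy is routine: if $\chara(\KK)\nmid n$ then $n$ is a unit in $\KK$ and $\KK[n]=0$; otherwise (which forces $\chara(\KK)=p>0$ and $p\mid n$) one has $\KK[n]=\KK\ne 0$. Therefore $Tor^1_\ZZ(T,\KK)=0$ exactly when $\chara(\KK)$ divides none of the $n_i$, i.e.\ when $\chara(\KK)$ is coprime to the order of $T$. Feeding this back into the exact sequence gives the claimed equivalence for $\EE^\bullet$.

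The argument for $\CE^\bullet$ is verbatim the same, since the Claim provides the analogous exact sequence with $\beta_G$ in place of $b_G$ and since $\CE^2(\Gamma)$ is likewise a finitely generated abelian group (being the cokernel of a map between finitely generated free abelian groups). There is no substantive obstacle here — the content is entirely packaged in the Claim, and the remaining verification is the standard computation of $Tor^1$ with a field coefficient.
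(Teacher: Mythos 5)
Your proposal is correct and follows exactly the route the paper intends: the corollary is an immediate consequence of the preceding Claim's exact sequence, combined with the standard computation that $Tor^1_\ZZ(\EE^2(\Gamma),\KK)$ vanishes precisely when the characteristic of $\KK$ is prime to the order of the torsion part of the finitely generated group $\EE^2(\Gamma)$ (and likewise for $\CE^\bullet$). The paper gives no separate argument beyond this, so there is nothing substantive to add or correct.
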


\begin{proposition}\label{prop:relEECEGamma} There exists a natural exact sequence
$$
\begin{array}{lll}
  0 &\hspace{-0.25cm} \to &\hspace{-0.25cm}  \bigoplus_{e\in E^b(\Gamma), N_e\ne 0}\left(\mu_{l(e)}(G)\right)\to \CE^1_G(\Gamma)\to \EE^1_G(\Gamma)\to\\
    &\hspace{-0.25cm} \to &\hspace{-0.25cm}  \bigoplus_{e\in E^b(\Gamma), N_e\ne 0}\left(G/l(e)G\right)\to\CE^2_G(\Gamma)\to \EE^2_G(\Gamma)\to 0 ,
\end{array}
$$
where $\mu_{l(e)}(G)=\ker\left(l(e)\colon G\to G\right)$, and $l(e)\colon G\to G$ is the multiplication by $l(e)$.
\end{proposition}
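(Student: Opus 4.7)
The plan is to realize the two complexes as a diagram in which $C^\bullet_{\beta_G}$ maps naturally to $C^\bullet_{b_G}$, and then extract the six-term sequence as the cohomology long exact sequence of that map.

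First, I would build a morphism of two-term complexes $(\phi,\mathrm{id})\colon C^\bullet_{\beta_G}\to C^\bullet_{b_G}$ as follows. Define
$$\phi\colon\Bigl(\bigoplus_{v\in V^f(\Gamma)}N_G\Bigr)\oplus\Bigl(\bigoplus_{e\in E^b(\Gamma)}(N_e)_G\Bigr)\longrightarrow \Bigl(\bigoplus_{v\in V^f(\Gamma)}N_G\Bigr)\oplus\Bigl(\bigoplus_{e\in E^b(\Gamma)}(N_e)_G\Bigr)$$
to be the identity on the vertex summands and multiplication by $l(e)$ on each $(N_e)_G$; on the codomain $\bigoplus_e N_G$ take the identity. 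The equality $b_G\circ\phi=\beta_G$ holds by the very definitions of $b_G$ and $\beta_G$ (the vertex parts agree, and on $(N_e)_G$ the composition is $x_e\mapsto l(e)x_e\mapsto l(e)x_e$), so $(\phi,\mathrm{id})$ is indeed a morphism of complexes, natural in $\Gamma$ and $G$.

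Next, I would identify the kernel and cokernel of $(\phi,\mathrm{id})$. The degree-one part is the identity, so it contributes nothing. On the degree-zero part, the vertex summands are unaffected, while on each edge summand $(N_e)_G$ one has two cases: if $N_e=0$ then $(N_e)_G=0$ and the edge contributes trivially; if $N_e\ne 0$ then $N_e\simeq\ZZ$ so $(N_e)_G\simeq G$, and $\phi$ restricts to multiplication by $l(e)$ on $G$ with kernel $\mu_{l(e)}(G)$ and cokernel $G/l(e)G$. Hence the kernel complex of $(\phi,\mathrm{id})$ is concentrated in degree $0$ with value $\bigoplus_{e\,:\,N_e\ne 0}\mu_{l(e)}(G)$ and similarly the cokernel complex is concentrated in degree $0$ with value $\bigoplus_{e\,:\,N_e\ne 0}G/l(e)G$.

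Finally, I would split the resulting four-term exact sequence of complexes
$$0\to[\ker(\phi,\mathrm{id})]\to C^\bullet_{\beta_G}\xrightarrow{(\phi,\mathrm{id})}C^\bullet_{b_G}\to[\mathrm{coker}(\phi,\mathrm{id})]\to 0$$
at the image $\mathrm{Im}(\phi,\mathrm{id})$ into two short exact sequences of complexes and splice the two associated cohomology long exact sequences. Because the kernel and cokernel complexes are concentrated in degree $0$, the long exact sequence from the first SES degenerates to
$0\to\bigoplus_{e\,:\,N_e\ne 0}\mu_{l(e)}(G)\to\CE^1_G\to H^0(\mathrm{Im})\to 0$ together with $H^1(\mathrm{Im})\cong\CE^2_G$, and the one from the second gives
$0\to H^0(\mathrm{Im})\to\EE^1_G\to\bigoplus_{e\,:\,N_e\ne 0}G/l(e)G\to H^1(\mathrm{Im})\to\EE^2_G\to 0$. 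Splicing at $H^0(\mathrm{Im})$ and $H^1(\mathrm{Im})\cong\CE^2_G$ produces the desired six-term exact sequence, with naturality inherited from the functoriality of the cohomology long exact sequence. The only point of genuine substance is the bookkeeping for $N_e=0$ versus $N_e\ne 0$ edges; this is where the indexing sets in the statement arise, and once handled, the proof is mechanical.
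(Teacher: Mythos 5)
Your proposal is correct and follows essentially the same route as the paper: the paper's proof constructs exactly this morphism from the complex \eqref{eq:stackycomplex} to \eqref{eq:complex} (identity on the vertex summands and on the right-hand term, multiplication by $l(e)$ on each $(N_e)_G$), identifies its kernel and cokernel as $\bigoplus_{N_e\ne 0}\mu_{l(e)}(G)$ and $\bigoplus_{N_e\ne 0}G/l(e)G$ using the distinguished generator of $N_e$, and leaves the resulting ``standard homological algebra computation'' to the reader. Your splice-at-the-image argument simply carries out that last step explicitly, so there is nothing to add.
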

\begin{proof} If the slope $N_e$ is not trivial then $(N_e)_G$ is canonically isomorphic to $G$ since $N_e$ has a distinguished generator (cf. Definition~\ref{def:proppartropcur}). Thus, $\bigoplus_{e\in E^b(\Gamma), N_e\ne 0}\left(\mu_{l(e)}(G)\right)$ and $\bigoplus_{e\in E^b(\Gamma), N_e\ne 0}\left(G/l(e)G\right)$ are the kernel and the cokernel of the natural morphism
$$\left(\bigoplus_{v\in V^f(\Gamma)}N_G\right)\oplus\left(\bigoplus_{e\in E^b(\Gamma)}(N_e)_G\right)\to \left(\bigoplus_{v\in V^f(\Gamma)}N_G\right)\oplus\left(\bigoplus_{e\in E^b(\Gamma)}(N_e)_G\right)$$
given by the identity map on $N_G$-s and by the multiplication by $l(e)$ on $(N_e)_G$-s. It extends to a morphism from \eqref{eq:stackycomplex} to \eqref{eq:complex} by the identity map between the right terms. The proposition now follows from the standard homological algebra computation, which we leave to the reader.
\end{proof}
\begin{corollary}\label{cor:EECE}
Assume that $l(e)$ is prime to $char(\kk)$ for all $e$. Then $\EE^\bullet_\kk(\Gamma)=\CE^\bullet_\kk(\Gamma)$, $\EE^2_{\kk^*}(\Gamma)=\CE^2_{\kk^*}(\Gamma)$, and $|\CE^1_{\kk^*}(\Gamma)|=|\EE^1_{\kk^*}(\Gamma)|\prod_{e\in E^b(\Gamma), N_e\ne 0}l(e)$.
\end{corollary}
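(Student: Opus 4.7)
The plan is to derive the three assertions from the six-term exact sequence of Proposition~\ref{prop:relEECEGamma} by specializing $G$ first to $\kk$ and then to $\kk^*$, using the fact that $\kk$ is algebraically closed and $\chara(\kk)\nmid l(e)$ for all $e$.

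First, I would take $G=\kk$. Since each $l(e)$ is a unit in $\kk$, multiplication by $l(e)$ is an automorphism of $\kk$, so both $\mu_{l(e)}(\kk)=0$ and $\kk/l(e)\kk=0$. Plugging this into the six-term sequence collapses it to isomorphisms $\CE^1_\kk(\Gamma)\xrightarrow{\sim}\EE^1_\kk(\Gamma)$ and $\CE^2_\kk(\Gamma)\xrightarrow{\sim}\EE^2_\kk(\Gamma)$, which is the first claim.

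Next, I would take $G=\kk^*$ (additively, so ``multiplication by $l(e)$'' on $\kk^*$ is the $l(e)$-th power map). Because $\kk$ is algebraically closed and $\gcd(l(e),\chara\kk)=1$, the polynomial $x^{l(e)}-a$ splits for every $a\in\kk^*$; equivalently, the $l(e)$-th power map is surjective on $\kk^*$ and its kernel is the cyclic group $\mu_{l(e)}(\kk^*)$ of order exactly $l(e)$. Therefore $\kk^*/l(e)\kk^*=0$, and the six-term sequence becomes
\begin{equation*}
0\to\bigoplus_{e\in E^b(\Gamma),\,N_e\ne 0}\mu_{l(e)}(\kk^*)\to \CE^1_{\kk^*}(\Gamma)\to\EE^1_{\kk^*}(\Gamma)\to 0
\end{equation*}
together with $\CE^2_{\kk^*}(\Gamma)\xrightarrow{\sim}\EE^2_{\kk^*}(\Gamma)$, which yields the second claim. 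Taking cardinalities of the displayed short exact sequence (multiplicative in any extension of abelian groups, finite or not, when interpreted as cardinalities) and using $|\mu_{l(e)}(\kk^*)|=l(e)$ produces the asserted equality $|\CE^1_{\kk^*}(\Gamma)|=|\EE^1_{\kk^*}(\Gamma)|\prod_{e\in E^b(\Gamma),\,N_e\ne 0}l(e)$.

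There is no serious obstacle here: the only point requiring care is the invocation of algebraic closedness of $\kk$ to guarantee that $\kk^*$ is $l(e)$-divisible and has exactly $l(e)$ elements of order dividing $l(e)$. Once the two vanishings $\mu_{l(e)}(\kk)=\kk/l(e)\kk=0$ and $\kk^*/l(e)\kk^*=0$ are in hand, everything is formal from Proposition~\ref{prop:relEECEGamma}.
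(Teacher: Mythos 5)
Your proof is correct and follows essentially the same route as the paper: the paper's own argument simply notes that $\mu_{l(e)}(\kk)=0$, $l(e)\kk=\kk$, $|\mu_{l(e)}(\kk^*)|=l(e)$, and $(\kk^*)^{l(e)}=\kk^*$ (since $\kk$ is algebraically closed and $\chara(\kk)\nmid l(e)$) and then invokes Proposition~\ref{prop:relEECEGamma}, exactly as you do. Nothing further is needed.
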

\begin{proof}
Note that $l(e)\kk=\kk$, $\mu_{l(e)}(\kk)=0$, $|\mu_{l(e)}(\kk^*)|=l(e)$, and $(\kk^*)^{l(e)}=\kk^*$ since $l(e)$ is not divisible by the characteristic and $\kk$ is algebraically closed. The corollary now follows from the proposition.
\end{proof}
\begin{definition} The spaces $\EE^2_G(\Gamma)$ and $\CE^2_G(\Gamma)$ are called the {\it obstruction space} and the {\it stacky obstruction space} of $\Gamma$ over $G$. Following Mikhalkin \cite[Definition 2.22]{M05}, we say that $\Gamma$ is {\it $G$-regular} if $\CE^2_G(\Gamma)=0$. Otherwise it is called {\it $G$-superabundant}.
\end{definition}
\begin{remark} Note that by Proposition~\ref{prop:relEECEGamma}, if $G/l(e)G=0$ for any bounded edge $e$ with non-trivial slope then $\Gamma$ is $G$-regular if and only if $\EE^2_G(\Gamma)=0$. In particular, this is the case if either $G=\kk$ or $G=\kk^*$ and $l(e)$ are prime to the characteristic for all $e$.
\end{remark}

\begin{definition}
Let $\Gamma$ and $\Gamma'$ be $N_\RR$-parameterized tropical curves, and assume that $\Gamma$ is obtained from $\Gamma'$ by subdivision of (some of) its edges. Given an orientation on $\Gamma'$ we define the {\it induced} orientation on $\Gamma$ as follows: if edge $e\in E^b_{vv'}(\Gamma')$ is oriented from $v$ to $v'$ and is subdivided into a chain of edges $e_0,\dotsc, e_r\in E(\Gamma)$, i.e., there exists a chain of vertices $v=v_0, v_1,\dotsc, v_{r+1}=v'\in V(\Gamma)$ such that $E_{v_kv_{k+1}}(\Gamma)=\{e_k\}$ for all $0\le k\le r$, and $val(v_k)=2$ and $v_k\notin V(\Gamma')$ for all $1\le k\le r$, then we set the target of $e_k$ to be $v_{k+1}$; if $e\in E^\infty_{vv'}(\Gamma')$ is subdivided into a chain of edges $e_0,\dotsc ,e_r$ then all $e_k$ are oriented in the direction of the infinite vertex.
\end{definition}
\begin{proposition}\label{prop:E12ForSubdivisions} Let $\Gamma$ and $\Gamma'$ be $N_\RR$-parameterized tropical curves. Assume that $\Gamma$ is obtained from $\Gamma'$ by subdivision of (some of) its edges. Pick an orientation on $\Gamma'$, and consider the induced orientation on $\Gamma$.
Then $\EE^2_G(\Gamma')\cong \EE^2_G(\Gamma)$, $\CE^2_G(\Gamma')\cong \CE^2_G(\Gamma)$, and the following sequences are exact:
$$0\to \bigoplus_{v\in V^f(\Gamma)\setminus V^f(\Gamma')}(N_{e_v})_G\to \EE^1_G(\Gamma)\to \EE^1_G(\Gamma')\to 0$$ and $$0\to \bigoplus_{v\in V^f(\Gamma)\setminus V^f(\Gamma')}(N_{e_v})_G\to \CE^1_G(\Gamma)\to \CE^1_G(\Gamma')\to 0,$$ where $e_v\in E(\Gamma')$ denotes the edge subdivided by the vertex $v$. In particular, if $\Gamma$ satisfies \eqref{stcond} and $\Gamma'=\Gamma^{\rm st}$ then $\bigoplus_{v\in V^f(\Gamma)\setminus V^f(\Gamma')}N_{e_v}=\bigoplus_{v\in V_2^f(\Gamma)}N_{e_v}$.
\end{proposition}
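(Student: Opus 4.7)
The plan is to realize the edge-forgetting map from $\Gamma$ to $\Gamma'$ as a surjective morphism of two-term complexes and apply the long cohomology exact sequence. Let $C^\bullet_\Gamma$ and $C^\bullet_{\Gamma'}$ denote either the complex \eqref{eq:complex} or \eqref{eq:stackycomplex} for $\Gamma$ and $\Gamma'$. The starting observation is that for any edge $e'\in E(\Gamma')$ subdivided in $\Gamma$ into pieces $e_0,\dotsc ,e_r$, one has $N_{e_k}=N_{e'}$ and $l(e_k)=l(e')$ for all $k$; this is immediate from the explicit formula $h_\Gamma(v_k)=\tfrac{\sum_{j\ge k}|e_j|}{|e'|}h_\Gamma(v_0)+\tfrac{\sum_{j<k}|e_j|}{|e'|}h_\Gamma(v_{r+1})$ given by Proposition~\ref{prop:paramtropcurandsubdivisions} (and the analogous formula in the unbounded case). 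I then define a chain map $\pi\colon C^\bullet_\Gamma\to C^\bullet_{\Gamma'}$ by sending $a_v\mapsto a_v$ for $v\in V^f(\Gamma')$, $a_v\mapsto 0$ for $v\in V^f(\Gamma)\setminus V^f(\Gamma')$, $x_{e_k}\mapsto x_{e'}\in (N_{e'})_G$ and $y_{e_k}\mapsto y_{e'}\in N_G$ for each subdivided edge $e'$, and by the identity on non-subdivided edges. A direct sign check using the induced orientation shows that $\pi$ is a chain map, and it is plainly surjective.

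The key step is computing the cohomology of $K^\bullet:=\ker(\pi)$. The complex $K^\bullet$ decomposes as a direct sum over subdivided edges of $\Gamma'$. For a subdivided bounded edge $e'\in E^b_{v_0v_{r+1}}(\Gamma')$ with new vertices $v_1,\dotsc ,v_r$, the local summand of $K^\bullet$ is
$$\Bigl(\bigoplus_{k=1}^r N_G\Bigr)\oplus\Bigl\{(x_{e_k})\in(N_{e'})_G^{r+1}:\textstyle\sum x_{e_k}=0\Bigr\}\longrightarrow\Bigl\{(y_{e_k})\in N_G^{r+1}:\textstyle\sum y_{e_k}=0\Bigr\},$$
and for a subdivided unbounded edge one gets the analogous complex $\bigoplus_{k=1}^r N_G\oplus\bigoplus_{k=0}^{r-1}(N_{e'})_G\to\bigoplus_{k=0}^{r-1}N_G$ with the sum-zero constraints dropped and the unbounded piece $e_r$ removed from the codomain. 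After the linear change of variables $\tilde y_j=\sum_{i\le j}y_{e_i}$ (bounded case) or the identity (unbounded case), the restriction of the differential to the span of the $a_{v_k}$'s becomes the triangular map $a_{v_k}\mapsto a_{v_k}$ placed in the $\tilde y_{k-1}$-slot, which is an isomorphism onto the target. This forces $H^2(K^\bullet)=0$, and solving the resulting system $a_{v_{j+1}}=-\sum_{k\le j}x_{e_k}$ identifies $H^1(K^\bullet)$ with the free $(N_{e'})_G$-module on $x_{e_0},\dotsc ,x_{e_{r-1}}$ for each subdivided edge, i.e.\ $H^1(K^\bullet)\cong\bigoplus_{v\in V^f(\Gamma)\setminus V^f(\Gamma')}(N_{e_v})_G$.

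The long cohomology sequence of $0\to K^\bullet\to C^\bullet_\Gamma\to C^\bullet_{\Gamma'}\to 0$ then collapses to the isomorphism $\EE^2_G(\Gamma)\cong \EE^2_G(\Gamma')$ and the claimed short exact sequence for $\EE^1_G$. The stacky version $\CE^\bullet$ is handled identically: replacing $x_{e_k}\mapsto x_{e_k}$ by $x_{e_k}\mapsto l(e_k)x_{e_k}=l(e')x_{e_k}$ merely rescales the $x$-contribution to the differential and affects neither the chain-map property of $\pi$ nor the computation of $H^\bullet(K^\bullet)$. Finally, when $\Gamma'=\Gamma^{\rm st}$ is the stabilization, $V^f(\Gamma^{\rm st})$ contains no $2$-valent vertices by stability, while steps (1) and (2) of Algorithm \ref{alg:trcurves} only introduce new $2$-valent finite vertices, so $V^f(\Gamma)\setminus V^f(\Gamma^{\rm st})=V_2^f(\Gamma)$. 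I expect the main obstacle to be the careful sign bookkeeping required to verify $\pi$ is a chain map under the induced orientation, together with the uniform handling of the bounded and unbounded subdivided-edge cases in the cohomology calculation of $K^\bullet$.
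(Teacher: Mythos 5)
Your proposal is correct and follows essentially the same route as the paper: the chain map you define (projection on vertices, summation onto the slot of the subdivided edge, zero on pieces of unbounded edges) is exactly the map of complexes constructed in the paper, and your computation of $H^\bullet$ of its kernel, summand by summand over subdivided edges, is the paper's verification that the induced map $\phi$ on kernels is surjective with kernel $\bigoplus_{v\in V^f(\Gamma)\setminus V^f(\Gamma')}(N_{e_v})_G$. Packaging the conclusion via the long exact sequence of the short exact sequence of two-term complexes, rather than the paper's diagram chase, is only a cosmetic difference.
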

\begin{proof}
Let $e\in E_{vv'}(\Gamma')$ be an edge with $v\in V^f(\Gamma')$. Then there exists a chain of vertices of $\Gamma$: $v_0=v, v_1,\dotsc, v_{r+1}=v'\in V(\Gamma)$, such that $E_{v_kv_{k+1}}(\Gamma)=\{e_k\}$ for all $0\le k\le r$, and $val(v_k)=2$ and $v_k\notin V(\Gamma')$ for all $1\le k\le r$. Furthermore, $N_{e_k}=N_e$ for all $k$ due to the balancing condition (cf. the proof of Proposition \ref{prop:paramtropcurandsubdivisions}), and the chains of edges $e_0,\dotsc, e_r$ are disjoint for different edges $e$. Thus, for each bounded edge $e$, the maps $\bigoplus_{k=0}^rN_G\to N_G$ and $\bigoplus_{k=0}^r(N_{e_k})_G\to (N_e)_G$ mapping $(x_k)$ to $\sum_{k=0}^rx_k$ are well defined. For unbounded edges $e$ consider the trivial maps $\bigoplus_{k=0}^{r-1}N_G\to 0$ and $\bigoplus_{k=0}^{r-1}(N_{e_k})_G\to 0$. Thus, by taking direct sums we obtain surjective linear maps $\bigoplus_{e\in E^b(\Gamma)}N_G\to \bigoplus_{e\in E^b(\Gamma')}N_G$ and $\bigoplus_{e\in E^b(\Gamma)}(N_e)_G\to \bigoplus_{e\in E^b(\Gamma')}(N_e)_G$.

Consider the natural projection $\bigoplus_{v\in V^f(\Gamma)}N_G\to \bigoplus_{v\in V^f(\Gamma')}N_G$ and the maps constructed above. Since the orientation on $\Gamma$ is induced from the orientation on $\Gamma'$, it follows from the definition of $b_G$ that the following diagram with exact rows is commutative:
$$\xymatrix{
0\ar[r] & \EE^1_G(\Gamma)\ar[r]\ar[d] & \left(\bigoplus_{v\in V^f}N_G\right)\oplus\left(\bigoplus_{e\in E^b(\Gamma)}(N_e)_G\right)\ar[r]\ar@{->>}[d]&\\
0\ar[r] & \EE^1_G(\Gamma')\ar[r] & \left(\bigoplus_{v\in V^f(\Gamma')}N_G\right)\oplus\left(\bigoplus_{e\in E^b(\Gamma')}(N_e)_G\right)\ar[r]&
}\hspace{1cm}$$
$$\hspace{5cm}\xymatrix{
\ar[r] & \bigoplus_{e\in E^b(\Gamma)}N_G\ar@{->>}[d]\ar[r] & \EE^2_G(\Gamma)\ar[d]\ar[r] & 0\\
\ar[r] & \bigoplus_{e\in E^b(\Gamma')}N_G\ar[r] & \EE^2_G(\Gamma')\ar[r] & 0
}$$
Consider the induced map $\phi$ between the kernels of the two central vertical arrows.
By the construction, it decomposes into a direct sum over the edges $e\in E(\Gamma')$ of the following summands:
$$\left(\bigoplus_{k=1}^{r(e)}N_G\right)\oplus\ker\left(\bigoplus_{k=0}^{r(e)}(N_{e_k})_G\to (N_e)_G\right)\to \ker\left(\bigoplus_{k=0}^{r(e)}N_G\to N_G\right)$$
if $e$ is bounded and
$$\left(\bigoplus_{k=1}^{r(e)}N_G\right)\oplus\left(\bigoplus_{k=0}^{r(e)-1}(N_{e_k})_G\right)\to \bigoplus_{k=0}^{r(e)-1}N_G$$
if $e$ is unbounded; where, as before, $e$ is subdivided by the vertices $v_1,\dotsc, v_{r(e)}$ into edges $e_0,\dotsc, e_{r(e)}$, and $v_0\in V^f(\Gamma')$. Observe that in both cases the map is surjective, hence so is $\phi$. Furthermore, we see that the kernel of $\phi$ is canonically isomorphic to $\bigoplus_{v\in V^f(\Gamma)\setminus V^f(\Gamma')}(N_{e_v})_G$, which implies the proposition. The proof for $\CE^\bullet(\Gamma)$ is identical.
\end{proof}
\begin{proposition}\label{prop:E1E2GammaGammaBar} Let $\Gamma$ and $\overline{\Gamma}$ be as in Proposition \ref{prop:contOfTrCurve}. Then the natural map $\EE^1_G(\overline{\Gamma})\to \EE^1_G(\Gamma)$ is an isomorphism, and there exists an exact sequence
$$0\to \EE^2_G(\overline{\Gamma})\to \EE^2_G(\Gamma)\to N_G^{g(\Gamma)-g(\overline{\Gamma})}\to 0.$$
The same statement holds true if $\EE^\bullet$ is replaced by $\CE^\bullet$.
\end{proposition}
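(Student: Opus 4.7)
The plan is to realise the complex for $\overline{\Gamma}$ as a canonical subcomplex of the complex for $\Gamma$, and to identify the quotient with (a mild quotient of) the cellular cochain complex of $\Gamma_0$.

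Concretely, write $C^\bullet_\Gamma$ for the two-term complex \eqref{eq:complex} (resp.\ \eqref{eq:stackycomplex}) and let $\pi\colon V^f(\Gamma)\to V^f(\overline{\Gamma})$ be the contraction map on finite vertices, so $V^f(\overline{\Gamma})=\pi_0(\Gamma_0)$. I would define an injective morphism of complexes $s\colon C^\bullet_{\overline{\Gamma}}\hookrightarrow C^\bullet_\Gamma$ as follows. Since $E^b(\overline{\Gamma})=E^b(\Gamma)\setminus E(\Gamma_0)$ and since the invariants $N_e$ and $l(e)$ are preserved under contraction, $s$ is the obvious inclusion on the $B$-side and on the edge summands of the $A$-side. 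On the vertex summands, $s$ sends $x_{\bar v}\in N_G$ to $(x_{\bar v})_{v\in\pi^{-1}(\bar v)}$, i.e.\ diagonally across the fibre. The key compatibility to verify is that $s$ commutes with $b_G$ (resp.\ $\beta_G$): for $e\in E(\Gamma_0)$ both endpoints of $e$ lie in the same fibre of $\pi$, so the contribution of $s(x_{\bar v})$ to the $e$-component of $B$ is $(-1+1)x_{\bar v}=0$; for $e\in E^b(\overline{\Gamma})$ the sum over $\pi^{-1}(\bar v)$ either picks up the unique endpoint of $e$ lying there, or cancels when $e$ is a loop in $\overline{\Gamma}$, and in either case matches the $\overline{\Gamma}$-image of $x_{\bar v}$.

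Next, I would analyse the cokernel $K^\bullet:=C^\bullet_\Gamma/s(C^\bullet_{\overline{\Gamma}})$. By construction $K^0$ is $\bigoplus_{v\in V^f(\Gamma)}N_G$ modulo the diagonal embedding of $\bigoplus_{\bar v}N_G$, and $K^1=\bigoplus_{e\in E(\Gamma_0)}N_G$; since $l(e)=0$ for $e\in E(\Gamma_0)$, the induced differentials from $b_G$ and $\beta_G$ agree on $K^\bullet$, and the resulting two-term complex is precisely the cellular cochain complex $C^\bullet(\Gamma_0;N_G)$ quotiented on the left by its canonical direct summand $H^0(\Gamma_0;N_G)=N_G^{|\pi_0(\Gamma_0)|}$. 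Consequently $H^0(K^\bullet)=0$ and $H^1(K^\bullet)=H^1(\Gamma_0;N_G)=N_G^{b_1(\Gamma_0)}$. A routine Euler-characteristic computation, using $V^f(\overline{\Gamma})=\pi_0(\Gamma_0)$ and $E^b(\Gamma)\setminus E^b(\overline{\Gamma})=E(\Gamma_0)$, gives $g(\Gamma)-g(\overline{\Gamma})=b_1(\Gamma_0)$.

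Finally, the long exact sequence in cohomology of $0\to C^\bullet_{\overline{\Gamma}}\to C^\bullet_\Gamma\to K^\bullet\to 0$ reads
$$0\to\EE^1_G(\overline{\Gamma})\to\EE^1_G(\Gamma)\to 0\to\EE^2_G(\overline{\Gamma})\to\EE^2_G(\Gamma)\to N_G^{g(\Gamma)-g(\overline{\Gamma})}\to 0,$$
which gives the statement for $\EE^\bullet$. Since the same $s$ is simultaneously a map of $\beta_G$-complexes and the cokernel $K^\bullet$ is identical (as noted above), the identical argument yields the statement for $\CE^\bullet$. I expect the work to be essentially mechanical; the one step deserving care is the verification that $s$ respects the differentials when $\overline{\Gamma}$ contains an edge both of whose endpoints are single $\pi$-fibres, but this reduces to the cancellation indicated above.
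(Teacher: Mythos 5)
Your proposal is correct and follows essentially the same route as the paper: the same injection of complexes (diagonal embedding on the vertex summands, natural inclusion on the edge summands, using $N_e=0$ on $E(\Gamma_0)$), with the quotient identified — in the paper componentwise over the vertices of $\overline{\Gamma}$, in your write-up globally as the cochain complex of $\Gamma_0$ modulo locally constant functions — so that its $H^0$ vanishes and its $H^1$ is $N_G^{b_1(\Gamma_0)}=N_G^{g(\Gamma)-g(\overline{\Gamma})}$, and the (long) exact sequence then gives both statements, for $b_G$ and $\beta_G$ alike.
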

\begin{proof} Let $\Gamma_0\subset \Gamma$ be as in Proposition \ref{prop:contOfTrCurve}. Then the finite vertices of $\overline{\Gamma}$ correspond to the connected components of $\Gamma_0$. Below we shall think about $\overline{v}\in V(\overline{\Gamma})$ as both: the vertices of $\overline{\Gamma}$ and the connected metric subgraphs of $\Gamma$. Consider the following diagram with exact rows:

$$\xymatrix{
0\ar[r] & \EE^1_G(\overline{\Gamma})\ar[r]\ar[d] & \left(\bigoplus_{\overline{v}\in V^f(\overline{\Gamma})}N_G\right)\oplus\left(\bigoplus_{e\in E^b(\overline{\Gamma})}(N_e)_G\right)\ar[r]\ar@{^(->}[d] & \\
0\ar[r] & \EE^1_G(\Gamma)\ar[r] & \left(\bigoplus_{v\in V^f(\Gamma)}N_G\right)\oplus\left(\bigoplus_{e\in E^b(\Gamma)}(N_e)_G\ar[r]\right) &
}\hspace{1cm}$$
$$\hspace{5cm}\xymatrix{
\ar[r] & \bigoplus_{e\in E^b(\overline{\Gamma})}N_G\ar@{^(->}[d]\ar[r] & \EE^2_G(\overline{\Gamma})\ar[d]\ar[r] & 0\\
\ar[r] & \bigoplus_{e\in E^b(\Gamma)}N_G\ar[r] & \EE^2_G(\Gamma)\ar[r] & 0
}$$
where the vertical arrow $\bigoplus_{\overline{v}\in V^f(\overline{\Gamma})}N_G\to \bigoplus_{v\in V^f(\Gamma)}N_G=\bigoplus_{\overline{v}\in V^f(\overline{\Gamma})}\left(\bigoplus_{v\in V(\overline{v})}N_G\right)$ is the direct sum of the diagonal embeddings, and $\bigoplus_{e\in E^b(\overline{\Gamma})}(N_e)_G\to \bigoplus_{e\in E^b(\Gamma)}(N_e)_G$ and $\bigoplus_{e\in E^b(\overline{\Gamma})}N_G\to \bigoplus_{e\in E^b(\Gamma)}N_G$ are the natural embedding. Then the above diagram is commutative. Consider the induced map of the cokernels of the middle vertical arrows:
$\bigoplus_{\overline{v}\in V^f(\overline{\Gamma})}\left[\bigoplus_{v\in V(\overline{v})}N_G/\Delta N_G\to \bigoplus_{e\in E(\overline{v})}N_G\right].$
Plainly, it is injective and its cokernel is canonically isomorphic to
$$\bigoplus_{\overline{v}\in V^f(\overline{\Gamma})}H^1(\overline{v}, N_G)\approx \bigoplus_{\overline{v}\in V^f(\overline{\Gamma})}\left(N_G^{g(\overline{v})}\right)\approx N_G^{g(\Gamma)-g(\overline{\Gamma})}\, .$$ This implies the proposition for $\EE^\bullet(\Gamma)$. The proof for $\CE^\bullet(\Gamma)$ is identical.
\end{proof}

\subsubsection{Deformations of $N_\RR$-parameterized tropical curves.}
\begin{definition}\label{def:deforpartrcur} Let $\Gamma$ be an $N_\RR$-parameterized tropical curve. By a {\it deformation} of $\Gamma$ we mean a germ (at $1$) of a continuous family $\{\Gamma_s\}_{s\in\RR}$ of $N_\RR$-parameterized tropical curves, with $\Gamma_1=\Gamma$.
\end{definition}
\begin{remark}
$ $

\begin{enumerate}
\item Any deformation of $\Gamma$ induces a deformation of the underlying graph. Since we work only with finite graphs, any such deformation can be canonically trivialized. Hence we may consider only deformations of $\Gamma$ inducing the trivial deformation of the underlying graph.
\item The multiplicities and the slopes of the edges are preserved by deformations since $h_{\Gamma_s}(v)\in N$ for any $v\in V^\infty$, and $\frac{1}{|e|_s}(h_{\Gamma_s}(v)-h_{\Gamma_s}(v'))\in N$ for any bounded edge $e\in E_{vv'}$. This also shows that the lengths $|e|_s$ of the edges $e\in E_{vv'}$ with non-trivial slopes are uniquely defined by the values of $h_{\Gamma_s}(v)$ and $h_{\Gamma_s}(v')$.
\end{enumerate}
\end{remark}
Fix an orientation of the bounded edges of $\Gamma$. Then the germ of the {\it universal} deformation of $\Gamma$, i.e., of the space of deformations up to isomorphism, can be identified naturally with the germ of the group $\EE_\RR^1(\Gamma)\times\RR_+^{c(\Gamma)}$ at the identity, where $c(\Gamma)$, as usual, denotes the number of bounded edges of $\Gamma$ with trivial slope.
\begin{definition} The {\it rank} of an $N_\RR$-parameterized tropical curve $\Gamma$ is the dimension of the universal deformation of $\Gamma$, i.e., $\rank(\Gamma)=c(\Gamma)+\rank(\EE^1(\Gamma))$.
\end{definition}
\begin{lemma} The rank of $\Gamma$ is given by the following formula: $$\rank(\Gamma)=(\rank(N)-3)\chi(\Gamma)+|E^\infty(\Gamma)|-ov(\Gamma)+\rank(\EE^2(\Gamma)),$$ where  overvalency $ov(\Gamma)$ is defined to be $ov(\Gamma)=\sum_{v\in V^f(\Gamma)}(val(v)-3)$.
\end{lemma}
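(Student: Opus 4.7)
The plan is to compute $\rank(\EE^1(\Gamma))$ from the defining two-term complex \eqref{eq:complex}, add $c(\Gamma)$, and then rearrange the combinatorics of the graph to match the claimed formula. All the work is bookkeeping on a single Euler characteristic identity.

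First I would apply the rank-nullity/additivity of rank along the exact sequence
\[
0\to \EE^1(\Gamma)\to \Bigl(\bigoplus_{v\in V^f(\Gamma)}\!\!N\Bigr)\oplus\Bigl(\bigoplus_{e\in E^b(\Gamma)}\!\!N_e\Bigr)\xrightarrow{\,b\,}\bigoplus_{e\in E^b(\Gamma)}\!\!N\to \EE^2(\Gamma)\to 0,
\]
which gives
\[
\rank(\EE^1(\Gamma))=\rank(N)\bigl(|V^f(\Gamma)|-|E^b(\Gamma)|\bigr)+\sum_{e\in E^b(\Gamma)}\rank(N_e)+\rank(\EE^2(\Gamma)).
\]
By Definition~\ref{def:proppartropcur}, $\rank(N_e)=1$ if the slope $N_{\RR,e}$ is non-trivial and $\rank(N_e)=0$ otherwise, so $\sum_{e\in E^b(\Gamma)}\rank(N_e)=|E^b(\Gamma)|-c(\Gamma)$. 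Adding $c(\Gamma)$ yields
\[
\rank(\Gamma)=\rank(N)\bigl(|V^f(\Gamma)|-|E^b(\Gamma)|\bigr)+|E^b(\Gamma)|+\rank(\EE^2(\Gamma)).
\]

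Next I would rewrite $|V^f(\Gamma)|-|E^b(\Gamma)|$ as $\chi(\Gamma)$. Since every infinite vertex is univalent and connected to exactly one unbounded edge (Definition~\ref{def:trcur}, (p2)), the map $V^\infty(\Gamma)\to E^\infty(\Gamma)$ sending an infinite vertex to its unique incident edge is a bijection, so $|V^\infty(\Gamma)|=|E^\infty(\Gamma)|$ and therefore
\[
\chi(\Gamma)=|V^f(\Gamma)|+|V^\infty(\Gamma)|-|E^b(\Gamma)|-|E^\infty(\Gamma)|=|V^f(\Gamma)|-|E^b(\Gamma)|.
\]
Hence $\rank(\Gamma)=\rank(N)\chi(\Gamma)+|E^b(\Gamma)|+\rank(\EE^2(\Gamma))$.

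Finally I would convert $|E^b(\Gamma)|$ into the overvalency term. The handshake identity, combined with $val(v)=1$ for $v\in V^\infty(\Gamma)$, gives
\[
\sum_{v\in V^f(\Gamma)}val(v)=2|E(\Gamma)|-|V^\infty(\Gamma)|=2|E^b(\Gamma)|+|E^\infty(\Gamma)|,
\]
so $ov(\Gamma)=2|E^b(\Gamma)|+|E^\infty(\Gamma)|-3|V^f(\Gamma)|$. Substituting $|E^b(\Gamma)|=\tfrac12\bigl(ov(\Gamma)+3|V^f(\Gamma)|-|E^\infty(\Gamma)|\bigr)$ and using $3\chi(\Gamma)=3|V^f(\Gamma)|-3|E^b(\Gamma)|$ to eliminate $|V^f(\Gamma)|$ leads directly to
\[
\rank(N)\chi(\Gamma)+|E^b(\Gamma)|=(\rank(N)-3)\chi(\Gamma)+|E^\infty(\Gamma)|-ov(\Gamma),
\]
which finishes the proof. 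There is no real obstacle here; the only point to watch is the sign conventions relating $|E^b|$ to $\chi(\Gamma)$ and $ov(\Gamma)$, which is immediate from $|V^\infty(\Gamma)|=|E^\infty(\Gamma)|$ and the handshake lemma.
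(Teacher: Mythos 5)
Your proof is correct and follows essentially the same route as the paper: both compute $\rank(\EE^1(\Gamma))$ by rank additivity on the defining complex (you use \eqref{eq:complex} directly, the paper its quasi-isomorphic form $\bigoplus_{v}N_\RR\to\bigoplus_{e}(N/N_e)_\RR$, which absorbs the $|E^b(\Gamma)|-c(\Gamma)$ count of nontrivial slopes in the same way), and then finish with the identical combinatorial identities $|V^\infty(\Gamma)|=|E^\infty(\Gamma)|$ and the handshake lemma. No gaps.
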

\begin{proof}
By definition,
$$\rank(\Gamma)=c(\Gamma)+\dim\left(\bigoplus_{v\in V^f(\Gamma)}N_\RR\right)-\dim\left(\bigoplus_{e\in E^b(\Gamma)}(N/N_e)_\RR\right)+\rank(\EE^2(\Gamma)).$$ Since $N_e$ is either trivial or has rank one, and $c(\Gamma)$ is the number of bounded edges with trivial slope, it follows that
$$\rank(\Gamma)=\rank(N)|V^f(\Gamma)|-(\rank(N)-1)|E^b(\Gamma)|+\rank(\EE^2(\Gamma)).$$
Note that $\chi(\Gamma)=|V^f(\Gamma)|-|E^b(\Gamma)|$, since $|E^\infty(\Gamma)|=|V^\infty(\Gamma)|$. Thus,
$$\rank(\Gamma)=(\rank(N)-3)\chi(\Gamma)+3|V^f(\Gamma)|-2|E^b(\Gamma)|+\rank(\EE^2(\Gamma)),$$ and since $|E^\infty(\Gamma)|+\sum_{v\in V^f(\Gamma)}val(v)=|V^\infty(\Gamma)|+\sum_{v\in V^f(\Gamma)}val(v)=2|E(\Gamma)|$, the following holds:
$$3|V^f(\Gamma)|-2|E^b(\Gamma)|=|E^\infty(\Gamma)|+3|V^f(\Gamma)|-\sum_{v\in V^f(\Gamma)}val(v)=|E^\infty(\Gamma)|-ov(\Gamma).$$ Hence
$\rank(\Gamma)=(\rank(N)-3)\chi(\Gamma)+|E^\infty(\Gamma)|-ov(\Gamma)+\rank(\EE^2(\Gamma))$.
\end{proof}

\subsubsection{Linear constraints.}
\begin{definition}\label{def:afconstr}
Let $L_i\subset N$, $1\le i\le k$, be sublattices of coranks greater than or equal to two, such that $N/L_i$ is torsion free for any $i$. Let $A=\{A_i\}_{i=1}^k$, $A_i\subset N_\RR$, be a set of affine subspaces with tangent spaces $(L_i)_\RR$. Consider an $N_\RR$-parameterized tropical curve $\Gamma$.
\begin{enumerate}
  \item We say that $\Gamma$ {\it satisfies the affine constraint} $A$ if for any $1\le i\le k$ the following holds: $h_\Gamma(v_i)=0$ and $h_\Gamma(v'_i)\in A_i$, where $v_i\in V^\infty(\Gamma)$ is the i-th infinite vertex, and $v'_i$ is the unique finite vertex connected to $v_i$.
  \item If $\Gamma$ satisfies $A$ then we say that $A$ is a {\it simple constraint} for $\Gamma$ if for all $1\le i\le k$ the following holds: $v'_i$ is trivalent, $N_e\ne 0$, and $N_e\cap L_i=0$, where $e$ is any bounded edge containing $v'_i$ (note that the slopes of the two bounded edges containing $v'_i$ coincide due to the balancing condition, since $h_\Gamma(v_i)=0$).
  \item We define the codimension of $A$ to be $\codim (A):=\sum_{i=1}^k \codim_{N_\RR} (A_i)$.
\end{enumerate}
\end{definition}

\begin{example}\label{example:constr}
Let $L_i$ be as in Definition~\ref{def:afconstr}. Consider the subtori $T_{L_i,\FF}\subset T_{N,\FF}$, and a set $O:=\{O_i\}_{i=1}^k$ of $T_{L_i,\FF}$-orbits in $T_{N,\FF}$. Then $O$ defines an affine constraint $A=A_O$. Indeed, any $\FF$-point $q\in O_i$ defines a linear map from $M$ to $\ZZ$: $m\mapsto \val(x^m(q))$. The set of all such maps forms a lattice of maximal rank in some $L_i$-affine subspace, and we define $A_i$ to be this subspace.

Let $(C, D, f)$ be as usual, and assume that $f(q_i)\in O_i$ for all $1\le i\le k$. In this case we say that $(C, D, f)$ satisfies the {\it toric constraint $O$}. Let $(C_{R_\LL}, D_{R_\LL})$ be a semi-stable model, and let $\Gamma$ be the associated $N_\QQ$-parameterized $\QQ$-tropical curve. Then $\Gamma$ satisfies the corresponding affine constraint $A$.
\end{example}

Let $\Gamma$ be an $N_\RR$-parameterized tropical curve, $A$ be an affine constraint satisfied by $\Gamma$,  $v_1,\dotsc, v_k$ be the infinite vertices corresponding to $q_1,\dotsc, q_k$, and $G$ be an abelian group. Consider the map
$$\gamma_G\colon\left(\bigoplus_{v\in V^f(\Gamma)}N_G\right)\oplus\left(\bigoplus_{e\in E^b(\Gamma)}(N_e)_G\right)\to\bigoplus_{i=1}^k (N/L_i)_G$$
defined by $\gamma_G(x_e)=0$, $\gamma_G(x_v)=(x_v \bmod L_i)$ if $v$ is connected to $v_i$, and $\gamma_G(x_v)=0$ otherwise.
\begin{notation} We denote by $\EE_G^1(\Gamma, A)$ and $\EE_G^2(\Gamma, A)$ the kernel and the cokernel of the linear map $b_G\times\gamma_G$
\begin{equation}\label{eq:bGxgammaG}
\left(\bigoplus_{v\in V^f(\Gamma)}N_G\right)\oplus\left(\bigoplus_{e\in E^b(\Gamma)}(N_e)_G\right)\to\left(\bigoplus_{e\in E^b(\Gamma)}N_G\right)\times\left(\bigoplus_{i=1}^k (N/L_i)_G\right)
\end{equation}
Similarly, we denote by $\CE_G^1(\Gamma, A)$ and $\CE_G^2(\Gamma, A)$ the kernel and the cokernel of the linear map $\beta_G\times\gamma_G$
\begin{equation}\label{eq:betaGxgammaG}
\left(\bigoplus_{v\in V^f(\Gamma)}N_G\right)\oplus\left(\bigoplus_{e\in E^b(\Gamma)}(N_e)_G\right)\to\left(\bigoplus_{e\in E^b(\Gamma)}N_G\right)\times\left(\bigoplus_{i=1}^k (N/L_i)_G\right)
\end{equation}
If $G=\ZZ$ then we use shorter notation $\EE^\bullet(\Gamma, A)$ and $\CE^\bullet(\Gamma, A)$ instead of $\EE^\bullet_\ZZ(\Gamma, A)$ and $\CE^\bullet_\ZZ(\Gamma, A)$.
\end{notation}
\begin{remark}
One may think about $\EE_G^1(\Gamma, A)$ and $\EE_G^2(\Gamma, A)$ as the kernel and the cokernel of the map $$\bigoplus_{v\in V^f(\Gamma)}N_G\to\left(\bigoplus_{e\in E^b(\Gamma)}(N/N_e)_G\right)\oplus\left(\bigoplus_{i=1}^k (N/L_i)_G\right)$$ (cf. Remark~\ref{rem:anotherformcomplex}). Similarly, if $l(e)\colon G\to G$ is an isomorphism for all $e$ then one may think about $\CE_G^1(\Gamma, A)$ and $\CE_G^2(\Gamma, A)$ as the kernel and the cokernel of the map $$\bigoplus_{v\in V^f(\Gamma)}N_G\to\left(\bigoplus_{e\in E^b(\Gamma)}(N/l(e)N_e)_G\right)\oplus\left(\bigoplus_{i=1}^k (N/L_i)_G\right).$$
\end{remark}
\begin{Claim} Consider the germ of the universal deformation of $\Gamma$ which, as before, we identify with the germ of the group $\EE_\RR^1(\Gamma)\times\RR_+^{c(\Gamma)}$ at the identity. Then the locus of the deformations satisfying $A$ can be identified naturally with the germ at the identity of $\EE_\RR^1(\Gamma, A)\times\RR_+^{c(\Gamma)}$.
\end{Claim}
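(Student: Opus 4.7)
The plan is to unwind both sides of the asserted identification and to match them edge by edge. For the first side, I would use the two remarks following Definition~\ref{def:deforpartrcur} to restrict to deformations $\{\Gamma_s\}$ that induce the trivial deformation on the combinatorial graph and preserve the slopes, the multiplicities, and the values $h_{\Gamma_s}(v)\in N$ at the infinite vertices. Such a deformation is then encoded by the germ of the displacements $\xi_v:=h_{\Gamma_s}(v)-h_\Gamma(v)\in N_\RR$ at the finite vertices together with the germs of the new lengths $|e|_s\in\RR_+$ of the bounded edges. As the discussion immediately preceding the claim shows, on a bounded edge $e\in E^b_{vv'}(\Gamma)$ with non-trivial slope the demand that $\frac{1}{|e|_s}(h_{\Gamma_s}(v)-h_{\Gamma_s}(v'))$ equal the fixed integer vector $\frac{1}{|e|}(h_\Gamma(v)-h_\Gamma(v'))$ forces $\xi_v-\xi_{v'}\in(N_e)_\RR$ and determines $|e|_s$ from the $\xi$'s, while on a trivial-slope bounded edge we get $\xi_v=\xi_{v'}$ with $|e|_s\in\RR_+$ free. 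Setting $x_e:=\xi_v-\xi_{v'}$ with the sign fixed by the chosen orientation, the tuple $((\xi_v),(x_e))$ lies precisely in $\ker b_\RR=\EE^1_\RR(\Gamma)$, and the free trivial-slope lengths contribute the factor $\RR_+^{c(\Gamma)}$, recovering the identification of the universal deformation with a germ in $\EE^1_\RR(\Gamma)\times\RR_+^{c(\Gamma)}$.

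I would then impose the affine constraint $A$. The condition $h_{\Gamma_s}(v_i)=0$ is automatic because $h_{\Gamma_s}(v_i)\in N$ depends continuously on $s$ and equals $0$ at $s=1$. Since $h_\Gamma(v'_i)\in A_i$ already holds and $A_i$ has tangent space $(L_i)_\RR$, the requirement $h_{\Gamma_s}(v'_i)\in A_i$ is equivalent at the germ level to $\xi_{v'_i}\in(L_i)_\RR$, i.e.\ to the vanishing of the class of $\xi_{v'_i}$ in $(N/L_i)_\RR$. Running over $i$, this is exactly $\gamma_\RR((\xi_v),(x_e))=0$, since by definition $\gamma_\RR$ kills the $x_e$'s and sends $\xi_v$ to $(\xi_v \bmod L_i)$ precisely at the vertex $v=v'_i$ adjacent to the constrained infinite vertex $v_i$. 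The trivial-slope lengths $|e|_s$ do not enter this condition, so the constrained locus is cut out of $\EE^1_\RR(\Gamma)\times\RR_+^{c(\Gamma)}$ by the single linear equation $\gamma_\RR=0$, giving exactly the germ of $\ker(b_\RR\times\gamma_\RR)\times\RR_+^{c(\Gamma)}=\EE^1_\RR(\Gamma,A)\times\RR_+^{c(\Gamma)}$.

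The only mild point to watch is the passage from the tangential linearization to the genuine germ statement; but since every condition in sight is either a linear equation in the $\xi_v$'s and $x_e$'s or a free positive-real parameter in the $|e|_s$'s, the linearization is exact on a full neighborhood of the identity and no additional analytic subtlety arises.
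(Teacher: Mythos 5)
Your argument is correct and is exactly the unwinding the paper has in mind: the paper's own proof of this Claim is simply ``Obvious,'' and your verification — that preserved slopes/multiplicities reduce a deformation to vertex displacements $(\xi_v)$ with edge differences in $(N_e)_\RR$ plus free lengths of trivial-slope edges, and that the constraint $A$ is imposed by the exact (not merely linearized) condition $\xi_{v'_i}\in(L_i)_\RR$, i.e.\ $\gamma_\RR=0$ — is the intended justification. No gaps; your closing remark that all conditions are genuinely linear or free positive parameters, so the germ statement is exact, is the right point to make explicit.
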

\begin{proof}
Obvious.
\end{proof}
\begin{definition}
Let $\Gamma$ be an $N_\RR$-parameterized tropical curve satisfying an affine constraint $A$, and $G$ be an abelian group. The pair $(\Gamma, A)$ is called {\it $G$-regular} if $A$ is a simple constraint, and $\CE_G^2(\Gamma, A)=0$. Otherwise it is called {\it $G$-superabundant}.
\end{definition}
\begin{proposition}\label{prop:E1E2GammaAGamma}
Consider the natural projections $a_G\colon\EE^1_G(\Gamma)\to\bigoplus_{i=1}^k (N/L_i)_G$ and $\alpha_G\colon\CE^1_G(\Gamma)\to\bigoplus_{i=1}^k (N/L_i)_G$. Then
$\EE^1_G(\Gamma, A)=\ker(a_G)$, $\CE^1_G(\Gamma, A)=\ker(\alpha_G)$, and there exist natural exact sequences $0\to {\rm coker}(a_G)\to \EE^2_G(\Gamma, A)\to \EE^2_G(\Gamma)\to 0$ and $0\to {\rm coker}(\alpha_G)\to \CE^2_G(\Gamma, A)\to \CE^2_G(\Gamma)\to 0$.
\end{proposition}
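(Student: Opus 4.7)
The plan is to derive both statements from a single homological-algebra computation, applied once with $b_G$ and once with $\beta_G$; the two cases are formally identical.

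First, I would view the defining data of $\EE^\bullet_G$ and $\EE^\bullet_G(\Gamma,A)$ as two-term cochain complexes concentrated in degrees $0$ and $1$. Write $V:=\bigl(\bigoplus_{v\in V^f(\Gamma)}N_G\bigr)\oplus\bigl(\bigoplus_{e\in E^b(\Gamma)}(N_e)_G\bigr)$. The complex $A^\bullet=[V\xrightarrow{b_G}\bigoplus_{e\in E^b(\Gamma)}N_G]$ computes $\EE^1_G(\Gamma)$ and $\EE^2_G(\Gamma)$, while $B^\bullet=[V\xrightarrow{b_G\times\gamma_G}(\bigoplus_{e\in E^b(\Gamma)}N_G)\oplus(\bigoplus_{i=1}^k(N/L_i)_G)]$ computes $\EE^1_G(\Gamma,A)$ and $\EE^2_G(\Gamma,A)$. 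There is an obvious surjective morphism of complexes $B^\bullet\twoheadrightarrow A^\bullet$ which is the identity in degree $0$ and the projection killing the factor $\bigoplus_i(N/L_i)_G$ in degree $1$. Its kernel is the complex $K^\bullet=[0\to\bigoplus_{i=1}^k(N/L_i)_G]$, so $H^0(K^\bullet)=0$ and $H^1(K^\bullet)=\bigoplus_{i=1}^k(N/L_i)_G$.

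Next, I would feed the short exact sequence $0\to K^\bullet\to B^\bullet\to A^\bullet\to 0$ into the long exact cohomology sequence, obtaining
\[
0\to \EE^1_G(\Gamma,A)\to \EE^1_G(\Gamma)\xrightarrow{\ \partial\ }\bigoplus_{i=1}^k(N/L_i)_G\to \EE^2_G(\Gamma,A)\to \EE^2_G(\Gamma)\to 0.
\]
The only point left to check is that the connecting homomorphism $\partial$ coincides with $a_G$. Unwinding the snake-lemma recipe, a class $[(x_v,x_e)]\in\EE^1_G(\Gamma)$ lifts to the element $(x_v,x_e)\in V$ viewed as an element of the degree-$0$ term of $B^\bullet$; applying $b_G\times\gamma_G$ lands in $\{0\}\oplus\bigoplus_i(N/L_i)_G$ since $b_G(x_v,x_e)=0$, and the resulting element of $H^1(K^\bullet)$ is precisely $\gamma_G(x_v,x_e)=(x_{v'_i}\bmod L_i)_i$, which is by definition $a_G([(x_v,x_e)])$. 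Thus $\EE^1_G(\Gamma,A)=\ker(a_G)$ and the tail of the sequence yields the desired exact sequence $0\to\mathrm{coker}(a_G)\to \EE^2_G(\Gamma,A)\to \EE^2_G(\Gamma)\to 0$.

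Finally, the same construction with $b_G$ replaced by $\beta_G$ and the identity on $V$ again in degree $0$ produces an analogous short exact sequence of complexes whose long exact sequence delivers the statement for $\CE^\bullet_G$ with connecting map $\alpha_G$. No obstacle is really anticipated; the only matter requiring care is the identification of the snake-lemma connecting map with $a_G$ (resp.\ $\alpha_G$), which is a direct unwinding of definitions using the description of $\gamma_G$ preceding the statement.
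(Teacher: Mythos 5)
Your argument is correct: the short exact sequence of two-term complexes $0\to K^\bullet\to B^\bullet\to A^\bullet\to 0$, its long exact cohomology sequence, and the identification of the connecting map with $a_G$ (resp. $\alpha_G$) via the snake-lemma recipe give exactly the stated kernel identifications and the exact sequences for $\EE^2_G(\Gamma,A)$ and $\CE^2_G(\Gamma,A)$. The paper dismisses this as ``straightforward from the definitions,'' and your write-up is precisely that direct verification, so there is nothing to add.
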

\begin{proof} Straightforward from the definitions.
\end{proof}

\begin{Claim}\label{cl:e1k*}
If $(\Gamma, A)$ is $\kk$-regular then it is $\kk^*$-regular. If, in addition, $c(\Gamma)=0$ and $\codim (A)=\rank(\Gamma)$ then $|\EE^1_{\kk^*}(\Gamma,A)|=|\EE^2(\Gamma, A)|$ and $|\CE^1_{\kk^*}(\Gamma,A)|=|\CE^2(\Gamma, A)|$.
\end{Claim}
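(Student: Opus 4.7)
The plan is to reduce the statement to finite-generation properties of $\CE^2(\Gamma,A)$ and $\EE^2(\Gamma,A)$, combined with the divisibility of $\kk^*$. The first step, which I expect to be the only non-routine point, is to extend the unnumbered homological Claim following \eqref{eq:complex}--\eqref{eq:stackycomplex} to the constrained setting. Since $N/L_i$ is torsion-free by Definition~\ref{def:afconstr}, the targets of \eqref{eq:bGxgammaG} and \eqref{eq:betaGxgammaG} are free $\ZZ$-modules, so the same homological argument produces, for every abelian group $G$,
\[
\CE^2_G(\Gamma,A)=\CE^2(\Gamma,A)\otimes_\ZZ G,\qquad \EE^2_G(\Gamma,A)=\EE^2(\Gamma,A)\otimes_\ZZ G,
\]
together with the natural short exact sequence
\[
0\to \CE^1(\Gamma,A)\otimes_\ZZ G\to \CE^1_G(\Gamma,A)\to \Tor^1_\ZZ(\CE^2(\Gamma,A),G)\to 0
\]
and its analogue for $\EE^\bullet$.

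To obtain $\kk^*$-regularity from $\kk$-regularity, I would argue as follows. By hypothesis $\CE^2(\Gamma,A)\otimes_\ZZ\kk=0$; since $\CE^2(\Gamma,A)$ is finitely generated, this forces it to be a finite torsion group of order prime to $p:=\chara\kk$. Because $\kk$ is algebraically closed, multiplication by $n$ is surjective on $\kk^*$ for every $n\geq 1$, so $T\otimes_\ZZ\kk^*=0$ for every finite abelian group $T$; applied to $T=\CE^2(\Gamma,A)$ this gives $\CE^2_{\kk^*}(\Gamma,A)=0$.

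For the cardinality assertion, under $c(\Gamma)=0$ and $\codim(A)=\rank(\Gamma)$, a rank count is in order. Proposition~\ref{prop:relEECEGamma} (whose first and fourth terms are torsion) yields $\rank\CE^1(\Gamma)=\rank\EE^1(\Gamma)=\rank(\Gamma)$, and the identity $\codim(A)=\sum_i\rank(N/L_i)$ together with Proposition~\ref{prop:E1E2GammaAGamma} gives
\[
\rank\CE^1(\Gamma,A)=\rank\CE^1(\Gamma)-\codim(A)+\rank\,{\rm coker}(\alpha_\ZZ).
\]
Since ${\rm coker}(\alpha_\ZZ)$ embeds into $\CE^2(\Gamma,A)$, which is torsion, one has ${\rm coker}(\alpha_\RR)=0$, and so $\CE^1(\Gamma,A)$ is a finite abelian group. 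Divisibility of $\kk^*$ then gives $\CE^1(\Gamma,A)\otimes_\ZZ\kk^*=0$, and the Tor sequence from the first step identifies $\CE^1_{\kk^*}(\Gamma,A)\cong \Tor^1_\ZZ(\CE^2(\Gamma,A),\kk^*)$. For each cyclic summand $\ZZ/n$ of $\CE^2(\Gamma,A)$ we have $\Tor^1_\ZZ(\ZZ/n,\kk^*)=\mu_n(\kk)$, which has cardinality $n$ since $n$ is prime to $p$ and $\kk$ is algebraically closed; summing over the summands yields $|\CE^1_{\kk^*}(\Gamma,A)|=|\CE^2(\Gamma,A)|$. The argument for $\EE^\bullet$ is identical, using that the analogue of Proposition~\ref{prop:relEECEGamma} for $(\Gamma,A)$ realises $\EE^2_\kk(\Gamma,A)$ as a quotient of $\CE^2_\kk(\Gamma,A)$, so it also vanishes.
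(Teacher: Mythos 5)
Your proposal is correct and follows essentially the same route as the paper's proof: deduce from $\kk$-regularity that $\CE^2(\Gamma,A)$ (and via the surjection of Proposition~\ref{prop:relEECEGammaConstraints}, $\EE^2(\Gamma,A)$) is finite of order prime to $\chara\kk$, note that the codimension hypothesis forces $\CE^1(\Gamma,A)$ and $\EE^1(\Gamma,A)$ to vanish rationally, and then identify $\CE^1_{\kk^*}(\Gamma,A)$ with $\Tor^1_\ZZ(\CE^2(\Gamma,A),\kk^*)$ to compare cardinalities. You merely make explicit two points the paper leaves implicit, namely the constrained analogue of the Tor exact sequence (valid because $N/L_i$ is torsion free) and the rank count showing ${\rm coker}(\alpha_\ZZ)$ has rank zero, so no further comment is needed.
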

\begin{proof} Since $(\Gamma, A)$ is $\kk$-regular, $\CE^2_{\kk}(\Gamma,A)=\CE^2(\Gamma,A)\otimes\kk=0$. Hence $\CE^2(\Gamma, A)$ is a torsion group of order prime to $char(\kk)$. Thus, $\CE^2_{\kk^*}(\Gamma,A)=\CE^2(\Gamma, A)\otimes_\ZZ\kk^*=0$ since $\kk$ is algebraically closed. Hence $(\Gamma, A)$ is $\kk^*$-regular. If $\codim (A)=\rank(\Gamma)$ and $c(\Gamma)=0$ then $\CE^1(\Gamma, A)=0$. Thus, $\CE^1_{\kk^*}(\Gamma,A)={\rm Tor}_\ZZ^1(\CE^2(\Gamma, A), \kk^*)$, and hence $|\CE^1_{\kk^*}(\Gamma,A)|=|\CE^2(\Gamma, A)|$. The proof for $\EE^\bullet$ is similar.
\end{proof}

The proofs of the following three propositions are identical to the proofs of Propositions \ref{prop:relEECEGamma}, \ref{prop:E12ForSubdivisions}, and \ref{prop:E1E2GammaGammaBar}:
\begin{proposition}\label{prop:relEECEGammaConstraints} There exists a natural exact sequence
$$\begin{array}{lll}
    0&\hspace{-0.25cm} \to&\hspace{-0.25cm} \bigoplus_{e\in E^b(\Gamma), N_e\ne 0}\left(\mu_{l(e)}(G)\right)\to \CE^1_G(\Gamma, A)\to \EE^1_G(\Gamma, A)\to\\
     &\hspace{-0.25cm} \to&\hspace{-0.25cm}  \bigoplus_{e\in E^b(\Gamma), N_e\ne 0}\left(G/l(e)G\right)\to \CE^2_G(\Gamma, A)\to\EE^2_G(\Gamma, A)\to 0
  \end{array}$$
where $\mu_{l(e)}(G)=\ker\left(l(e)\colon G\to G\right)$, and $l(e)\colon G\to G$ is the multiplication by $l(e)$.
\end{proposition}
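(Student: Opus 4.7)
The plan is to follow the proof of Proposition~\ref{prop:relEECEGamma} verbatim, carrying the constraint factor $\bigoplus_{i=1}^k(N/L_i)_G$ along on the right-hand side. Concretely, I will construct a morphism of two-term complexes from \eqref{eq:betaGxgammaG} to \eqref{eq:bGxgammaG} and extract the required six-term sequence via the snake lemma.

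Since the source and target terms of the two complexes coincide, I define the vertical chain map as the identity on every $N_G$-summand indexed by $v\in V^f(\Gamma)$, on every $N_G$-summand indexed by $e\in E^b(\Gamma)$ in the target, and on every constraint summand $(N/L_i)_G$; and as multiplication by $l(e)$ on each source summand $(N_e)_G$. Commutativity of the square is immediate: $b_G$ and $\beta_G$ agree on the vertex summands and differ only by the factor $l(e)$ on $(N_e)_G$-summands, while the map $\gamma_G$ is unaffected because it vanishes on $(N_e)_G$ and acts as the identity on the vertex $N_G$-summands.

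Using the canonical isomorphism $(N_e)_G\cong G$ afforded by the distinguished generator $n_e$ of $N_e$ (cf. Definition~\ref{def:proppartropcur}), multiplication by $l(e)$ on $(N_e)_G$ corresponds to multiplication by $l(e)$ on $G$ whenever $N_e\ne 0$, while the summand is zero when $N_e=0$. Hence the vertical map has kernel $\bigoplus_{e\in E^b(\Gamma),\,N_e\ne 0}\mu_{l(e)}(G)$ and cokernel $\bigoplus_{e\in E^b(\Gamma),\,N_e\ne 0}(G/l(e)G)$ in source degree and is an isomorphism in target degree, so the snake lemma applied to this morphism delivers the six-term exact sequence of the proposition. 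No genuine obstacle arises, since the constraint component $\gamma_G$ is unaffected by the passage between $b_G$ and $\beta_G$, making the argument formally identical to that of Proposition~\ref{prop:relEECEGamma}.
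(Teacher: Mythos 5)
Your proposal is correct and follows essentially the same route as the paper: the paper proves the unconstrained version (Proposition~\ref{prop:relEECEGamma}) by the very chain map you describe (identity on the vertex summands and on the right-hand terms, multiplication by $l(e)$ on the $(N_e)_G$-summands) and declares the constrained case identical, which is exactly your observation that $\gamma_G$ kills the $(N_e)_G$-summands and so commutes with the modification. The only cosmetic point is that "snake lemma" should be read as the standard six-term cohomology sequence attached to a morphism of two-term complexes that is an isomorphism in the top degree (the "standard homological algebra computation" the paper leaves to the reader), which your kernel/cokernel identification supplies correctly.
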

\begin{proposition}\label{prop:E12forSubdivisionsConstraints}
Let $\Gamma$ and $\Gamma'$ be $N_\RR$-parameterized tropical curves, and assume that $\Gamma$ is obtained from $\Gamma'$ by subdivision of some of its edges. Pick an orientation on $\Gamma'$, and consider the induced orientation on $\Gamma$. Assume that $\Gamma'$ satisfies an affine constraint $A$. Then $\Gamma$ satisfies $A$, $\EE^2_G(\Gamma',A)\cong \EE^2_G(\Gamma,A)$, and the following sequence is exact:
$$0\to \bigoplus_{v\in V^f(\Gamma)\setminus V^f(\Gamma')}N_{e_v}\to \EE^1_G(\Gamma,A)\to \EE^1_G(\Gamma',A)\to 0,$$ where $e_v\in E(\Gamma')$ is the edge subdivided by the vertex $v$. In particular, if $\Gamma$ satisfies \eqref{stcond} and $\Gamma'=\Gamma^{\rm st}$ then $\bigoplus_{v\in V^f(\Gamma)\setminus V^f(\Gamma')}N_{e_v}=\bigoplus_{v\in V_2^f(\Gamma)}N_{e_v}$. The same statement holds true if $\EE^\bullet$ is replaced by $\CE^\bullet$.
\end{proposition}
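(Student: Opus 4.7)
My plan is to mimic the proof of Proposition~\ref{prop:E12ForSubdivisions}, the only new feature being the constraint summand $\bigoplus_{i=1}^k (N/L_i)_G$ in the target of the defining complex \eqref{eq:bGxgammaG}. The argument works uniformly for $\EE^\bullet$ (via $b_G$) and $\CE^\bullet$ (via $\beta_G$); I describe the $\EE$ case.

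First I would check that $\Gamma$ inherits $A$. Fix a constraint index $i$. If the unbounded edge at $v_i$ is not subdivided, $v'_i$ is unchanged and there is nothing to verify. Otherwise, that edge is subdivided into a chain $e_0,\dotsc,e_r$ with new finite vertices $v^{(i)}_1,\dotsc,v^{(i)}_r$. By Proposition~\ref{prop:paramtropcurandsubdivisions}, $h_\Gamma(v^{(i)}_k)=h_{\Gamma'}(v^{(i)}_0)+\bigl(\sum_{j<k}|e_j|\bigr)h_{\Gamma'}(v_i)$; the constraint condition $h_{\Gamma'}(v_i)=0$ then forces $h_\Gamma(v^{(i)}_k)=h_{\Gamma'}(v^{(i)}_0)\in A_i$ for every $k$, and in particular for the new $v'_i=v^{(i)}_r$.

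Next I would construct a surjective map of two-term complexes from the one defining $\EE^\bullet_G(\Gamma,A)$ to the one defining $\EE^\bullet_G(\Gamma',A)$, copying the vertex- and edge-side recipes from the proof of Proposition~\ref{prop:E12ForSubdivisions} and taking the identity on the constraint summand. A routine check shows commutativity everywhere except at a new terminal vertex $v^{(i)}_r$ of a subdivided unbounded edge at a constraint vertex $v_i$, where $d_\Gamma(x_{v^{(i)}_r})$ contributes both an edge term $+x_{e_{r-1}}$ and a constraint term $(x\bmod L_i)$, while $d_{\Gamma'}\circ\pi_S(x_{v^{(i)}_r})=0$. I repair the diagram by augmenting the target map with the correction $x_{e_k}\mapsto -(x_{e_k}\bmod L_i)$ into the $i$-th constraint factor, for every bounded edge $e_k$ of such a chain. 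The fact that the full chain has trivial slope ($N_{e_k}=0$, forced by $h_\Gamma(v_i)=0$) makes this correction compatible at intermediate vertices and preserves surjectivity; identifying this correction term is the main obstacle of the proof.

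What remains is a local kernel computation as in Proposition~\ref{prop:E12ForSubdivisions}. The kernel complex $K_\bullet$ of the short exact sequence of complexes splits as a direct sum over edges of $\Gamma'$. For bounded edges and for unbounded edges at non-constrained vertices, the local piece is identical to what appears there, contributing $\bigoplus_v (N_{e_v})_G$ to $H^0$ and nothing to $H^1$. For unbounded edges at constrained vertices, the correction term produces a locally acyclic piece (since here $N_{e_v}=0$ for each new $v$), so it also contributes nothing, again matching $\bigoplus_v (N_{e_v})_G=0$. The long exact cohomology sequence then yields $\EE^2_G(\Gamma,A)\cong\EE^2_G(\Gamma',A)$ and the stated short exact sequence on $\EE^1_G$. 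Replacing $b_G$ by $\beta_G$ throughout proves the statement for $\CE^\bullet$, and the ``in particular'' clause is the observation that a subdivision can produce only two-valent finite vertices, which cannot already exist in the stable graph $\Gamma'=\Gamma^{\rm st}$.
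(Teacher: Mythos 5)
Your proof is correct and follows essentially the same route as the paper, which simply declares the argument identical to that of Proposition~\ref{prop:E12ForSubdivisions}. The only genuine addition is your correction term on the constraint factor for subdivided constrained unbounded edges (needed because the vertex adjacent to $v_i$ changes under subdivision), and your use of the trivial slope $N_{e_k}=0$ forced by $h_{\Gamma'}(v_i)=0$ to make it commute — a detail the paper leaves implicit, and you handle it correctly.
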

\begin{proposition}\label{prop:E1E2GammaGammaBarConstraints} Let $\Gamma$ and $\overline{\Gamma}$ be as in Proposition \ref{prop:contOfTrCurve}.
If $\Gamma$ satisfies a simple constraint $A$ then $\overline{\Gamma}$ satisfies $A$, $A$ is a simple constraint for $\overline{\Gamma}$, the natural map $\EE^1_G(\overline{\Gamma}, A)\to \EE^1_G(\Gamma, A)$ is an isomorphism, and there exists an exact sequence
$$0\to \EE^2_G(\overline{\Gamma},A)\to \EE^2_G(\Gamma,A)\to N_G^{g(\Gamma)-g(\overline{\Gamma})}\to 0.$$ The same statement holds true if $\EE^\bullet$ is replaced by $\CE^\bullet$.
\end{proposition}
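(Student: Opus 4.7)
The plan is to mimic the proof of Proposition~\ref{prop:E1E2GammaGammaBar}, now with the constraint block $\bigoplus_{i=1}^k (N/L_i)_G$ glued onto the right-hand side of the defining complex.

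First I would check that $\overline{\Gamma}$ inherits $A$ and that $A$ remains simple for $\overline{\Gamma}$. The key observation is that, by simplicity of $A$ for $\Gamma$, each finite vertex $v'_i$ adjacent to the $i$-th infinite vertex is trivalent, with its two incident bounded edges having \emph{non-trivial} slope. Since every edge of $\Gamma_0$ is bounded of trivial slope, none of the three edges at $v'_i$ lies in $\Gamma_0$. Hence $\{v'_i\}$ is already a full connected component of $\Gamma_0$, and $v'_i$ descends to a finite vertex of $\overline{\Gamma}$ having the same valency, the same incident slopes, and the same image under $h$. This gives $h_{\overline{\Gamma}}(v'_i)=h_\Gamma(v'_i)\in A_i$, $h_{\overline{\Gamma}}(v_i)=0$, and the simplicity conditions $N_e\ne 0$ and $N_e\cap L_i=0$ transfer directly.

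Next I would set up the analogue of the central commutative diagram from the proof of Proposition~\ref{prop:E1E2GammaGammaBar}, with the two rows being the four-term exact sequences
\[
0\to \EE^1_G(\overline{\Gamma},A)\to \CA_{\overline{\Gamma}} \xrightarrow{b_G\times\gamma_G} \CB_{\overline{\Gamma}} \to \EE^2_G(\overline{\Gamma},A)\to 0
\]
and the corresponding one for $\Gamma$, where I abbreviate $\CA_{\bullet}:=\bigoplus_{v\in V^f(\bullet)}N_G\oplus \bigoplus_{e\in E^b(\bullet)}(N_e)_G$ and $\CB_{\bullet}:=\bigoplus_{e\in E^b(\bullet)}N_G\oplus \bigoplus_{i=1}^k (N/L_i)_G$. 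The middle-left vertical map is built from the diagonal embedding $N_G\hookrightarrow \bigoplus_{v\in V(\overline{v})}N_G$ on each vertex summand and from the inclusion of non-contracted edges on the $(N_e)_G$ summands. The middle-right vertical map is the inclusion of non-contracted edges on the edge summands, paired---thanks to the first step---with the \emph{identity} on the constraint block $\bigoplus_i (N/L_i)_G$.

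Finally I would run the snake lemma. The cokernels of the two middle vertical maps decompose, over $\overline{v}\in V^f(\overline{\Gamma})$, into exactly the local pieces that appear in the proof of Proposition~\ref{prop:E1E2GammaGammaBar}: the constraint block contributes nothing to either cokernel, so one is left with the local maps $\bigoplus_{v\in V(\overline{v})}N_G/\Delta N_G\to \bigoplus_{e\in E(\overline{v})}N_G$, each injective with cokernel $H^1(\overline{v},N_G)\cong N_G^{g(\overline{v})}$. Summing yields total cokernel $N_G^{g(\Gamma)-g(\overline{\Gamma})}$, and the snake lemma delivers both the isomorphism on $\EE^1_G$ and the claimed short exact sequence for $\EE^2_G$. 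The $\CE^\bullet$ statement follows by the identical argument with $\beta_G$ in place of $b_G$, since the constraint block and its vertical map are unchanged. The only subtlety---and the step that would break the whole argument if mishandled---is the first one: if some $v'_i$ were absorbed into a larger component of $\Gamma_0$, the right vertical map on the constraint block would cease to be an identity and would contribute extra cokernel; the simplicity hypothesis on $A$ is precisely what excludes this.
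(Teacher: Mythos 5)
Your proposal is correct and is essentially the paper's proof: the paper simply declares this proposition's proof identical to that of Proposition~\ref{prop:E1E2GammaGammaBar}, and your diagram with the constraint block $\bigoplus_{i=1}^k(N/L_i)_G$ adjoined, followed by the snake/long-exact-sequence argument on the injective middle vertical maps, is exactly that argument, with the descent of the simple constraint to $\overline{\Gamma}$ usefully spelled out. One small caveat on your closing aside: even if some $v'_i$ lay in a larger connected component of $\Gamma_0$, the vertical map would still be the identity on the constraint block and the diagram would still commute (since $h_\Gamma$ is constant on components of $\Gamma_0$, the exact sequence would survive); what the simplicity hypothesis actually secures is the trivalency of the image vertex, i.e.\ the assertion that $A$ remains a \emph{simple} constraint for $\overline{\Gamma}$.
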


\subsection{Elliptic constraint.}\label{subsec:ellconstr}

\begin{definition}
Let $\Gamma$ be a tropical curve of genus one. {\it Tropical $j$-invariant of $\Gamma$} is the minimal length of a cycle generating the first homology of $\Gamma$.
\end{definition}
Let $\Gamma$ be the tropical curve corresponding to an integral model of $(C,D)$, and assume that $g(\Gamma)=g(C)=1$. As any two integral models of $(C,D)$ are dominated by a third one, and the tropical curve corresponding to the dominating model is obtained from $\Gamma$ by the steps of Algorithm~\ref{alg:trcurves}, it follows that the tropical $j$-invariant of $\Gamma$ is independent of the model; hence depends only on $C$, and is independent of $D$. The following theorem (to the best of our knowledge) goes back to Tate:
\begin{theorem}\label{thm:jtrjalg}
Let $\Gamma$ be the tropical curve corresponding to an integral model of $(C,D)$. If $g(C)=g(\Gamma)=1$ then the tropical $j$-invariant of $\Gamma$ is equal to $-\val (j(C))$, where $j(C)$ is the algebraic $j$-invariant of $C$.
\end{theorem}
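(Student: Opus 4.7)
The plan is to reduce to the Tate curve and match two computations: the valuation of the algebraic $j$-invariant, via Tate's $q$-expansion, against the length of the unique cycle of $\Gamma$, via the singularity type at the node of the stable reduction.

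\emph{Step 1 (reduction to the minimal semistable model).} By the paragraph preceding the theorem, the tropical $j$-invariant depends only on $C$, being unchanged under passage to a dominating model. We may therefore replace $(C_{R_\LL}, D_{R_\LL})$ by the minimal semistable model of $C$ alone, forgetting $D$. Since $g(C)=g(\Gamma)=1$, the arithmetic genus of the special fiber is $1$ and is accounted for entirely by $b_1(\Gamma)=1$; hence every irreducible component of the special fiber has geometric genus zero and the components form a single cycle. In particular, $C$ has multiplicative reduction over some finite extension $\LL/\FF$.

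\emph{Step 2 (Tate uniformization and $\val(j)$).} By the theory of the Tate curve, a smooth elliptic curve over $\overline{\FF}$ with multiplicative reduction is $\overline{\FF}$-analytically isomorphic to a Tate curve $E_q = \overline{\FF}^*/q^{\ZZ}$ for a unique $q\in\overline{\FF}^*$ with $\val(q)>0$. Tate's classical $q$-expansion
\[
j(E_q) \;=\; \frac{1}{q} + 744 + 196884\,q + \cdots
\]
has integral coefficients, so $\val(j(C))=-\val(q)$.

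\emph{Step 3 (length of the cycle).} Enlarge $\LL$ so that $q\in\LL$, and write $q=u\,t_\LL^{m}$ with $u\in R_\LL^{*}$ and $m=e_\LL\val(q)\in\NN_{>0}$. The minimal semistable model of $E_q$ over $R_\LL$, obtained for example as the formal quotient of $\widehat{\GG}_{m,R_\LL}$ by $q^{\ZZ}$, has special fiber an irreducible nodal rational curve with a single node whose completed local equation in the total space is $xy=q=u\,t_\LL^{m}$, an $A_{m-1}$ singularity. The length convention of Subsection~\ref{subsec:trcurvesforcurvewithmp} then assigns the unique loop of $\Gamma$ length $(m-1+1)/e_\LL=\val(q)$; since this loop generates $H_1(\Gamma,\ZZ)$, the tropical $j$-invariant of $\Gamma$ equals $\val(q)$.

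Combining Steps 2 and 3 gives tropical $j$-invariant $=\val(q)=-\val(j(C))$, as required. The non-trivial classical inputs are Tate's $q$-expansion and the explicit local equation $xy=q$ of the Tate model at the node; the chief obstacle is simply to keep conventions straight, i.e.\ to check that the $A_{m-1}$ singularity produces a loop of length exactly $\val(q)$ and that this number is independent of the chosen $\LL$. This last independence is precisely the compatibility already verified in the discussion of $\Gamma_{C_{R_\LL},D_{R_\LL}}$: passing from $\LL$ to $\LL'$ replaces $m$ by $e_{\LL'/\LL}\,m$ and $e_\LL$ by $e_{\LL'}=e_{\LL'/\LL}e_\LL$, leaving $m/e_\LL=\val(q)$ unchanged.
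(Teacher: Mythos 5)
Your proof is correct, but it follows a genuinely different route from the paper. The paper argues via the Legendre family: assuming for simplicity $\chara(\kk)\neq 2$, it writes $C$ as $y^2=x(x-1)(x-\lambda)$ with $\val(\lambda)>0$ and $1-\lambda$ a unit, observes that the total space of the resulting model has the singularity $XY=\lambda^2$ at the node, so $j(\Gamma)=2\val(\lambda)$, and then reads off $\val(j(C))=-2\val(\lambda)$ from the explicit formula $j(C)=2^8\frac{(\lambda^2-\lambda+1)^3}{\lambda^2(\lambda-1)^2}$. You instead reduce to multiplicative reduction (correctly, since $g(\Gamma)=1$ forces all components of the special fiber to be rational and the dual graph to be a single cycle), invoke Tate uniformization and the integrality of the $q$-expansion to get $\val(j(C))=-\val(q)$, and match this with the cycle length computed from the $A_{m-1}$ singularity $xy=q=u\,t_\LL^{m}$, $m=e_\LL\val(q)$. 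Your approach buys uniformity in the residue characteristic (no $\chara(\kk)\neq 2$ restriction) and a clean conceptual separation of the two quantities, at the cost of importing Tate's non-archimedean theory; the paper's approach is more self-contained and its explicit coordinate computation is recycled almost verbatim in the proof of Lemma~\ref{lem:leadtermofj}. One small imprecision: the ``formal quotient of $\widehat{\GG}_{m,R_\LL}$ by $q^{\ZZ}$'' in the usual normalization has special fiber the $I_m$ cycle of $m$ projective lines rather than the irreducible nodal curve; the model with irreducible nodal special fiber and local equation $xy=q$ is the (minimal Weierstrass) contraction of that cycle. This does not affect the argument, since either model gives total cycle length $m/e_\LL=\val(q)$ under the length convention of Subsection~\ref{subsec:trcurvesforcurvewithmp}, and the tropical $j$-invariant is model-independent, as you note.
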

\begin{proof}
Since the tropical $j$-invariant of $(C,D)$ depends only on $C$, we may assume that $D$ consists of one point and $\Gamma=\Gamma^{\rm st}_{C,D}$. Assume for simplicity that $char(\kk)\ne 2$. Then, there exists a finite extension $\FF\subseteq\LL$, and a scalar $\lambda\in \LL$, such that $C$ is a plane cubic given by $y^2=x(x-1)(x-\lambda)$. After replacing $\lambda$ with $\frac{1}{\lambda}$ if necessary we may assume that $\lambda\in R_\LL$. If $\val(\lambda)=\val(1-\lambda)=0$ then $C$ has good reduction and $g(\Gamma)=0$ which is a contradiction. After replacing $\lambda$ with  $\frac{\lambda-1}{\lambda}$ if necessary we may assume that $1-\lambda$ is invertible in $R_\LL$. Thus, $j(\Gamma)=2\val(\lambda)$, since, locally, the singularity of the total space is of the form $XY=\lambda^2$. Recall that the $j$-invariant of $C$ is given by $j(C)=2^8\frac{(\lambda^2-\lambda+1)^3}{\lambda^2(\lambda-1)^2}$. Thus, $j(\Gamma)=2\val(\lambda)=-\val(j(C))$ as required.
\end{proof}
\noindent We would like to define the groups $\EE_G^\bullet(\Gamma, A, j)$ and $\CE_G^\bullet(\Gamma, A, j)$ similarly to $\EE_G^\bullet(\Gamma, A)$  and $\CE_G^\bullet(\Gamma, A)$. It turns out that $\EE_G^\bullet(\Gamma, A, j)$ can be defined only if $l(e)\colon G\to G$ is an isomorphism for any bounded edge $e$ with non-trivial slope, e.g., $G$ is a field of characteristic prime to $l(e)$ for all $e$. However, $\CE_G^\bullet(\Gamma, A, j)$ can be defined for any $G$.

Let $\Gamma$ be an $N_\RR$-parameterized tropical curve of genus one satisfying an affine constraint $A$, and $e_1,\dotsc, e_k$ be the finite edges in the shortest cycle generating the first homology of $\Gamma$. Fix an orientation on the edges of $\Gamma$ for which $e_1,\dotsc, e_k$ is an oriented cycle. This induces an isomorphism $N_e\cong\ZZ$ for all $e\in E^b(\Gamma)$ with $N_e\ne 0$. Let $G$ be an abelian group, and $\delta_G\colon \left(\bigoplus_{v\in V^f(\Gamma)}N_G\right)\oplus\left(\bigoplus_{e\in E^b(\Gamma)}(N_e)_G\right)\to G$ be the map given by $\delta_G(x_v)=0$, $\delta_G(x_e)=x_e$ if $e=e_i$ for some $1\le i\le k$, and $\delta_G(x_e)=0$ otherwise.
\begin{notation} Denote by $\CE_G^1(\Gamma, A, j)$ and $\CE_G^2(\Gamma, A, j)$ the kernel and the cokernel of the map $\beta_G\times\gamma_G\times\delta_G$
\begin{equation}\label{eq: betaGxgammaGxdeltaG}
\left(\bigoplus_{v\in V^f(\Gamma)}N_G\right)\oplus\left(\bigoplus_{e\in E^b(\Gamma)}(N_e)_G\right)\to \left(\bigoplus_{e\in E^b(\Gamma)}N_G\right)\times\left(\bigoplus_{i=1}^k(N/L_i)_G\right)\times G\, .
\end{equation}
\end{notation}
\begin{Claim}\label{claim:EvsEJ}
There exists a natural exact sequence
$$0\to \CE_G^1(\Gamma, A, j)\to\CE_G^1(\Gamma, A)\to G\to \CE_G^2(\Gamma, A, j)\to\CE_G^2(\Gamma, A)\to 0\, .$$
\end{Claim}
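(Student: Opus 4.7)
The plan is to realize both $\CE_G^\bullet(\Gamma, A)$ and $\CE_G^\bullet(\Gamma, A, j)$ as cohomologies of two-term complexes that fit into a short exact sequence of complexes, and then extract the claim from the associated long exact sequence (equivalently, from the snake lemma).

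First, I would observe that the map $\beta_G\times\gamma_G\times\delta_G$ defining $\CE_G^\bullet(\Gamma, A, j)$ is related to the map $\beta_G\times\gamma_G$ defining $\CE_G^\bullet(\Gamma, A)$ in a transparent way: if we abbreviate the common source by $X$ and set
$$Y:=\left(\bigoplus_{e\in E^b(\Gamma)}N_G\right)\oplus\left(\bigoplus_{i=1}^k(N/L_i)_G\right),$$
then the projection $\pi\colon Y\oplus G\to Y$ onto the first summand satisfies $\pi\circ(\beta_G\times\gamma_G\times\delta_G)=\beta_G\times\gamma_G$. This yields a commutative square whose left column is the identity and whose right column is $\pi$.

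Next, I would assemble this square into a short exact sequence of two-term complexes (placed in degrees $0$ and $1$):
$$0\to (0\to G)\to (X\to Y\oplus G)\to (X\to Y)\to 0,$$
where the leftmost complex has $G$ sitting in degree $1$ and zero in degree $0$. Since $H^0(0\to G)=0$, $H^1(0\to G)=G$, and the cohomologies of the other two complexes are by definition $\CE_G^1(\Gamma,A,j), \CE_G^2(\Gamma,A,j)$ and $\CE_G^1(\Gamma,A), \CE_G^2(\Gamma,A)$, the long exact cohomology sequence reads exactly
$$0\to \CE_G^1(\Gamma, A, j)\to\CE_G^1(\Gamma, A)\to G\to \CE_G^2(\Gamma, A, j)\to\CE_G^2(\Gamma, A)\to 0.$$

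There is no substantive obstacle here; the only point worth checking is that the connecting homomorphism $\CE_G^1(\Gamma, A)\to G$ is literally the restriction of $\delta_G$ to $\ker(\beta_G\times\gamma_G)$. This follows by tracing an element through the snake: an $x\in X$ with $(\beta_G\times\gamma_G)(x)=0$ lifts tautologically to $x$ in the middle column; its image under $\beta_G\times\gamma_G\times\delta_G$ is $(0,\delta_G(x))$, which projects to $\delta_G(x)\in G$. Naturality of the sequence is inherited from naturality of the snake lemma applied to the functorial square above, completing the argument.
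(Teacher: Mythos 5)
Your proposal is correct and is essentially the paper's own argument: the paper likewise obtains the claimed sequence as the long exact cohomology sequence of the short exact sequence of two-term complexes $0\to G[-1]\to \bigl(\text{complex defined by }\beta_G\times\gamma_G\times\delta_G\bigr)\to \bigl(\text{complex defined by }\beta_G\times\gamma_G\bigr)\to 0$, which is exactly your $0\to(0\to G)\to(X\to Y\oplus G)\to(X\to Y)\to 0$. Your explicit identification of the connecting homomorphism as $\delta_G$ restricted to $\ker(\beta_G\times\gamma_G)$ is a correct elaboration that the paper leaves implicit.
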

\begin{proof} This sequence is nothing but the long exact sequence of cohomology associated to $0\to G[-1]\to \eqref{eq: betaGxgammaGxdeltaG}\to \eqref{eq:bGxgammaG}\to 0$.
\end{proof}
\begin{definition}
Let $\Gamma$ be an $N_\RR$-parameterized tropical curve of genus one satisfying an affine constraint $A$, and $G$ be an abelian group. The pair $(\Gamma, A)$ is called {\it elliptically $G$-regular} if $A$ is a simple constraint for $\Gamma$, and $\CE_G^2(\Gamma, A, j)=0$. Otherwise it is called {\it elliptically $G$-superabundant}.
\end{definition}

It is not difficult to check that $(\Gamma, A)$ is elliptically $\kk$-regular if and only if $(\Gamma, A)$ is $\kk$-regular, and the locus of tropical curves satisfying the constraint $A$ and having fixed tropical $j$-invariant $j(\Gamma)$ in the universal deformation space of $\Gamma$ has codimension $\codim(A) + 1$.

The proofs of the following two propositions and of the claim are identical to the proofs of Propositions  \ref{prop:E12ForSubdivisions}, \ref{prop:E1E2GammaGammaBar} and Claim~\ref{cl:e1k*}:
\begin{proposition}\label{prop:E12forSubdivisionsConstraintsJ}
Let $\Gamma$ and $\Gamma'$ be $N_\RR$-parameterized tropical curves, and assume that $\Gamma$ is obtained from $\Gamma'$ by subdivision of some of its edges. Pick an orientation on $\Gamma'$, and consider the induced orientation on $\Gamma$. Assume that $\Gamma'$ satisfies an affine constraint $A$. Then $\Gamma$ satisfies $A$, $\CE^2_G(\Gamma',A,j)\cong \CE^2_G(\Gamma,A,j)$, and the following sequence is exact:
$$0\to \bigoplus_{v\in V^f(\Gamma)\setminus V^f(\Gamma')}N_{e_v}\to \CE^1_G(\Gamma,A,j)\to \CE^1_G(\Gamma',A,j)\to 0,$$ where $e_v\in E(\Gamma')$ is the edge subdivided by the vertex $v$. In particular, if $\Gamma$ satisfies \eqref{stcond} and $\Gamma'=\Gamma^{\rm st}$ then $\bigoplus_{v\in V^f(\Gamma)\setminus V^f(\Gamma')}N_{e_v}=\bigoplus_{v\in V_2^f(\Gamma)}N_{e_v}.$
\end{proposition}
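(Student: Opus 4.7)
The plan is to mimic the diagram chase in the proof of Proposition~\ref{prop:E12ForSubdivisions} essentially verbatim, adjoining one extra $G$-summand to the codomain of the complex to account for $\delta_G$. Since the $N$-part of the morphism of complexes is already understood from the proofs of Propositions~\ref{prop:E12ForSubdivisions} and \ref{prop:E12forSubdivisionsConstraints}, the only new content is the compatibility of $\delta_G$ with subdivision.

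First I would check that $\Gamma$ satisfies $A$: subdivision fixes $V^\infty$, and by Proposition~\ref{prop:paramtropcurandsubdivisions} the values of $h_\Gamma$ at the infinite vertices and at the adjacent finite vertices coincide with those of $h_{\Gamma'}$. Applying the balancing condition at each new two-valent vertex (as in the proof of Proposition~\ref{prop:paramtropcurandsubdivisions}) shows that the slope is constant along every subdivided chain $e_0,\dots,e_r$, so the oriented cycle $e_1,\dots,e_k$ of $\Gamma'$ realizing the tropical $j$-invariant lifts canonically to a coherently oriented chain in $\Gamma$ of equal total length; under the induced orientation the oriented generator of $N_{e_i}$ transports to that of each subdivided $N_{e_{i,k}}$. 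I would then build the morphism from the complex \eqref{eq: betaGxgammaGxdeltaG} for $\Gamma$ to the same complex for $\Gamma'$: reuse the vertex-projection and edge-summation arrows from the proof of Proposition~\ref{prop:E12ForSubdivisions} on the $N_G$- and $(N_e)_G$-summands, and put the identity map on the constraint factor $\bigoplus_i (N/L_i)_G$ and on the $\delta_G$-factor $G$. Commutativity of the $\delta_G$-square reduces, under the identifications just fixed, to the tautology $\sum_k y_{i,k}=\sum_k y_{i,k}$, while edge summands outside the cycle map to zero on both sides.

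The snake lemma, applied exactly as in the proof of Proposition~\ref{prop:E12ForSubdivisions}, then delivers the isomorphism $\CE^2_G(\Gamma,A,j)\cong\CE^2_G(\Gamma',A,j)$ together with the short exact sequence with kernel $\bigoplus_{v\in V^f(\Gamma)\setminus V^f(\Gamma')}(N_{e_v})_G$. For the closing clause about $\Gamma^{\rm st}$, any finite vertex of $\Gamma$ absent from $\Gamma^{\rm st}$ is either two-valent (contributing a summand $N_{e_v}$) or a leaf of an attached tree, in which case the balancing condition forces $N_{e_v}=0$; only the summands indexed by $V_2^f(\Gamma)$ survive. The main obstacle is simply the orientation bookkeeping in the $\delta_G$-square, but this is precisely what the definition of the induced orientation is designed to make automatic, so no new homological input beyond that of Proposition~\ref{prop:E12ForSubdivisions} is required.
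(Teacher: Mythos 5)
Your proposal is correct and matches the paper's treatment: the paper gives no separate argument here, stating only that the proof is identical to those of Propositions~\ref{prop:E12ForSubdivisions} and \ref{prop:E12forSubdivisionsConstraints}, which is exactly the diagram you reuse. The one genuinely new ingredient for the $j$-version --- that under the induced orientation the distinguished generators of the slopes along a subdivided cycle edge agree with that of the original edge, so the $\delta_G$-square commutes with the chain-summation map --- is precisely what you check, so no further input is needed.
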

\begin{proposition}\label{prop:E1E2GammaGammaBarConstraintsJ} Let $\Gamma$ and $\overline{\Gamma}$ be as in Proposition \ref{prop:contOfTrCurve}.
Assume that $\Gamma$ satisfies a simple constraint $A$, and that $g(\Gamma)=g(\overline{\Gamma})=1$. Then $A$ is a simple constraint for $\overline{\Gamma}$, and the natural maps $\CE^1_G(\overline{\Gamma}, A,j)\to \CE^1_G(\Gamma, A,j)$ and $\CE^2_G(\overline{\Gamma},A,j)\to \CE^2_G(\Gamma,A,j)$ are isomorphisms.
\end{proposition}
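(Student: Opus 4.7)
The strategy is to reduce the statement to Proposition \ref{prop:E1E2GammaGammaBarConstraints} by threading in the long exact sequence of Claim \ref{claim:EvsEJ}. First I would invoke Proposition \ref{prop:E1E2GammaGammaBarConstraints} to conclude that $A$ is automatically a simple constraint for $\overline{\Gamma}$, and that the natural maps $\CE^1_G(\overline{\Gamma}, A) \to \CE^1_G(\Gamma, A)$ and $\CE^2_G(\overline{\Gamma}, A) \to \CE^2_G(\Gamma, A)$ are isomorphisms: indeed, the hypothesis $g(\Gamma)=g(\overline{\Gamma})=1$ makes the cokernel term $N_G^{g(\Gamma)-g(\overline{\Gamma})}$ vanish.

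Next I would analyze how the $j$-invariant piece behaves under contraction. Since $g(\Gamma)=g(\overline{\Gamma})=1$, the subgraph $\Gamma_0 \subset \Gamma$ (defined in Proposition \ref{prop:contOfTrCurve}) is a forest, because any cycle in $\Gamma_0$ would strictly decrease the genus upon contraction. The unique simple cycle of $\Gamma$ therefore uses only finitely many $\Gamma_0$-edges (of trivial slope) together with the edges of non-trivial slope that are in canonical bijection with the edges of the simple cycle of $\overline{\Gamma}$. Since edges $e \in E(\Gamma_0)$ satisfy $N_e = 0$, they contribute nothing to the map $\delta_G$ from definition \eqref{eq: betaGxgammaGxdeltaG}. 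Consequently the two $\delta_G$ maps (one for $\Gamma$ and one for $\overline{\Gamma}$) are compatible with the projection on the rest of the complex, and the induced map $G \to G$ between the ``torus'' terms of Claim \ref{claim:EvsEJ} applied to $\overline{\Gamma}$ and to $\Gamma$ is the identity.

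Then I would assemble the commutative ladder of six-term exact sequences produced by applying Claim \ref{claim:EvsEJ} to $\overline{\Gamma}$ and to $\Gamma$, connected vertically by the maps induced by the contraction $\Gamma \to \overline{\Gamma}$. The two adjacent vertical maps between the $\CE^\bullet_G(-,A)$ terms are isomorphisms by the first step; the middle vertical map $G \to G$ is the identity by the second step. A standard double application of the five-lemma yields that the outer vertical maps $\CE^1_G(\overline{\Gamma}, A, j) \to \CE^1_G(\Gamma, A, j)$ and $\CE^2_G(\overline{\Gamma}, A, j) \to \CE^2_G(\Gamma, A, j)$ are isomorphisms, which is the claim.

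The only nontrivial point is verifying that the middle $G \to G$ is the identity, which in turn hinges on the remark that the equal-genus assumption forces $\Gamma_0$ to be a forest and hence the simple cycle to be carried by non-contracted edges. Once this compatibility of ``shortest cycles'' is in place, the rest is the same formal diagram chase that proves Propositions \ref{prop:E12ForSubdivisions} and \ref{prop:E1E2GammaGammaBar}, as the parenthetical remark preceding the statement indicates.
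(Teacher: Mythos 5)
Your proof is correct, but it is organized differently from what the paper intends. The paper's proof is declared to be ``identical to the proofs of Propositions~\ref{prop:E12ForSubdivisions} and \ref{prop:E1E2GammaGammaBar}'', i.e.\ one writes down the map from the complex \eqref{eq: betaGxgammaGxdeltaG} for $\overline{\Gamma}$ to the one for $\Gamma$ (diagonal embeddings on the vertex factors, inclusions on the edge factors, identity on the $\bigoplus_i(N/L_i)_G$ and $G$ factors) and computes directly that the induced map on cokernels of the vertical arrows is an isomorphism, exactly as in the snake-type argument of Proposition~\ref{prop:E1E2GammaGammaBar}. You instead reduce to the already-proved Proposition~\ref{prop:E1E2GammaGammaBarConstraints} (where $g(\Gamma)=g(\overline{\Gamma})$ kills the $N_G^{g(\Gamma)-g(\overline{\Gamma})}$ term) and then run the five lemma on the commutative ladder of six-term sequences from Claim~\ref{claim:EvsEJ}. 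Both arguments hinge on the same two observations, which you state correctly: equality of genera forces $\Gamma_0$ to be a forest, so the unique cycle of $\Gamma$ maps onto the unique cycle of $\overline{\Gamma}$ with the non-contracted cycle edges in bijection; and the contracted edges have $N_e=0$, hence $(N_e)_G=0$, so they are invisible to $\delta_G$ and the induced map on the $G$-term is the identity (with compatible orientations). Your route buys economy --- no repetition of the explicit kernel/cokernel identification, just formal homological algebra on top of the constrained case --- at the cost of having to verify the commutativity of the ladder, including the orientation compatibility of the two chosen cycles, which you address adequately.
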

\begin{Claim}\label{cl:e1jk*}
Assume that the pair $(\Gamma, A)$ is elliptically $\kk$-regular. Then $(\Gamma, A)$ is elliptically $\kk^*$-regular. If, in addition, $c(\Gamma)=0$ and $\codim (A)+1=\rank(\Gamma)$ then $|\CE^1_{\kk^*}(\Gamma, A, j)|=|\CE^2(\Gamma, A, j)|$.
\end{Claim}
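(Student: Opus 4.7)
My plan is to imitate the proof of Claim~\ref{cl:e1k*} verbatim in the elliptic setting. The only preliminary input I need is the analog of the universal-coefficient claim from earlier in the section: because the complex \eqref{eq: betaGxgammaGxdeltaG} is the tensor product with $G$ of a two-term complex of finitely generated free abelian groups (its source is a direct sum of the lattices $N$ and $N_e$; its target is a direct sum of $N$'s, the quotients $N/L_i$ which are free by Definition~\ref{def:afconstr}, and $\ZZ$), the same short-resolution argument used earlier yields the natural identification $\CE^2_G(\Gamma, A, j) = \CE^2(\Gamma, A, j) \otimes_\ZZ G$ together with a natural short exact sequence
$$0 \to \CE^1(\Gamma, A, j) \otimes_\ZZ G \to \CE^1_G(\Gamma, A, j) \to \Tor^1_\ZZ(\CE^2(\Gamma, A, j), G) \to 0.$$

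Granted this, the first assertion is immediate. Elliptic $\kk$-regularity of $(\Gamma, A)$ gives $\CE^2(\Gamma, A, j) \otimes_\ZZ \kk = \CE^2_\kk(\Gamma, A, j) = 0$, so $\CE^2(\Gamma, A, j)$ is a finite abelian group of order prime to $\chara(\kk)$. Since $\kk$ is algebraically closed, $\kk^*$ is uniquely $n$-divisible for every such $n$, hence $\CE^2(\Gamma, A, j) \otimes_\ZZ \kk^* = 0$, i.e., $\CE^2_{\kk^*}(\Gamma, A, j) = 0$. Combined with the hypothesis that $A$ is a simple constraint, this is exactly elliptic $\kk^*$-regularity.

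For the cardinality identity I would first show $\CE^1(\Gamma, A, j) = 0$ under the additional hypotheses. Over $\RR$ the stacky and non-stacky complexes coincide (all $\mu_{l(e)}(\RR)$ and $\RR/l(e)\RR$ vanish), so $\CE^1_\RR(\Gamma, A, j) = \EE^1_\RR(\Gamma, A, j)$. By the deformation-theoretic description recorded just before Claim~\ref{claim:EvsEJ}, when $c(\Gamma) = 0$ this is the tangent space at $\Gamma$ to the locus of tropical deformations of $\Gamma$ satisfying $A$ and preserving $j(\Gamma)$, a locus of codimension $\codim(A) + 1$ in the $\rank(\Gamma)$-dimensional universal deformation space; under $\codim(A) + 1 = \rank(\Gamma)$ this locus is zero-dimensional, forcing $\CE^1_\RR(\Gamma, A, j) = 0$. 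Because $\CE^1(\Gamma, A, j)$ is a subgroup of the free abelian source of \eqref{eq: betaGxgammaGxdeltaG} it is itself free, so it vanishes. The short exact sequence then collapses to $\CE^1_{\kk^*}(\Gamma, A, j) \cong \Tor^1_\ZZ(\CE^2(\Gamma, A, j), \kk^*)$. Writing $\CE^2(\Gamma, A, j) = \bigoplus_i \ZZ/n_i$ with each $n_i$ prime to $\chara(\kk)$, one has $\Tor^1_\ZZ(\ZZ/n_i, \kk^*) = \mu_{n_i}(\kk^*)$, of order $n_i$ since $\kk$ is algebraically closed; multiplying gives the required identity. The only point beyond formal homological algebra is the deformation-theoretic vanishing of $\CE^1_\RR(\Gamma, A, j)$, and even this is essentially already asserted in the paragraph preceding the claim.
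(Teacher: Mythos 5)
Your proof is correct and takes essentially the same route as the paper, whose own proof of this claim is simply declared identical to that of Claim~\ref{cl:e1k*}: universal coefficients for the two-term complex of free abelian groups \eqref{eq: betaGxgammaGxdeltaG}, elliptic $\kk$-regularity forcing $\CE^2(\Gamma,A,j)$ to be finite of order prime to $\chara(\kk)$ and hence killed by $\otimes_\ZZ\kk^*$, and, under $c(\Gamma)=0$ and $\codim(A)+1=\rank(\Gamma)$, the vanishing of $\CE^1(\Gamma,A,j)$ so that $\CE^1_{\kk^*}(\Gamma,A,j)\cong\Tor^1_\ZZ\bigl(\CE^2(\Gamma,A,j),\kk^*\bigr)$ has order $|\CE^2(\Gamma,A,j)|$. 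Your justification of $\CE^1(\Gamma,A,j)=0$ via the codimension-$(\codim(A)+1)$ remark following the definition of elliptic regularity, combined with freeness of the kernel, correctly fills in a step the paper leaves implicit.
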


\section{Tropical degenerations and tropical reductions.}\label{sec:trdegtrlim}

\subsection{$\Gamma^{\rm tr}$ and two fans.}
Let $\Gamma$ be an $N_\QQ$-parameterized $\QQ$-tropical curve.
\begin{definition} We define $\Sigma_{\Gamma, \eta}$ to be the fan in $N_\RR$ generated by the rays $\RR_+h_\Gamma(v)$ for $v\in V^\infty(\Gamma)$.
\end{definition}
Let us now define another fan associated to $\Gamma$. Consider the set $K_\Gamma$ consisting of the following cones in $N_\RR\oplus\RR$:
\begin{itemize}
    \item[(i)] the zero cone,
    \item[(ii)] $\rho_v=\Span_{\RR_+}\{(h_\Gamma(v), 1)\}$ for $v\in V^f(\Gamma)$,
    \item[(iii)] $\rho_v=\Span_{\RR_+}\{(h_\Gamma(v), 0)\}$ for $v\in V^\infty(\Gamma)$,
    \item[(iv)] $\sigma_e=\Span_{\RR_+}\{(h_\Gamma(v), 1), (h_\Gamma(v'), 1)\}$ for $e\in E^b_{vv'}(\Gamma)$,
    \item[(v)] $\sigma_e=\Span_{\RR_+}\{(h_\Gamma(v), 1), (h_\Gamma(v'), 0)\}$ for $e\in E^\infty_{vv'}(\Gamma)$ with $v\in V^f(\Gamma)$.
\end{itemize}
Note that $K_\Gamma$ is not necessarily a fan, since the intersection of two different cones in $K_\Gamma$ need not be a common face of these cones. However, after subdividing the cones in $K_\Gamma$, one gets a fan. The following claim is obvious:
\begin{Claim} Let $W$ be the set of rays consisting of the one-dimensional cones of the form $\sigma\cap\tau$ with $\sigma,\tau\in K_\Gamma$. Then there exists a unique fan $\Sigma_\Gamma$, such that $\Sigma_\Gamma^1=W$, and $|\Sigma_\Gamma|=\cup_{\sigma\in K_\Gamma}\sigma$.
\end{Claim}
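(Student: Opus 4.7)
The plan is to construct $\Sigma_\Gamma$ by refining $K_\Gamma$ and then check that the refinement is forced. Observe first that every cone in $K_\Gamma$ has dimension $\le 2$, and that the failure of $K_\Gamma$ to be a fan can only come from pairs of two-dimensional cones $\sigma_e,\sigma_{e'}$ whose intersection is a ray passing through the relative interior of one of them. Indeed, a one-dimensional cone $\rho_v$ already lies in the half-space $N_\RR\times\{0\}$ or on the ray through $(h_\Gamma(v),1)$, so $\rho_v\cap\sigma_{e'}$ is either $\{0\}$, all of $\rho_v$, or a subray of $\rho_v$ ending at the origin; in each case it is a face of $\rho_v$, and it is a face of $\sigma_{e'}$ precisely when it is one of the extremal rays of $\sigma_{e'}$. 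Similarly $\sigma_e\cap\sigma_{e'}$ is at most a ray through the origin.

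Given this, I would take $W$ to be, as defined, the set of rays of the form $\sigma\cap\tau$ that are not already among the extremal rays of the $\sigma_e$'s, together with those extremal rays themselves; then refine each $\sigma_e$ by cutting it along every ray in $W$ that meets its relative interior. Since $\sigma_e$ is two-dimensional, the rays of $W$ contained in $\sigma_e$ are totally ordered by the angular order between its two bounding rays, and splitting $\sigma_e$ along them yields a finite collection of closed two-dimensional cones whose pairwise intersections are extremal rays in $W$ or the origin. I would then take $\Sigma_\Gamma$ to be the collection consisting of the zero cone, the rays in $W$, and all the two-dimensional cones produced by this refinement.

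Three things must be checked, all straightforward. First, every face of a cone in $\Sigma_\Gamma$ lies in $\Sigma_\Gamma$: the faces of a two-dimensional cone of the refinement are its two bounding rays, which lie in $W$ by construction. Second, the intersection of two cones of $\Sigma_\Gamma$ is a common face: for rays this is immediate since distinct rays in $W$ meet only at the origin; for a ray and a two-dimensional cone, the intersection is either $\{0\}$ or the whole ray (which must then be an extremal ray of the two-dimensional cone, by the way we refined); for two two-dimensional cones in the refinement, they either lie in the same $\sigma_e$ and share an extremal ray by the angular ordering argument, or lie in distinct $\sigma_e,\sigma_{e'}$, in which case their intersection equals $\sigma_e\cap\sigma_{e'}$, which is a ray of $W$ and, by our refinement, an extremal ray of each. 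Third, the support is unchanged by refinement, so $|\Sigma_\Gamma|=\bigcup_{\sigma\in K_\Gamma}\sigma$, and by definition $\Sigma_\Gamma^1=W$.

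For uniqueness, suppose $\Sigma'$ is any fan with $\Sigma'^1=W$ and $|\Sigma'|=\bigcup_{\sigma\in K_\Gamma}\sigma$. The support is a union of two-dimensional cones together with isolated rays $\rho_v$ for $v\in V^\infty(\Gamma)$ with $h_\Gamma(v)\ne 0$, so the maximal cones of $\Sigma'$ are either two-dimensional cones in some $\sigma_e$ or such isolated rays. An isolated ray of the support must be a ray in $\Sigma'^1=W$. A maximal two-dimensional cone $\tau\subset\Sigma'$ contained in some $\sigma_e$ has two bounding rays, each necessarily in $W$, and contains no other ray of $W$ in its relative interior (otherwise $\tau$ would not be a face-to-face intersection with the adjacent cone of $\Sigma'$ cut along that ray). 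Hence the two-dimensional cones of $\Sigma'$ are precisely the minimal two-dimensional cones cut out of the $\sigma_e$ by the rays of $W$, which coincide with those of $\Sigma_\Gamma$. The only step requiring any real care is the angular-ordering argument inside each $\sigma_e$, and once that is set up the rest of the verification is formal.
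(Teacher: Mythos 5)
The paper offers no argument here (it states the claim as obvious, after remarking that one gets a fan by subdividing the cones of $K_\Gamma$), and your construction by cutting each two-dimensional cone along the rays of $W$ is exactly the intended subdivision. However, your proof rests on a structural assertion that is false in general: that two two-dimensional cones $\sigma_e,\sigma_{e'}\in K_\Gamma$ intersect in at most a ray. Nothing in Definition~\ref{def:partrcur} prevents distinct edges from having overlapping collinear images; no genericity is assumed. For instance, take a trivalent vertex $v$ with an unbounded edge of direction $n$, a bounded edge $e'$ to a vertex $w$ with $h_\Gamma(w)=h_\Gamma(v)+cn$, $c>0$, and a third edge of direction $-2n$ (the balancing condition holds). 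Then $(h_\Gamma(w),1)=(h_\Gamma(v),1)+c(n,0)$, so $\sigma_{e'}\subsetneq\sigma_e$ and $\sigma_e\cap\sigma_{e'}$ is two-dimensional; more generally any two edges with collinear overlapping images produce a two-dimensional intersection. This breaks the step of your verification where you claim that two pieces of the refinement lying in distinct $\sigma_e,\sigma_{e'}$ intersect in ``$\sigma_e\cap\sigma_{e'}$, which is a ray of $W$,'' and it also undercuts the opening reduction of the uniqueness argument (a maximal cone of $\Sigma'$ need not obviously sit inside a single $\sigma_e$ when adjacent cones are coplanar; there one should instead argue directly that no ray of $W$ may meet the relative interior of a cone of a fan whose rays are $W$).

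The gap is repairable, and the repair is worth spelling out: whenever $\sigma_e\cap\sigma_{e'}$ is two-dimensional, the two cones span the same plane, so at height one their cross-sections are segments or rays on a common affine line, and the intersection is the cone over a common sub-segment or sub-ray whose endpoints are images of vertices (or whose recession ray is $\RR_+(h_\Gamma(v''),0)$ for an infinite vertex $v''$). All these extremal rays are already cones of $K_\Gamma$ of the form $\rho_u$, hence lie in $W$ (as self-intersections), so your refinement cuts $\sigma_e$ and $\sigma_{e'}$ identically over the overlap and the resulting pieces still meet along common faces in $W$. With this case added (and a word about the degenerate ``edges'' with trivial slope, whose cones are one-dimensional and coincide with the $\rho_v$, so they cause no harm), your construction and uniqueness argument go through; as written, the case analysis is incomplete.
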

\begin{proposition}\label{prop:Gammatr}
There exists a unique $N_\QQ$-parameterized $\QQ$-tropical curve $\Gamma^{\rm tr}$, obtained from $\Gamma$ by subdivision of edges with non-trivial slopes, such that $\Sigma_\Gamma=K_{\Gamma^{\rm tr}}$. In particular, $\Sigma_\Gamma=\Sigma_{\Gamma^{\rm tr}}$.
\end{proposition}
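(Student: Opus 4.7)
My plan is to read off from the fan $\Sigma_\Gamma$ exactly which edges of $\Gamma$ must be subdivided and at which positions the new finite vertices should sit, and then to invoke Corollary~\ref{cor:lengthsfromweights} to promote this combinatorial subdivision to an $N_\QQ$-parameterized $\QQ$-tropical curve.

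First I would analyze the refinement of $K_\Gamma$ by $\Sigma_\Gamma$. Since they share support, each $2$-cone $\sigma_e\in K_\Gamma$ (that is, every $e$ with non-trivial slope) is partitioned by $2$-cones of $\Sigma_\Gamma$, and the internal walls of this partition are the rays of $\Sigma_\Gamma^1$ lying in the relative interior of $\sigma_e$. For a bounded edge $e\in E^b_{vv'}(\Gamma)$ each such interior ray is generated by a height-one vector $(n,1)$ with $n=(1-\lambda)h_\Gamma(v)+\lambda h_\Gamma(v')$ for some $0<\lambda<1$, which I interpret as the prescription to insert a new finite vertex on $e$ at position $n$. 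For an unbounded edge $e\in E^\infty_{vv'}(\Gamma)$ with $v\in V^f(\Gamma)$, the interior rays take the form $\RR_+(h_\Gamma(v)+\lambda h_\Gamma(v'),1)$ with $\lambda>0$, giving new finite vertices at $h_\Gamma(v)+\lambda h_\Gamma(v')$. Edges with trivial slope contribute one-dimensional $\sigma_e$ that cannot be subdivided internally, so they are left alone.

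For existence, I would subdivide each edge $e$ of $\Gamma$ with non-trivial slope using the ordered sequence of $\lambda$-values thus obtained, and apply Corollary~\ref{cor:lengthsfromweights} with the prescribed positions $n_{v_k}$. This outputs a unique $N_\QQ$-parameterized $\QQ$-tropical curve $\Gamma^{\rm tr}$ on the resulting subdivided graph. By construction, each new vertex $v_k$ produces exactly the ray $\rho_{v_k}$ that prescribed it, and each new edge $e_k$ produces a $2$-cone $\sigma_{e_k}$ equal to one of the pieces into which $\Sigma_\Gamma$ partitions $\sigma_e$; in particular, $K_{\Gamma^{\rm tr}}$ and $\Sigma_\Gamma$ have the same sets of one- and two-dimensional cones. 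Since $\Sigma_\Gamma$ is a fan, so is $K_{\Gamma^{\rm tr}}$, whence $K_{\Gamma^{\rm tr}}=\Sigma_\Gamma$. The ``in particular'' clause follows because $\Sigma_{\Gamma^{\rm tr}}$ is defined as the coarsest fan refining $K_{\Gamma^{\rm tr}}$, which is already a fan.

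For uniqueness, any subdivision $\Gamma'$ of $\Gamma$ with $K_{\Gamma'}=\Sigma_\Gamma$ is forced to insert the same new vertices with the same $h_{\Gamma'}$-values as $\Gamma^{\rm tr}$: the new vertices are in bijection with the interior walls of the $\sigma_e$'s, and their $h$-values are pinned by the height-one slice of the corresponding ray. The uniqueness part of Corollary~\ref{cor:lengthsfromweights} then identifies $\Gamma'$ with $\Gamma^{\rm tr}$. The main point that requires care is verifying that every interior ray of $\sigma_e$ genuinely lies at height one, so that it corresponds to a finite vertex at a well-defined position rather than at infinity; this is where the non-triviality of the slopes enters, and it follows from the observation that the only rays of $K_\Gamma$ at height zero are the $\rho_{v'}$ for $v'\in V^\infty(\Gamma)$, each of which lies in $\sigma_e$ only as one of its own faces, never as an internal wall.
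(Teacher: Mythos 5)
Your proposal is correct and follows essentially the same route as the paper's proof: subdivide each edge of non-trivial slope at the height-one points of the rays of $\Sigma_\Gamma$ interior to $\sigma_e$, invoke Corollary~\ref{cor:lengthsfromweights} to get the parameterized structure, and deduce uniqueness from the same corollary. Your extra checks (that interior rays of the $\sigma_e$ have positive height, and the verification $K_{\Gamma^{\rm tr}}=\Sigma_\Gamma$) simply spell out steps the paper leaves implicit or calls obvious.
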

\begin{proof}
To construct $\Gamma^{\rm tr}$ we apply steps 1 and 2 of Algorithm~\ref{alg:trcurves} to the graph $\Gamma$: Let $e\in E_{vv'}(\Gamma)$ be an edge with a non-trivial slope, and $\sigma_e$ be the corresponding cone. If $\sigma_e\in \Sigma_\Gamma$ then we leave this edge as is, otherwise $\sigma_e$ is a union of two-dimensional cones in $\Sigma_\Gamma$. Let $\rho_1,\dotsc ,\rho_r\in\Sigma^1_\Gamma$ be the rays contained in the interior of $\sigma_e$, and let $(n_k,1)\in\rho_k$ be the corresponding vectors. Then we subdivide the edge $e$ with $r$ new vertices $v_1,\dotsc ,v_r\in e$, and set $h_{\Gamma^{\rm tr}}(v_i):=n_i$. The proposition now follows from Corollary~\ref{cor:lengthsfromweights}. The uniqueness of $\Gamma^{\rm tr}$ is obvious.
\end{proof}
\begin{remark}
To any edge $e\in E(\Gamma^{\rm tr})$ with $N_e\ne 0$ corresponds a cone $\sigma_e\in \Sigma^2_{\Gamma^{\rm tr}}$, to any finite vertex $v\in V^f(\Gamma^{\rm tr})$ corresponds a ray $\rho=\RR_+(h_\Gamma(v),1)\in \Sigma^1_{\Gamma^{\rm tr}}$, and to any infinite vertex $v\in V^\infty(\Gamma^{\rm tr})$ with $h_\Gamma(v)\ne 0$ corresponds a ray $\rho=\RR_+(h_\Gamma(v),0)\in \Sigma^1_{\Gamma^{\rm tr}}$. Each ray/cone corresponds to at least one vertex/edge, however several vertices/edges may correspond to the same ray/cone.

Note that there is a natural embedding $\Sigma^1_{\Gamma, \eta}\hookrightarrow\Sigma^1_\Gamma$, and we will often identify $\Sigma^1_{\Gamma, \eta}$ with its image in $\Sigma^1_\Gamma$.
\end{remark}
\begin{notation}\label{not:VrohEsigmanrho}
Let $\rho\in \Sigma^1_{\Gamma^{\rm tr}}$ be a ray, and $\sigma\in \Sigma^2_{\Gamma^{\rm tr}}$ be a two-dimensional cone. We denote by $V_\rho(\Gamma^{\rm tr})$ the set of vertices $v\in V(\Gamma^{\rm tr})$ that correspond to the ray $\rho$ and by $E_\sigma(\Gamma^{\rm tr})$ the set of edges corresponding to the cone $\sigma$. For $\rho\in \Sigma^1_{\Gamma^{\rm tr}, \eta}$ we denote by $n_\rho$ the primitive integral vector in the ray $\rho\subseteq N_\RR$, and for $\rho\in \Sigma^1_{\Gamma^{\rm tr}}\setminus\Sigma^1_{\Gamma^{\rm tr}, \eta}$, we denote by $n_\rho$ the unique rational vector such that $(n_\rho, 1)\in \rho$.
\end{notation}
\begin{definition}\label{def:lrholsigma} Let $\sigma\in \Sigma^2_{\Gamma^{\rm tr}}$ and $\rho\in \Sigma^1_{\Gamma^{\rm tr}, \eta}\subset \Sigma^1_{\Gamma^{\rm tr}}$ be cones. We define the multiplicities
$l(\sigma):=\lcm\{l(e)\,|\,e\in E_\sigma(\Gamma^{\rm tr})\}$ and
$l(\rho):=\lcm\{l(v)\,|\,v\in V_\rho(\Gamma^{\rm tr})\}$, where $l(e)$ and $l(v)$ are the multiplicities defined in Definition~\ref{def:proppartropcur}.
\end{definition}

\subsection{Toric varieties assigned to $N_\QQ$-parameterized $\QQ$-tropical curves.}

Pick a natural number $a\in \NN$. Let us equip $\RR$ with the lattice $a\ZZ$, and  $N_\RR\oplus\RR$ with the lattice $N\oplus a\ZZ$, and let $X(\Gamma, a)=X(\Gamma^{\rm tr}, a)$ be the toric variety over $\ZZ$ associated to the fan $\Sigma_\Gamma=\Sigma_{\Gamma^{\rm tr}}$.
The projection $N_\RR\oplus\RR\to \RR$ induces a natural map from the fan $\Sigma_\Gamma$ to the fan $\{0, \RR_+\}$. Hence the variety $X(\Gamma, a)$ admits a natural projection to $\AAA^1_a$, where the subscript $a$ indicates that the integral structure on $\RR$ is given by $a\ZZ$.
Let now $X(\Gamma, \eta)$ be the toric variety associated to the fan $\Sigma_{\Gamma, \eta}$. The following proposition is an easy exercise in toric geometry:
\begin{proposition}\label{prop:PropTorMod}
$ $

\begin{enumerate}
    \item The morphism $X(\Gamma, a)\to \AAA^1_a$ is flat.
    \item Let $\KK(\AAA^1_a)$ be the field of rational functions on $\AAA^1_a$. Then the general fiber of $X(\Gamma, a)\to \AAA^1_a$ is canonically isomorphic to $X(\Gamma, \eta)\times_{\Spec\ZZ}\Spec \KK(\AAA^1_a)$.
    \item The fiber over $0$ is reduced if and only if $an_\rho\in N$ for any ray $\rho\in\Sigma_\Gamma^1\setminus \Sigma_{\Gamma,\eta}^1$. Furthermore, if it is reduced then it is a union of irreducible components parameterized by the rays $\rho\in\Sigma_\Gamma^1\setminus \Sigma_{\Gamma,\eta}^1$, the component corresponding to $\rho$ is the closure $\overline{O}_\rho$ of the orbit $O_\rho$, and
        $$\overline{O}_{\rho_1}\cap \overline{O}_{\rho_2}=\left\{
                                    \begin{array}{ll}
                                      \overline{O}_\sigma, & \hbox{if $\sigma=\rho_1+\rho_2\in \Sigma_\Gamma$;} \\
                                      \emptyset, & \hbox{otherwise.}
                                    \end{array}
                                  \right.
        $$
    \item If $a$ is divisible by $a'$ then $X(\Gamma, a)$ is isomorphic to the normalization of the base change $X(\Gamma, a')\times_{\AAA^1_{a'}}\AAA^1_a$.
\end{enumerate}
\end{proposition}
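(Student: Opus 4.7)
All four statements can be verified on affine charts of $X(\Gamma,a)$ indexed by the cones $\sigma\in\Sigma_\Gamma$, using standard facts about monoid algebras. Fix such a $\sigma$ and set $A_\sigma := \ZZ[\sigma^\vee\cap (M \oplus \tfrac{1}{a}\ZZ)]$. The morphism $X(\Gamma,a)\to \AAA^1_a = \Spec\ZZ[t_a]$ comes from the projection of fans $N_\RR \oplus \RR \to \RR$, and $t_a$ pulls back on each chart to the monomial $x^{(0,1/a)}$.

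For (1), $A_\sigma$ is an integral domain (monoid ring of a torsion-free cancellative monoid) and $x^{(0,1/a)}$ is nonzero, hence a nonzerodivisor; so $A_\sigma$ is torsion-free, and thus flat, over the principal ideal domain $\ZZ[t_a]$. For (2), inverting $t_a$ inverts $(0,\tfrac{1}{a})$ in the monoid and identifies $A_\sigma[t_a^{-1}]$ with $\ZZ[\bar\sigma^\vee\cap M] \otimes_\ZZ \ZZ[\tfrac{1}{a}\ZZ]$, where $\bar\sigma := \sigma \cap (N_\RR\times\{0\})$. The cones $\bar\sigma$ obtained this way are exactly the cones of $\Sigma_{\Gamma,\eta}$, so gluing yields an isomorphism $X(\Gamma,a)_{t_a\neq 0} \cong X(\Gamma,\eta)\times \GG_m$ over the complement of the zero fiber, whose restriction to the generic point of $\AAA^1_a$ is $X(\Gamma,\eta)\times_{\Spec\ZZ}\Spec\KK(\AAA^1_a)$.

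For (3), the scheme-theoretic zero fiber is cut out by the monomial $x^{(0,1/a)}$, and the standard toric orbit-divisor formula reads
$$(x^{(0,1/a)}) \;=\; \sum_\rho \langle (0,\tfrac{1}{a}),\, v_\rho\rangle\, \overline{O}_\rho,$$
where the sum runs over rays $\rho\in\Sigma_\Gamma^1$ whose primitive integral generator $v_\rho\in N\oplus a\ZZ$ has strictly positive second coordinate, i.e., precisely the rays of $\Sigma_\Gamma^1\setminus\Sigma_{\Gamma,\eta}^1$. For such $\rho = \RR_+\cdot(n_\rho,1)$ the primitive generator in $N\oplus a\ZZ$ is $(am_\rho n_\rho,\, am_\rho)$, where $m_\rho\in\NN$ is minimal with $am_\rho n_\rho\in N$; pairing against $(0,\tfrac{1}{a})$ gives multiplicity $m_\rho$. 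Thus reducedness is equivalent to $m_\rho=1$ for every such $\rho$, that is, to $an_\rho\in N$. The intersection formula $\overline{O}_{\rho_1}\cap\overline{O}_{\rho_2}=\overline{O}_{\rho_1+\rho_2}$ (and $\emptyset$ otherwise) is the standard orbit-closure intersection on normal toric varieties.

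For (4), base change on each chart gives $A_{\sigma,a'}\otimes_{\ZZ[t_{a'}]}\ZZ[t_a]$ with $t_{a'}\mapsto t_a^{a/a'}$. This is the monoid algebra of the submonoid $S$ of $M\oplus\tfrac{1}{a}\ZZ$ generated by $\sigma^\vee\cap(M\oplus\tfrac{1}{a'}\ZZ)$ together with $\tfrac{1}{a}\ZZ_{\geq 0}$. Its $\RR_{\geq 0}$-span is $\sigma^\vee$, so its saturation inside $M\oplus\tfrac{1}{a}\ZZ$ is $\sigma^\vee\cap(M\oplus\tfrac{1}{a}\ZZ)$; passing from the monoid algebra of $S$ to that of its saturation is exactly the normalization, and gluing yields (4). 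The only points requiring care are the multiplicity computation in (3) and the saturation identification in (4); neither presents a conceptual obstacle, but both demand careful handling of the two lattices $N\oplus a\ZZ$ and $N\oplus a'\ZZ$ on $N_\RR\oplus\RR$.
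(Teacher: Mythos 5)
The paper gives no argument for this proposition (it is dismissed as ``an easy exercise in toric geometry''), and your chartwise reduction to monoid algebras for the lattice $N\oplus a\ZZ$ and its dual $M\oplus\tfrac1a\ZZ$ is exactly the standard route; parts (2)--(4) are essentially correct as you set them up. In (4), the one assertion you leave implicit -- that the base change $\ZZ[\check\sigma\cap(M\oplus\tfrac{1}{a'}\ZZ)]\otimes_{\ZZ[t_{a'}]}\ZZ[t_a]$ \emph{is} the monoid algebra of $S$ -- does need a line: since $\ZZ[t_a]$ is free over $\ZZ[t_{a'}]$ with basis $1,t_a,\dotsc,t_a^{d-1}$ ($d=a/a'$), the base change is $\bigoplus_{j=0}^{d-1}t_a^j\,\ZZ[\check\sigma\cap(M\oplus\tfrac{1}{a'}\ZZ)]$, and the monomials $s+(0,j/a)$ so obtained are pairwise distinct; this shows the natural surjection onto $\ZZ[S]$ is bijective, after which saturation $=$ normalization goes through as you say. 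Similarly, in (3) the implication ``all multiplicities equal one $\Rightarrow$ reduced'' uses that the zero fiber, being a hypersurface in a normal (hence $S_2$) scheme, is $S_1$ and so has no embedded components; worth a word, but standard.

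The one step that actually fails as written is the justification of (1): by the paper's convention all toric varieties are over $\ZZ$, so $\ZZ[t_a]\cong\ZZ[t]$ is \emph{not} a principal ideal domain, and torsion-free does not imply flat over it (the ideal $(2,t)$ is torsion-free and not flat). The conclusion is still true and the fix is short: the monomials give a $\ZZ$-basis of $\ZZ[\check\sigma\cap(M\oplus\tfrac1a\ZZ)]$ permuted injectively by multiplication by $t_a=x^{(0,1/a)}$, so the chart decomposes as a $\ZZ[t_a]$-module into a direct sum over the cosets of $\ZZ\cdot(0,\tfrac1a)$, each summand being isomorphic to $\ZZ[t_a]$ (if the coset meets $\check\sigma$ in a half-line) or to $\ZZ[t_a,t_a^{-1}]$ (if $(0,-\tfrac1a)\in\check\sigma$), and both are flat. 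Alternatively, keep your PID argument fiberwise over $\Spec\ZZ$ (where $\QQ[t]$ and $\FF_p[t]$ are PIDs and the fibers of the chart are domains) and conclude by the fibrewise criterion of flatness, using that the chart is free, hence flat, over $\ZZ$.
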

\begin{remark}\label{rem:RedComp} Note first, that by the construction there is a distinguished rational function $t$ on $X(\Gamma, a)$ lifting the coordinate on $\AAA^1_a$. Recall that $\overline{O}_\sigma$ is isomorphic to the toric variety $X_{Star(\sigma)}$, and in our case (if the fiber over $0$ is reduced) they are {\it canonically isomorphic} thanks to the existence of $t$. If $\sigma\in \Sigma_\Gamma^2$ then $\overline{O}_\sigma=O_\sigma$, and if $\sigma=\rho\in\Sigma_\Gamma^1\setminus \Sigma_{\Gamma,\eta}^1$ then $Star(\sigma)$ is the fan in $N_\RR$ consisting of the zero cone and the following collection of rays: for each $\rho'\in\Sigma_\Gamma^1$ such that $\rho+\rho'\in\Sigma_\Gamma^2$ the cone $\RR_+n_{\rho'}$ if $\rho'\in\Sigma_{\Gamma,\eta}^1$ and the cone $\RR_+(n_{\rho'}-n_\rho)$ otherwise.
\end{remark}
\begin{notation}\label{not:tordegfortrcurve} Let $\Gamma$ be an $N_\QQ$-parameterized $\QQ$-tropical curve. Let $\LL/\FF$ be a finite field extension, and $t_\LL\in R_\LL$ be a uniformizer. Then there exists a unique morphism $\Spec R_\LL\to \AAA^1_{e_\LL}$ for which the pullback of the coordinate on $\AAA^1_{e_\LL}$ is the uniformizer $t_\LL$. We denote
$X_{R_\LL}(\Gamma):=X(\Gamma, e_\LL)\times_{\AAA^1_{e_\LL}}\Spec R_\LL$, $X_{\kk}(\Gamma):=X_{R_\LL}(\Gamma)\times_{\Spec R_\LL}\Spec\kk$, and $X_{\LL}(\Gamma):=X_{R_\LL}(\Gamma)\times_{\Spec R_\LL}\Spec\LL\cong X(\Gamma,\eta)\times_{\Spec\ZZ}\Spec\LL$.
\end{notation}

\subsection{Tropical degenerations and $\Gamma$-reductions.}
Let $C$ be a smooth complete curve over $\overline{\FF}$, and $f \colon  C\dashrightarrow T_{N,\overline{\FF}}$ be a rational map. For any point $p\in C$, let $n_p$ be the order of vanishing of $f^*(x^m)$ at $p$. Note that $n_p=0$ if and only if $f$ is defined at $p$, hence $n_p=0$ for all but finitely many points $p$.
\begin{Claim}\label{cl:extendsToD} Consider the toric variety $X\to \Spec \overline{\FF}$ associated to the following fan in $N_\RR$: the zero cone, and the collection of rays $\sigma_p:=\RR_+n_p$. Then the map $f$ extends to a morphism $f\colon C\to X$.
\end{Claim}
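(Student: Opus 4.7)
The plan is to extend $f$ point by point using the valuative criterion-style observation that $\sigma_p$ is built precisely so that $X_{\sigma_p}$ absorbs the pole/zero behavior of $f$ at $p$. First I would verify that the proposed collection of cones is genuinely a fan in $N_\RR$: it consists only of the zero cone and rays through origin, and distinct rays meet only at $\{0\}$, which is a face of each, so the face/intersection axioms hold trivially. Finiteness of the fan follows because $f$ is defined away from a finite set, and $n_p=0$ exactly when $f$ is regular at $p$; hence only finitely many $n_p$ are nonzero, and only finitely many distinct rays $\sigma_p$ appear.

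Next, for each $p\in C$ I assign the cone $\sigma_p\in\Sigma$ given by $\{0\}$ if $n_p=0$ and by $\RR_+ n_p$ otherwise, and I consider the affine open $X_{\sigma_p}=\Spec\overline{\FF}[\check\sigma_p\cap M]\subset X$. The key observation is that, by the very definition of $n_p$ as the linear functional $m\mapsto \mathrm{ord}_p(f^*(x^m))$, every lattice point $m\in\check\sigma_p\cap M$ satisfies $\langle n_p,m\rangle\ge 0$, so $f^*(x^m)\in\CO_{C,p}$. Sending $x^m\mapsto f^*(x^m)$ therefore defines a ring homomorphism $\overline{\FF}[\check\sigma_p\cap M]\to\CO_{C,p}$, i.e.\ a morphism $\Spec\CO_{C,p}\to X_{\sigma_p}$ that agrees with $f$ on the generic point of $C$.

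Finally I would glue these local extensions into a global morphism $f\colon C\to X$. Since $X$ is separated (being a toric variety) and $C$ is a smooth curve, any extension of a rational map from $C$ to $X$ is unique where it exists; the local extensions constructed above must therefore agree on overlaps with $f$ on $C\setminus\{p: n_p\ne 0\}$, so they patch together to a morphism defined on all of $C$.

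The potentially subtle point, and what I would single out as the main thing to watch, is not an obstacle so much as a bookkeeping check: ensuring that the cone $\sigma_p$ I assign is actually in the fan $\Sigma$ and that $n_p$ lies in its relative interior (or at the origin when $n_p=0$). Once that compatibility is in place, the verification that $\langle n_p,m\rangle\ge 0$ for $m\in\check\sigma_p\cap M$ is immediate and the whole argument is essentially a reformulation of the definition of $n_p$.
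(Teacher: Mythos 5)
Your proposal is correct and follows essentially the same route as the paper's proof: the key step in both is that $m\in\check\sigma_p\cap M$ means $(n_p,m)\ge 0$, so $f^*(x^m)$ is regular at $p$ by the definition of $n_p$, giving the extension over each affine chart $X_{\sigma_p}$. The extra checks you add (the fan axioms, finiteness, and gluing via separatedness) are routine bookkeeping that the paper leaves implicit.
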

\begin{proof}
Since $X=\cup_p\Spec\overline{\FF}[\check{\sigma}_p\cap M]$, it is sufficient to show that for any $p$, the following holds: the functions $f^*(x^m)$ are regular at $p$ for all $m\in \check{\sigma}_p\cap M$. Note that $m\in \check{\sigma}_p\cap M$ if and only if $(n_p,m)\ge 0$. Hence $f^*(x^m)$ is regular at $p$ by the definition of $n_p$.
\end{proof}
\begin{remark} Let $D\subset C$ be the indeterminacy locus of $f$. Then, in terms of Notation~\ref{not:tordegfortrcurve}, $X\cong X_{\LL}(\Gamma^{\rm st}_{C,D,f})\times_{\Spec \LL}\Spec \overline{\FF}$.
\end{remark}

Assume that $(C,D)$ is stable. The goal of this section is to construct a distinguished integral model of $(C,D,f,X)$.
Set $\Gamma:=\Gamma^{\rm st}_{C,D,f}$, and let $\Gamma^{\rm tr}$ be the $N_\QQ$-parame\-terized $\QQ$-tropical curve associated to $\Gamma$ in Proposition~\ref{prop:Gammatr}. Then, by Proposition~\ref{prop:modelforgraph}, there exists a unique (up-to an isomorphism and a field extension) integral model $(C^{\rm tr}_{R_\LL}, D_{R_\LL})$ of $(C, D)$, whose associated $N_\QQ$-parameterized $\QQ$-tropical curve is $\Gamma^{\rm tr}$.
\begin{definition}
We say that $\LL$ is {\it sufficiently ramified} for $(C, D, f)$ if the stable model of $(C, D)$ is defined over $R_\LL$, and condition \eqref{eq:resonlen} is satisfied for any bounded edge $e\in E^b(\Gamma^{\rm tr})$, i.e., $e_\LL|e|\in\NN$.
\end{definition}
\begin{remark} Note that the model $C^{\rm tr}$ is defined over any sufficiently ramified field extension $\LL$ by Proposition~\ref{prop:modelforgraph}.
\end{remark}
\begin{proposition}
If $\LL$ is a sufficiently ramified extension for $(C, D, f)$ then the morphism $f_\LL\colon  C_\LL\to X_\LL$ extends to a stable map $f^{\rm tr}_{R_\LL}\colon C^{\rm tr}_{R_\LL}\to X_{R_\LL}(\Gamma^{\rm tr})$.
\end{proposition}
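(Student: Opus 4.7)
The plan is to construct the extension locally on $C^{\rm tr}_{R_\LL}$ and glue via separatedness of the target. Since $X_{R_\LL}(\Gamma^{\rm tr})$ is separated over $R_\LL$, as a toric $R_\LL$-scheme, any extension of $f_\LL$ is unique, so the problem is local. Moreover, $C^{\rm tr}_{R_\LL}$ is $R_\LL$-flat with generic fiber $C_\LL$ on which $f_\LL$ is already defined, so we obtain a rational map $C^{\rm tr}_{R_\LL}\dashrightarrow X_{R_\LL}(\Gamma^{\rm tr})$ whose domain of definition is an open subset containing $C_\LL$. It then suffices to verify extendibility at every point of the special fiber.

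For each special-fiber point I would assign a cone $\sigma\in\Sigma_\Gamma=K_{\Gamma^{\rm tr}}$ (using Proposition~\ref{prop:Gammatr}) such that the image of a neighborhood of the point lies in the affine toric chart $U_\sigma\subset X_{R_\LL}(\Gamma^{\rm tr})$: at the generic point $\eta_v$ of a component $C_v$ for $v\in V^f(\Gamma^{\rm tr})$, assign $\rho_v=\RR_+(h_\Gamma(v),1)$; at a node $p_e$ with $e\in E^b_{vv'}(\Gamma^{\rm tr})$, assign $\sigma_e=\Span_{\RR_+}\{(h_\Gamma(v),1),(h_\Gamma(v'),1)\}$; at the specialization $p_v$ of a marked point for $v\in V^\infty(\Gamma^{\rm tr})$ adjacent to $v'\in V^f(\Gamma^{\rm tr})$ via an unbounded edge $e$, assign $\sigma_e=\Span_{\RR_+}\{(h_\Gamma(v'),1),(h_\Gamma(v),0)\}$. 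Smooth, non-nodal, non-marked points of the special fiber lie in the regular locus of $C^{\rm tr}_{R_\LL}$ and are handled by extension from the corresponding generic-point case.

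The central step is a valuation computation. At $\eta_v$ the local ring is a DVR with $\val_{\eta_v}(t_\LL)=1$ and $\val_{\eta_v}(f^*(x^m))=e_\LL\langle m,h_\Gamma(v)\rangle$ by the definition of $h_\Gamma$, so pairing against $\rho_v$ shows that every character $\chi_{(m,s)}$ with $(m,s)\in\check\sigma\cap(M\oplus\tfrac{1}{e_\LL}\ZZ)$ and $\sigma=\rho_v$ pulls back to $f^*(x^m)\,t_\LL^{se_\LL}$ of nonnegative valuation. At a node $p_e$ I would pass to the formal completion $R_\LL[[x,y]]/(xy-t_\LL^{r_e+1})$ with $r_e+1=e_\LL|e|$ and $\{y=0\}=C_v$, $\{x=0\}=C_{v'}$; the two branches impose two valuation inequalities, and the condition $(m,s)\in\check\sigma_e$ is exactly the simultaneous nonnegativity of both. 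The marked-point case is analogous: at $p_v$ the local ring is a two-dimensional regular local ring whose two codimension-one points are $C_{v'}$ and $q_v$, with $\val_{q_v}(f^*(x^m))=\langle m,h_\Gamma(v)\rangle$ by definition, matching the dual cone inequality for the infinite ray.

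It remains to establish stability. The only potentially unstable components of the special fiber of $C^{\rm tr}_{R_\LL}$ are the rational curves $C_v$ for the new vertices $v\in V^f(\Gamma^{\rm tr})\setminus V^f(\Gamma^{\rm st})$, each of valency two. At such a $v$ the ray $\rho_v$ is a one-dimensional cone of $\Sigma_\Gamma$, and the two adjacent nodes $p_{e_\pm}$ map to two distinct positive-codimension toric strata (corresponding to the distinct two-dimensional cones $\sigma_{e_\pm}$ containing $\rho_v$), so $f^{\rm tr}_{R_\LL}(C_v)$ is a positive-dimensional curve and the map is stable. The main anticipated difficulty lies in the node computation, where the $A_{r_e}$ surface singularity of $C^{\rm tr}_{R_\LL}$ obscures naive Weil-divisor bookkeeping; a clean alternative is to extend first to the minimal regular resolution $C'_{R_\LL}$ (whose tropical curve refines $\Gamma^{\rm tr}$ into edges of length $1/e_\LL$, making $X_{R_\LL}$ smooth along the refined strata), and then to descend through the toric contraction $X_{R_\LL}(\Gamma')\to X_{R_\LL}(\Gamma^{\rm tr})$.
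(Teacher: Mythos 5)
Your proposal is correct and follows essentially the same route as the paper: extend chart by chart over the affine toric opens attached to the cones of $\Sigma_{\Gamma^{\rm tr}}$ (rays at generic points of components, two-dimensional cones at nodes and at specializations of marked points), checking that the characters of the dual cones pull back to regular functions via the valuation interpretation of $h_{\Gamma^{\rm tr}}$. The one point where you anticipate trouble — the $A_{r_e}$ singularity at a node — is handled in the paper without any detour through the minimal regular model: the relevant functions $t_\LL^{e_\LL k}f_\LL^*(x^m)$ are regular in a punctured neighborhood of $p_e$ (in particular in codimension one), and normality of $C^{\rm tr}_{R_\LL}$ at $p_e$ then gives regularity at $p_e$ itself, so your "naive Weil-divisor bookkeeping" does suffice once algebraic Hartogs on the normal surface is invoked.
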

\begin{proof} First, note that the map $f_\LL$ extends to the generic points of the components of the reduction. Indeed, let $v\in V^f(\Gamma^{\rm tr})$ be a finite vertex, $C_v$ be the corresponding component of the reduction, and $\eta$ be its generic point. Then it is sufficient to check that $t_\LL^{e_\LL k}f_\LL^*(x^m)$ is regular at $\eta$ for all $(m,k)\in M\oplus\frac{1}{e_\LL}\ZZ$ satisfying $((m,k),(h_{\Gamma^{\rm tr}}(v),1))\ge 0$. But such $t_\LL^{e_\LL k}f_\LL^*(x^m)$ is regular at $\eta$ by the definition of $h_{\Gamma^{\rm tr}}(v)$. Moreover, the image of $\eta$ belongs to the open affine subset defined by the ray $\rho=\RR_+(h_{\Gamma^{\rm tr}}(v),1)$.

Second, if $p\in C_v$ is a {\em non-special point}, i.e., $p$ is different from $p_e$ for all edges $e$ with non-trivial slopes, then $f_\LL$ extends to $p$, since $C_{R_\LL}^{\rm tr}$ is normal and $f_\LL$ is defined in a punctured neighborhood of $p$.

Finally, it remains to prove that $f_\LL$ extends to the special points of the reduction. Let $p_e$ be the special point of the reduction corresponding to a bounded edge $e\in E_{vv'}$. Then the cone $\sigma_e$ is spanned by the rays $\rho,\rho'$ where $(e_\LL h_{\Gamma^{\rm tr}}(v),e_\LL)\in\rho$ and $(e_\LL h_{\Gamma^{\rm tr}}(v'),e_\LL)\in\rho'$ are primitive integral vectors (recall that the integral structure on the second factor of $N_\RR\oplus\RR$ is given by the lattice $e_\LL\ZZ$). In order to prove that $f_\LL$ extends to $p_e$, it is sufficient to show that $t_\LL^{e_\LL k}f_\LL^*(x^m)$ is regular at $p_e$ for all $(m,k)$ satisfying two inequalities: $((m,k),(h_{\Gamma^{\rm tr}}(v),1))\ge 0$ and $((m,k),(h_{\Gamma^{\rm tr}}(v'),1))\ge 0$. Note that such functions are regular in a punctured neighborhood of $p_e$, hence regular in codimension one. Since $C^{\rm tr}_{R_\LL}$ is normal at $p_e$ it follows that these functions are regular at $p_e$ as well. The case of special points corresponding to the unbounded edges is similar, and we leave it to the reader.
\end{proof}
\newpage
\begin{definition}\label{def:tropdegtropred}
$ $

\begin{enumerate}
  \item If $\Gamma=\Gamma^{\rm st}_{C,D,f}$ then $\Gamma^{\rm tr}$ is called the {\it $N_\QQ$-parameterized $\QQ$-tropi\-cal curve associated to the quadruple} $(C,D,f,X)$ and is denoted by $\Gamma^{\rm tr}_{C,D,f}$.
  \item The quadruple $(C^{\rm tr}_{R_\LL}, D_{R_\LL}, f^{\rm tr}_{R_\LL}, X_{R_\LL}(\Gamma^{\rm tr}))$ is called the {\it tropical degeneration} of $(C, D, f, X)$ over a sufficiently ramified extension $\LL$.
  \item The reduction $(C^{\rm tr}_\kk, D_\kk, f^{\rm tr}_\kk, X_\kk(\Gamma^{\rm tr}))$ of a tropical degeneration is called the {\it tropical reduction of $(C, D, f, X)$}.
\end{enumerate}
\end{definition}
\begin{remark}
$ $

\begin{enumerate}
\item The tropical degeneration depends on the choice of the uniformizer $t_\LL\in R_\LL$, while $C^{\rm tr}_\kk, D_\kk, X_\kk(\Gamma^{\rm tr})$, and the tropical curve associated to $(C, D, f, X)$ are independent of $t_\LL$ and of $\LL$. Note, however, that $f^{\rm tr}_\kk$ depends on $t_\LL$. In fact, for different choices of the uniformizer, $f^{\rm tr}_\kk$ differ by the action of a compatible family of $\chi_v\in T_{N,\kk}$, depending only on the residue class of the ratio of the uniformizers. Since each component of  $X_\kk(\Gamma^{\rm tr})$ is a toric variety (see Remark~\ref{rem:RedComp}), and in particular contains a distinguished point in the big orbit, we see that the tropical reduction depends on the uniformizer.
\item By Proposition \ref{prop:PropTorMod} and Remark \ref{rem:RedComp}, $X_\kk(\Gamma^{\rm tr})$ is a union of irreducible components $\overline{O}_\rho=X_{Star(\rho)}$ parameterized by the rays $\rho\in \Sigma^1_\Gamma\setminus\Sigma^1_{\Gamma, \eta}=\Sigma^1_{\Gamma^{\rm tr}}\setminus\Sigma^1_{\Gamma^{\rm tr}, \eta}$.
\item We will use the shorter notation $X_{R_\LL}^{\rm tr}:=X_{R_\LL}(\Gamma^{\rm tr})$ and $X_{\kk}^{\rm tr}:=X_{\kk}(\Gamma^{\rm tr})$, when no confusion is possible.
\end{enumerate}
\end{remark}

\begin{notation}
For $v\in V_\rho(\Gamma^{\rm tr})$, denote by $X_v$ the open subvariety of the component $\overline{O}_\rho\subseteq X_\kk^{\rm tr}$ defined by the subfan of $Star(\rho)$ consisting of the zero cone and of all rays of the form $\RR_+(h_{\Gamma^{\rm tr}}(v')-h_{\Gamma^{\rm tr}}(v))$ and $\RR_+h_{\Gamma^{\rm tr}}(v'')$, where $v'\in V^f(\Gamma^{\rm tr})$, $v''\in V^\infty(\Gamma^{\rm tr})$ for which $E_{vv'}(\Gamma^{\rm tr})\neq\emptyset\neq E_{vv''}(\Gamma^{\rm tr})$.
\end{notation}
\begin{remark}\label{rem:explicitformulaformap} It is easy to see that $f^{\rm tr}_{R_\LL}(C_v)\subset X_v$. Note that if $C_v\simeq \PP^1$ then one can describe the restriction of $f^{\rm tr}_{R_\LL}$ to $C_v$ explicitly. Indeed, let $y$ be a coordinate on $C_v$, $y_1,\dotsc,y_k\in C_v$ be the points of intersection of $C_v$ with other components of the reduction of $C$, $e_1,\dotsc,e_k\in E^b(\Gamma^{\rm tr})$ be the corresponding bounded edges, $y_{k+1},\dotsc,y_s$ be the specializations of the points of $D$ on the component $C_v$, and $v_1,\dotsc,v_s\in V(\Gamma^{\rm tr})$ be the corresponding vertices, i.e.  $y_i\in C_v\cap C_{v_i}$ for $1\le i\le k$ and $y_i$ is the specialization of $q_{v_i}$ for $i>k$. Then, since the pullback of $x^m$ to $C_v$ is invertible away from $y_1,\dotsc, y_s$, and has a zero of order $|e_i|^{-1}(h_{\Gamma^{\rm tr}}(v_i)-h_{\Gamma^{\rm tr}}(v), m)$ at $y_i$ for $i\le k$ and of order $(h_{\Gamma^{\rm tr}}(v_i), m)$ at $y_i$ for $i>k$, the restriction $f^{\rm tr}_{R_\LL}|_{C_v}$ is given by
\begin{equation}\label{eq:morphismRESTRICTEDtoCOMPONENTS}
\begin{array}{l}
  t_\LL^{-e_\LL(h_{\Gamma^{\rm tr}}(v),m)}\left(f^{\rm tr}_{R_\LL}|_{C_v}\right)^*(x^m)= \\
  \chi_v(m)\prod_{i=1}^k(y-y_i)^{|e_i|^{-1}\left(h_{\Gamma^{\rm tr}}(v_i)-h_{\Gamma^{\rm tr}}(v), m\right)}\prod_{i=k+1}^s(y-y_i)^{\left(h_{\Gamma^{\rm tr}}(v_i), m\right)}
\end{array}
\end{equation}
for some multiplicative character $\chi_v\colon M\to \kk^*$, which depends on the choice of the coordinate $y$.
\end{remark}
\begin{definition} Let $\Gamma$ be an $N_\QQ$-parameterized $\QQ$-tropical curve, $(C_\kk, D_\kk)$ be a semi-stable curve with marked points, and $f_\kk\colon C_\kk\to X_\kk(\Gamma^{\rm tr})$ be a morphism. Denote $X_\kk(\Gamma^{\rm tr})$ by $X_\kk^{\rm tr}$. Then the quadruple $(C_\kk, D_\kk, f_\kk, X_\kk^{\rm tr})$ is called a {\it $\Gamma$-reduction} if the following conditions are satisfied:
\begin{enumerate}
\item The set of irreducible components of $C_\kk$ is $\{C_v\}_{v\in V^f(\Gamma^{\rm tr})}$,
\item $D_\kk=\{p_e\}_{e\in E^\infty(\Gamma^{\rm tr})}$,
\item For any $e\in E^\infty(\Gamma^{\rm tr})$, $p_e\in C_v$ if and only if there exists $v'\in V^\infty(\Gamma^{\rm tr})$ such that $e\in E_{vv'}(\Gamma^{\rm tr})$,
\item $C_v\cap C_{v'}=\{p_e\}_{e\in E_{vv'}(\Gamma^{\rm tr})}$ for all $v,v'\in V^f(\Gamma^{\rm tr})$,
\item $f_\kk(C_v)\subset X_v$ for all $v\in V_\rho(\Gamma^{\rm tr})$, and
\item $(f_\kk|C_v)^*\partial X_v=\sum_{v'\in V(\Gamma^{\rm tr}),e\in E_{vv'}(\Gamma^{\rm tr})}l(e)p_e$ for all $v\in V^f(\Gamma^{\rm tr})$.
\end{enumerate}
If, in addition, all components of $C_\kk$ are rational then we say that $(C_\kk, D_\kk, f_\kk, X_\kk^{\rm tr})$ is a {\em Mumford $\Gamma$-reduction}.
\end{definition}
The last condition of the definition implies that if the reduction is Mumford, and $y$ is a coordinate on a component $C_v\subseteq C_\kk$, then the map $f_\kk|_{C_v}$ is given by formula \eqref{eq:morphismRESTRICTEDtoCOMPONENTS}.
\begin{Claim} The tropical reduction of $(C, D, f)$ is a $\Gamma^{\rm st}_{C, D, f}$-reduction. Furthermore, it is Mumford if and only if the curve $C$ is Mumford.
\end{Claim}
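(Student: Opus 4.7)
The plan is to verify the six axioms of a $\Gamma$-reduction for the tropical reduction $(C^{\rm tr}_\kk, D_\kk, f^{\rm tr}_\kk, X_\kk^{\rm tr})$, dealing with the combinatorial axioms first and saving the multiplicity axiom (6) for last.

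First I would observe that axioms (1)--(4) are essentially tautological: by construction, the $\QQ$-tropical curve associated to the nodal model $(C^{\rm tr}_{R_\LL},D_{R_\LL})$ is $\Gamma^{\rm tr}$, and the definition of that tropical curve in Subsection~\ref{subsec:trcurvesforcurvewithmp} identifies finite vertices with components, infinite vertices with marked points, bounded edges with nodes, and unbounded edges with specializations of marked points. This gives (1), (2), (3), and (4) at once, together with Notation~\ref{not:qvCvpvpe}. Axiom (5) was established in the course of the proof of the proposition that extends $f_\LL$ to a morphism $f^{\rm tr}_{R_\LL}\colon C^{\rm tr}_{R_\LL}\to X_{R_\LL}(\Gamma^{\rm tr})$: the image of the generic point of $C_v$ lies in the affine open chart attached to the ray $\rho=\RR_+(h_{\Gamma^{\rm tr}}(v),1)$, and the edges incident to $v$ determine precisely the cones of $\mathrm{Star}(\rho)$ that appear in $X_v$.

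The main content is axiom (6), the multiplicity identity $(f_\kk|_{C_v})^*\partial X_v=\sum_{e} l(e)p_e$. To compute the pullback of the toric boundary, I would reduce to each ray $\rho_e$ of $\mathrm{Star}(\rho)$ separately: for an edge $e\in E_{vv'}(\Gamma^{\rm tr})$, pick $m\in M$ (or $M\oplus\frac{1}{e_\LL}\ZZ$ in the unbounded case) dual to the primitive generator of $\rho_e$, equal to $1$ on $\rho_e$ and to $0$ on every other ray of $X_v$. Then $x^m$ is a local equation of the corresponding toric divisor at the generic point of $\overline{O}_{\rho_e}$, so $(f_\kk|_{C_v})^*(x^m)$ is a local equation of $(f_\kk|_{C_v})^*D_{\rho_e}$ away from the other special points. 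Now, by the computation carried out in Remark~\ref{rem:explicitformulaformap}, the order of vanishing of $(f_\kk|_{C_v})^*(x^m)$ at $p_{e}$ is exactly $|e|^{-1}\bigl(h_{\Gamma^{\rm tr}}(v')-h_{\Gamma^{\rm tr}}(v),m\bigr)=l(e)(n_{e},m)=l(e)$ (and similarly $l(e)$ in the unbounded case from the factor $(y-y_i)^{(h(v_i),m)}$), while its order at every other $p_{e'}$ is $l(e')(n_{e'},m)=0$ by the choice of $m$. Summing over all rays of $\mathrm{Star}(\rho)$ through $v$ gives the desired identity.

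For the Mumford part, I would use that the tropical model $(C^{\rm tr}_{R_\LL},D_{R_\LL})$ is obtained from the stable model $(C^{\rm st}_{R_\LL},D_{R_\LL})$ by the sequence of blowups from Proposition~\ref{prop:modelforgraph}, which inserts chains of $\PP^1$'s at nodes and at specializations of marked points without affecting the birational type of the components originally present. Consequently every component of $C^{\rm tr}_\kk$ is rational if and only if every component of the stable reduction is rational, i.e., if and only if $C$ is Mumford. The expected routine obstacle is simply bookkeeping in the multiplicity calculation (making sure the signs of $n_e$ induced by the choice of orientation match the formula in Remark~\ref{rem:explicitformulaformap}), but no conceptual difficulty arises.
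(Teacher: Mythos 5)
Your verification is correct in substance, and since the paper's own proof is simply ``Obvious,'' your detailed unwinding of the definitions (graph data for (1)--(4), the extension argument for (5), the order-of-vanishing computation for (6), and the blow-up description of $C^{\rm tr}_{R_\LL}$ over the stable model for the Mumford statement) is exactly the intended reading; there is no alternative route to compare.

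One step, as literally written, does not go through and needs a small repair. You ask for $m\in M$ ``equal to $1$ on $\rho_e$ and to $0$ on every other ray of $X_v$.'' Such an $m$ need not exist: the curve $\Gamma^{\rm tr}$ is produced from $\Gamma^{\rm st}_{C,D,f}$ precisely by subdividing edges, so it has many two-valent finite vertices $v$ whose two adjacent edges point in \emph{opposite} directions; for such $v$ the fan of $X_v$ contains the rays $\RR_+n_e$ and $\RR_+(-n_e)$, and no linear functional is $1$ on one and $0$ on the other. The fix is immediate and does not change your conclusion: since the fan of $X_v$ has no two-dimensional cones, the boundary divisors $D_{\rho'}\subset X_v$ are pairwise disjoint, so $(f_\kk|_{C_v})^*\partial X_v$ can be computed one divisor at a time, locally at each special point. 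At $p_e$ one only needs \emph{some} $m$ with $(n_e,m)=1$; then $x^m$ is, up to a unit near $f_\kk(p_e)$, a local equation of $D_{\rho_e}$, and the order-of-vanishing description (which underlies Remark~\ref{rem:explicitformulaformap} and the proof of Lemma~\ref{lem:bal}, and holds for arbitrary, not necessarily rational, components $C_v$) gives ${\rm ord}_{p_e}\bigl(f_\kk^*x^m\bigr)=l(e)(n_e,m)=l(e)$, while non-special points of $C_v$ map into the torus and contribute nothing. With this adjustment your argument is complete.
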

\begin{proof} Obvious.
\end{proof}
Next, we shall analyze the set of isomorphism classes of Mumford $\Gamma$-reductions for a given $N_\QQ$-paramete\-rized $\QQ$-tropical curve $\Gamma$.
\begin{proposition}\label{prop:ParamOfTrLimits} Let $\Gamma$ be an $N_\QQ$-parameterized $\QQ$-tropical curve satisfying \eqref{stcond} for which $c(\Gamma)=0$. Assume that $\EE^2_{\kk^*}(\Gamma)=1$. Then the set of isomorphism classes of Mumford $\Gamma$-reductions has a natural structure of a $\EE^1_{\kk^*}(\Gamma^{\rm st})$-torsor over the product $\prod_{v\in V^f(\Gamma^{\rm st})}\CM_{0,val(v)}$, where $\CM_{0,n}$ denotes the moduli space of smooth genus zero curves with $n$ marked points over the field $\kk$.
\end{proposition}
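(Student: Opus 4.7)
The plan is to identify a Mumford $\Gamma$-reduction with a pair consisting of an isomorphism class of source curve (the point in $\prod_{v\in V^f(\Gamma^{\rm st})}\CM_{0,\val(v)}$) and a tuple of characters $(\chi_v)_{v\in V^f(\Gamma^{\rm tr})}$ satisfying a compatibility condition at each node. First, for the forgetful map: given a Mumford $\Gamma$-reduction $(C_\kk,D_\kk,f_\kk,X_\kk^{\rm tr})$, for each $v\in V^f(\Gamma^{\rm st})\subseteq V^f(\Gamma^{\rm tr})$ the component $C_v\simeq\PP^1$ carries $\val_{\Gamma^{\rm tr}}(v)=\val_{\Gamma^{\rm st}}(v)$ special points (subdivision does not change valencies of pre-existing vertices), so defines a point of $\CM_{0,\val(v)}$. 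The extra components indexed by $v\in V^f(\Gamma^{\rm tr})\setminus V^f(\Gamma^{\rm st})$ are 2-valent $\PP^1$'s (under $c(\Gamma)=0$ no finite leaves occur), contributing only a unique point of $\CM_{0,2}$ and so invisible to the forgetful map.

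Next I fix a point in the target and analyze the fiber. Choose explicit coordinates $y$ on each $C_v$. Formula~\eqref{eq:morphismRESTRICTEDtoCOMPONENTS} then shows that every admissible restriction $f_\kk|_{C_v}\colon C_v\to X_v$ is pinned down by exactly one character $\chi_v\in\Hom(M,\kk^*)=N_{\kk^*}$, so a choice of $f_\kk$ is an element $(\chi_v)\in\bigoplus_{v\in V^f(\Gamma^{\rm tr})}N_{\kk^*}$. Evaluating~\eqref{eq:morphismRESTRICTEDtoCOMPONENTS} at $p_e\in C_v\cap C_{v'}$ on each side, the gluing condition $f_\kk|_{C_v}(p_e)=f_\kk|_{C_{v'}}(p_e)$ in the toric divisor $\overline{O}_{\sigma_e}$ with big torus $(N/N_e)_{\kk^*}$ becomes an equation of the form $\chi_v\chi_{v'}^{-1}\equiv\alpha_e\pmod{(N_e)_{\kk^*}}$ with $\alpha_e$ fixed by the positions of the $y_i$'s; equivalently $(\chi_v)$ lies in the preimage of a fixed $\alpha\in\bigoplus_e(N/N_e)_{\kk^*}$ under the quotient of $b_{\kk^*}$ described in Remark~\ref{rem:anotherformcomplex}. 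The hypothesis $\EE^2_{\kk^*}(\Gamma)=1$ together with Proposition~\ref{prop:E12ForSubdivisions} yields $\EE^2_{\kk^*}(\Gamma^{\rm tr})=1$, so $\alpha$ is in the image, the preimage is nonempty, and it is an $\EE^1_{\kk^*}(\Gamma^{\rm tr})$-torsor.

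Finally I pass to isomorphism classes by quotienting out source automorphisms that project to the identity of $\prod\CM_{0,\val(v)}$; these form $\prod_{v\in V^f(\Gamma^{\rm tr})\setminus V^f(\Gamma^{\rm st})}\GG_m$, one factor per 2-valent subdivisional $\PP^1$. Direct inspection of~\eqref{eq:morphismRESTRICTEDtoCOMPONENTS} shows that the reparametrization $y\mapsto\lambda y$ on such a $C_v$ modifies $\chi_v$ by $\lambda^{l(e_v)}\otimes n_{e_v}$; since $\kk$ is algebraically closed one has $(\kk^*)^{l(e_v)}=\kk^*$, so the image is the full subgroup $(N_{e_v})_{\kk^*}$. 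Applying Proposition~\ref{prop:E12ForSubdivisions} to $\Gamma^{\rm tr}$ and $\Gamma^{\rm st}$ gives the short exact sequence
\[
0\to\bigoplus_{v\in V^f(\Gamma^{\rm tr})\setminus V^f(\Gamma^{\rm st})}(N_{e_v})_{\kk^*}\to\EE^1_{\kk^*}(\Gamma^{\rm tr})\to\EE^1_{\kk^*}(\Gamma^{\rm st})\to 0,
\]
from which the quotient of the fiber by the source automorphisms is canonically an $\EE^1_{\kk^*}(\Gamma^{\rm st})$-torsor, proving the claim. The main technical point is the verification that node-compatibility reduces exactly to $b_{\kk^*}$ modulo $(N_e)_{\kk^*}$; this amounts to a toric evaluation in the local coordinates of each intersection divisor $\overline{O}_{\sigma_e}$ and follows from the explicit form~\eqref{eq:morphismRESTRICTEDtoCOMPONENTS}.
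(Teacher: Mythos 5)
Your proposal is correct and follows essentially the same route as the paper: both encode $f_\kk$ through the characters $\chi_v$ of formula \eqref{eq:morphismRESTRICTEDtoCOMPONENTS}, reduce the gluing at nodes to a condition modulo $(N_e)_{\kk^*}$ so that the fiber over a fixed marked curve is governed by the complex of Remark~\ref{rem:anotherformcomplex}, use $\EE^2_{\kk^*}(\Gamma^{\rm tr})=1$ (via Proposition~\ref{prop:E12ForSubdivisions}) for nonemptiness/surjectivity, and identify the coordinate changes on the two-valent subdivisional components with the kernel $\bigoplus_{v\in V_2^f(\Gamma^{\rm tr})}(N_{e_v})_{\kk^*}$ of $\EE^1_{\kk^*}(\Gamma^{\rm tr})\to\EE^1_{\kk^*}(\Gamma^{\rm st})$. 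The only difference is presentational: the paper phrases this as a transitive $\EE^1_{\kk^*}(\Gamma^{\rm tr})$-action on reductions whose kernel on isomorphism classes is computed, while you compute the fiber directly as a coset and then quotient by source automorphisms.
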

\begin{proof}
First, observe that $\EE^1_{\kk^*}(\Gamma^{\rm tr})$ acts naturally on the set of Mumford $\Gamma$-reductions. Indeed, let $(C_\kk, D_\kk, f_\kk, X_\kk^{\rm tr})$ be a Mumford $\Gamma$-reduction. Fix a coordinate on each component of $C_\kk$. Then the restriction $f_\kk|_{C_v}$ is given by a character $\chi_v$ (cf. \eqref{eq:morphismRESTRICTEDtoCOMPONENTS}), and the collection of characters $\chi=(\chi_v)$ must satisfy the following compatibility conditions at any $p_e\in C_v\cap C_{v'}$: $\chi_v\chi_{v'}^{-1}$ restricted to $N_e^\perp$ is a given character, which depends on the choice of the coordinates on $C_v$ and $C_{v'}$. Thus, for any element $\chi^0=(\chi_v^0)\in \EE^1_{\kk^*}(\Gamma^{\rm tr})$ the collection $\chi\chi^0=(\chi_v\chi_v^0)$ defines another morphism $\chi^0(f_\kk)\colon C_\kk\to X_\kk^{\rm tr}$, hence another Mumford $\Gamma$-reduction. Plainly, the action we have constructed is independent of the choice of the coordinates we made, and it induces an action on the isomorphism classes of Mumford $\Gamma$-reductions. Note that the latter action is transitive on the fibers of the natural projection to the product of the coarse moduli spaces $\prod_{v\in V^f(\Gamma)}\CM_{0,val(v)}$, and
$\bigoplus_{v\in V_2(\Gamma)}(N_{e_v})_{\kk^*}=\ker\big(\EE^1_{\kk^*}(\Gamma^{\rm tr})\to \EE^1_{\kk^*}(\Gamma^{\rm st})\big)$ is the kernel of this action. Hence, the induced action of $\EE^1_{\kk^*}(\Gamma^{\rm st})$ on the set of isomorphism classes of Mumford $\Gamma$-reductions is free, and the set of isomorphism classes of Mumford $\Gamma$-reductions is a torsor over its image under the natural projection to $\prod_{v\in V^f(\Gamma)}\CM_{0,val(v)}\cong\prod_{v\in V^f(\Gamma^{\rm st})}\CM_{0,val(v)}$. Finally, observe that if $\EE^2_{\kk^*}(\Gamma)=1$ then $\EE^2_{\kk^*}(\Gamma^{\rm tr})=1$ by Proposition~\ref{prop:E12ForSubdivisions}, and the projection to $\prod_{v\in V^f(\Gamma^{\rm st})}\CM_{0,val(v)}$ is surjective.
\end{proof}

\subsubsection{Tropical degenerations of toric constraints.}\label{subsec:tropdegofcons}

Let $L_i\subset N$, $1\le i\le k$, be sublattices of arbitrary  coranks greater than or equal to two, such that $N/L_i$ is torsion free for any $i$. Let $T_{L_i,\FF}\subset T_{N,\FF}$ be the corresponding subtori, $O=\{O_i\}_{i=1}^k$ be a set of $T_{L_i,\FF}$-orbits in $T_{N,\FF}$, and $A=A_O$ be the corresponding affine constraint. Consider an $N_\QQ$-parameterized $\QQ$-tropical curve $\Gamma$, and assume that $A$ is a simple constraint for $\Gamma$. Then $h_\Gamma(v_i)\in A_i$ for all $1\le i\le k$, where $v_i$ is the unique finite vertex connected to the $i$-th infinite vertex, which we denote by $v'_i$.

For any $1\le i\le k$, pick a point $r_i\in O_i$ such that the corresponding point in $A$ is precisely $h_\Gamma(v_i)$, i.e., the following equality holds: $\val(x^m(r_i))=(h_\Gamma(v_i),m)$ for all $m\in M$. Let $T_{L_i,\LL}\to T_{N,\LL}\subset X_\LL:=X_\LL(\Gamma)$ be the $T_{L_i,\LL}$-equivariant morphism sending $1\in T_{L_i,\LL}$ to $r_i$. Set $Y_\LL:=\coprod_{i=1}^k T_{L_i,\LL}$. Consider the map $Y_\LL\to T_{N,\LL}\subset X_\LL$, and let us construct a natural integral model $Y_{R_\LL}\hookrightarrow X_{R_\LL}(\Gamma)$ of $Y_\LL\hookrightarrow X_\LL$.
Let $\rho_i\in \Sigma^1_\Gamma$ be the ray corresponding to $v_i$, and $X_{\rho_i}\subset X_{R_\LL}(\Gamma)$ be the open subvariety defined by $\rho_i$. Note that $\val\left(t_\LL^{-(e_\LL h_\Gamma(v_i),m)}x^m(r_i)\right)=0$ for all $m\in M$ by the choice of $r_i$. Thus, the morphism $T_{L_i,\LL}\to T_{N,\LL}$ is given by $x^m\mapsto \chi_i(m)t_\LL^{(e_\LL h_\Gamma(v_i),m)}x^m$ for some character $\chi_i\colon M\to \LL^*$ satisfying $\val\circ\chi_i=0$. Hence the pullbacks of the regular functions on $X_{\rho_i}$ belong to $\CO(T_{L_i,R_\LL})$, and the map $T_{L_i,\LL}\to T_{N,\LL}$ extends to a morphism
$T_{L_i,R_\LL}\to X_{\rho_i}\subset X_{R_\LL}(\Gamma)$. Set $Y_{R_\LL}:=\coprod_{i=1}^k T_{L_i,R_\LL}$. Thus, by the construction, $Y_{R_\LL}$ is an integral model of $Y_\LL$ and the morphism  $Y_{R_\LL}\hookrightarrow X_{R_\LL}(\Gamma)$ extends $Y_\LL\hookrightarrow X_\LL$; we denote it by $g_{R_\LL}$.
\begin{definition} Morphism $g_{R_\LL}\colon Y_{R_\LL}\hookrightarrow X_{R_\LL}(\Gamma)$ is called the {\it tropical degeneration of toric constraint $O$ associated to $\Gamma$}, and the reduction $g_\kk\colon Y_\kk\hookrightarrow X_\kk(\Gamma)$ is called the {\it reduction of the toric constraint $O$ associated to $\Gamma$}.
\end{definition}
The reduction of the toric constraint can be written explicitly as $\coprod_{i=1}^k T_{L_i,\kk}\to T_{N,\kk}$, where each map  $T_{L_i,\kk}\to X_{\rho_i}\subset X_\kk$ is given by $x^m\mapsto \overline{\chi_i}(m)x^m$, and $\overline{\chi_i}$ is the composition of $\chi_i$ followed by the residue map $R_\LL\to \kk$ (recall that $\val\circ\chi_i=0$, hence $\chi_i(M)\subset R_\LL$).

\begin{definition}
Let $\Gamma$ be an $N_\QQ$-parameterized $\QQ$-tropical curve, $O$ be a toric constraint, and $(C_\kk, D_\kk, f_\kk, X_\kk^{\rm tr})$ be a $\Gamma$-reduction. $(C_\kk, D_\kk, f_\kk, X_\kk^{\rm tr})$ is called {\it $O$-constrained} if and only if $f_\kk(p_i)\in g_\kk(Y_\kk)$ for the first $k$ marked points $p_1,...,p_k\in D_\kk$.
\end{definition}
\begin{remark}
If $O$ is a toric constraint, and $(C,D,f)$ satisfies $O$, then the corresponding $\Gamma$-reduction is $O$-constrained.
\end{remark}

\begin{proposition}\label{prop:ParamOfTrLimitsConstr} In the above notation, if $c(\Gamma)=0$ and $\EE^2_{\kk^*}(\Gamma, A)=1$ then the set of isomorphism classes of $O$-constrained Mumford $\Gamma$-reductions has a natural structure of a $\EE^1_{\kk^*}(\Gamma^{\rm st}, A)$-torsor over $\prod_{v\in V^f(\Gamma^{\rm st})}\CM_{0,val(v)}$.
\end{proposition}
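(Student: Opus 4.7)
The strategy is to mirror the proof of Proposition~\ref{prop:ParamOfTrLimits} while tracking how the $O$-constraint restricts both the action and the fiber. First I would observe that Proposition~\ref{prop:E1E2GammaAGamma} gives a surjection $\EE^2_{\kk^*}(\Gamma, A)\twoheadrightarrow \EE^2_{\kk^*}(\Gamma)$, so the hypothesis $\EE^2_{\kk^*}(\Gamma,A)=1$ forces $\EE^2_{\kk^*}(\Gamma)=1$, and Proposition~\ref{prop:ParamOfTrLimits} already endows the set of isomorphism classes of Mumford $\Gamma$-reductions with the structure of an $\EE^1_{\kk^*}(\Gamma^{\rm st})$-torsor over $\prod_{v\in V^f(\Gamma^{\rm st})}\CM_{0,val(v)}$. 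The plan is then to cut out the $O$-constrained reductions and identify them as a sub-torsor.

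Next I would analyze how the $\EE^1_{\kk^*}(\Gamma^{\rm tr})$-action interacts with the constraint. Recall that an element $\chi^0=(\chi^0_v)\in\EE^1_{\kk^*}(\Gamma^{\rm tr})$ acts on a Mumford $\Gamma$-reduction by multiplying the local character $\chi_v$ in formula \eqref{eq:morphismRESTRICTEDtoCOMPONENTS} by $\chi^0_v$. For an $O$-constrained marked point $p_i\in C_{v_i}$ we have $f_\kk(p_i)\in g_\kk(T_{L_i,\kk})$, and since $g_\kk(T_{L_i,\kk})$ is a $T_{L_i,\kk}$-coset in $T_{N,\kk}$, the translated point $\chi^0_{v_i}\cdot f_\kk(p_i)$ remains in the same coset exactly when $\chi^0_{v_i}\in T_{L_i,\kk}$, equivalently when $\chi^0_{v_i}$ projects to zero in $(N/L_i)_{\kk^*}$. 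In the notation of \eqref{eq:bGxgammaG}, this is precisely the condition $\gamma_{\kk^*}(\chi^0)=0$, so the stabilizer of the set of $O$-constrained Mumford $\Gamma$-reductions inside $\EE^1_{\kk^*}(\Gamma^{\rm tr})$ is $\EE^1_{\kk^*}(\Gamma^{\rm tr},A)$. Exactly as in Proposition~\ref{prop:ParamOfTrLimits}, the subgroup $\bigoplus_{v\in V^f_2(\Gamma^{\rm tr})}(N_{e_v})_{\kk^*}$ acts trivially on isomorphism classes; passing to the quotient and invoking Proposition~\ref{prop:E12forSubdivisionsConstraints} produces a free action of $\EE^1_{\kk^*}(\Gamma^{\rm st},A)$ on the set of isomorphism classes of $O$-constrained Mumford $\Gamma$-reductions.

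The key remaining point is surjectivity of the projection onto $\prod_{v\in V^f(\Gamma^{\rm st})}\CM_{0,val(v)}$. Given a choice of pointed rational curve $(C_v;y_1,\dotsc,y_{val(v)})$ for each $v\in V^f(\Gamma^{\rm tr})$, the remaining datum of the map $f_\kk$ is a family of characters $(\chi_v)_{v\in V^f(\Gamma^{\rm tr})}\in\bigoplus_{v\in V^f(\Gamma^{\rm tr})}N_{\kk^*}$ subject to the nodal gluing conditions in $\bigoplus_{e\in E^b(\Gamma^{\rm tr})}(N/N_e)_{\kk^*}$ prescribed by formula \eqref{eq:morphismRESTRICTEDtoCOMPONENTS}, together with the $O$-constraint conditions in $\bigoplus_{i=1}^k (N/L_i)_{\kk^*}$. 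By the alternative description in Remark~\ref{rem:anotherformcomplex} (and the constraint-analogue noted after \eqref{eq:betaGxgammaG}), the obstruction to solving this linear system lies in $\EE^2_{\kk^*}(\Gamma^{\rm tr},A)$. But Proposition~\ref{prop:E12forSubdivisionsConstraints} gives $\EE^2_{\kk^*}(\Gamma^{\rm tr},A)\cong \EE^2_{\kk^*}(\Gamma,A)=1$, so the obstruction vanishes and the projection is surjective.

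Combining the three steps yields the claimed $\EE^1_{\kk^*}(\Gamma^{\rm st},A)$-torsor structure. I expect the main obstacle to be the third step, namely identifying the obstruction-to-lift for a choice of component-wise pointed rational curves with an element of $\EE^2_{\kk^*}(\Gamma^{\rm tr},A)$; once this identification is spelled out carefully, the hypothesis that this group is trivial together with the earlier propositions forces the surjectivity, and transitivity of $\EE^1_{\kk^*}(\Gamma^{\rm st},A)$ on the fibers follows automatically from Step~2.
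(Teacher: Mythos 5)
Your proposal is correct and follows essentially the same route as the paper, which simply adapts the character-twisting argument of Proposition~\ref{prop:ParamOfTrLimits}: the constraint cuts the acting group down to $\EE^1_{\kk^*}(\Gamma^{\rm tr},A)$, the kernel on isomorphism classes is again $\bigoplus_{v\in V_2^f(\Gamma^{\rm tr})}(N_{e_v})_{\kk^*}$, and surjectivity onto $\prod_{v\in V^f(\Gamma^{\rm st})}\CM_{0,val(v)}$ follows from the vanishing of $\EE^2_{\kk^*}(\Gamma^{\rm tr},A)\cong\EE^2_{\kk^*}(\Gamma,A)$. Your write-up just spells out the details the paper leaves implicit by saying the proof is identical.
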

\begin{proof} The proof is identical to the proof of Proposition~\ref{prop:ParamOfTrLimits}.
\end{proof}

We conclude this section by explaining the motivation for the introduction of the stacky tropical degenerations and reductions. In the tropical approach to enumerative problems, one counts algebraic curves satisfying certain constraints, e.g., toric constraints, in terms of their tropical reductions; or, combinatorially, in terms of the corresponding $N_\QQ$-parameterized $\QQ$-tropical curves. To do so, one must be able to reconstruct uniquely the algebraic curve from its tropical reduction, or, equivalently, to reconstruct the integral model from the tropical reduction. In other words, one must solve the following deformation-theoretic problem:
Given a diagram of solid arrows, extend it to a commutative square:
\begin{equation}\label{diag:intmodeltotalcentral}
\xymatrix@!0{
& & D^k_{R_\LL} \ar@{-->}[rrr]\ar@{-->}'[d][dd]
& & & Y_{R_\LL} \ar[dd]
\\
D^k_\kk \ar@{^{(}-->}[urr]\ar[rrr]\ar[dd]
& & & Y_\kk \ar@{^{(}->}[urr]\ar[dd]
\\
& & C_{R_\LL} \ar@{-->}'[rr][rrr]
& & & X_{R_\LL}^{\rm tr}
\\
C_\kk \ar[rrr]\ar@{^{(}-->}[urr]
& & & X_\kk^{\rm tr} \ar@{^{(}->}[urr]
}
\end{equation}

One can compute the tangent and the obstruction spaces to  this deformation problem. In many cases, the deformation space has the expected dimension, e.g., the dimension is zero if one imposes the ``correct" number of constraints. However, usually, it is singular and obstructed! One of the reasons for this, is the non-trivial torsion in the normal sheaf of the map $f_\kk\colon C_\kk\to X_\kk^{\rm tr}$. In the next section we will equip tropical degenerations with a natural stacky structure, which will make the deformation space smooth and unobstructed in many cases. I shall mention that the idea, that by introducing an appropriate stacky structure, one can make the deformation space smooth and unobstructed I learned from Dan Abramovich.

\section{Toric stacks and stacky tropical degenerations.}\label{sec:toricstacksStackLimits}
In 2005, Borisov, Chen, and Smith introduced toric stacks \cite{BCS05}. Their construction gives rise to two kinds of stacks: smooth Deligne-Mumford stacks, if the fan is simplicial, and Artin stacks with infinite stabilizers otherwise. Below we introduce singular Deligne-Mumford stacks, or, more generally, tame Artin stacks with finite stabilizers. Our construction generalizes Borisov-Chen-Smith's construction, and produces the kind of toric stacks we need for the correspondence theorems.

\subsection{Toric stacks.}
\begin{definition}
Let $\Sigma$ be a fan in $N_\RR$. {\it Toric stacky data} is a collection $\Sigma'$ of sublattices $N'_\sigma\subseteq N_\sigma=N\cap \Span(\sigma)$ of maximal possible rank for all $\sigma\in\Sigma$, satisfying the following compatibility condition $N'_\sigma\cap \Span(\sigma\cap\tau)=N'_\tau\cap \Span(\sigma\cap\tau)$ for all $\sigma,\tau\in\Sigma$.
\end{definition}
For given toric stacky data $\Sigma'$, let us construct a tame Artin stack $\CX_{\Sigma'}$:
Let $\sigma\in\Sigma$ be a cone, and let $N'_\sigma\subset N_\sigma$ be the corresponding sublattice. Choose a sublattice $N'\subset N$ of full rank, such that $N'_\sigma=N'\cap \Span(\sigma)$. Then the sequences $0\to N'\to N\to N/N'\to 0$ and $0\to M\to M'\to M'/M\to 0$ are exact, where $M'=\Hom_\ZZ(N',\ZZ)$ is the dual lattice, and $N/N'$ and $M'/M$ are torsion groups. Thus,
$$1\to G_{N',N}\to T_{N'}\to T_N\to 1$$
is an exact sequence of algebraic groups, where $G_{N',N}\to \Spec\ZZ$ is a finite group-scheme.

Consider the affine toric varieties $X_\sigma=\Spec \ZZ[\check{\sigma}\cap M]$ and $X'_\sigma=\Spec \ZZ[\check{\sigma}\cap M']$. Then the natural map $X'_\sigma\to X_\sigma$ is invariant under the action of $G_{N',N}$. It is well known that $X_\sigma$ is the geometric quotient of $X'_\sigma$ by the action of $G_{N',N}$. Thus, $X_\sigma$ is the coarse moduli space of the quotient stack $[X'_\sigma/G_{N',N}]$.

Let $N''\subset N$ be another sublattice of full rank for which $N''\cap \Span(\sigma)=N'_\sigma$. If $N''\subseteq N'$ then there is a natural map
$X''_\sigma\to X'_\sigma$, and $X'_\sigma$ is the geometric quotient of $X''_\sigma$ by the action of the group $G_{N'',N'}$. Since $N'_\sigma=N'\cap \Span(\sigma)=N''\cap \Span(\sigma)$, it follows that $G_{N'',N'}$ acts freely on $X''_\sigma$. Hence
$X'_\sigma=[X''_\sigma/G_{N'',N'}]$, and the natural map $[X''_\sigma/G_{N'',N}]\to [X'_\sigma/G_{N',N}]$ is an isomorphism, which, by \cite[Lemma 4.2.3]{AV02}, has no non-trivial 2-automorphisms. We constructed a compatible system of isomorphisms. Note that the system of sublattices $N'\subset N$ of full rank for which $N'_\sigma=N'\cap \Span(\sigma)$, is partially ordered by embeddings, and any two elements are dominated by a third one. Thus, we can define the stack $\CX_\sigma$ to be $[X'_\sigma/G_{N',N}]$; and it is well defined up-to unique isomorphism.

Let $\tau$ be a face of $\sigma\in \Sigma$, and let $N'\subseteq N$ be a sublattice of full rank such that $N'_\sigma=N'\cap\Span(\sigma)$, and hence $N'\cap\Span(\tau)=N'_\tau$. Then there is a natural isomorphism $\alpha_{\sigma\tau}\colon \CX_\sigma\times_{X_\sigma}X_\tau \to \CX_\tau$, which, again, has no non-trivial 2-automorphisms. If $\sigma,\varrho\in\Sigma$ then we define $\alpha_{\sigma\varrho}\colon  \CX_\sigma\times_{X_\sigma}X_\tau\to \CX_\varrho\times_{X_\varrho}X_\tau$, where $\tau=\sigma\cap\varrho$, to be the composition $\alpha_{\varrho\tau}^{-1}\circ\alpha_{\sigma\tau}$.
Plainly, the collection $\{\alpha_{\sigma\varrho}\}$  satisfies the cocycle condition, and the 2-cocycle condition is empty by the construction. Thus, we can glue the stacks $\CX_\sigma$ together, and we obtain the desired stack $\CX_{\Sigma'}$.

Note that $\CX_{\Sigma'}$ is a normal separated tame Artin stack with coarse moduli space $X_\Sigma$. It is clear from the construction that torus $T_N$ acts on $\CX_{\Sigma'}$, and there is a one-to-one order reversing correspondence between the orbits of $T_N$ and the cones $\sigma\in\Sigma$.
\begin{Claim}\label{cl:Gsigma}
Orbit $\Theta_\sigma$ is isomorphic to $O_\sigma\times \CB G_\sigma$, where $G_\sigma=\Ker (T_{N'_\sigma}\to T_{N_\sigma})$.
\end{Claim}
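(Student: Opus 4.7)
The plan is to exhibit $\Theta_\sigma$ as the quotient stack $[O'_\sigma/G_{N',N}]$ for a particularly convenient choice of full-rank sublattice $N' \subseteq N$, chosen so that the action of $G_{N',N}$ on $O'_\sigma$ is trivial; once this is achieved, the identification $\Theta_\sigma \cong O_\sigma \times \CB G_\sigma$ follows from the standard fact that a quotient stack by a trivial action of a diagonalizable group scheme is a product with its classifying stack.

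First, I would identify the preimage of the coarse orbit $O_\sigma \subseteq X_\sigma$ under $\pi \colon X'_\sigma \to X_\sigma$. By $T_{N'}$-equivariance and surjectivity of $T_{N'} \twoheadrightarrow T_N$, we have $\pi(O'_\tau) = O_\tau$ for every face $\tau \le \sigma$, and hence $\pi^{-1}(O_\sigma) = O'_\sigma$. Consequently, in the chart $\CX_\sigma = [X'_\sigma/G_{N',N}]$, the stacky orbit $\Theta_\sigma$ is represented by $[O'_\sigma/G_{N',N}]$, and by the independence of $\CX_\sigma$ from the choice of $N'$ established in the construction, we may compute this using any convenient $N'$.

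Since $N_\sigma = N \cap \Span(\sigma)$ is saturated in $N$, the quotient $N/N_\sigma$ is a free $\ZZ$-module, so the sequence $0 \to N_\sigma \to N \to N/N_\sigma \to 0$ splits and I can choose a complement giving $N = N_\sigma \oplus N''$. I would then set $N' := N'_\sigma \oplus N''$; since $N'' \cap \Span(\sigma) \subseteq N'' \cap N_\sigma = 0$, we have $N' \cap \Span(\sigma) = N'_\sigma$, so this is a valid choice of sublattice for the stacky datum at $\sigma$. Under the resulting decompositions $T_{N'} = T_{N'_\sigma} \times T_{N''}$ and $T_N = T_{N_\sigma} \times T_{N''}$, the covering map $T_{N'} \twoheadrightarrow T_N$ becomes the product of $T_{N'_\sigma} \twoheadrightarrow T_{N_\sigma}$ with the identity on $T_{N''}$, so $G_{N',N} = G_\sigma$, sitting entirely in the first factor.

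To finish, I would note that $O'_\sigma = T_{N'/N'_\sigma} = T_{N''}$ and that the $T_{N'}$-action on $O'_\sigma$ factors through the projection $T_{N'} \twoheadrightarrow T_{N''}$, whose kernel $T_{N'_\sigma}$ contains $G_\sigma$; thus $G_\sigma$ acts trivially on $O'_\sigma$. Since $O_\sigma = T_{N/N_\sigma} = T_{N''}$ as well, we obtain
\[
\Theta_\sigma \;=\; [O'_\sigma/G_{N',N}] \;=\; O'_\sigma \times \CB G_\sigma \;=\; O_\sigma \times \CB G_\sigma,
\]
as required. There is no real obstacle here: the main point is the choice of $N'$ compatible with a splitting of $N = N_\sigma \oplus N''$, which makes the action trivial on the nose; the only other ingredient is the standard identification of a trivial-action quotient stack with the product with the classifying stack.
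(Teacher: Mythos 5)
Your proof is correct and follows essentially the same route as the paper: both arguments pick the sublattice $N'=N'_\sigma\oplus N''$ coming from a splitting $N=N_\sigma\oplus N''$, observe that then $G_{N',N}=G_\sigma$, and conclude that this group acts trivially on the orbit over $O_\sigma$, giving $\Theta_\sigma\cong O_\sigma\times\CB G_\sigma$. The only cosmetic difference is that you phrase the triviality of the action via the factorization of the $T_{N'}$-action through $T_{N'/N'_\sigma}=T_{N''}$, whereas the paper phrases it dually via $\sigma^\perp\cap M'=\sigma^\perp\cap M$.
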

\begin{proof}
First, note that there exists a sublattice $N'\subseteq N$ such that $N'_\sigma=N'\cap\Span(\sigma)$ and $N/N'=N_\sigma/N'_\sigma$. To construct such a lattice we use the fact that $N_\sigma\subseteq N$ splits, hence there exists $N^1_\sigma\subseteq N$ such that $N=N_\sigma\oplus N^1_\sigma$. Thus $N'=N'_\sigma+N^1_\sigma\subseteq N$ is the desired sublattice.
Since $\sigma^\perp\cap M'=\sigma^\perp\cap M$ and $G_{N',N}=G_\sigma$ acts trivially on $\Spec\kk[\sigma^\perp\cap M']$ it follows that
$\Theta_\sigma=\left[\Spec\ZZ[\sigma^\perp\cap M]/G_\sigma\right]\simeq O_\sigma\times\CB G_\sigma\, .$
\end{proof}
\begin{corollary}\label{cor:DMcriterion} $\CX_{\Sigma'}(R_\LL):=\CX_{\Sigma'}\times_{\Spec\ZZ}\Spec R_\LL$ is a Deligne-Mumford stack if and only if $char(\kk)$ does not divide $|N_\sigma/N'_\sigma|$ for all $\sigma\in\Sigma$.
\end{corollary}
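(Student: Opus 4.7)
My plan is to reduce the Deligne--Mumford property to a statement about the stabilizer group schemes at points, and then use the explicit description of these stabilizers provided by Claim~\ref{cl:Gsigma}.

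First, I would recall the standard fact that a tame Artin stack of finite type over a base $S$ is Deligne--Mumford if and only if the geometric stabilizer of every point is reduced, equivalently, if the (finite, flat) inertia group scheme $I_{\CX/S} \to \CX$ is étale over $\CX$. Since $\CX_{\Sigma'}$ is stratified by the $T_N$-orbits $\Theta_\sigma$, and the stabilizer is locally constant along each stratum, it suffices to verify the étaleness of the stabilizer at the generic point of each orbit $\Theta_\sigma$, for $\sigma \in \Sigma$.

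Next, by Claim~\ref{cl:Gsigma}, the stabilizer at a point of $\Theta_\sigma$ is precisely the finite group scheme $G_\sigma = \ker\bigl(T_{N'_\sigma} \to T_{N_\sigma}\bigr)$ over $\Spec \ZZ$. Applying the duality between finitely generated abelian groups and diagonalizable group schemes, $G_\sigma$ is the Cartier dual of the finite abelian group $N_\sigma/N'_\sigma$, i.e., $G_\sigma \cong \Spec \ZZ[N_\sigma/N'_\sigma]$ as a group scheme. After base change to $\Spec R_\LL$, the stabilizer becomes $G_\sigma \times_{\Spec \ZZ} \Spec R_\LL \cong \Spec R_\LL[N_\sigma/N'_\sigma]$.

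Then I would use the criterion that, for a finite abelian group $A$ and a local ring $R$, the diagonalizable group scheme $\Spec R[A]$ is étale over $\Spec R$ if and only if $|A|$ is invertible in $R$. For our complete DVR $R_\LL$ with residue field $\kk$, invertibility of $|N_\sigma/N'_\sigma|$ is equivalent to $\chara(\kk) \nmid |N_\sigma/N'_\sigma|$. Combining: $\CX_{\Sigma'}(R_\LL)$ is Deligne--Mumford iff, for every $\sigma \in \Sigma$, the group scheme $G_\sigma \times \Spec R_\LL$ is étale, iff $\chara(\kk) \nmid |N_\sigma/N'_\sigma|$ for all $\sigma \in \Sigma$.

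The main technical point (and essentially the only nontrivial input) is the identification of $G_\sigma$ as a Cartier dual together with the well-known étaleness criterion for diagonalizable group schemes; everything else is just bookkeeping. Since $\CX_{\Sigma'}$ was already constructed as a normal separated tame Artin stack with finite stabilizers, no delicate tameness or properness argument is required—the corollary is essentially a one-line consequence of Claim~\ref{cl:Gsigma} once the étaleness criterion is invoked.
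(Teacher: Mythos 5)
Your argument is correct and matches the paper's intent: the corollary is stated as an immediate consequence of Claim~\ref{cl:Gsigma}, and your proof simply fills in the standard facts (DM $\Leftrightarrow$ étale geometric stabilizers, stabilizer along $\Theta_\sigma$ equal to $G_\sigma$, and étaleness of a finite diagonalizable group scheme iff its order is invertible), which is exactly the intended reasoning. The only cosmetic point is that $G_\sigma$ is literally $\Spec\ZZ[M'_\sigma/M_\sigma]$, the Cartier dual of $M'_\sigma/M_\sigma$, which is only non-canonically isomorphic to $N_\sigma/N'_\sigma$; since the two groups have the same order, this does not affect the divisibility criterion.
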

\begin{definition}
Let $\CX$ be a normal separated Deligne-Mumford stack, and $\partial \CX\subset \CX$ be a divisor. A rational differential form $\omega$ on $\CX$ is called a {\it log-differential form} if it is regular on the complement of $\partial \CX$, and has at worst simple poles along $\partial \CX$, i.e., if $\kappa\colon U\to \CX$ is an \'etale covering and $D=\kappa^{-1}(\partial \CX)$ then $\kappa^*\omega$ has at worst simple pole along $D$. Log-differential forms form a sheaf on $\CX$ denoted by $\Omega_\CX\big(\log (\partial\CX)\big)$.
If $\CX=\CX_{\Sigma'}(R_\LL)$ then $\Omega_\CX\big(\log (\partial\CX)\big)$ denotes the sheaf of log-differential forms on $\CX$ with respect to $\partial\CX=\cup_{\rho\in\Sigma^1}\Theta_\rho\times_{\Spec\ZZ}\Spec R_\LL$.
\end{definition}

\begin{Claim}\label{cl:logtoric}
Let $\Sigma'$ be toric stacky data and $\CX_{\Sigma'}(R_\LL)$ be the corresponding toric stack. Assume that $\CX_{\Sigma'}(R_\LL)$ is a Deligne-Mumford stack. Then $\Omega_{\CX_{\Sigma'}(R_\LL)}\big(\log (\partial\CX_{\Sigma'}(R_\LL))\big)$ is canonically isomorphic to $M\otimes_\ZZ\CO_{\CX_{\Sigma'}(R_\LL)}$.
\end{Claim}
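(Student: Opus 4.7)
The plan is to exhibit the canonical morphism $M \otimes_\ZZ \CO_{\CX_{\Sigma'}(R_\LL)} \to \Omega_{\CX_{\Sigma'}(R_\LL)}(\log \partial)$ sending $m \otimes 1$ to $d\log(x^m) = dx^m/x^m$, and to verify it is an isomorphism by a local computation on each chart $\CX_\sigma(R_\LL) = [X'_\sigma/G_{N',N}](R_\LL)$. For well-definedness, observe that each monomial $x^m$ is invertible on the open torus $T_N$ and vanishes only along toric strata, so $d\log(x^m)$ is a rational form with at worst simple poles along the toric boundary of $X'_\sigma$. Moreover $d\log(x^m)$ is $G_{N',N}$-invariant: the group acts on $x^m$ by a constant character $\chi_m(g) \in \kk^*$, which is annihilated by $d\log$. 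Hence $d\log(x^m)$ descends to a section of $\Omega_{\CX_\sigma(R_\LL)}(\log \partial)$.

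To prove the map is an isomorphism, I would argue locally. Fix a cone $\sigma \in \Sigma$ and choose the full-rank sublattice $N' = N'_\sigma \oplus N^1_\sigma$ used in the proof of Claim~\ref{cl:Gsigma}, so that $N/N' \cong N_\sigma/N'_\sigma$. On the (normal) toric variety $X'_\sigma$ the classical toric identification gives $\Omega_{X'_\sigma}(\log \partial X'_\sigma) = M' \otimes_\ZZ \CO_{X'_\sigma}$ via $m' \otimes 1 \mapsto d\log(x^{m'})$, where $M' = \Hom_\ZZ(N', \ZZ)$. The invariance computation above shows the canonical $G_{N',N}$-equivariant structure on this sheaf is trivial on the $M'$ factor and standard on the second; so by descent along the \'etale $G_{N',N}$-torsor $X'_\sigma \to \CX_\sigma$, one obtains $\Omega_{\CX_\sigma(R_\LL)}(\log \partial) \cong M' \otimes_\ZZ \CO_{\CX_\sigma(R_\LL)}$.

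Finally I would compare $M \otimes \CO$ with $M' \otimes \CO$. The inclusion $N' \hookrightarrow N$ dualizes to $M \hookrightarrow M'$ whose cokernel is canonically Pontryagin dual to $N/N' \cong N_\sigma/N'_\sigma$; in particular $|M'/M| = |N_\sigma/N'_\sigma|$. Tensoring the exact sequence $0 \to M \to M' \to M'/M \to 0$ over $\ZZ$ with $\CO_{\CX_\sigma(R_\LL)}$ exhibits the cokernel of $M \otimes \CO \hookrightarrow M' \otimes \CO$ as $(M'/M) \otimes_\ZZ \CO_{\CX_\sigma(R_\LL)}$. By Corollary~\ref{cor:DMcriterion}, the Deligne-Mumford hypothesis forces $\chara(\kk) \nmid |N_\sigma/N'_\sigma|$, so the order of $M'/M$ is invertible in $R_\LL$ and this cokernel vanishes. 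Thus $M \otimes \CO_{\CX_\sigma(R_\LL)} \to \Omega_{\CX_\sigma(R_\LL)}(\log \partial)$ is an isomorphism; since the morphism $m \otimes 1 \mapsto d\log(x^m)$ is defined intrinsically in terms of $M$ and is independent of the auxiliary $N'$, the local isomorphisms patch to the asserted canonical global one. The main delicate step I anticipate is justifying the normal-toric identification $\Omega_{X'_\sigma}(\log \partial X'_\sigma) = M' \otimes \CO_{X'_\sigma}$ when $X'_\sigma$ is singular; however a rational form on $X'_\sigma$ is determined by its restriction to $T_{N'}$, where it is uniquely an $M'$-combination of $d\log$'s, and the simple-pole condition along each torus-invariant Weil divisor forces every coefficient to be regular on $X'_\sigma$.
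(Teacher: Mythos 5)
Your proof is correct and follows essentially the same route as the paper: reduce to the chart $\CX_\sigma(R_\LL)$ covered by $X'_\sigma(R_\LL)$ for a full-rank sublattice $N'$ with $N'\cap\Span(\sigma)=N'_\sigma$ and $|N/N'|$ prime to $\chara(\kk)$, invoke the classical toric identification $\Omega_{X'_\sigma}\big(\log(\partial X'_\sigma)\big)\cong M'\otimes_\ZZ\CO_{X'_\sigma}$, and use that the Deligne--Mumford hypothesis makes $M\otimes_\ZZ R_\LL\to M'\otimes_\ZZ R_\LL$ an isomorphism. The only difference is that you spell out the $G_{N',N}$-invariance of $d\log x^m$ and sketch the Danilov-type identification on the singular chart, details the paper leaves implicit.
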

\begin{proof}
We claim that the map $\iota\colon m\otimes f\mapsto \frac{dx^m}{x^m}f$ is an isomorphism. It is sufficient to check this locally. Let $\sigma\in\Sigma$ be a cone, and let $N'\subseteq N$ be a sublattice of maximal rank such that $N'_\sigma=N'\cap \Span(\sigma)$ and $|N/N'|$ is not divisible by $char(\kk)$. Then the natural map $\kappa\colon X'_\sigma(R_\LL)=\Spec R_\LL[\check{\sigma}\cap M']\to \CX_\sigma(R_\LL)\subset\CX_{\Sigma'}(R_\LL)$ is an \'etale covering. Note that the natural embedding $M\subseteq M'$ induces an isomorphism $M\otimes_\ZZ R_\LL\to M'\otimes_\ZZ R_\LL$. Thus, $$\kappa^*\left(M\otimes_\ZZ\CO_{\CX_{\Sigma'}(R_\LL)}\right)=M\otimes_\ZZ\CO_{X'_\sigma(R_\LL)}\cong M'\otimes_\ZZ\CO_{X'_\sigma(R_\LL)},\:\:\: \text{and}$$ $$\kappa^*\Omega_{\CX_{\Sigma'}(R_\LL)}\big(\log (\partial\CX_{\Sigma'}(R_\LL))\big)=\Omega_{X'_\sigma(R_\LL)}\big(\log (\partial X'_\sigma(R_\LL))\big)\cong M'\otimes_\ZZ\CO_{X'_\sigma(R_\LL)}.$$ Hence $\kappa^*(\iota)$ is an isomorphism.
\end{proof}

\subsection{Stacky tropical degenerations and reductions.} Let $(C, D, f, X)$ be as in Definition~\ref{def:tropdegtropred}, $\LL$ be a sufficiently ramified extension, and $(f^{\rm tr}_{R_\LL}, C^{\rm tr}_{R_\LL}, D_{R_\LL}, X^{\rm tr}_{R_\LL})$ be the corresponding tropical degeneration. The goal of this subsection is to introduce natural stacky structures on $C^{\rm tr}_{R_\LL}$ and $X^{\rm tr}_{R_\LL}$.

Let us first, construct the stack $\CX^{\rm tr}_{R_\LL}$ with coarse moduli space $X^{\rm tr}_{R_\LL}$. Recall that for $\Gamma=\Gamma^{\rm tr}_{C,D,f}$, we constructed a fan $\Sigma_{\Gamma^{\rm tr}}$, and defined $X^{\rm tr}_{R_\LL}=X(\Gamma^{\rm tr}, e_\LL)\times_{\AAA_{e_\LL}^1}\Spec R_\LL$, where $X(\Gamma^{\rm tr}, e_\LL)$ is the toric variety assigned to the fan $\Sigma_{\Gamma^{\rm tr}}$ in $(N\oplus e_\LL\ZZ)_\RR$. Thus, to introduce the stacky structure on $X^{\rm tr}_{R_\LL}$, it is sufficient to specify stacky data on $\Sigma_{\Gamma^{\rm tr}}$, i.e., a compatible collection of sublattices $N'_\sigma\subseteq (N\oplus e_\LL\ZZ)_\sigma$ for $\sigma\in\Sigma_{\Gamma^{\rm tr}}$.

Let $\rho\in \Sigma^1_{\Gamma^{\rm tr}}$ be a ray. If $\rho\notin \Sigma^1_{\Gamma^{\rm tr},\eta}$ then set $N'_\rho:=(N\oplus e_\LL\ZZ)_\rho$, otherwise set $N'_\rho:=\ZZ\cdot(l(\rho)n_\rho, 0)$ (cf. Definition~\ref{def:lrholsigma} and Notation~\ref{not:VrohEsigmanrho}).

Let now $\sigma\in \Sigma^2_{\Gamma^{\rm tr}}$ be a two-dimensional cone, and $\rho_1, \rho_2$ be the facets of $\sigma$.
If one of them belongs to $\Sigma^1_{\Gamma^{\rm tr},\eta}$ then we set $N'_\sigma:=N'_{\rho_1}+N'_{\rho_2}$.  Otherwise, $\sigma$ is generated by vectors $(n_{\rho_i},1)\in\rho_i$, $i=1,2$. Let $n$ be the primitive integral vector in the direction of $n_{\rho_2}-n_{\rho_1}$. We define $N'_\sigma\subseteq (N\oplus e_\LL\ZZ)_\sigma$ to be the sublattice generated by $(e_\LL n_{\rho_1}, e_\LL)$ and $(l(\sigma)n, 0)$ (cf. Definition~\ref{def:lrholsigma}). Recall that the integral length of $e_\LL n_{\rho_2}-e_\LL n_{\rho_1}$ is divisible by $l(e)$ for all $e\in E_\sigma(\Gamma^{\rm tr})$ (cf. Remark~\ref{rem:divisib}). Thus, it is divisible by $l(\sigma)$. Hence $N'_\sigma\cap\rho_i=N'_{\rho_i}$.
We constructed stacky data $\Sigma'_{\Gamma^{\rm tr}}$, hence a toric stack $\CX_{\Sigma'_{\Gamma^{\rm tr}}}$, and we define $\CX_{R_\LL}(\Gamma^{\rm tr}):=\CX_{\Sigma'_{\Gamma^{\rm tr}}}\times_{\AAA_{e_\LL}^1}\Spec R_\LL$. As before, we shall use shorter notation $\CX_{R_\LL}^{\rm tr}=\CX_{R_\LL}(\Gamma^{\rm tr})$ if no confusion is possible.

\begin{remark}\label{rem:DMcriterion} The stacky structure on $\CX^{\rm tr}_{R_\LL}$ is concentrated over the intersections of the irreducible components of the reduction $X^{\rm tr}_{\kk}$, and along the closures of the boundary divisors of  the generic fiber $X^{\rm tr}_\LL$. It follows from Corollary~\ref{cor:DMcriterion} and Corollary~\ref{cor:l(e)fordifferentmodels} that $\CX^{\rm tr}_{R_\LL}$ is Deligne-Mumford if and only if \begin{equation}\label{cond:DM}
char(\kk)\nmid \prod_{e\in E(\Gamma^{\rm st}), N_e\ne 0}l(e).
\end{equation}
\end{remark}
\begin{convention} From now on we assume that $\CX^{\rm tr}_{R_\LL}$ is Deligne-Mumford.
\end{convention}
\begin{remark} Note that if we repeat the construction above, but for $\rho\in \Sigma^1_{\Gamma^{\rm tr},\eta}$ chose the sublattice generated by $(n_\rho,0)$ rather that $(l(\rho)n_\rho,0)$, then we will obtain a stack $\CX^{\rm tr'}_{R_\LL}$ with coarse moduli space $X^{\rm tr}_{R_\LL}$, and a natural map $\CX^{\rm tr}_{R_\LL}\to\CX^{\rm tr'}_{R_\LL}$ compatible with the projections to $X^{\rm tr}_{R_\LL}$. Furthermore, the stacky structure on $\CX^{\rm tr'}_{R_\LL}$ is concentrated on the intersections of the irreducible components of the reduction of $X^{\rm tr}_{R_\LL}\to \Spec R_\LL$ only, and $\CX^{\rm tr}_{R_\LL}$ is obtained from $\CX^{\rm tr'}_{R_\LL}$ by extracting the roots of order $l(\rho)$ along $\overline{O}_\rho$ for all $\rho\in \Sigma^1_{\Gamma^{\rm tr},\eta}$, i.e., $\CX^{\rm tr}_{R_\LL}=\big(\CX^{\rm tr'}_{R_\LL}\big)_{\DD,\vec{r}}$, where $\DD=\cup \overline{O}_\rho$ and $\vec{r}=(l(\rho))$ (see \cite{Cad07a} for the definition and the basic properties of the root stacks).
\end{remark}

Next, we define the stack $\CC^{\rm tr}_{R_\LL}$ with coarse moduli space  $C^{\rm tr}_{R_\LL}$, and a morphism $\varphi_{R_\LL}^{\rm tr}\colon\CC^{\rm tr}_{R_\LL}\to \CX^{\rm tr}_{R_\LL}$ lifting the morphism $f^{\rm tr}_{R_\LL}\colon C^{\rm tr}_{R_\LL}\to X^{\rm tr}_{R_\LL}$. We do it in two steps: First, we consider the twisted stable map $\psi_{R_\LL}^{\rm tr}\colon\CC^{\rm tr'}_{R_\LL}\to \CX^{\rm tr'}_{R_\LL}$ extending the stable map
$f_\LL\colon C_\LL\to X_\LL\subset \CX^{\rm tr'}_{R_\LL}$ (see \cite{AV02} for the definition and the properties of the twisted stable maps). Note that the coarse moduli space of $\CC^{\rm tr'}_{R_\LL}$ is $C^{\rm tr}_{R_\LL}$, and the stacky structure on $\CC^{\rm tr'}_{R_\LL}$ is concentrated at the nodes of the reduction of $C^{\rm tr}_{R_\LL}$. Second, observe that $\big(\psi_{R_\LL}^{\rm tr}\big)^*\overline{O}_\rho=\sum_{v\in V_\rho(\Gamma^{\rm tr})} l(v)q_v$ for any $\rho\in\Sigma^1_{\Gamma^{\rm tr},\eta}$. Thus, by \cite[Theorem~3.3.6]{Cad07a}, there exists a stack $\CC^{\rm tr}_{R_\LL}$, and a unique representable morphism $\varphi_{R_\LL}^{\rm tr}\colon\CC^{\rm tr}_{R_\LL}\to \CX^{\rm tr}_{R_\LL}$ lifting $\psi^{\rm tr}_{R_\LL}$, such that the coarse moduli space of $\CC^{\rm tr}_{R_\LL}$ is $C^{\rm tr}_{R_\LL}$. More explicitely, $\CC^{\rm tr}_{R_\LL}$ is the root stack $\big(\CC^{\rm tr'}_{R_\LL}\big)_{\DD_1,\vec{r}_1}$ for the divisor $\DD_1=\sum_{\rho\in\Sigma^1_{\Gamma^{\rm tr},\eta}, v\in V_\rho(\Gamma^{\rm tr})} q_v$ and the vector of multiplicities $\vec{r}_1=\big(l(\rho)/l(v)\big)_{\rho\in\Sigma^1_{\Gamma^{\rm tr},\eta}, v\in V_\rho(\Gamma^{\rm tr})}$. In fact, this is the {\it minimal} stacky structure on $C^{\rm tr}_{R_\LL}$ such that the map $f^{\rm tr}_{R_\LL}$ lifts to a map $\varphi^{\rm tr}_{R_\LL}\colon \CC^{\rm tr}_{R_\LL}\to \CX^{\rm tr}_{R_\LL}$. Moreover, we can describe it explicitly at each node and each marked point.
\begin{notation}\label{not:GeGv}
Let $e\in E_\sigma(\Gamma^{\rm tr})$ be an edge, and $v\in V_\rho(\Gamma^{\rm tr})$ be a vertex. We denote $G_e:=\Spec \ZZ[l(e)\ZZ/l(\sigma)\ZZ]$ and $G_v:=\Spec \ZZ[l(v)\ZZ/l(\rho)\ZZ]$.
\end{notation}
Let $\sigma\in \Sigma^2_{\Gamma^{\rm tr}}$ be a cone, $e\in E_\sigma(\Gamma^{\rm tr})$ be an edge, and $p_e$ be the corresponding node. Then, {\'e}tale locally at $p_e$, the scheme $C^{\rm tr}_{R_\LL}$ is given by $xy=t_\LL^{e_\LL|e|}=t_\LL^{r_e+1}$.
Recall that $l(e)(r_e+1)$ is the integral length of $e_\LL(h_{\Gamma^{\rm tr}}(v)-h_{\Gamma^{\rm tr}}(v'))$. Hence, Zariski locally at $f^{\rm tr}_{R_\LL}(p_e)$, $X^{\rm tr}_{R_\LL}$ is given by $XY=t_\LL^{l(e)(r_e+1)}$. Furthermore, locally $\CX^{\rm tr}_{R_\LL}=[X'_{R_\LL}/G_\sigma]$, where $X'_{R_\LL}$ is given by $X'Y'= t_\LL^{l(e)(r_e+1)/l(\sigma)}$, and $G_\sigma=\Spec \ZZ[\ZZ/l(\sigma)\ZZ]$ acts by $\xi\colon (X', Y', t_\LL)\mapsto(\xi X', \xi^{-1}Y', t_\LL)$. Consider the affine curve $C'$ given by the equation $x'y'=t_\LL^{l(e)(r_e+1)/l(\sigma)}$. Then $G_e\unlhd G_\sigma$ acts diagonally on $G_\sigma\times C'$, where the action on $C'$ is given by $\xi\colon(x', y', t_\LL)\mapsto(\xi x', \xi^{-1}y', t_\LL)$, and $\CC^{\rm tr}_{R_\LL}\times_{\CX_{R_\LL}}X'_{R_\LL}\simeq (G_\sigma\times C')/G_e$ \'etale locally. Finally, $\CC^{\rm tr}_{R_\LL}=G_\sigma\backslash(G_\sigma\times C')/G_e=[C'/G_e]$. In particular, if $l(\sigma)=l(e)$ then the stacky structure at $p_e$ is trivial. Similarly, one describes the stacky structure at the marked points. Indeed, if $\rho\in \Sigma^1_{\Gamma^{\rm tr},\eta}$ and $v\in V_\rho(\Gamma^{\rm tr})$ then, \'etale locally at $q_v$, the curve $C^{\rm tr}_{R_\LL}$ is isomorphic to $\AAA^1_{R_\LL}$. Consider the map $C'=\AAA^1_{R_\LL}\to \AAA^1_{R_\LL}\subset C^{\rm tr}_{R_\LL}$ given by $x\mapsto x^{l(\rho)/l(v)}$. Then $\CC^{\rm tr}_{R_\LL}$ is locally isomorphic to $[C'/G_v]$, where $G_v$ acts naturally on $C'$.
\begin{definition}
$ $

\begin{enumerate}
  \item The quadruple $(\CC^{\rm tr}_{R_\LL}, D_{R_\LL}, \varphi^{\rm tr}_{R_\LL}, \CX^{\rm tr}_{R_\LL})$ is called the {\it stacky tropical degeneration} of $(C, D, f, X)$ over a sufficiently ramified extension $\LL$.
  \item The reduction $(\CC^{\rm tr}_\kk, D_\kk, \varphi^{\rm tr}_\kk, \CX^{\rm tr}_\kk)$ of $(\CC^{\rm tr}_{R_\LL}, D_{R_\LL}, \varphi^{\rm tr}_{R_\LL}, \CX^{\rm tr}_{R_\LL})$ is called the {\it stacky tropical reduction} of $(C, D, f, X)$.
\end{enumerate}
\end{definition}

Recall that in Subsection~\ref{subsec:tropdegofcons} we constructed tropical degenerations of toric constraints $Y_{R_\LL}\to X_{R_\LL}$. Note that by the construction of $\CX_{R_\LL}$ and of $Y_{R_\LL}\to X_{R_\LL}$, the latter morphism lifts to the map $Y_{R_\LL}\to \CX_{R_\LL}$, which has no non-trivial 2-automorphisms by \cite[Lemma 4.2.3]{AV02}.
\begin{definition} Let $\Gamma$ be an $N_\QQ$-parameterized $\QQ$-tropical curve, $\varphi_\kk\colon\CC_\kk\to \CX_\kk^{\rm tr}$ be a representable morphism of Deligne-Mumford stacks, $f_\kk\colon C_\kk\to X_\kk^{\rm tr}$ be the corresponding morphism between their coarse moduli spaces, and $D_\kk\subset C_\kk$ be a divisor. The quadruple $(\CC_\kk, D_\kk, \varphi_\kk, \CX_\kk^{\rm tr})$ is called a {\it stacky $\Gamma$-reduction} if the following holds:
\begin{enumerate}
\item $\CX_\kk^{\rm tr}$ is the reduction of the stack $\CX^{\rm tr}_{R_\LL}$,
\item $(C_\kk,D_\kk,f_\kk,X_\kk^{\rm tr})$ is a $\Gamma$-reduction, and
\item $\varphi_\kk(\CC_v)$ is transversal to $\partial \CX_v$ for all $v\in V^f(\Gamma^{\rm tr})$.
\end{enumerate}
If, in addition, all components of $C_\kk$ are rational then we say that $(\CC_\kk, D_\kk, \varphi_\kk, \CX_\kk^{\rm tr})$ is a {\em Mumford stacky $\Gamma$-reduction}.
\end{definition}
\begin{Claim} The quadruple $(\CC^{\rm tr}_\kk, D_\kk, \varphi^{\rm tr}_\kk, \CX^{\rm tr}_\kk)$ is a stacky $\Gamma^{\rm st}_{C, D, f}$-reduction. Furthermore, it is Mumford if and only if the curve $C$ is Mumford.
\end{Claim}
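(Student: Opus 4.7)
The plan is to unwind the three conditions in the definition of a stacky $\Gamma$-reduction and verify each one in turn. Conditions (1) and (2) reduce immediately to facts already in hand: (1) is tautological, since $\CX^{\rm tr}_\kk$ was defined as the reduction of $\CX^{\rm tr}_{R_\LL}$; and (2), that the quadruple of coarse moduli spaces $(C^{\rm tr}_\kk, D_\kk, f^{\rm tr}_\kk, X^{\rm tr}_\kk)$ is a $\Gamma^{\rm st}_{C,D,f}$-reduction, is exactly the preceding Claim. So the heart of the matter is condition (3): the transversality of $\varphi^{\rm tr}_\kk(\CC_v)$ to $\partial\CX_v$ at each finite vertex $v$.

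I would verify transversality locally, point by point. The points of $\CC_v$ that meet $\partial\CX_v$ fall into two types: marked points $q_v$ associated to rays $\rho\in\Sigma^1_{\Gamma^{\rm tr},\eta}$ with $v\in V_\rho(\Gamma^{\rm tr})$, and nodes $p_e$ associated to two-dimensional cones $\sigma\in\Sigma^2_{\Gamma^{\rm tr}}$ with $e\in E_\sigma(\Gamma^{\rm tr})$. In both cases I would use the explicit local root-stack descriptions of $\CC^{\rm tr}_{R_\LL}$ and $\CX^{\rm tr}_{R_\LL}$ that are recorded just before the definition and track the multiplicity of the pulled-back boundary divisor. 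At a marked point $q_v$ the coarse map contributes multiplicity $l(v)$ along $\overline{O}_\rho$ by Remark~\ref{rem:explicitformulaformap}; after replacing $\CX^{\rm tr'}_{R_\LL}$ by its $l(\rho)$-th root along $\overline{O}_\rho$ and $\CC^{\rm tr'}_{R_\LL}$ by its $\bigl(l(\rho)/l(v)\bigr)$-th root along $q_v$, the multiplicity becomes $l(v)\cdot\bigl(l(\rho)/l(v)\bigr)/l(\rho)=1$, i.e.\ transversal. At a node $p_e$ the same kind of bookkeeping, starting from the charts $x'y'=t_\LL^{l(e)(r_e+1)/l(\sigma)}$ on $\CC^{\rm tr}$ and $X'Y'=t_\LL^{l(e)(r_e+1)/l(\sigma)}$ on $\CX^{\rm tr}$, produces a transverse intersection after reduction modulo $t_\LL$. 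This is the step that requires the most care; the potential obstacle is combinatorial, namely keeping the various multiplicities $l(e),\,l(v),\,l(\rho),\,l(\sigma)$ and $r_e$ straight, but Remark~\ref{rem:divisib}(2) and the compatibility built into the stacky data $\Sigma'_{\Gamma^{\rm tr}}$ (via $N'_\sigma\cap\rho_i=N'_{\rho_i}$) guarantee that the cancellations are forced, so no additional input is needed.

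For the Mumford clause I would argue that being Mumford is preserved in passing from the stable model to the tropical model. Recall from Proposition~\ref{prop:modelforgraph} that $C^{\rm tr}_{R_\LL}$ is obtained from $C^{\rm st}_{R_\LL}$ (or, equivalently, from the minimal regular semistable model dominating it) by a sequence of blowups along smooth points of the successive reductions, each of which merely inserts a chain of $\PP^1$-components indexed by the new finite vertices in $V^f(\Gamma^{\rm tr})\setminus V^f(\Gamma^{\rm st}_{C,D,f})$. Hence the non-rational irreducible components of $C^{\rm tr}_\kk$ coincide with those of $C^{\rm st}_\kk$. Since $C^{\rm tr}_\kk$ is the coarse moduli space of $\CC^{\rm tr}_\kk$, the stacky tropical reduction is Mumford exactly when $C^{\rm st}_\kk$ has only rational components, i.e.\ when $C$ is Mumford.
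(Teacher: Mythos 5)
Your argument is correct and matches the paper's intent: the paper's own proof of this Claim is the single word ``Obvious'', and what it leaves implicit is precisely the definitional unwinding you carry out -- conditions (1)--(2) tautological/from the preceding Claim, transversality at marked points and nodes forced by the root-stack and twisted-stable-map construction of $\varphi^{\rm tr}$ (your multiplicity count $l(v)\cdot\bigl(l(\rho)/l(v)\bigr)/l(\rho)=1$ is exactly the right computation), and the Mumford clause because the tropical model differs from the stable one only by inserted chains of rational curves. The only step you pass over silently is that forming coarse moduli spaces commutes with reduction to the special fibre (needed to identify the coarse quadruple of the stacky reduction with the tropical reduction and so invoke the previous Claim for condition (2)); this is standard for the tame Deligne--Mumford stacks in play here, so it is not a genuine gap.
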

\begin{proof} Obvious.
\end{proof}
\begin{proposition}\label{prop:ParamOfStakyTrLimits} Let $\Gamma$ be an $N_\QQ$-parameterized $\QQ$-tropical curve satisfying \eqref{stcond} and having $c(\Gamma)=0$. Let $(C_\kk,D_\kk,f_\kk,X_\kk^{\rm tr})$ be a $\Gamma$-reduction. Assume that $\EE^2_{\kk^*}(\Gamma)=1$. Then the number of isomorphism classes of stacky $\Gamma$-reductions $(\CC_\kk, D_\kk, \varphi_\kk, \CX_\kk^{\rm tr})$ with coarse moduli isomorphic to $(C_\kk,D_\kk,f_\kk,X_\kk^{\rm tr})$ is equal to
$\prod_{e\in E^b(\Gamma^{\rm st})} l(e)$.
\end{proposition}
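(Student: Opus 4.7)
The plan is to exhibit the set of isomorphism classes of Mumford stacky $\Gamma$-reductions lifting the fixed $\Gamma$-reduction $(C_\kk,D_\kk,f_\kk,X_\kk^{\rm tr})$ as a torsor under $\ker\bigl(\CE^1_{\kk^*}(\Gamma^{\rm st})\to\EE^1_{\kk^*}(\Gamma^{\rm st})\bigr)$. By Proposition~\ref{prop:relEECEGamma} together with the divisibility identity $\kk^*/l(e)\kk^*=1$, this kernel equals $\bigoplus_{e\in E^b(\Gamma^{\rm st}),\,N_e\ne 0}\mu_{l(e)}(\kk^*)$; since $c(\Gamma)=0$ forces $c(\Gamma^{\rm st})=0$ (triviality of slope is preserved under the contraction of chains of 2-valent vertices by balancing), the product is over all bounded edges of $\Gamma^{\rm st}$, giving the claimed order $\prod_{e\in E^b(\Gamma^{\rm st})}l(e)$.

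To produce this torsor structure, I would first upgrade Proposition~\ref{prop:ParamOfTrLimits} by showing that the set of isomorphism classes of Mumford stacky $\Gamma$-reductions carries a natural $\CE^1_{\kk^*}(\Gamma^{\rm st})$-torsor structure over $\prod_{v\in V^f(\Gamma^{\rm st})}\CM_{0,val(v)}$. Working on $\Gamma^{\rm tr}$ and using the local models $\CC^{\rm tr}_{R_\LL}=[C'/G_e]$ and $\CX^{\rm tr}_{R_\LL}=[X'/G_\sigma]$ from Section~\ref{sec:toricstacksStackLimits}, the restriction $\varphi_\kk|_{\CC_v}$ is determined on the $G_e$-cover by formula~\eqref{eq:morphismRESTRICTEDtoCOMPONENTS}, but the classifying data now consists of a character $\chi_v\colon M\to\kk^*$ on each component together with a ``twisting parameter'' $\delta_e\in(N_e)_{\kk^*}\cong\kk^*$ at each node $p_e$, the latter encoding the $\mu_{l(e)}$-worth of ambiguity when extracting the representable $G_e$-equivariant lift of the coarse morphism. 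The node compatibility at $p_e\in\CC_v\cap\CC_{v'}$ reads $\chi_v-\chi_{v'}=l(e)\delta_e$ in $N_{\kk^*}$, which is precisely the kernel condition for the map $\beta_{\kk^*}$ of \eqref{eq:stackycomplex} defining $\CE^1_{\kk^*}(\Gamma^{\rm tr})$.

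Next, I would define the $\CE^1_{\kk^*}(\Gamma^{\rm tr})$-action by translation of $(\chi_v,\delta_e)$. Rescalings of the coordinate on the $\PP^1$-components arising from subdivisions yield a trivially-acting subgroup, which by Proposition~\ref{prop:E12ForSubdivisions} coincides with $\bigoplus_{v\in V_2^f(\Gamma^{\rm tr})}(N_{e_v})_{\kk^*}=\ker\bigl(\CE^1_{\kk^*}(\Gamma^{\rm tr})\to\CE^1_{\kk^*}(\Gamma^{\rm st})\bigr)$, so the action descends to a free $\CE^1_{\kk^*}(\Gamma^{\rm st})$-action. For surjectivity onto $\prod_v\CM_{0,val(v)}$, the hypothesis $\EE^2_{\kk^*}(\Gamma)=1$ implies $\EE^2_{\kk^*}(\Gamma^{\rm st})=1$ via Proposition~\ref{prop:E12ForSubdivisions} (using that $c(\Gamma)=0$ rules out attached finite-leaf trees, whose edges would necessarily have trivial slope by balancing, so $\Gamma$ is obtained from $\Gamma^{\rm st}$ by subdivisions only), and then Proposition~\ref{prop:relEECEGamma} with $\kk^*/l(e)\kk^*=1$ gives $\CE^2_{\kk^*}(\Gamma^{\rm st})=1$.

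Finally, the forgetful map to Mumford $\Gamma$-reductions is equivariant under the natural surjection $\CE^1_{\kk^*}(\Gamma^{\rm st})\to\EE^1_{\kk^*}(\Gamma^{\rm st})$ of Proposition~\ref{prop:relEECEGamma}, and combining with Proposition~\ref{prop:ParamOfTrLimits} identifies each fiber with a torsor under this kernel, concluding the argument. The hardest part will be the étale-local step of showing that $(\chi_v,\delta_e)$ genuinely classifies stacky lifts up to isomorphism: one must verify that distinct choices of $\delta_e$ give non-isomorphic representable morphisms $[C'/G_e]\to[X'/G_\sigma]$ and that every representable lift is captured by this data, using the rigidity criterion of \cite[Lemma~4.2.3]{AV02} for representable morphisms between quotient stacks.
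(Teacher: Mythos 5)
Your overall strategy (count the fiber of the forgetful map from stacky to coarse reductions as a torsor under $\ker\bigl(\CE^1_{\kk^*}(\Gamma^{\rm st})\to\EE^1_{\kk^*}(\Gamma^{\rm st})\bigr)\cong\bigoplus_{e\in E^b(\Gamma^{\rm st})}\mu_{l(e)}(\kk^*)$) is consistent with Remark~\ref{rem:actiononstackylimits} and would give the right number, but it is not how the paper argues, and as written it has two genuine gaps. First, a scope problem: the proposition fixes an arbitrary $\Gamma$-reduction, not a Mumford one, and your entire classification by characters $(\chi_v)$ rests on formula \eqref{eq:morphismRESTRICTEDtoCOMPONENTS}, which is only available when every $C_v$ is rational; likewise the base $\prod_v\CM_{0,val(v)}$ and Proposition~\ref{prop:ParamOfTrLimits} are specific to the Mumford case. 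The paper's proof uses none of this: it observes that over each component the lift $\CC_v\to\CX_\rho$ of $C_v\to X_\rho$ exists and is \emph{unique} because $\CC_v$ and $\CX_\rho$ are generically schemes and $\CX_\rho$ is separated, so all freedom is concentrated in the gluings at the nodes, where the choices form a torsor under $Aut(\CC_e\to\CX_{\sigma_e})=G_{\sigma_e}/G_e\cong\mu_{l(e)}$; one then quotients by the automorphism group $\prod_{v\in V_2^f(\Gamma^{\rm tr})}\mu_{l(e_v)}$ of the fixed coarse $\Gamma$-reduction, checks (with the orientation/sign bookkeeping) that this action on $\prod_{e\in E^b(\Gamma^{\rm tr})}\mu_{l(e)}$ is free, and gets $\prod_{e\in E^b(\Gamma^{\rm tr})}l(e)\big/\prod_{v\in V_2^f(\Gamma^{\rm tr})}l(e_v)=\prod_{e\in E^b(\Gamma^{\rm st})}l(e)$. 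This is both more elementary and strictly more general than your route.

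Second, the step you defer as ``the hardest part'' is in fact the whole content of the proof, not a technical verification to be added later: you assert, but do not establish, that stacky lifts of the fixed coarse datum are classified by the node parameters $\delta_e$ up to the stated equivalence, that the resulting $\CE^1_{\kk^*}(\Gamma^{\rm st})$-action on isomorphism classes is free, and that the trivially-acting subgroup of $\CE^1_{\kk^*}(\Gamma^{\rm tr})$ is exactly $\bigoplus_{v\in V_2^f(\Gamma^{\rm tr})}(N_{e_v})_{\kk^*}$ (equivalently, that the only identifications come from automorphisms of the coarse reduction supported at the two-valent vertices of $\Gamma^{\rm tr}$). Proving these claims amounts precisely to the component-lifting/gluing analysis above, so without it your argument is circular in the sense that it presupposes the proposition's counting mechanism. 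Also a small but real inaccuracy: the node relation is multiplicative and lives in $(N_e)_{\kk^*}$, namely $\chi_e^{l(e)}=\chi_v\chi_v^0/(\chi_{v'}\chi_{v'}^0)$ as in Remark~\ref{rem:actiononstackylimits} (with the coarse compatibility only on $N_e^\perp$); the identity ``$\chi_v-\chi_{v'}=l(e)\delta_e$ in $N_{\kk^*}$'' conflates the coarse constraint with the stacky root datum. Your reductions $c(\Gamma)=0\Rightarrow c(\Gamma^{\rm st})=0$, the surjectivity of $\CE^1_{\kk^*}\to\EE^1_{\kk^*}$ from $\kk^*/l(e)\kk^*=1$, and $|\mu_{l(e)}(\kk^*)|=l(e)$ under the Deligne--Mumford convention \eqref{cond:DM} are all fine.
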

\begin{proof}
The group of automorphisms $Aut$ of the $\Gamma$-reduction $(C_\kk,D_\kk,f_\kk,X_\kk^{\rm tr})$ is isomorphic to $\prod_{v\in V_2^f(\Gamma^{\rm tr})}\mu_{l(e_v)}$, where $e_v$ denotes any of the two edges containing $v$. Given a stacky $\Gamma$-reduction $(\CC_\kk, D_\kk, \varphi_\kk, \CX_\kk^{\rm tr})$ with coarse moduli $(C_\kk,D_\kk,f_\kk,X_\kk^{\rm tr})$, one obtains a family of maps $\CC_v\to \CX_\rho$ lifting the maps $C_v\to X_\rho$ for any ray $\rho$ and any vertex $v\in V_\rho$. Furthermore,  this family satisfies compatibility conditions on the intersections $\CC_v\cap\CC_{v'}$. Vice versa, a family of maps $\CC_v\to \CX_\rho$ lifting the maps $C_v\to X_\rho$ with identifications on the intersections defines a stacky $\Gamma$-reduction $(\CC_\kk, D_\kk, \varphi_\kk, \CX_\kk^{\rm tr})$ with coarse moduli $(C_\kk,D_\kk,f_\kk,X_\kk^{\rm tr})$. For any $\rho$, and any $v\in V_\rho$, the map $C_v\to X_\rho$ can be lifted to $\CC_v\to \CX_\rho$, and the lifting is unique since $\CC_v$ and $\CX_\rho$ are generically schemes, and $\CX_\rho$ is separated. Thus, the set of isomorphism classes of stacky $\Gamma$-reductions $(\CC_\kk, D_\kk, \varphi_\kk, \CX_\kk^{\rm tr})$ with coarse moduli isomorphic to $(C_\kk,D_\kk,f_\kk,X_\kk^{\rm tr})$ is in one-to-one correspondence with the product of the automorphism groups of the maps $\CC_e=\CC_v\cap\CC_{v'}\to \CX_\rho\cap\CX_{\rho'}=\CX_{\sigma_e}$ modulo the action of the group $Aut$. Finally, observe that $Aut(\CC_e\to \CX_{\sigma_e})=G_{\sigma_e}/G_e\cong \mu_{l(e)}$ and the group $Aut\cong\prod_{v\in V_2^f(\Gamma^{\rm tr})}\mu_{l(e_v)}$ acts on $\prod_{e\in E^b(\Gamma^{\rm tr})} Aut(\CC_e\to \CX_{\sigma_e})\cong\prod_{e\in E^b(\Gamma^{\rm tr})} \mu_{l(e)}$ via the diagonal embedding, i.e., if we fix an orientation on the bounded edges of $\Gamma^{\rm tr}$ then $\xi_v\colon \zeta_e\to\xi_v^{\epsilon(e,v)}\zeta_e$, where $\epsilon(e,v)=-1$ if $v$ is the initial point of $v$, $\epsilon(e,v)=1$ if $v$ is the target of $v$, and $\epsilon(e,v)=0$ otherwise. Hence, the number of isomorphism classes of stacky $\Gamma$-reductions $(\CC_\kk, D_\kk, \varphi_\kk, \CX_\kk^{\rm tr})$ with coarse moduli isomorphic to $(C_\kk,D_\kk,f_\kk,X_\kk^{\rm tr})$ is $$\left(\prod_{e\in E^b(\Gamma^{\rm tr})}l(e)\right)\div\left(\prod_{v\in V_2^f(\Gamma^{\rm tr})}l(e_v)\right)=\prod_{e\in E^b(\Gamma^{\rm st})}l(e)$$
\end{proof}

\begin{proposition}\label{prop:ParamOfConstrStakyTrLimits}
Let $\Gamma$ be an $N_\QQ$-parameterized $\QQ$-tropical curve satisfying \eqref{stcond} for which $c(\Gamma)=0$. Let $O$ be a toric constraint, and $A$ be the corresponding affine constraint. Assume that $\Gamma$ satisfies $A$, and that $A$ is a simple constraint for $\Gamma$. Let $(C_\kk,D_\kk,f_\kk,X_\kk^{\rm tr})$ be an $O$-constrained  $\Gamma$-reduction. If $\EE^2_{\kk^*}(\Gamma, A)=1$ then the number of isomorphism classes of $O$-constrained stacky $\Gamma$-reductions $(\CC_\kk, D_\kk, \varphi_\kk, \CX_\kk^{\rm tr})$ with the coarse moduli isomorphic to $(C_\kk,D_\kk,f_\kk,X_\kk^{\rm tr})$ is equal to
$\prod_{e\in E^b(\Gamma^{\rm st})} l(e)$.
\end{proposition}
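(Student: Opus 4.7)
The plan is to mirror the proof of Proposition~\ref{prop:ParamOfStakyTrLimits} and to show that the $O$-constraint imposes no additional restriction on the set of stacky lifts of a given $O$-constrained $\Gamma$-reduction. First I would note that since $\Gamma$ satisfies the affine constraint $A$, the $k$ infinite vertices $v_1,\dotsc,v_k$ corresponding to the constrained marked points all satisfy $h_\Gamma(v_i)=0$. Hence the rays associated to these vertices do not lie in $\Sigma^1_{\Gamma^{\rm tr},\eta}$, so the minimal stacky structure on $\CC^{\rm tr}_{R_\LL}$ at the constrained marked points is trivial (no root stack is extracted). In particular, a stacky lift $\varphi_\kk$ at each $p_i$ is just a morphism from the scheme point $p_i$ to $\CX^{\rm tr}_\kk$ covering $f_\kk(p_i)\in X^{\rm tr}_\kk$.

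Next, recall from Subsection~\ref{subsec:tropdegofcons} that the toric constraint comes equipped with a canonical lift $Y_{R_\LL}\to \CX_{R_\LL}$ of the morphism $Y_{R_\LL}\hookrightarrow X_{R_\LL}$, and this lift has no non-trivial 2-automorphisms by \cite[Lemma 4.2.3]{AV02}. Since the $O$-constrained condition on the coarse moduli reduction gives $f_\kk(p_i)\in g_\kk(Y_\kk)$ and $\varphi_\kk$ is required to be representable, the map $p_i\to X^{\rm tr}_\kk$ factors uniquely through $Y_\kk$ and therefore admits a unique lift $p_i\to \CX^{\rm tr}_\kk$. Thus the constrained marked points contribute no freedom to the choice of stacky lift, and they also impose no extra restriction beyond the assumption that we start from an $O$-constrained $\Gamma$-reduction.

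It remains to count the freedom away from the constrained marked points: this is supported at the nodes of $C_\kk$ and at the (unconstrained) marked points, and it is governed by the same gluing data appearing in the proof of Proposition~\ref{prop:ParamOfStakyTrLimits}. At each bounded edge $e\in E^b(\Gamma^{\rm tr})$ the lift of the coarse map across the corresponding node contributes an automorphism group $G_{\sigma_e}/G_e\cong\mu_{l(e)}$, and the global automorphism group $Aut\cong\prod_{v\in V_2^f(\Gamma^{\rm tr})}\mu_{l(e_v)}$ acts diagonally on the product $\prod_{e\in E^b(\Gamma^{\rm tr})}\mu_{l(e)}$. This yields
$$\left(\prod_{e\in E^b(\Gamma^{\rm tr})}l(e)\right)\Big/\left(\prod_{v\in V_2^f(\Gamma^{\rm tr})}l(e_v)\right)=\prod_{e\in E^b(\Gamma^{\rm st})}l(e)$$
isomorphism classes of stacky lifts, as desired.

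The main obstacle, as in the unconstrained case, is really only bookkeeping: one must check that the constraint at the marked points is decoupled from the gluing freedom at the nodes. This decoupling is transparent here because the stacky structure at the constrained marked points is trivial (by $h_\Gamma(v_i)=0$), while the gluing automorphisms at each node modify $\varphi_\kk$ only locally around that node and therefore preserve the behaviour at all marked points.
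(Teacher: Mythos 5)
Your proposal is correct and takes essentially the same route as the paper, whose proof of this proposition is literally a reference to the argument for Proposition~\ref{prop:ParamOfStakyTrLimits}: the count comes from the gluing data $\prod_{e\in E^b(\Gamma^{\rm tr})}\mu_{l(e)}$ modulo the diagonal action of $\prod_{v\in V_2^f(\Gamma^{\rm tr})}\mu_{l(e_v)}$, and the constraint plays no role since the constrained marked points map into the schematic locus of $\CX_\kk^{\rm tr}$ (as $h_\Gamma(v_i)=0$). Your extra verification of this decoupling is exactly the routine check the paper leaves implicit.
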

\begin{proof} Identical to the proof of the previous proposition.
\end{proof}
\begin{remark}\label{rem:actiononstackylimits}
Fix an orientation of the bounded edges of $\Gamma^{\rm st}$. It induces an orientation on the bounded edges of $\Gamma$. Then, under the assumptions of the propositions, the set of isomorphism classes of (resp. $O$-constrained) Mumford stacky  $\Gamma$-reductions has a natural structure of a $\CE_{\kk^*}^1(\Gamma^{\rm st})$-torsor (resp. $\CE_{\kk^*}^1(\Gamma^{\rm st}, A)$-torsor) over $\prod_{v\in V^f(\Gamma^{\rm st})}\CM_{0,val(v)}$. The action of $\CE_{\kk^*}^1(\Gamma^{\rm st}, A)$ on the set of $O$-constrained stacky tropical reductions is defined similarly to the action of $\EE_{\kk^*}^1(\Gamma^{\rm st}, A)$ on the set of $O$-constrained tropical reductions (cf. Propositions \ref{prop:ParamOfTrLimits} and \ref{prop:ParamOfTrLimitsConstr}). Indeed, given an $O$-constrained stacky tropical reduction with the corresponding $O$-constrained tropical reduction $(C_\kk, D_\kk, f_\kk, X_\kk^{\rm tr})$, pick a coordinate on each component of $C_\kk$. Then the restriction of $f_\kk|_{C_v}$ is given by a character $\chi_v$ (cf. \eqref{eq:morphismRESTRICTEDtoCOMPONENTS}), and the set of characters must satisfy the following compatibility conditions at any $p_e\in C_v\cap C_{v'}$, $e\in E_{vv'}$: $\chi_v\chi_v^0|_{N_e^\perp}=\chi_{v'}\chi_{v'}^0|_{N_e^\perp}$ where $\chi_v^0$ and $\chi_{v'}^0$ are two fixed characters depending {\it only} on the choice of the coordinates on $C_v$ and $C_{v'}$. Assume that $v$ is the initial point of $e$. Then $\varphi_\kk$ determines, and is determined by the choice of $\chi_e\in (N_e)_{\kk^*}$ satisfying $\chi_e^{l(e)}=\frac{\chi_v\chi_v^0}{\chi_{v'}\chi_{v'}^0}$.
Now we can describe the action explicitly: For $$\xi=[(\xi_v),(\xi_e)]\in \CE_{\kk^*}^1(\Gamma^{\rm st}, A)\subseteq \left(\bigoplus_{v\in V^f(\Gamma)}N_{\kk^*}\right)\oplus\left(\bigoplus_{e\in E^b(\Gamma)}(N_e)_{\kk^*}\right)$$
we define $\xi(\CC_\kk, D_\kk, \varphi_\kk, \CX_\kk^{\rm tr})$ to be the stacky reduction defined by the collection $\left((\xi_v\chi_v)_{v\in V^f}, (\xi_e\chi_e)_{e\in E^b}\right)$. Plainly, this defines an action on the set of $O$-constrained stacky tropical reductions, and the action is independent of the choices we made. Furthermore, it is compatible with the action of $\EE_{\kk^*}^1(\Gamma^{\rm st}, A)$ on the set of $O$-constrained tropical reductions.
\end{remark}

\subsubsection{Tropical degenerations of elliptic constraint.}\label{subsec:tropdegofelcons}

Assume that $C$ and the corresponding tropical curve have genus one. We have seen in Subsection~\ref{subsec:ellconstr} that in this case $\val(j(C))=-j(\Gamma)<0$. Let $\LL$ be a sufficiently ramified extension, and fix a uniformizer $t_\LL\in R_\LL$. Then the residue class of $t_\LL^{-e_\LL\val(j(C))}j(C)$ is a well defined element of $\kk^*$, which we denote by $j_\kk(C)$. The goal of this subsection is to describe the set of isomorphism classes of stacky tropical reductions of triples $(C, D, f)$ satisfying an affine constraint and having given $\val(j(C))$ and $j_\kk(C)$. We start with necessary preparations.

Let $(C'_{R_\LL},D_{R_\LL})$ be a regular semi-stable model of $(C,D)$. Set $\Gamma':=\Gamma_{C'_{R_\LL},D_{R_\LL}}$. Let $v_1, v_2,\dotsc, v_k$ be the vertices of the cycle of minimal length generating the first homology of $\Gamma'$, and set $v_{k+1}:=v_1$. Let $e_i\in E_{v_i,v_{i+1}}$, $i=1,\dotsc, k$, be the edges of the cycle, and set $e_0:=e_k$. Then the reduction of $C'_{R_\LL}$ contains the cycle of projective lines $\cup_{i=1}^k C_{v_i}$. For any $1\le i\le k$, pick a coordinate $y_i$ on $C_{v_i}$, such that $y_i$ vanishes at $p_{e_i}$, and has a pole at $p_{e_{i-1}}$.

Consider the infinitesimal deformation of the reduction $C'_\kk=C'_{R_\LL}\times_{\Spec R_\LL}\Spec\kk$ to $\Spec R_\LL/(t_\LL^2)$ defined by $C'_{R_\LL/(t_\LL^2)}:=C'_{R_\LL}\times_{\Spec R_\LL} \Spec R_\LL/(t_\LL^2)$. Then there exists an exact sequence $0\to \CO_{C'_\kk}\to \CO_{C'_{R_\LL/(t_\LL^2)}}\to \CO_{C'_\kk}\to 0$, since $C'_{R_\LL/(t_\LL^2)}$ is flat over $\Spec R_\LL/(t_\LL^2)$. For any $1\le i\le k$, let $z_i\in \CO_{C'_\kk,p_{e_i}}$ and $w_{i+1}\in\CO_{C'_\kk,p_{e_i}}$ be the liftings of $y_i$ and $\frac{1}{y_{i+1}}$ respectively, such that $z_i$ vanishes on $C_{v_{i+1}}$ and $w_{i+1}$ vanishes on $C_{v_i}$. Pick arbitrary liftings of $z_i$ and $w_{i+1}$ to $\CO_{C'_{R_\LL/(t_\LL^2)},p_{e_i}}$, and denote them also by $z_i$ and $w_{i+1}$. Then, locally at $p_{e_i}$, $C'_{R_\LL/(t_\LL^2)}$ is given by $z_iw_{i+1}=t_\LL f_i$ for some $f_i\in \CO_{C'_\kk, p_{e_i}}$.

Four remarks are in place here: first, $f_i(p_{e_i})$ are independent of the choice of the liftings we made; second, $\prod_{i=1}^k f_i(p_{e_i})$ is independent of the choice of $y_i$, since if $\{y'_i\}$ is another set of coordinates with the same properties then $y'_i=\lambda_iy_i$ for some non-zero constants $\lambda_i\in\kk$, $f'_i=f_i\frac{\lambda_i}{\lambda_{i+1}}$, and $\prod_{i=1}^k f_i(p_{e_i})=\prod_{i=1}^k f'_i(p_{e_i})$; third, $\prod_{i=1}^k f_i(p_{e_i})\ne 0$ since $C'_{R_\LL}$ is regular; finally, if $C'_{R_\LL}$ was not regular, but would have a singularity of type $A_{r_i}$ at $p_{e_i}$, and would be given locally by $z_iw_{i+1}\equiv t_\LL^{r_i+1}f_i\mod t_\LL^{r_i+2}$, then $\prod_{i=1}^k f_i(p_{e_i})$ would give rise to the same value. To see this, one must check that the value of the product does not change when blowing up $p_e$; we leave the details to the reader.

\begin{lemma}\label{lem:leadtermofj}
$j_\kk(C)=\frac{1}{\prod_{i=1}^k f_i(p_{e_i})}$.
\end{lemma}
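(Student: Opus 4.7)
My approach is to reduce to the Tate curve and apply the $q$-expansion of $j$. I would first use the invariance of $\prod_i f_i(p_{e_i})$ under node blowups---noted by the author in the paragraph immediately preceding the lemma---to replace the regular model $C'_{R_\LL}$ with the \emph{stable} model of $(C,D)$. Since $g(C)=|D|=1$ with singular reduction, this stable model is a single nodal rational curve $C_v$ with one node $p$; the total space has an $A_{K-1}$ singularity at $p$, where $K:=\sum_{i=1}^k(r_i+1)$, so locally $zw\equiv t_\LL^K f\pmod{t_\LL^{K+1}}$ for a single function $f$, and $f(p)=\prod_{i=1}^k f_i(p_{e_i})$ by that invariance. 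Applying Theorem~\ref{thm:jtrjalg} to $\Gamma^{\rm st}_{C,D}$ yields $e_\LL\val(j(C))=-K$, so $j_\kk(C)=\bigl(t_\LL^K j(C)\bigr)\bmod t_\LL$.

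Next I would invoke Tate's uniformization: since $C$ has multiplicative reduction, $C^{\rm an}_\LL\cong \GG_m^{\rm an}/q^\ZZ$ for a unique Tate parameter $q\in R_\LL$ with $\val(q)=K/e_\LL$. Writing $q=t_\LL^K u$ with $u\in R_\LL^*$ and using the universal formal identity
\[
j(q)=q^{-1}+744+196884\,q+\cdots\in\ZZ((q)),
\]
which, being an identity over $\ZZ$, is valid in any residue characteristic, I obtain $j_\kk(C)=\bar u^{-1}$.

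It remains to identify $\bar u$ with $f(p)$. I would do this by a direct local computation on the stable model of $\GG_m^{\rm an}/q^\ZZ$: the node $p$ is the image of the identified pair $y=1$, $y=q$ in the quotient, and choosing branch parameters $z\sim y-1$, $w\sim q/y-1$ on the two branches of the stable reduction yields the local defining relation $zw\equiv q=t_\LL^K u\pmod{(z,w)^3}$, so that $f(p)=\bar u$ and hence $j_\kk(C)\cdot f(p)=1$.

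The main obstacle is the compatibility of normalizations in this last step: $f(p)$ depends on the branch coordinate $y_1$ chosen on $C_v$ (through the lifts $z_1,w_1$ of $y_1,y_1^{-1}$), whereas the Tate parameter $q$ is intrinsic. I would verify that a rescaling $y_1\mapsto\lambda y_1$ rescales the pair of lifts $z_1,w_1$ by reciprocal factors (so their product, and therefore $f(p)$, is unchanged), and that the corresponding change in the Tate cover cancels in the product $zw$; thus $f(p)$ is actually coordinate-invariant, matching the intrinsic character of $j_\kk(C)$.
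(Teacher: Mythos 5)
Your overall strategy is genuinely different from the paper's proof and, if completed, would even be stronger: the paper writes $C$ in Legendre form $y^2=X(X-1)(X-\lambda)$, resolves the $A_{2e_\LL\val(\lambda)-1}$ singularity by an explicit chain of blow-ups, and evaluates each $f_i(p_{e_i})$ by hand (assuming $\chara(\kk)\neq 2$ for simplicity), whereas your route via Tate uniformization uses only the integral identity $j(q)=q^{-1}+744+\cdots$ and so is characteristic-free. The reduction to the stable model is legitimate: the remark preceding the lemma gives invariance of $\prod_i f_i(p_{e_i})$ under the blow-downs at the cycle nodes, and contracting tails attached to cycle components does not change the local data at those nodes (note, though, that the lemma does not assume $|D|=1$; you should say, as the paper does in the proof of Theorem~\ref{thm:jtrjalg}, that both sides depend only on $C$ and on the cycle, and then pass to the one-pointed stable model).

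There is, however, a genuine error in the decisive step, the identification $f(p)=\bar u$. In $E_q=\GG_m^{\rm an}/q^{\ZZ}$ the points $y=1$ and $y=q$ are one and the same point, the identity, and it reduces to a \emph{smooth} point of the special fiber; the node is instead the reduction of the whole open annulus $\{|q|<|y|<1\}$. Accordingly, $z\sim y-1$ and $w\sim q/y-1$ are not branch parameters at the node (they vanish at smooth points), and $zw=(y-1)(q/y-1)$ is not congruent to $q$ in any relevant sense, so the asserted local relation fails as written. The correct, standard statement you need is: the stable model is obtained by gluing the canonical $R_\LL$-model of the annulus $\{|q|\le |y|\le 1\}$, with coordinate ring $R_\LL\langle z,w\rangle/(zw-q)$ where $z=y$, $w=q/y$, to itself along the loci $\{|z|=1\}\cong\{|w|=1\}$ via $w=1/z$; equivalently, the completed local ring of the model at the node is $R_\LL[[z,w]]/(zw-q)$ with $z,w$ reducing to the coordinate $\bar y$ and to $1/\bar y$ on the normalization of the special fiber, which matches the normalization entering the definition of $f$. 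Granting that fact (or citing the analogous description of the Tate curve over $\ZZ[[q]]$), $zw=q=t_\LL^{K}u$ gives $f(p)=\bar u$, and your computation $j_\kk(C)=\bar u^{-1}$ indeed finishes the proof; but this identification is the real content of the final step and must replace the computation at $y=1$, $y=q$ that you proposed.
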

\begin{proof}
Set $t:=t_\LL$. Assume for simplicity that $char(\kk)\ne 2$. Then, without loss of generality, we may assume that $C$ is given by $y^2=X(X-1)(X-\lambda)$, where $\lambda\in \LL$ is such that $\val(\lambda)>0$ (cf. Subsection~\ref{subsec:ellconstr}). Set $x:=X-\frac{\lambda}{2}$. Then the equation can be rewritten as $x^2+y^2=\frac{\lambda^2}{4}+(x+\frac{\lambda}{2})(x^2-\frac{\lambda^2}{4})$. We will assume that $e_\LL\val(\lambda)>1$ since the case $e_\LL\val(\lambda)=1$ is easier and can be done along the same lines.

The reduction of the integral model of $C$ defined by the same equation has one nodal component, and the singularity of the total space is of type $A_{2e_\LL\val(\lambda)-1}$. To resolve the singularity of the total space, we will proceed with a sequence of $e_\LL\val(\lambda)$ blow ups with centers given by the ideals $\left(x,y,t\right), \left(\frac{x}{t}, \frac{y}{t}, t\right), \dotsc, \left(\frac{x}{t^{e_\LL\val(\lambda)-1}}, \frac{y}{t^{e_\LL\val(\lambda)-1}}, t\right)$. Each blow up increases the number of the components of the reduction by two, but the last blow up, which adds only one component. Thus, $k=2e_\LL\val(\lambda)$.

Let $v_1,\dotsc, v_{k+1}; e_0,\dotsc, e_k$ be as above. Without loss of generality, we may assume that $v_1$ corresponds to the strict transform of the component in the original reduction. Then $v_{1+i}$ and $v_{2e_\LL\val(\lambda)+1-i}$ correspond to the strict transforms of the components of the exceptional divisor of the $i$-th blow up if $0<i<e_\LL\val(\lambda)$, and $v_{1+e_\LL\val(\lambda)}$ corresponds to the exceptional divisor of the last blow up. The function $\frac{x}{y}$ has values $\pm\sqrt{-1}$ at the nodes $p_{e_1}$ and $p_{e_{k}}$; and without loss of generality, we may assume that the value at $p_{e_1}$ is $-\sqrt{-1}$.

Set $y_1:=\frac{x+\sqrt{-1}y}{x-\sqrt{-1}y}$, $y_{1+i}:=\frac{y}{t^i}$, and $y_{2e_\LL\val(\lambda)+1-i}:=\frac{t^i}{y}$ for $0<i<e_\LL\val(\lambda)$, and $y_{1+e_\LL\val(\lambda)}:=\frac{\alpha}{2}\frac{t^n}{(x+\sqrt{-1}y)}$. One can check by a straightforward calculation that first, for any $i$, $y_i$ is a coordinate on $C_{v_i}$ satisfying the properties required above; and second, if $\alpha$ denotes the residue class of $\lambda t^{-e_\LL\val(\lambda)}$ in the field $\kk$, then $f_1(p_{e_1})=\frac{-\sqrt{-1}}{2^2}$, $f_{2e_\LL\val(\lambda)}(p_{e_{2e_\LL\val(\lambda)}})=\frac{\sqrt{-1}}{2^2}$, $f_{e_\LL\val(\lambda)}(p_{e_{e_\LL\val(\lambda)}})=\frac{\alpha\sqrt{-1}}{2^2}$, $f_{e_\LL\val(\lambda)+1}(p_{e_{e_\LL\val(\lambda)+1}})=\frac{-\alpha\sqrt{-1}}{2^2}$, and $f_i(p_{e_i})=1$ for $i\ne 1, 2e_\LL\val(\lambda), e_\LL\val(\lambda), e_\LL\val(\lambda)+1$. Thus, $$j_\kk(C)=\frac{2^8}{\alpha^2}=\frac{1}{\prod_{i=1}^{2e_\LL\val(\lambda)}f_i(p_{e_i})},$$ since $j(C)=2^8\frac{(\lambda^2-\lambda+1)^3}{\lambda^2(\lambda-1)^2}$.
\end{proof}
Let now $(C_{R_\LL}^{\rm tr}, D_{R_\LL}, f_{R_\LL}^{\rm tr}, X_{R_\LL}^{\rm tr})$ and $(\CC_{R_\LL}^{\rm tr}, D_{R_\LL}, \varphi_{R_\LL}^{\rm tr}, \CX_{R_\LL}^{\rm tr})$ be the tropical and the stacky tropical degenerations of $(C,D,f,X)$. Fix a coordinate on each component of the reduction of $C_{R_\LL}$. Recall that the restriction of $f_\kk$ to $C_v$ is given by \eqref{eq:morphismRESTRICTEDtoCOMPONENTS}.
Pick $v,v'\in V^f$ and $e\in E_{vv'}$ with $N_e\ne 0$. Then $C_{R_\LL}$ has singularity of type $A_{e_\LL|e|-1}$ at $p_e$, and by \eqref{eq:morphismRESTRICTEDtoCOMPONENTS}, locally at $p_e$, the following equality holds on $C_{R_\LL}\times_{\Spec R_\LL}\Spec (R_\LL/t_\LL^{e_\LL|e|+1})$: $$t_\LL^{e_\LL(h_\Gamma(v),m)}\chi_v(m)\prod_{i=1}^{k_v}(y_v-y_i)^{|e_i|^{-1}(h_\Gamma(v_i)-h_\Gamma(v), m)}\prod_{i=k_v+1}^{s_v}(y_v-y_i)^{(h_\Gamma(v_i), m)}=$$ $$t_\LL^{e_\LL(h_\Gamma(v'),m)}\chi_{v'}(m)\prod_{i=1}^{k_{v'}}(y_{v'}-y'_i)^{|e'_i|^{-1}(h_\Gamma(v'_i)-h_\Gamma(v'), m)}\prod_{i=k_{v'}+1}^{s_{v'}}(y_{v'}-y'_i)^{(h_\Gamma(v'_i), m)}$$
Without loss of generality we may assume that $v_1=v'$, $v'_1=v$, $y_v$ vanishes at $p_e$, $y_{v'}$ has a pole at $p_e$, and, in a neighborhood of $p_e$, $C_{R_\LL}\times_{\Spec R_\LL}\Spec (R_\LL/t_\LL^{e_\LL|e|+1})$ is given by $\frac{y_v}{y_{v'}}\equiv a_et_\LL^{e_\LL|e|}\bmod t_\LL^{e_\LL|e|+1}$. Then, it follows from the equation above that
$a_e^{l(e)}=\frac{\chi_v\chi_v^0}{\chi_{v'}\chi_{v'}^0}\in (N_e)_{\kk^*}=\kk^*$ where
$$\chi_v^0(m)=\prod_{i=2}^{k_v}(-y_i)^{|e_i|^{-1}(h_\Gamma(v_i)-h_\Gamma(v), m)}\prod_{i=k_v+1}^{s_v}(-y_i)^{(h_\Gamma(v_i), m)}$$ and
$\chi_{v'}^0(m)=1$. Recall that the stacky tropical model determines an element $\chi_e$ satisfying $\chi_e^{l(e)}=\frac{\chi_v\chi_v^0}{\chi_{v'}\chi_{v'}^0}$ (cf. Remark~\ref{rem:actiononstackylimits}). By the construction, this element is nothing but $a_e$ above! Hence, $j_\kk(C)$ is completely determined by the stacky tropical reduction, and can be computed as $\prod_{e\in E^b}\chi_e^{-1}$ for an appropriate choice of coordinates on the components of the reduction. This leads to the following definition:
\begin{definition}
Let $\Gamma=\Gamma^{\rm tr}$ be an $N_\QQ$-parameterized $\QQ$-tropical curve of genus one, $v_1, v_2,\dotsc, v_k, v_{k+1}=v_1$ be the vertices of the cycle of minimal length generating the first homology of $\Gamma$, and $e_i\in E_{v_i,v_{i+1}}$, $i=1,\dotsc, k$, be the edges of the cycle. Assume that $N_{e_i}\ne 0$ for all $i$, and set $e_0:=e_k$. Let  $(\CC_\kk, D_\kk, \varphi_\kk, \CX_\kk^{\rm tr})$ be a Mumford stacky tropical $\Gamma$-reduction. For any $1\le i\le k$, pick a coordinate $y_i$ on $\CC_{v_i}$, such that $y_i$ vanishes at $p_{e_i}$ and has a pole at $p_{e_{i-1}}$. Then  $(\CC_\kk, D_\kk, \varphi_\kk, \CX_\kk^{\rm tr})$ defines, and is completely determined by the following data: $\left((\chi_v)_{v\in V^f}, (\chi_e)_{e\in E^b}\right)$ (cf. Remark~\ref{rem:actiononstackylimits}). We define $$j_\kk(\CC_\kk, D_\kk, \varphi_\kk, \CX_\kk^{\rm tr}):=\prod_{i=1}^k\chi_{e_i}.$$
\end{definition}
Note that $j_\kk(\CC_\kk, D_\kk, \varphi_\kk, \CX_\kk^{\rm tr})$ is well defined since if we choose different coordinates $y'_i=\lambda_iy_i$ then
$\prod_{i=1}^k\chi'_{e_i}=\prod_{i=1}^k(\frac{\lambda_i}{\lambda{i+1}}\chi_{e_i})=\prod_{i=1}^k\chi_{e_i}$.

\begin{proposition}\label{prop:leadingcoefofj}
Let $\Gamma$ be an $N_\QQ$-parameterized $\QQ$-tropical curve of genus one with $c(\Gamma)=0$ for which \eqref{stcond} is satisfied. Let $O$ be a toric constraint, and $A$ be the corresponding affine constraint. Assume that $\Gamma$ satisfies $A$, and  $A$ is a simple constraint for $\Gamma$.
If $\CE^2_{\kk^*}(\Gamma, A, j)=1$ then the set of isomorphism classes of $O$-constrained Mumford stacky  $\Gamma$-reductions with fixed $j_\kk(\CC_\kk, D_\kk, \varphi_\kk, \CX_\kk^{\rm tr})$ has a natural structure of a $\CE_{\kk^*}^1(\Gamma^{\rm st}, A, j)$-torsor over $\prod_{v\in V^f(\Gamma^{\rm st})}\CM_{0,val(v)}$.
\end{proposition}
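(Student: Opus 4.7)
The plan is to proceed in close analogy with the proof of Proposition~\ref{prop:ParamOfConstrStakyTrLimits}, adding the analysis of how the invariant $j_\kk$ interacts with the $\CE^1_{\kk^*}(\Gamma^{\rm st}, A)$-action, and then using the six-term exact sequence of Claim~\ref{claim:EvsEJ} to cut out the $j$-level.

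First, I would check that the hypotheses let us apply Remark~\ref{rem:actiononstackylimits}. From $\CE^2_{\kk^*}(\Gamma, A, j)=1$, Proposition~\ref{prop:E12forSubdivisionsConstraintsJ} yields $\CE^2_{\kk^*}(\Gamma^{\rm st}, A, j)=1$, and then the exact sequence of Claim~\ref{claim:EvsEJ} gives $\CE^2_{\kk^*}(\Gamma^{\rm st}, A)=1$. Combined with the standing DM convention (all multiplicities $l(e)$ are prime to $\chara(\kk)$, so $\kk^*/l(e)\kk^*=0$), Proposition~\ref{prop:relEECEGammaConstraints} then delivers $\EE^2_{\kk^*}(\Gamma^{\rm st}, A)=1$. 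Consequently, Remark~\ref{rem:actiononstackylimits} endows the set $S$ of isomorphism classes of $O$-constrained Mumford stacky $\Gamma$-reductions with the structure of a $\CE^1_{\kk^*}(\Gamma^{\rm st}, A)$-torsor over $B:=\prod_{v\in V^f(\Gamma^{\rm st})}\CM_{0,val(v)}$.

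Second, I would establish the key equivariance relation. Unwinding the formula of Remark~\ref{rem:actiononstackylimits}, the action of $\xi=[(\xi_v),(\xi_e)]\in \CE^1_{\kk^*}(\Gamma^{\rm st}, A)$ sends each $\chi_e$ to $\xi_e\chi_e$, so
$$j_\kk\bigl(\xi\cdot(\CC_\kk, D_\kk, \varphi_\kk, \CX_\kk^{\rm tr})\bigr)=\prod_{i=1}^k\xi_{e_i}\chi_{e_i}=\delta_{\kk^*}(\xi)\cdot j_\kk\bigl(\CC_\kk, D_\kk, \varphi_\kk, \CX_\kk^{\rm tr}\bigr),$$
where $\delta_{\kk^*}$ is the map appearing in Claim~\ref{claim:EvsEJ}. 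A routine bookkeeping check shows that this formula is insensitive to the passage between $\Gamma$, $\Gamma^{\rm tr}$ and $\Gamma^{\rm st}$: multiplicities are preserved by the relevant subdivisions (Corollary~\ref{cor:l(e)fordifferentmodels}), and under a subdivision replacing an edge $e$ by a chain $e_0,\dotsc,e_r$ the telescoping product $\prod_k\chi_{e_k}$ matches $\chi_e$ inside $(N_e)_{\kk^*}\cong\kk^*$.

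Third, I would apply the exact sequence
$$0\to\CE^1_{\kk^*}(\Gamma^{\rm st},A,j)\to\CE^1_{\kk^*}(\Gamma^{\rm st},A)\xrightarrow{\delta_{\kk^*}}\kk^*\to\CE^2_{\kk^*}(\Gamma^{\rm st},A,j)=1.$$
Surjectivity of $\delta_{\kk^*}$ together with the equivariance above implies that $j_\kk\colon S\to\kk^*$ is surjective on every fiber of the projection $S\to B$, and that for any fixed $j_0\in\kk^*$ the fiber $S_{j_0}:=j_\kk^{-1}(j_0)$ is preserved exactly by $\CE^1_{\kk^*}(\Gamma^{\rm st},A,j)=\ker(\delta_{\kk^*})$. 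Freeness of the ambient $\CE^1_{\kk^*}(\Gamma^{\rm st},A)$-action restricts to freeness of the subgroup-action on $S_{j_0}$, while surjectivity of $S_{j_0}\to B$ follows by translating any point $s$ of the given $B$-fiber into $S_{j_0}$ via the surjectivity of $\delta_{\kk^*}$. Thus $S_{j_0}$ acquires the claimed $\CE^1_{\kk^*}(\Gamma^{\rm st},A,j)$-torsor structure over $B$. The main obstacle, as in the companion propositions, lies in the explicit equivariance computation of step two and the verification that the $\Gamma^{\rm st}$-level action of Remark~\ref{rem:actiononstackylimits} transforms the $\chi_{e_i}$'s precisely by $\delta_{\kk^*}$; once this is in hand the remaining arguments are formal.
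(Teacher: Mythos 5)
Your argument is correct and follows essentially the same route as the paper: the paper's proof likewise deduces $\CE^2_{\kk^*}(\Gamma,A)=1$ and $\EE^2_{\kk^*}(\Gamma,A)=1$ from $\CE^2_{\kk^*}(\Gamma,A,j)=1$, invokes Remark~\ref{rem:actiononstackylimits} to obtain the $\CE^1_{\kk^*}(\Gamma^{\rm st},A)$-torsor structure, and then observes that $\xi$ fixes $j_\kk$ exactly when $\xi\in\CE^1_{\kk^*}(\Gamma^{\rm st},A,j)$, which is your equivariance relation $j_\kk(\xi\cdot L)=\delta_{\kk^*}(\xi)\,j_\kk(L)$. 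Your additional explicit steps (passing through Proposition~\ref{prop:E12forSubdivisionsConstraintsJ}, Claim~\ref{claim:EvsEJ}, and the surjectivity of $\delta_{\kk^*}$) merely spell out what the paper leaves implicit.
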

\begin{proof}
If $\CE^2_{\kk^*}(\Gamma, A, j)=1$ then $\CE^2_{\kk^*}(\Gamma, A)=1$ and $\EE^2_{\kk^*}(\Gamma, A)=1$. Thus, the set of isomorphism classes of $O$-constrained Mumford stacky  $\Gamma$-reductions has a natural structure of a $\CE_{\kk^*}^1(\Gamma^{\rm st}, A)$-torsor over $\prod_{v\in V^f(\Gamma^{\rm st})}\CM_{0,val(v)}$. Furthermore, by the construction of the action (cf. Remark~\ref{rem:actiononstackylimits}) $j_\kk(\xi(\CC_\kk, D_\kk, \varphi_\kk, \CX_\kk^{\rm tr}))=j_\kk(\CC_\kk, D_\kk, \varphi_\kk, \CX_\kk^{\rm tr})$ if and only if $\xi\in \CE_{\kk^*}^1(\Gamma^{\rm st}, A, j)\subset \CE_{\kk^*}^1(\Gamma^{\rm st}, A)$. This implies the proposition.
\end{proof}

\section{The deformation theory.}\label{sec:deftheory}
\begin{convention} In this section all sheaves are considered as elements of derived categories of sheaves, and all functors are derived functors. In particular, we use short notation such as $f^*$, $f_*$, and $\RHom$ instead of $Lf^*$, $Rf_*$, and $\mathcal R\RHom$ respectively. All stacks in this section are Deligne-Mumford stacks, i.e. we assume that \eqref{cond:DM} holds.
\end{convention}
The reference for this section is the book of Illusie \cite{I71-72}, and we shall use Illusie's notation in this section. In particular, we use notation $L_{X/Y}$ for the cotangent complex associated to a morphism $X\to Y$.
The deformation problem we are going to deal with is the following:
\begin{equation}\label{defcube}
\xymatrix@!0{
& & D^k_{R_\LL} \ar@{-->}[rrrrr]\ar@{-->}'[d][ddd]
& & & & & Y_{R_\LL} \ar[ddd]
\\
D^k_\kk \ar@{^{(}-->}[urr]\ar[rrrrr]_{j_\kk}\ar[ddd]^{i_\kk}
& & & & & Y_\kk \ar@{^{(}->}[urr]\ar[ddd]_{g_\kk}
\\
\\
& & \CC_{R_\LL} \ar@{-->}'[rr][rrrrr]
& & & & & \CX_{R_\LL}
\\
\CC_\kk \ar[rrrrr]^{\varphi_\kk}\ar@{^{(}-->}[urr]
& & & & & \CX_\kk^{\rm tr} \ar@{^{(}->}[urr]
}
\end{equation}
In other words, we are given a stacky $\Gamma$-reduction $(\CC_\kk,D_\kk,\varphi_\kk,\CX_\kk^{\rm tr})$ satisfying a constraint $Y_\kk$, and we want to complete the corresponding diagram of solid arrows to a commutative diagram of dotted arrows; which we shall do order by order.

Recall that by \cite[p. 138]{I71-72}, for a pair of morphisms
$$\xymatrix{
X\ar[r]^f & Y\ar[r]^g & Z
}$$
there exists a distinguished triangle of cotangent complexes
\begin{equation}\label{eq:fundtriang}
\xymatrix@!0{
& & L_{X/Y} \ar@{>->}[lld]  & & \\
f^*L_{Y/Z}\ar[rrrr]& & & & L_{X/Z}\ar[ull]
}
\end{equation}
\begin{notation} For a scheme (stack) $Z$ over $\Spec\kk$, $L_Z$ denotes the cotangent complex $L_{Z/\Spec \kk}$.
\end{notation}
\begin{Claim}\label{cl:CotComAtSmPt} Let $Z$ be a stack over $\kk$, and $p\in Z$ be a smooth schematic $\kk$-point. Then $L_{p/Z}=T^*_pZ[1]$.
\end{Claim}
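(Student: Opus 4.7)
The plan is to apply the fundamental distinguished triangle~\eqref{eq:fundtriang} of cotangent complexes to the composition of morphisms
$$\Spec \kk \xrightarrow{p} Z \to \Spec \kk,$$
which, in the conventions of the paper, gives
$$p^* L_Z \to L_{\Spec \kk / \Spec \kk} \to L_{p/Z} \to p^* L_Z[1].$$
Since $L_{\Spec \kk / \Spec \kk} = 0$, rotating the triangle yields a canonical isomorphism $L_{p/Z} \cong p^* L_Z[1]$. This reduces the claim to identifying $p^* L_Z$ with $T^*_p Z$.

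The second step is to use the smoothness hypothesis. By assumption $p$ is a smooth schematic $\kk$-point of $Z$, so there is an \'etale neighborhood of $p$ in $Z$ which is a smooth $\kk$-scheme. For a smooth scheme the cotangent complex is quasi-isomorphic to the sheaf of K\"ahler differentials placed in degree zero, and \'etale descent allows us to transport this to $Z$: in an \'etale neighborhood of $p$, $L_Z$ is concentrated in degree zero and coincides with $\Omega_Z$. Pulling back along the $\kk$-point $p$ then produces precisely the fiber $T^*_p Z$, and combining with the first step gives $L_{p/Z} \cong T^*_p Z[1]$, as required.

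There is no real obstacle here: the statement is a direct consequence of the two standard inputs (the fundamental triangle of cotangent complexes and the computation of the cotangent complex of a smooth morphism). The only point worth verifying carefully is that ``smooth schematic point'' is strong enough to guarantee that $L_Z$ is locally just $\Omega_Z$ in degree zero; this follows because the condition precisely says that, \'etale-locally near $p$, the stack $Z$ is represented by a smooth scheme.
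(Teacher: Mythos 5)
Your argument is correct and is essentially the paper's own proof: both reduce to the triangle for $p\to Z\to\Spec\kk$ with $L_{p/p}=0$, and both use that, étale-locally near the smooth schematic point, $L_Z=\Omega_Z$ is a vector bundle so the derived pullback along $p$ is just the fiber $T^*_pZ$. The only cosmetic difference is the order of the two steps (you rotate the triangle first and localize second, the paper localizes first), which changes nothing of substance.
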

\begin{proof} The statement is local, thus we may assume that $Z$ is a smooth scheme.
Consider triangle \eqref{eq:fundtriang} for $p\to Z\to p$. Since $L_{p/p}=0$ and $L_{Z/p}=\Omega_Z$ is a vector bundle, we have
$L_{p/Z}=\left(L_{Z/p}\otimes_{\CO_Z}\CO_p\right)[1]=T^*_pZ[1]$.
\end{proof}

Consider distinguished triangles \eqref{eq:fundtriang} associated to the triples $\CC_\kk\to \CX_\kk^{\rm tr}\to p$, $Y_\kk\to \CX_\kk^{\rm tr}\to p$,
$D^k_\kk\to\CC_\kk\to\CX_\kk^{\rm tr}$, and $D^k_\kk\to Y_\kk\to\CX_\kk^{\rm tr}$
$$\xymatrix@!0{
& & L_{\CC_\kk/\CX_\kk^{\rm tr}} \ar@{>->}[lld]  & & \\
f_\kk^*L_{\CX_\kk^{\rm tr}}\ar[rrrr]& & & & L_{\CC_\kk}\ar[ull]
}\:\:\:\:
\xymatrix@!0{
& & L_{Y_\kk/\CX_\kk^{\rm tr}} \ar@{>->}[lld]  & & \\
g_\kk^*L_{\CX_\kk^{\rm tr}}\ar[rrrr]& & & & L_{Y_\kk}\ar[ull]
}$$
$$\xymatrix@!0{
& & & L_{D_\kk^k/\CC_\kk}\ar@{>->}[llld] & & & \\
i_\kk^*L_{\CC_\kk/\CX_\kk^{\rm tr}}\ar[rrrrrr]& & & & & & L_{D_\kk^k/\CX_\kk^{\rm tr}}\ar[ulll]
}$$
$$\xymatrix@!0{
& & & L_{D_\kk^k/Y_\kk}\ar@{>->}[llld]  & & &\\
j_\kk^*L_{Y_\kk/\CX_\kk^{\rm tr}}\ar[rrrrrr]& & & & & & L_{D_\kk^k/\CX_\kk^{\rm tr}}\ar[ulll]
}$$
Since any $q\in D^k_\kk$ is a smooth schematic point in $\CC_\kk, \CX_\kk^{\rm tr}$, and $Y_\kk$, the second pair of distinguished triangles can be rewritten as follows due to Claim~\ref{cl:CotComAtSmPt}:
$$
\xymatrix@!0{
& & & T_{D^k_\kk}^*\CC_\kk[1] \ar@{>->}[llld] & & & \\
i_\kk^*L_{\CC_\kk/\CX_\kk^{\rm tr}}\ar[rrrrrr]& & & & & & T_{D^k_\kk}^*\CX_\kk^{\rm tr}[1]\ar[ulll]
}
$$
\begin{equation}\label{eq:SecPairOfTriang}
\xymatrix@!0{
& & & T_{D^k_\kk}^*Y_\kk[1] \ar@{>->}[llld] & & & \\
j_\kk^*L_{Y_\kk/\CX_\kk^{\rm tr}}\ar[rrrrrr]& & & & & & T_{D^k_\kk}^*\CX_\kk^{\rm tr}[1]\ar[ulll]
}
\end{equation}
where $T_{D^k_\kk}^*Z=\bigoplus_{q\in D^k_\kk}T^*_qZ$ for $Z=Y_\kk,\CX_\kk^{\rm tr},\CC_\kk$.

Let us now return to deformation problem \eqref{defcube}.
By \cite[Th\'eor\`eme 2.1.7]{I71-72}, the deformation problem defined by the top square of \eqref{defcube} is unobstructed, and the set of small extensions is a torsor under the natural action of the group $$\EExt^1\bigl(L_{D^k_\kk/Y_\kk}, \CO_{D^k_\kk}\bigr)=T_{D^k_\kk}Y_\kk:=\bigoplus_{q\in D^k_\kk} T_qY_\kk\, .$$
By the same theorem, the obstructions to the deformation problem defined by the bottom square of \eqref{defcube} belong to $\EExt^2(L_{\CC_\kk/\CX_\kk^{\rm tr}}, \CO_{\CC_\kk})$, while the set of small extensions is either empty or forms a torsor under the action of the group
$\EExt^1(L_{\CC_\kk/\CX_\kk^{\rm tr}}, \CO_{\CC_\kk})$.
Consider the following commutative diagram of exact sequences assigned to distinguished triangles \eqref{eq:SecPairOfTriang}:
$$\xymatrix{
& 0\ar[d]& \\
& \EExt^0\bigl(i_\kk^*L_{\CC_\kk/\CX_\kk^{\rm tr}}, \CO_{D^k_\kk}\bigr)\ar[d]& \\
& T_{D^k_\kk}\CC_\kk\ar[d]^{di_\kk} & \\
T_{D^k_\kk}Y_\kk\ar@{^{(}->}[r]^{dj_\kk}\ar[dr]^h & T_{D^k_\kk}\CX_\kk^{\rm tr}\ar[r]\ar[d]& \EExt^1\bigl(j_\kk^*L_{Y_\kk/\CX_\kk^{\rm tr}}, \CO_{D^k_\kk}\bigr)\\
\EExt^1(L_{\CC_\kk/\CX_\kk^{\rm tr}}, \CO_{\CC_\kk})\ar[r]^{i_\kk^*}& \EExt^1\bigl(i_\kk^*L_{\CC_\kk/\CX_\kk^{\rm tr}}, \CO_{D^k_\kk}\bigr)\ar[d] &\\
& 0&\\
}$$
and assume that we are given a pair of small extensions of the top and the bottom squares of \eqref{defcube} defined by $(\xi, \zeta)\in T_{D^k_\kk}Y_\kk\oplus \EExt^1(L_{\CC_\kk/\CX_\kk^{\rm tr}}, \CO_{\CC_\kk})$. Then, by \cite[Proposition 2.2.4]{I71-72}, one can extend it to a small extension for deformation problem \eqref{defcube} if and only if $h(\xi)=i_\kk^*(\zeta)$, and the set of small extensions for given $(\xi, \zeta)$ is a torsor under the action of the group
$\ker(di_\kk)=\EExt^0\bigl(i_\kk^*L_{\CC_\kk/\CX_\kk^{\rm tr}}, \CO_{D^k_\kk}\bigr)$.
Note that if $dj_\kk\bigl(T_{D^k_\kk}Y_\kk\bigr)\cap di_\kk\bigl(T_{D^k_\kk}\CC_\kk\bigr)=0$ then $h$ is an embedding, and if, in addition, $\EExt^0\bigl(i_\kk^*L_{\CC_\kk/\CX_\kk^{\rm tr}}, \CO_{D^k_\kk}\bigr)=0$ then the set of small extensions for deformation problem \eqref{defcube} is either empty or forms a torsor under the action of the kernel of the map
\begin{equation}\label{eq:alpha}
\alpha\colon \EExt^1(L_{\CC_\kk/\CX_\kk^{\rm tr}}, \CO_{\CC_\kk})\to \dfrac{\EExt^1\bigl(i_\kk^*L_{\CC_\kk/\CX_\kk^{\rm tr}}, \CO_{D^k_\kk}\bigr)}{h\bigl(T_{D^k_\kk}Y_\kk\bigr)}\, .
\end{equation}
We can summarize the discussion above in the following proposition:
\begin{proposition}\label{prop:defexun}
Assume that $dj_\kk\bigl(T_{D^k_\kk}Y_\kk\bigr)\cap di_\kk\bigl(T_{D^k_\kk}\CC_\kk\bigr)=0$, and the map $di_\kk$ is injective.
Then the space of the first order deformations in the deformation problem \eqref{defcube} is given by $Def^1\eqref{defcube}=\ker(\alpha)$,
and the obstruction space $Ob\eqref{defcube}$ fits naturally into the exact sequence
$0\to {\rm coker}(\alpha)\to Ob\eqref{defcube}\to \EExt^2(L_{\CC_\kk/\CX_\kk^{\rm tr}}, \CO_{\CC_\kk})\to 0$. In particular, if $\alpha$ is surjective and $\EExt^2(L_{\CC_\kk/\CX_\kk^{\rm tr}}, \CO_{\CC_\kk})=0$ then the deformation space $Def\eqref{defcube}$ is smooth and unobstructed. If, in addition, $\alpha$ is an isomorphism then there exists a unique solution to the deformation problem \eqref{defcube}.
\end{proposition}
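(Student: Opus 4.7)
The proposition is essentially a formalization of the diagram chase carried out in the discussion immediately preceding it, so my plan is to check that the two hypotheses are exactly what is needed to turn that discussion into a clean statement about $Def^1$ and $Ob$.

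First, I would invoke \cite[Th\'eor\`eme 2.1.7]{I71-72} separately for the top and bottom squares of~\eqref{defcube}. The top square is a deformation of the embedding $j_\kk\colon D^k_\kk\hookrightarrow Y_\kk$ of smooth points in a smooth scheme, hence is unobstructed with first-order deformations forming a torsor under $T_{D^k_\kk}Y_\kk$. The bottom square is the deformation of the map $\varphi_\kk\colon\CC_\kk\to\CX_\kk^{\rm tr}$ (keeping $\CX_{R_\LL}$ and $\CC_{R_\LL}$ flat over $R_\LL$), so its first-order deformations form a torsor under $\EExt^1(L_{\CC_\kk/\CX_\kk^{\rm tr}},\CO_{\CC_\kk})$ with obstructions in $\EExt^2(L_{\CC_\kk/\CX_\kk^{\rm tr}},\CO_{\CC_\kk})$.

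Next, I would apply \cite[Proposition~2.2.4]{I71-72} to splice these two deformation problems along the common corner $D^k_\kk$: a pair $(\xi,\zeta)\in T_{D^k_\kk}Y_\kk\oplus \EExt^1(L_{\CC_\kk/\CX_\kk^{\rm tr}},\CO_{\CC_\kk})$ extends to a deformation of the whole cube if and only if $h(\xi)=i_\kk^*(\zeta)$ in $\EExt^1(i_\kk^*L_{\CC_\kk/\CX_\kk^{\rm tr}},\CO_{D^k_\kk})$, and the set of such extensions for a fixed $(\xi,\zeta)$ is a torsor under $\ker(di_\kk)=\EExt^0(i_\kk^*L_{\CC_\kk/\CX_\kk^{\rm tr}},\CO_{D^k_\kk})$. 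Here the hypothesis that $di_\kk$ is injective forces this $\EExt^0$ group to vanish, removing the internal torsor so that the gluing is rigid once $(\xi,\zeta)$ is fixed; and the transversality hypothesis $dj_\kk(T_{D^k_\kk}Y_\kk)\cap di_\kk(T_{D^k_\kk}\CC_\kk)=0$ implies that the composition $h=di_\kk\circ dj_\kk\colon T_{D^k_\kk}Y_\kk\hookrightarrow \EExt^1(i_\kk^*L_{\CC_\kk/\CX_\kk^{\rm tr}},\CO_{D^k_\kk})$ is injective (via the commutative diagram already drawn above the proposition).

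Given these two vanishings, the compatibility condition $h(\xi)=i_\kk^*(\zeta)$ together with the injectivity of $h$ determines $\xi$ uniquely from $\zeta$ and reduces the constraint to $i_\kk^*(\zeta)\in h(T_{D^k_\kk}Y_\kk)$; in other words, $\zeta\in\ker(\alpha)$ where $\alpha$ is the map~\eqref{eq:alpha}. This gives $Def^1\eqref{defcube}=\ker(\alpha)$. For the obstruction space I would argue as follows: any obstruction class decomposes into its projection to $\EExt^2(L_{\CC_\kk/\CX_\kk^{\rm tr}},\CO_{\CC_\kk})$, which is the obstruction to deforming the bottom square alone; when this projection vanishes, one can choose a lift $\zeta$ of the bottom-square deformation, and the remaining obstruction to finding a compatible $\xi$ is the image of $i_\kk^*(\zeta)$ in the quotient $\EExt^1(i_\kk^*L_{\CC_\kk/\CX_\kk^{\rm tr}},\CO_{D^k_\kk})/h(T_{D^k_\kk}Y_\kk)$, i.e.\ an element of ${\rm coker}(\alpha)$. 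This gives the exact sequence $0\to{\rm coker}(\alpha)\to Ob\eqref{defcube}\to\EExt^2(L_{\CC_\kk/\CX_\kk^{\rm tr}},\CO_{\CC_\kk})\to 0$.

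Finally, the last two assertions are immediate formal consequences: if both $\EExt^2(L_{\CC_\kk/\CX_\kk^{\rm tr}},\CO_{\CC_\kk})=0$ and $\alpha$ is surjective, then $Ob\eqref{defcube}=0$, so the deformation space is smooth and unobstructed and lifts order by order; if in addition $\alpha$ is injective then $Def^1\eqref{defcube}=0$, so at each order the lift is unique, yielding a unique formal solution (which is algebraizable by standard arguments since $R_\LL$ is complete). The only mildly delicate point is verifying that the compatibility obstruction is exactly $i_\kk^*(\zeta)\bmod h(T_{D^k_\kk}Y_\kk)$ rather than some shifted or signed variant; I expect this to be the main bookkeeping step, but it follows from unraveling the construction of the obstruction cocycle in~\cite[Proposition~2.2.4]{I71-72} applied to our distinguished triangles.
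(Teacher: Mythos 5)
Your proposal is correct and follows essentially the same route as the paper, whose proof is precisely the discussion preceding the proposition: Illusie's Th\'eor\`eme 2.1.7 applied to the top and bottom squares, Proposition 2.2.4 for splicing them along $D^k_\kk$, with injectivity of $di_\kk$ giving $\EExt^0\bigl(i_\kk^*L_{\CC_\kk/\CX_\kk^{\rm tr}},\CO_{D^k_\kk}\bigr)=0$ and the transversality hypothesis giving injectivity of $h$, whence $Def^1=\ker(\alpha)$ and the stated obstruction sequence. The only slip is notational: $h$ is not $di_\kk\circ dj_\kk$ (these do not compose) but the composite of $dj_\kk$ with the connecting map $T_{D^k_\kk}\CX_\kk^{\rm tr}\to\EExt^1\bigl(i_\kk^*L_{\CC_\kk/\CX_\kk^{\rm tr}},\CO_{D^k_\kk}\bigr)$, whose kernel is $di_\kk\bigl(T_{D^k_\kk}\CC_\kk\bigr)$ --- which is exactly why your injectivity conclusion is nonetheless right.
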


\subsection{Semi-simple computations.}
$ $\newline Let $\{\CX_\rho\}_{\rho\in\Sigma^1_{\Gamma^{\rm tr}}\setminus \Sigma^1_{\Gamma^{\rm tr},\eta}}$ be the set of irreducible components of $\CX_\kk^{\rm tr}$. Then each $\CX_\rho$ is a toric stack with coarse moduli space $\overline{O}_\rho$, whose orbit decomposition is given by $\CX_\rho=\coprod_{\rho\subset\sigma\in\Sigma^2_{\Gamma^{\rm tr}}}\CX_\sigma\coprod T_{N,\kk}$. Furthermore, $\CX_\rho\cap \CX_{\rho'}=\CX_\sigma$ if $\rho+\rho'=\sigma\in\Sigma^2_{\Gamma^{\rm tr}}$ and $\CX_\rho\cap \CX_{\rho'}=\emptyset$ otherwise.
Set $\CC_\rho:=\CC_\kk\times_{\CX_\kk^{\rm tr}}\CX_\rho$ and $\CC_\sigma:=\CC_\kk\times_{\CX_\kk^{\rm tr}}\CX_\sigma$, and denote by $\iota_\rho, \iota_\sigma$ the natural embeddings of $\CC_\rho$ and $\CC_\sigma$ into $\CC_\kk$. Then $\CC_\rho\cap \CC_{\rho'}=\CC_\sigma$ if $\rho+\rho'=\sigma\in\Sigma^2_{\Gamma^{\rm tr}}$ and $\CC_\rho\cap \CC_{\rho'}=\emptyset$ otherwise. We denote $\coprod_{\rho\subset\sigma\in\Sigma^2_{\Gamma^{\rm tr}}}\CX_\sigma$ by $\partial \CX_\rho$,  $\coprod_{\rho\subset\sigma\in\Sigma^2_{\Gamma^{\rm tr}}}\CC_\sigma$ by $\partial \CC_\rho$, and $\varphi^{\rm tr}_{R_\LL}|_{\CC_\rho}$ by $\varphi_\rho$.

\begin{lemma}\label{lem:componenttriangle}
Let $\CX_\rho$, $\CX_\sigma$, $\CC_\rho$, and $\CC_\sigma$ be as above. Then
\begin{enumerate}
\item There exists a distinguished triangle
$$\xymatrix{
& \bigoplus_{\sigma=\rho+\rho'}\RHom_{\CO_{\CC_\kk}}(L_{\CC_\kk/\CX_\kk^{\rm tr}}, (\iota_\sigma)_*\CO_{\CC_\sigma})\ar@{>->}[ld] \\
\RHom_{\CO_{\CC_\kk}}(L_{\CC_\kk/\CX_\kk^{\rm tr}}, \CO_{\CC_\kk})\ar[r] & \bigoplus_\rho\RHom_{\CO_{\CC_\kk}}(L_{\CC_\kk/\CX_\kk^{\rm tr}}, (\iota_\rho)_*\CO_{\CC_\rho})\ar[u]
}
$$
where in the upper sum $\rho,\rho'\in \Sigma^1_{\Gamma^{\rm tr}}\setminus \Sigma^1_{\Gamma^{\rm tr},\eta}$ and $\sigma\in\Sigma^2_{\Gamma^{\rm tr}}$, and in the lower sum $\rho\in \Sigma^1_{\Gamma^{\rm tr}}\setminus \Sigma^1_{\Gamma^{\rm tr},\eta}$.
\item There exists a natural quasi-isomorphism $$\RHom_{\CO_{\CC_\kk}}(L_{\CC_\kk/\CX_\kk^{\rm tr}}, (\iota_\rho)_*\CO_{\CC_\rho})\to (\iota_\rho)_*\RHom_{\CO_{\CC_\rho}}(L_{\CC_\rho/\CX_\rho}, \CO_{\CC_\rho}).$$
\item There exists a natural quasi-isomorphism $$\RHom_{\CO_{\CC_\kk}}(L_{\CC_\kk/\CX_\kk^{\rm tr}}, (\iota_\sigma)_*\CO_{\CC_\sigma})\to (\iota_\sigma)_*\RHom_{\CO_{\CC_\sigma}}(L_{\CC_\sigma/\CX_\sigma}, \CO_{\CC_\sigma}).$$
\end{enumerate}
\end{lemma}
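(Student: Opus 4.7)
The plan for (1) is to use a Mayer--Vietoris (\v{C}ech) resolution of $\CO_{\CC_\kk}$ coming from the covering of $\CC_\kk$ by its irreducible components. Since $\CC_\kk = \bigcup_\rho \CC_\rho$ with pairwise intersections $\CC_\rho \cap \CC_{\rho'} = \CC_\sigma$ for $\sigma = \rho+\rho' \in \Sigma^2_{\Gamma^{\rm tr}}$ (and empty otherwise), and there are no triple intersections (the dual complex of $\CC_\kk$ is a graph, like that of $\CX_\kk^{\rm tr}$), we obtain a short exact sequence of $\CO_{\CC_\kk}$-modules
\begin{equation*}
0 \to \CO_{\CC_\kk} \to \bigoplus_\rho (\iota_\rho)_*\CO_{\CC_\rho} \to \bigoplus_{\sigma=\rho+\rho'} (\iota_\sigma)_*\CO_{\CC_\sigma} \to 0.
\end{equation*}
Applying $\RHom_{\CO_{\CC_\kk}}(L_{\CC_\kk/\CX_\kk^{\rm tr}}, -)$ to this short exact sequence yields the distinguished triangle after the standard rotation.

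For (2) and (3), the plan is to combine the derived adjunction
\begin{equation*}
\RHom_{\CO_{\CC_\kk}}\bigl(F, (\iota_\rho)_*G\bigr) \simeq (\iota_\rho)_*\RHom_{\CO_{\CC_\rho}}\bigl(\iota_\rho^* F,\, G\bigr)
\end{equation*}
(applied to $F = L_{\CC_\kk/\CX_\kk^{\rm tr}}$ and $G = \CO_{\CC_\rho}$) with the base-change identification
\begin{equation*}
\iota_\rho^* L_{\CC_\kk/\CX_\kk^{\rm tr}} \simeq L_{\CC_\rho/\CX_\rho}
\end{equation*}
arising from the Cartesian square defining $\CC_\rho = \CC_\kk \times_{\CX_\kk^{\rm tr}} \CX_\rho$. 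The base-change map always exists; it is a quasi-isomorphism precisely when the square is tor-independent, i.e.\ when $\Tor^{\CO_{\CX_\kk^{\rm tr}}}_i(\CO_{\CC_\kk}, \CO_{\CX_\rho}) = 0$ for $i>0$. Exactly the same argument proves (3), with $\sigma$ in place of $\rho$.

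The main point to verify is therefore the tor-independence. This is an \'etale-local claim, and it suffices to check it at the nodes of $\CC_\kk$ and at the stacky points. At a node of $\CC_\kk$ corresponding to an edge $e \in E_\sigma$, the \'etale-local model is $\CC_\kk = \Spec \kk[x,y]/(xy)$, $\CX_\kk^{\rm tr} = \Spec \kk[X,Y]/(XY)$ (possibly with stacky quotients by $G_\sigma$ and $G_e$), with $\CX_\rho$ and $\CX_{\rho'}$ cut out by $Y=0$ and $X=0$ respectively, and the morphism given by $X \mapsto x^{l(e)}$, $Y \mapsto y^{l(e)}$ up to unit; the transversality condition (3) in the definition of a stacky $\Gamma$-reduction forces the map to have precisely this form. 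The required Tor vanishing is then a standard Koszul computation: $\CO_{\CC_\kk} \otimes^L_{\CO_{\CX_\kk^{\rm tr}}} \CO_{\CX_\rho}$ reduces to $\CO_{\CC_\rho}$, since both $\CO_{\CC_\kk}$ and $\CO_{\CX_\rho}$ admit Koszul-type resolutions over $\CO_{\CX_\kk^{\rm tr}}$ whose tensor product collapses by the transversality. At smooth points of $\CC_\kk$ lying on a single component $\CC_\rho$ (including the marked points, where the stacky structure is of root-stack type), $\iota_\rho$ is locally an isomorphism onto an open substack and the base-change claim is trivial. Assembling the local verifications gives the desired quasi-isomorphisms globally.
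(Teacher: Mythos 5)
Your overall strategy is the same as the paper's: part (1) comes from applying $\RHom_{\CO_{\CC_\kk}}(L_{\CC_\kk/\CX_\kk^{\rm tr}},-)$ to the Mayer--Vietoris triangle of structure sheaves, and parts (2)--(3) from the adjunction between $\iota_\rho^*$ and $(\iota_\rho)_*$ together with Illusie's base-change criterion for cotangent complexes, reduced to an \'etale-local Tor-independence check. The gap is in that local check at the nodes. You take the local model $\CC_\kk=\Spec\kk[x,y]/(xy)\to\CX_\kk^{\rm tr}=\Spec\kk[X,Y]/(XY)$ with $X\mapsto x^{l(e)}$, $Y\mapsto y^{l(e)}$, and assert that the Koszul computation collapses ``by transversality''. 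It does not when $l(e)>1$: resolving $\CO_{\CX_\rho}=\kk[X]=A/(Y)$ over $A=\kk[X,Y]/(XY)$ by the periodic complex $\cdots\xrightarrow{\,X\,}A\xrightarrow{\,Y\,}A\to A/(Y)\to 0$ and tensoring with $B=\kk[x,y]/(xy)$ gives $\Tor_1^{A}\bigl(\kk[X],B\bigr)\cong x\kk[x]/x^{l(e)}\kk[x]\neq 0$, and even the underived fiber product is $\Spec\kk[x,y]/(xy,\,y^{l(e)})$, a non-reduced scheme rather than $\CC_\rho$. So with your local form the claimed quasi-isomorphism is simply false. That multiplicity-$l(e)$ form is the local form of the \emph{coarse} map $f_\kk$ (condition (6) of a $\Gamma$-reduction), and the nonvanishing Tor you would get is exactly the torsion phenomenon the stacky structure was introduced to kill; your sentence that transversality ``forces the map to have precisely this form'' conflates the coarse and stacky pictures.

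The missing point is that transversality in condition (3) of a stacky $\Gamma$-reduction holds on the level of the stacks: by the construction of $\CC^{\rm tr}$ and $\CX^{\rm tr}$ via the $G_e$- and $G_\sigma$-charts (equivalently, the root-stack/twisted-curve structure), \'etale-locally at a node the representable map $\varphi_\kk$ is $\Spec\kk[x,y]/(xy)\to\Spec\kk[x,y,\underline{z}\,]/(xy)$ with $x\mapsto x$, $y\mapsto y$ up to units and $\underline{z}\mapsto 0$, i.e.\ multiplicity one. With this form the Tor-independence is immediate (after the flat reduction in the $\underline{z}$-directions the module $\CO_{\CC_\kk}$ is free over $\kk[x,y]/(xy)$), the fiber products $\CC_\rho$ and $\CC_\sigma$ are the expected reduced components and (gerby) nodes --- which is also what your short exact sequence in part (1) implicitly uses --- and the rest of your argument goes through exactly as in the paper. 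So the architecture is right, but the computation must be performed on the stacky charts, not at the coarse level where the statement fails for $l(e)>1$.
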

\begin{proof}
By applying the derived functor $\RHom_{\CO_{\CC_\kk}}(L_{\CC_\kk/\CX_\kk^{\rm tr}}, \cdot)$ to the distinguished triangle
$$\xymatrix{
& \bigoplus_{\sigma=\rho+\rho'}(\iota_\sigma)_*\CO_{\CC_\sigma}\ar@{>->}[ld]  &  \\
\CO_{\CC_\kk}\ar[rr] & & \bigoplus_\rho(\iota_\rho)_*\CO_{\CC_\rho}\ar[ul]
}
$$
one proves the first part of the lemma.

The proofs of the second and the third parts of the lemma are similar, thus we prove the second statement, and leave the third statement to the reader. We shall first, prove that there is a natural quasi-isomorphism $\iota_\rho^*L_{\CC_\kk/\CX_\kk^{\rm tr}}\to L_{\CC_\rho/\CX_\rho}$. Consider the commutative diagram
$$
\xymatrix{
    \CC_\rho\ar[r]^{\iota_\rho}\ar[d] & \CC_\kk\ar[d]\\
    \CX_\rho\ar[r]                 & \CX_\kk^{\rm tr}
}
$$
It induces the natural map $\iota_\rho^*L_{\CC_\kk/\CX_\kk^{\rm tr}}\to L_{\CC_\rho/\CX_\rho}$, which is quasi-isomorphism if (a) $\Tor_q^{\CO_{\CX_\kk^{\rm tr}}}(\CO_{\CX_\rho},\CO_{\CC_\kk})=0$ for $q>0$ and (b) $L_{\CC_\rho/\CC_\kk\times_{\CX_\kk^{\rm tr}}\CX_\rho}=0$, by \cite[Corollary 2.2.3]{I71-72}. Plainly, (b) is satisfied since $\CC_\rho=\CC_\kk\times_{\CX_\kk^{\rm tr}}\CX_\rho$. To prove that (a) holds, observe that the problem is \'etale local. Thus, we may assume that $\CX_\kk^{\rm tr}=\Spec \kk[x,y,\underline{z}\,]/xy$ (here $\underline{z}$ is a multivariable) by the construction of $X_{R_\LL}^{\rm tr}$ and $\CX_{R_\LL}^{\rm tr}$. Furthermore, we may assume that $\CC_\kk=\Spec \kk[x,y]/xy$, $(\varphi^{\rm tr}_\kk)^*\colon \underline{z}\mapsto 0$, and $\CC_\rho$ and $\CX_\rho$ are given by $x=0$, since $\varphi_\kk(\CC_\rho)$ is transversal to $\partial\CX_\rho$. Thus,
$$\Tor_q^{\CO_{\CX_\kk^{\rm tr}}}(\CO_{\CX_\rho},\CO_{\CC_\kk})=\Tor_q^{\kk[x,y]/xy}(\kk[y],\kk[x,y]/xy)\otimes_\kk\kk[\underline{z}],$$ since $\kk[\underline{z}]$ is flat over $\kk$. Hence (a) holds. The second part of the lemma now follows, since $\iota_\rho^*$ and $(\iota_\rho)_*$ are adjoint functors.
\end{proof}

\begin{lemma}\label{lem:logtriang} There exists a distinguished triangle
$$\xymatrix{
& L_{\CC_\rho/\CX_\rho}\ar@{>->}[ld]  &  \\
\varphi_\rho^*\Omega_{\CX_\rho}\big(\log(\partial \CX_\rho)\big)\ar[rr] & & \Omega_{\CC_\rho}\big(\log(\partial \CC_\rho)\big)\ar[ul]
}
$$
\end{lemma}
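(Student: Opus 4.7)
The approach is to combine the fundamental triangle \eqref{eq:fundtriang} for $\varphi_\rho$ with the Poincar\'e residue sequences for the logarithmic differentials of $\CX_\rho$ and $\CC_\rho$. Both $\CX_\rho$ and $\CC_\rho$ are smooth Deligne-Mumford stacks with the simple normal crossings divisors $\partial\CX_\rho$ and $\partial\CC_\rho$ respectively, so the residue maps yield short exact sequences
$$0\to\Omega_{\CX_\rho}\to\Omega_{\CX_\rho}\bigl(\log(\partial\CX_\rho)\bigr)\to\bigoplus_{\sigma\supset\rho,\,\sigma\in\Sigma^2_{\Gamma^{\rm tr}}}\CO_{\CX_\sigma}\to 0,$$
$$0\to\Omega_{\CC_\rho}\to\Omega_{\CC_\rho}\bigl(\log(\partial\CC_\rho)\bigr)\to\bigoplus_{p_e\in\partial\CC_\rho}\CO_{p_e}\to 0.$$
By smoothness one has $L_{\CX_\rho}\cong\Omega_{\CX_\rho}$ and $L_{\CC_\rho}\cong\Omega_{\CC_\rho}$, so these residue sequences present $\Omega(\log)$ as an extension of the residue sheaves by the classical cotangent complexes on each side.

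The heart of the argument is to verify that the natural pullback map
$$\varphi_\rho^*\bigoplus_{\sigma\supset\rho}\CO_{\CX_\sigma}\to\bigoplus_{p_e\in\partial\CC_\rho}\CO_{p_e}$$
is an isomorphism. This is a local statement. Using the explicit description of $\varphi_\rho$ at a special point $p_e\in\partial\CC_\rho$ given after Notation~\ref{not:GeGv}, the map \'etale-locally takes the form $[\Spec\kk[y']/G_e]\to[\Spec\kk[Y']/G_\sigma]$ with $y'\mapsto Y'$ on coordinates and $G_e\hookrightarrow G_\sigma$ on stabilizers. Hence the schematic preimage of $\{Y'=0\}\subset\CX_\sigma$ is exactly the reduced point $\{y'=0\}=p_e$, which gives $\varphi_\rho^*\CO_{\CX_\sigma}\cong\CO_{p_e}$ for the unique $p_e\in\CC_\sigma$. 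The multiplicities $l(e),l(\sigma),l(\rho),l(v)$ in the root-stack construction of $\CC^{\rm tr}_{R_\LL}$ and $\CX^{\rm tr}_{R_\LL}$ were arranged precisely so that this transversality (condition (3) in the definition of a stacky $\Gamma$-reduction) holds stackily. An analogous calculation handles the marked points of $\partial\CC_\rho$ lying over the toric-boundary strata corresponding to rays in $\Sigma^1_{\Gamma^{\rm tr},\eta}$, where the root-stack structure on $\CC_\rho$ of order $l(\rho)/l(v)$ and the root-stack structure on $\CX_\rho$ of order $l(\rho)$ along $\overline{O}_{\rho'}$ balance to give a reduced pullback.

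With this isomorphism in hand, the octahedral axiom applied to the commutative square
$$\xymatrix{\varphi_\rho^*L_{\CX_\rho}\ar[r]\ar[d] & L_{\CC_\rho}\ar[d]\\ \varphi_\rho^*\Omega_{\CX_\rho}\bigl(\log(\partial\CX_\rho)\bigr)\ar[r] & \Omega_{\CC_\rho}\bigl(\log(\partial\CC_\rho)\bigr)}$$
identifies the cone of the bottom horizontal map with the cone of the top horizontal map, since the two vertical cones are canonically the residue sheaves (in degree $0$) and agree under the just-established isomorphism. The cone of the top row is $L_{\CC_\rho/\CX_\rho}$ by the fundamental triangle \eqref{eq:fundtriang}, so the cone of the bottom row is also $L_{\CC_\rho/\CX_\rho}$, which is exactly the distinguished triangle claimed in the lemma.

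The main obstacle is the local verification of the residue-pullback isomorphism: one must disentangle the stacky structures at internal nodes (where $[C'/G_e]\to[X'/G_\sigma]$ is the relevant model) versus at marked points lying over toric-boundary strata (where the root-stack description enters), and confirm in each case that the preimage of the relevant boundary divisor is reduced. Once these local computations are in place, the remainder of the proof is a purely formal manipulation in the derived category via the octahedral axiom and the Poincar\'e residue sequences.
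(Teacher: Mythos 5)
Your argument is essentially the paper's own proof: transversality of $\varphi_\rho(\CC_\rho)$ to $\partial\CX_\rho$ gives the map on log-forms and the isomorphism of residue quotients $\varphi_\rho^*\Omega_{\CX_\rho}\big(\log(\partial\CX_\rho)\big)/\varphi_\rho^*\Omega_{\CX_\rho}\to\Omega_{\CC_\rho}\big(\log(\partial\CC_\rho)\big)/\Omega_{\CC_\rho}$, so the log and non-log two-term complexes have the same cone, which is $L_{\CC_\rho/\CX_\rho}$ by triangle \eqref{eq:fundtriang} since $L_{\CX_\rho}=\Omega_{\CX_\rho}$ and $L_{\CC_\rho}=\Omega_{\CC_\rho}$ ($\CX_\rho$ smooth, $\CC_\rho$ a complete intersection). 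Your explicit \'etale-local verification of the reduced pullback at nodes and marked points merely spells out what the paper subsumes under the transversality condition in the definition of a stacky $\Gamma$-reduction.
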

\begin{proof}
The image $\varphi_\rho(\CC_\rho)$ is transversal to $\partial \CX_\rho$; hence, there exists a well defined map of log-differential forms $\varphi_\rho^*\Omega_{\CX_\rho}\big(\log(\partial \CX_\rho)\big)\to\Omega_{\CC_\rho}\big(\log (\partial \CC_\rho)\big)$, and the natural map
$\frac{\varphi_\rho^*\Omega_{\CX_\rho}\big(\log(\partial \CX_\rho)\big)}{\varphi_\rho^*\Omega_{\CX_\rho}}\to \frac{\Omega_{\CC_\rho}\big(\log (\partial \CC_\rho)\big)}{\Omega_{\CC_\rho}}$ is an isomorphism.
Thus, the complexes $\varphi_\rho^*\Omega_{\CX_\rho}\big(\log(\partial \CX_\rho)\big)\to\Omega_{\CC_\rho}\big(\log(\partial \CC_\rho)\big)$ and $\varphi_\rho^*\Omega_{\CX_\rho}\to \Omega_{\CC_\rho}$ are quasi-isomorphic. Note that $\CC_\rho$ is a complete intersection and $\CX_\rho$ is smooth, thus $L_{\CC_\rho}=\Omega_{\CC_\rho}$ and $L_{\CX_\rho}=\Omega_{\CX_\rho}$. Hence, there exists a distinguished triangle
$$\xymatrix{
& L_{\CC_\rho/\CX_\rho}\ar@{>->}[ld]  &  \\
\varphi_\rho^*\Omega_{\CX_\rho}\ar[rr] & & \Omega_{\CC_\rho}\ar[ul]
}
$$
and the lemma follows.
\end{proof}
\begin{lemma}\label{lem:componentcomputations} Assume that the coarse moduli space $C_\rho$ of $\CC_\rho$ is a  smooth rational curve, i.e., $C_\rho=\cup_{v\in V_\rho(\Gamma^{\rm tr})}C_v$, $C_v$ are rational for all $v\in V_\rho(\Gamma^{\rm tr})$, and no two vertices $v,v'\in V_\rho(\Gamma^{\rm tr})$ are connected by an edge. Then
\begin{enumerate}
\item $\EExt^i(L_{\CC_\rho/\CX_\rho}, \CO_{\CC_\rho})=0$ for $i\ne 1$.
\item
    \begin{enumerate}
    \item $\EExt^1(L_{\CC_v/\CX_\rho}, \CO_{\CC_v})=N_\kk$ for any $v\in V_\rho(\Gamma^{\rm tr})$ of valency three.
    \item $\EExt^1(L_{\CC_v/\CX_\rho}, \CO_{\CC_v})=(N/l(v)N_v)_\kk$ for any $v\in V_\rho(\Gamma^{\rm tr})$ of valency two, where $N_v$ and $l(v)$ denote the slope and the multiplicity of an edge containing $v$. By the balancing condition, the slope and the multiplicity are independent of the choice of the edge.
    \item $\EExt^1(L_{\CC_\rho/\CX_\rho}, \CO_{\CC_\rho})=\left(\bigoplus_{val(v)=2}(N/l(v)N_v)_\kk\right)\oplus\left(\bigoplus_{val(v)=3}N_\kk\right)$ if $\Gamma^{\rm tr}$ has no vertices of valency greater than three corresponding to the ray $\rho$.
    \item There is an exact sequence $$0\to N_\kk\to \EExt^1(L_{\CC_v/\CX_\rho}, \CO_{\CC_v})\to H^1\bigl(C_v, T_{C_v}\bigl(\log(\partial C_v)\bigr)\bigr)\to 0$$ for any vertex $v\in V_\rho(\Gamma^{\rm tr})$ with $val(v)>3$.
    \end{enumerate}
\item Let $\sigma=\rho+\rho'\in\Sigma^2_{\Gamma^{\rm tr}}$ be a cone, where $\rho,\rho'\in \Sigma^1_{\Gamma^{\rm tr}}\setminus \Sigma^1_{\Gamma^{\rm tr},\eta}$. Then
$$\EExt^i(L_{\CC_\sigma/\CX_\sigma}, \CO_{\CC_\sigma})=\left\{
                                                          \begin{array}{ll}
                                                            \bigoplus_{e\in E_\sigma(\Gamma^{\rm tr})}(N/N_e)_\kk, & \hbox{if i=1;}\\
                                                            0, & \hbox{otherwise.}
                                                          \end{array}
                                                        \right.$$
\end{enumerate}
\end{lemma}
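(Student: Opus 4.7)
The plan is to dualize the log cotangent distinguished triangle of Lemma~\ref{lem:logtriang},
$$\varphi_\rho^*\Omega_{\CX_\rho}\bigl(\log\partial\CX_\rho\bigr)\to\Omega_{\CC_\rho}\bigl(\log\partial\CC_\rho\bigr)\to L_{\CC_\rho/\CX_\rho},$$
identify $\Omega_{\CX_\rho}(\log\partial\CX_\rho)\cong M\otimes\CO_{\CX_\rho}$ via Claim~\ref{cl:logtoric}, and take hypercohomology. Under the assumption that no two vertices of $V_\rho(\Gamma^{\rm tr})$ are connected by an edge, $\CC_\rho=\coprod_{v\in V_\rho(\Gamma^{\rm tr})}\CC_v$ splits as a disjoint union of stacky rational curves, so the computation reduces to a single component $\CC_v$. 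Because $\Omega_{\CC_v}(\log\partial\CC_v)$ is pulled back along the coarse moduli map $\pi\colon\CC_v\to C_v=\PP^1$, one has
$$H^\bullet\bigl(\CC_v,T_{\CC_v}(-\log\partial\CC_v)\bigr)=H^\bullet\bigl(\PP^1,\CO(2-val(v))\bigr)\quad\text{and}\quad H^\bullet\bigl(\CC_v,N\otimes\CO_{\CC_v}\bigr)=(N_\kk,0).$$

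Applying $\RHom(-,\CO_{\CC_v})$ to the triangle and taking cohomology produces the exact sequence
$$0\to\EExt^0(L_{\CC_v/\CX_\rho},\CO_{\CC_v})\to H^0(T_{C_v}(-\log))\xrightarrow{d\varphi_v}N_\kk\to\EExt^1(L_{\CC_v/\CX_\rho},\CO_{\CC_v})\to H^1(T_{C_v}(-\log))\to 0,$$
with $\EExt^i=0$ for $i\ne 0,1$. When $val(v)=3$ both $H^0$ and $H^1$ vanish, giving (2a); summing over all $v$ yields (1) and (2c); when $val(v)>3$ the vanishing of $H^0$ gives (2d). The bivalent case (2b) requires computing the connecting map $d\varphi_v\colon\kk\to N_\kk$. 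Tracing formula~\eqref{eq:morphismRESTRICTEDtoCOMPONENTS} through the stacky local model at $p_e\in\CC_v$ from Section~\ref{sec:toricstacksStackLimits}, $\varphi_v^*(x^m)$ has order $l(v)(n_v,m)$ on the coarse curve, hence $d\varphi_v$ is multiplication by $l(v)n_v$. By~\eqref{cond:DM} the scalar $l(v)$ is invertible in $\kk$, and since $n_v$ is primitive in $N$ this map is injective with cokernel $(N/l(v)N_v)_\kk$.

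For part (3) the argument is parallel. Since $\Sigma_{\Gamma^{\rm tr}}$ has no cones of dimension greater than two, $\CX_\sigma$ equals its big orbit $\Theta_\sigma\cong O_\sigma\times\CB G_\sigma$ from Claim~\ref{cl:Gsigma}; a direct check using $n_\rho=(h_\Gamma(v),1)$ and $n_{\rho'}=(h_\Gamma(v'),1)$ identifies the character lattice of $O_\sigma$ with $\sigma^\perp\cap(M\oplus\ZZ)\cong N_e^\perp$ for any $e\in E_\sigma(\Gamma^{\rm tr})$. On the other hand $\CC_\sigma=\coprod_{e\in E_\sigma(\Gamma^{\rm tr})}\CB G_e$, and the map $\CB G_e\to\CX_\sigma$ factors as $\CB G_e\to\CB G_\sigma\hookrightarrow\CB G_\sigma\times O_\sigma$. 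The first morphism is \'etale because $G_e\hookrightarrow G_\sigma$ has \'etale quotient $\mu_{l(e)}$ by~\eqref{cond:DM}, so $L_{\CB G_e/\CB G_\sigma}=0$; the cotangent of the closed embedding is the shifted conormal $N_e^\perp\otimes\kk[1]$. Thus $L_{\CB G_e/\CX_\sigma}\cong N_e^\perp\otimes\kk[1]$, its $\RHom$ into $\CO_{\CB G_e}$ is $(N/N_e)\otimes\kk[-1]$, and hypercohomology on $\CB G_e$ (trivial in positive degree by~\eqref{cond:DM}) yields $\EExt^1=(N/N_e)_\kk$ with vanishing elsewhere. Summing over $e\in E_\sigma(\Gamma^{\rm tr})$ completes (3).

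The main obstacle I anticipate is the identification $d\varphi_v=l(v)n_v$ in (2b): the stacky atlas coordinate $x'$ at $p_e$ satisfies $y=(x')^{l(\sigma_e)/l(e)}$ relative to a coarse coordinate $y$, so $\varphi_v^*(x^m)=(\mathrm{unit})\cdot(x')^{l(\sigma_e)(n_e,m)}$ on the atlas, and one must verify that after descent the exponent $l(\sigma_e)$ combines with the stacky order to yield exactly $l(v)n_v$ as an element of $N$ rather than $l(\sigma_e)n_e$. The rest is formal manipulation of the long exact sequence together with standard vanishing computations on $\PP^1$.
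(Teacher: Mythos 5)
Your argument is correct and coincides in essence with the paper's proof: for parts (1)--(2) you dualize the log triangle of Lemma~\ref{lem:logtriang}, use Claim~\ref{cl:logtoric} and push down to the coarse $\PP^1$ for the degree counts, with the explicit connecting map at bivalent vertices supplying both (2b) and the vanishing of $\EExt^0$ there; and for part (3) you reduce to Claim~\ref{cl:CotComAtSmPt} via $\CC_\sigma=\coprod_{e}\CB G_e$ and $\CX_\sigma\cong O_\sigma\times\CB G_\sigma$, exactly as the paper does (up to the cosmetic choice of factoring through $\CB G_\sigma$ rather than base changing along $O_\sigma\to\CX_\sigma$). The worry you flag in (2b) is harmless: computing with the coarse coordinate gives exponent $l(e)(n_e,m)$, computing on the stacky chart gives $l(\sigma_e)(n_e,m)$ but with $d\log y=\bigl(l(\sigma_e)/l(e)\bigr)d\log x'$, so the trivializing section of $\Omega_{\CC_v}\bigl(\log(\partial\CC_v)\bigr)$ changes by the unit $l(\sigma_e)/l(e)$ (invertible by \eqref{cond:DM}); hence the image of the dual map is $(N_v)_\kk$ in either normalization and the cokernel is $(N/l(v)N_v)_\kk$ as claimed.
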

To prove the Lemma, we need a tool for computing the $Ext$-s. Note that such computations can be reduced to the computations of cohomology of sheaves, which, in turn, can be computed using the following result of Abramovich and Vistoli \cite[Lemma~2.3.4]{AV02}: {\it Let $\pi\colon\CC\to C$ be the natural map between a tame stack $\CC$ and its coarse moduli space $C$. Then the functor $\pi_*$ is an exact functor between the categories of (quasi)coherent sheaves on $\CC$ and $C$.} Hence $H^*(\CC, \CF)=H^*(C, \pi_*\CF)$ for any (quasi)coherent sheaf $\CF$ on $\CC$.
\begin{proof} Pick $\rho$ and $v\in V_\rho(\Gamma^{\rm tr})$. First, note that $\varphi_\rho^*\Omega_{\CX_\rho}\big(\log(\partial \CX_\rho)\big)=M\otimes_\ZZ\CO_{\CC_\rho}$ by Claim~\ref{cl:logtoric}. Thus, $$\EExt^s(\varphi_\rho^*\Omega_{\CX_\rho}\big(\log(\partial \CX_\rho)\big),\CO_{\CC_v})=N\otimes_\ZZ H^s(\CC_v, \CO_{\CC_v})=\begin{cases}
                                                                N_\kk & \text{if $s=0$} \\
                                                                0 & \text{otherwise}
                                                             \end{cases}
$$
since $(\pi_\rho)_*\CO_{\CC_v}=\CO_{C_v}$ and $C_v\simeq\PP^1$.
Second, note that $$(\pi_\rho)_*\Omega_{\CC_v}\big(\log(\partial \CC_v)\big)=\Omega_{C_v}\bigl(\log(\partial C_v)\bigr).$$ Thus,
$$\EExt^s\bigl(\Omega_{\CC_v}\big(\log(\partial \CC_v)\big), \CO_{\CC_v}\bigr)=H^s\bigl(C_v, T_{C_v}\bigl(\log(\partial C_v)\bigr)\bigr),$$
which is equal to zero for $s\ne 0,1$, since $C_v\simeq\PP^1$. Finally, by Lemma \ref{lem:logtriang}, we conclude that there exists an exact sequence
\begin{equation*}
\begin{split}
0\to \EExt^0\bigl(L_{\CC_v/\CX_\rho}, \CO_{\CC_v}\bigr)\to & H^0\bigl(C_v, T_{C_v}\bigl(\log(\partial C_v)\bigr)\bigr)\to N_\kk\to \\
& \EExt^1\bigl(L_{\CC_v/\CX_\rho}, \CO_{\CC_v}\bigr)\to H^1\bigl(C_v, T_{C_v}\bigl(\log(\partial C_v)\bigr)\bigr)\to 0
\end{split}
\end{equation*}
and $\EExt^s\bigl(L_{\CC_v/\CX_\rho}, \CO_{\CC_v}\bigr)=0$ for $s\ne 0,1$. To finish the proof of the first part of the lemma it remains to show that $\EExt^0\bigl(L_{\CC_v/\CX_\rho}, \CO_{\CC_v}\bigr)=0$.

If $val(v)>2$ then $\deg(T_{C_v}\bigl(\log(\partial C_v)\bigr))<0$, hence $H^0\bigl(C_v, T_{C_v}\bigl(\log(\partial C_v)\bigr)\bigr)=0$, and we are done. If $val(v)=2$ then $h^0\bigl(C_v, T_{C_v}\bigl(\log(\partial C_v)\bigr)\bigr)=1$, and the statement follows from the second part of the Lemma.

(a) Assume that $val(v)=3$. Then $\deg \Omega_{C_v}\bigl(\log(\partial C_v)\bigr)=1$, which implies
$$\EExt^0\bigl(\Omega_{\CC_v}\bigl(\log(\partial \CC_v)\bigr),\CO_{\CC_v}\bigr)\simeq H^0(\PP^1, \CO_{\PP^1}(-1))=0.$$ Hence the map $N_\kk=\EExt^0\bigl(\varphi_\rho^*\Omega_{\CX_\rho}\bigl(\log(\partial \CX_\rho)\bigr),\CO_{\CC_v}\bigr)\to \EExt^{1}\bigl(L_{\CC_v/\CX_\rho}, \CO_{\CC_v}\bigr)$ is an isomorphism.

(b) Assume now that $val(v)=2$. Then $\EExt^0\bigl(\Omega_{\CC_v}\bigl(\log(\partial \CC_v)\bigr),\CO_{\CC_v}\bigr)$ is one-dimen\-sional since $\deg \Omega_{C_v}\bigl(\log(\partial C_v)\bigr)=0$. Thus, it remains to prove that its image in $\EExt^0\bigl(\varphi_\rho^*\Omega_{\CX_\rho}\bigl(\log(\partial \CX_\rho)\bigr),\CO_{\CC_v}\bigr)=N_\kk$ coincides with $(l(v)N_v)_\kk$. Let $v'$ be a finite vertex connected to $v$, and $e\in E_{vv'}(\Gamma^{\rm tr})$ be the edge. Note that $\CC_v\setminus \partial\CC_v=C_v\setminus \partial C_v\simeq\GG_m$. Fix such an isomorphism, and let $y$ be a coordinate on $\GG_m$ vanishing at $p_e\in C_v$. Then $\varphi_\rho|_{\GG_m}:\GG_m\to T_{N,\kk}\subset \CX_\rho$ is given by $(\varphi_\rho|_{\GG_m})^*x^m=\chi_{v,y}(m)y^{|e|^{-1}(h_{\Gamma^{\rm tr}}(v')-h_{\Gamma^{\rm tr}}(v),m)}$ for some character $\chi_{v,y}\colon M\to \kk^*$  (cf. Remark~\ref{rem:explicitformulaformap}). Thus, the map $$M\otimes_\ZZ\CO_{\CC_\rho}=f_\rho^*\Omega_{\CX_\rho}\bigl(\log(\partial \CX_\rho)\bigr)\to \Omega_{\CC_v}\bigl(\log(\partial \CC_v)\bigr)=\CO_{\CC_v}$$ is given by $m\mapsto |e|^{-1}(h_{\Gamma^{\rm tr}}(v')-h_{\Gamma^{\rm tr}}(v),m)$. By Corollary~\ref{cor:DMcriterion}, the integral length $l(e)$ of $|e|^{-1}(h_{\Gamma^{\rm tr}}(v')-h_{\Gamma^{\rm tr}}(v))$ is not divisible by $char(\kk)$, since $\CX_\rho$ is Deligne-Mumford. Thus, the map
$$\kk=\EExt^0\bigl(\Omega_{\CC_v}\bigl(\log(\partial \CC_v)\bigr),\CO_{\CC_v}\bigr)\to \EExt^0\bigl(\varphi_\rho^*\Omega_{\CX_\rho}\bigl(\log(\partial \CX_\rho)\bigr),\CO_{\CC_v}\bigr)=N_\kk$$
is given by $1\mapsto |e|^{-1}(h_{\Gamma^{\rm tr}}(v')-h_{\Gamma^{\rm tr}}(v))$ and its image coincides with the subspace $(l(v)N_v)_\kk\subset N_\kk$.

(c) Follows immediately from (a) and (b); and (d) follows from the vanishing $h^0\bigl(C_v, T_{C_v}\bigl(\log(\partial C_v)\bigr)\bigr)=0$.

For the third part of the lemma, note that in terms of Notation~\ref{not:GeGv} and Claim~\ref{cl:Gsigma}, $\CC_\sigma=\coprod_{e\in E_\sigma(\Gamma^{\rm tr})}\CB G_e(\kk)$ and $\CX_\sigma=T_{N/N_e}\times \CB G_\sigma(\kk)$, where $G_\bullet(\kk):=G_\bullet\times\Spec\kk$. Furthermore, $\CB G_e(\kk)\times_{\CX_\sigma}T_{N/N_e,\kk}=G_\sigma(\kk)/G_e(\kk),$ and its  image in $T_{N/N_e,\kk}$ is a point $u_e\in T_{N/N_e,\kk}$. Thus, by Claim~\ref{cl:CotComAtSmPt}, $\EExt^i(L_{\CC_\sigma/\CX_\sigma}, \CO_{\CC_\sigma})$ is given by
$$                                                      \left\{
                                                          \begin{array}{ll}
                                                            \bigoplus_{e\in E_\sigma(\Gamma^{\rm tr})}T_{u_e}T_{N/l(e)N_e,\kk}, & \hbox{if\:\:} i=1;\\
                                                            0, & \hbox{otherwise}
                                                          \end{array}
                                                        \right.\hspace{-0.1cm} =
                                                        \left\{
                                                          \begin{array}{ll}
                                                            \bigoplus_{e\in E_\sigma(\Gamma^{\rm tr})}(N/l(e)N_e)_\kk, & \hbox{if\:\:} i=1;\\
                                                            0, & \hbox{otherwise}
                                                          \end{array}
                                                        \right.$$
\end{proof}
\begin{corollary}\label{cor:computationofexts} Under the assumptions of Lemma \ref{lem:componentcomputations}, if $val(v)\le 3$ for any $v\in V_\rho(\Gamma^{\rm tr})$ then $\EExt^i(L_{\CC_\kk/\CX_\kk^{\rm tr}}, \CO_{\CC_\kk})=\CE^i_\kk(\Gamma^{\rm st})$ for $i=1,2$.
\end{corollary}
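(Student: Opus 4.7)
The plan is to combine the distinguished triangle of Lemma~\ref{lem:componenttriangle} with the Ext computations of Lemma~\ref{lem:componentcomputations}, and then compare the resulting two-term complex with the complex of Remark~\ref{rem:anotherformcomplex} that computes $\CE^\bullet_\kk(\Gamma^{\rm tr})$. Applying $\RHom_{\CO_{\CC_\kk}}(L_{\CC_\kk/\CX_\kk^{\rm tr}}, -)$ to the triangle, invoking the quasi-isomorphisms in parts~(2)--(3) of that lemma, and taking cohomology, the vanishings in Lemma~\ref{lem:componentcomputations}(1) and~(3) immediately collapse the long exact sequence to
\begin{multline*}
0 \to \EExt^1(L_{\CC_\kk/\CX_\kk^{\rm tr}}, \CO_{\CC_\kk}) \to \bigoplus_\rho \EExt^1(L_{\CC_\rho/\CX_\rho}, \CO_{\CC_\rho}) \xrightarrow{\partial}\\
\bigoplus_\sigma \EExt^1(L_{\CC_\sigma/\CX_\sigma}, \CO_{\CC_\sigma}) \to \EExt^2(L_{\CC_\kk/\CX_\kk^{\rm tr}}, \CO_{\CC_\kk}) \to 0.
\end{multline*}

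Using the hypothesis $val(v)\le 3$, part~(2)(c) of Lemma~\ref{lem:componentcomputations} identifies the middle-left term with
\[
C_2^0 := \bigoplus_{v \in V^f_3(\Gamma^{\rm tr})} N_\kk \;\oplus\; \bigoplus_{v \in V^f_2(\Gamma^{\rm tr})} (N/l(v)N_v)_\kk,
\]
and part~(3) identifies the middle-right term with $C_2^1 := \bigoplus_{e\in E^b(\Gamma^{\rm tr})} (N/l(e)N_e)_\kk$. Unfolding the local descriptions in the proofs of (2)(a)--(b) and~(3), the boundary $\partial$ becomes the combinatorial differential $x_v\mapsto \sum_{e\ni v}\epsilon(e,v)(x_v\bmod l(e)N_e)$. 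Letting $C_1$ be the complex $\bigoplus_{v\in V^f(\Gamma^{\rm tr})} N_\kk \to \bigoplus_{e\in E^b(\Gamma^{\rm tr})} (N/l(e)N_e)_\kk$ from Remark~\ref{rem:anotherformcomplex} (whose cohomology computes $\CE^\bullet_\kk(\Gamma^{\rm tr})$ because the Deligne-Mumford hypothesis makes every $l(e)$ with $N_e\ne 0$ invertible in $\kk$), the surjection $C_1^0\twoheadrightarrow C_2^0$ (identity on trivalent summands, reduction modulo $l(v)N_v$ on bivalent ones) fits into a short exact sequence of complexes $0 \to K[0] \to C_1 \to C_2 \to 0$ with $K = \bigoplus_{v\in V^f_2(\Gamma^{\rm tr})} (l(v)N_v)_\kk$.

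The same invertibility forces $(l(v)N_v)_\kk = (N_v)_\kk$ when $N_v \ne 0$, while summands with $N_v = 0$ contribute trivially; hence $K = \bigoplus_{v\in V^f_2(\Gamma^{\rm tr})} (N_{e_v})_\kk$, where $e_v$ is the edge of $\Gamma^{\rm st}$ subdivided by~$v$. The long exact sequence of cohomology attached to $0 \to K[0] \to C_1 \to C_2 \to 0$ then yields an isomorphism $\CE^2_\kk(\Gamma^{\rm tr}) \cong \EExt^2$ and a short exact sequence $0 \to K \to \CE^1_\kk(\Gamma^{\rm tr}) \to \EExt^1 \to 0$. Comparing the latter with the sequence of Proposition~\ref{prop:E12ForSubdivisions} applied to the subdivision $\Gamma^{\rm tr}$ of $\Gamma^{\rm st}$---whose kernel is the same $K$ because all added vertices are bivalent and $N_{e_v}=N_v$, and whose $\CE^2$-statement is an isomorphism $\CE^2_\kk(\Gamma^{\rm tr})\cong \CE^2_\kk(\Gamma^{\rm st})$---gives the desired identifications $\EExt^1 \cong \CE^1_\kk(\Gamma^{\rm st})$ and $\EExt^2 \cong \CE^2_\kk(\Gamma^{\rm st})$.

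The main technical obstacle is to verify that the connecting map $\partial$, produced abstractly from the distinguished triangle, agrees after all identifications with the combinatorial differential of the $\CE$-complex. This reduces to a local calculation at each node $p_e\in \CC_v\cap\CC_{v'}$: for a trivalent $v$ one tracks the toric-translation deformation in $N_\kk$ through the restriction to $\CC_{\sigma_e}$, obtaining reduction modulo $l(e)N_e$ with the correct orientation sign; for a bivalent $v$ the balancing condition $N_v = N_e$ and $l(v) = l(e)$ turns the restriction into an isomorphism (up to sign) between the two $(N/l(e)N_e)_\kk$-summands, matching the combinatorial formula exactly.
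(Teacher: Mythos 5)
Your argument is correct, and its first half is exactly the paper's: both collapse the triangle of Lemma~\ref{lem:componenttriangle} using the vanishings of Lemma~\ref{lem:componentcomputations} into the four-term sequence whose middle map is the combinatorial differential on $\left(\bigoplus_{v\in V_3^f(\Gamma^{\rm tr})}N_\kk\right)\oplus\left(\bigoplus_{v\in V_2^f(\Gamma^{\rm tr})}(N/l(v)N_v)_\kk\right)\to\bigoplus_{e\in E^b(\Gamma^{\rm tr})}(N/l(e)N_e)_\kk$, and both leave the identification of that connecting map with the $\epsilon(e,v)$-differential at essentially the same (local, sketched) level of detail. Where you diverge is the final comparison: the paper maps the $\Gamma^{\rm st}$-complex of Remark~\ref{rem:anotherformcomplex} \emph{injectively} into this Ext-complex and checks that the cokernel complex is acyclic, obtaining $\CE^\bullet_\kk(\Gamma^{\rm st})$ in one stroke; you instead surject the $\Gamma^{\rm tr}$-complex onto the Ext-complex, with kernel $K=\bigoplus_{v\in V_2^f(\Gamma^{\rm tr})}(N_{e_v})_\kk$ concentrated in degree zero (here the Deligne--Mumford condition is used twice, to kill the relevant Tor-terms and to see that $K$ maps to zero under the differential), and then quote Proposition~\ref{prop:E12ForSubdivisions} to pass from $\CE^\bullet_\kk(\Gamma^{\rm tr})$ to $\CE^\bullet_\kk(\Gamma^{\rm st})$. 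This buys you reuse of an already established subdivision-invariance statement rather than an ad hoc quasi-isomorphism check, at the small cost that in the comparison of your two short exact sequences you should note that the two kernels agree as subspaces of $\CE^1_\kk(\Gamma^{\rm tr})$ (cocycles supported on the bivalent vertices with values in $(N_{e_v})_\kk$), or simply invoke a dimension count over $\kk$, to conclude $\EExt^1\cong\CE^1_\kk(\Gamma^{\rm st})$; the $\EExt^2$ identification is immediate from the isomorphisms $\CE^2_\kk(\Gamma^{\rm tr})\cong\CE^2_\kk(\Gamma^{\rm st})$. Both routes rest on the same two lemmas, so the difference is one of bookkeeping rather than of substance.
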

\begin{proof}
It follows from Lemmata \ref{lem:componenttriangle} and \ref{lem:componentcomputations} that there exists an exact sequence
$$\begin{array}{rlrrl}
    0\to&\hspace{-0.25cm}\EExt^1(L_{\CC_\kk/\CX_\kk^{\rm tr}}, \CO_{\CC_\kk})\to& & &\\
    \to&\hspace{-0.30cm}\left(\bigoplus_{v\in V_3^f(\Gamma^{\rm tr})}N_\kk\right)\oplus\left(\bigoplus_{v\in V_2^f(\Gamma^{\rm tr})}(N/l(v)N_v)_\kk\right)&\hspace{-0.25cm}\to \bigoplus_{e\in E^b(\Gamma^{\rm tr})}(N/l(e)N_e)_\kk&\hspace{-0.7cm}\to &\\
    & &\hspace{-0.25cm}\to \EExt^2(L_{\CC_\kk/\CX_\kk^{\rm tr}}, \CO_{\CC_\kk})&\hspace{-0.3cm}\to &\hspace{-0.25cm}0
  \end{array}$$
where the central map is given by $x_v\mapsto \sum_{e\in E^b}(\epsilon(e,v)x_v\bmod N_e)$, and, as before, $\epsilon(e,v)=-1$ if $v$ is the initial point of $e$, $\epsilon(e,v)=1$ if $v$ is the target of $e$, and $\epsilon(e,v)=0$ otherwise. Consider the natural injective map of complexes
$$
\xymatrix{
0\ar[d]\ar[r] & 0\ar[d]\\
\bigoplus_{v\in V^f(\Gamma^{\rm st})}N_\kk\ar[d]\ar[r] & \left(\bigoplus_{v\in V_3^f(\Gamma^{\rm tr})}N_\kk\right)\oplus\left(\bigoplus_{v\in V_2^f(\Gamma^{\rm tr})}(N/l(v)N_v)_\kk\right)\ar[d]\\
\bigoplus_{e\in E^b(\Gamma^{\rm st})}(N/l(e)N_e)_\kk\ar[d]\ar[r] &  \bigoplus_{e\in E^b(\Gamma^{\rm tr})}(N/l(e)N_e)_\kk\ar[d]\\
0\ar[r]& 0
}
$$
Plainly, its cokernel is quasi-isomorphic to zero. Thus, the map itself is a quasi-isomorphism. Recall, that the cohomology of the left column is $\CE^\bullet_\kk(\Gamma^{\rm st})$ (cf. Remark~\ref{rem:anotherformcomplex}). Hence, $\EExt^i(L_{\CC_\kk/\CX_\kk^{\rm tr}}, \CO_{\CC_\kk})=\CE^i_\kk(\Gamma^{\rm st})$ for $i=1,2$.
\end{proof}

\begin{corollary}\label{cor:computationofconstrainedexts} Let $O$ be a toric constraint, and $A$ be the corresponding affine constraint. Assume that $\Gamma^{\rm tr}$ satisfies $A$, and $A$ is a simple constraint for $\Gamma^{\rm tr}$. Then, under the assumptions of Corollary~\ref{cor:computationofexts},
$Def^1\eqref{defcube}=\CE^1_\kk(\Gamma^{\rm st}, A)$ and $Ob\eqref{defcube}=\CE^2_\kk(\Gamma^{\rm st}, A)$.
\end{corollary}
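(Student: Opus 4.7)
The plan is to apply Proposition~\ref{prop:defexun} to the deformation problem \eqref{defcube} and to identify the kernel and cokernel of its map $\alpha$ with the constrained complex $\CE^{\bullet}_\kk(\Gamma^{\rm st}, A)$ via Proposition~\ref{prop:E1E2GammaAGamma}. First I would verify the two hypotheses of Proposition~\ref{prop:defexun}. Using formula \eqref{eq:morphismRESTRICTEDtoCOMPONENTS} at each constrained infinite vertex $q_i$, a direct computation of the logarithmic derivative of $f_\kk^*(x^m)$ at the local coordinate vanishing at $q_i$ shows that the image of $di_\kk\colon T_{q_i}\CC_\kk\to T_{f(q_i)}\CX_\kk^{\rm tr}$ lies in the one-dimensional slope subspace $N_{e_{v'_i},\kk}\subset N_\kk$, where $e_{v'_i}$ is either of the bounded edges at the adjacent trivalent finite vertex $v'_i$ (the two having the same slope by the balancing condition since $h_\Gamma(q_i)=0$). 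For generic coordinates of the remaining special points on $\CC_{v'_i}$ this map is nonzero, so $di_\kk$ is injective; and the simple-constraint condition $L_i\cap N_{e_{v'_i}}=0$ together with $dj_\kk(T_{q_i}Y)=(L_i)_\kk$ gives $dj_\kk(T_{D^k}Y)\cap di_\kk(T_{D^k}\CC)=0$.

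Next, Corollary~\ref{cor:computationofexts} identifies the source and the last term of the obstruction exact sequence of Proposition~\ref{prop:defexun}: $\EExt^i(L_{\CC_\kk/\CX_\kk^{\rm tr}},\CO_{\CC_\kk})=\CE^i_\kk(\Gamma^{\rm st})$ for $i=1,2$. The target of $\alpha$ is computed from the long exact sequence obtained by dualizing the triangle $i_\kk^*L_{\CC/\CX}\to L_{D^k/\CX}\to L_{D^k/\CC}$, using $L_{D^k/\CC}|_{q_i}=T^*_{q_i}\CC[1]$ and similarly for $L_{D^k/\CX}$ (by Claim~\ref{cl:CotComAtSmPt}, since $D^k$ consists of smooth schematic points of both $\CC_\kk$ and $\CX_\kk^{\rm tr}$): this gives $\EExt^0(i_\kk^*L_{\CC/\CX},\CO_{D^k})=0$ and $\EExt^1(i_\kk^*L_{\CC/\CX},\CO_{D^k})=\coker(di_\kk)$. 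The key step is then to identify, under the quasi-isomorphism of Corollary~\ref{cor:computationofexts}, the map $\alpha$ with the projection $\alpha_\kk\colon\CE^1_\kk(\Gamma^{\rm st})\to\bigoplus_i(N/L_i)_\kk$ of Proposition~\ref{prop:E1E2GammaAGamma}. Concretely, for a class $(x_v)\in\CE^1_\kk(\Gamma^{\rm st})$, its restriction at $q_i$ is, via the trivalent-vertex identification $\EExt^1(L_{\CC_{v'_i}/\CX_{\rho(v'_i)}},\CO_{\CC_{v'_i}})=N_\kk$ of Lemma~\ref{lem:componentcomputations}(a), the element $x_{v'_i}\in N_\kk$, and the quotient by $h(T_{D^k}Y)$ reduces this to $x_{v'_i}\bmod L_i$ in $(N/L_i)_\kk$.

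Once this identification is established, Proposition~\ref{prop:E1E2GammaAGamma} gives $\ker(\alpha)=\CE^1_\kk(\Gamma^{\rm st},A)$ and the exact sequence $0\to\coker(\alpha)\to\CE^2_\kk(\Gamma^{\rm st},A)\to\CE^2_\kk(\Gamma^{\rm st})\to 0$. Combined with the obstruction sequence of Proposition~\ref{prop:defexun} and the identification $\EExt^2=\CE^2_\kk(\Gamma^{\rm st})$, a short diagram chase (using that the two three-term sequences share the map $\coker(\alpha)\to$ next term) yields $Ob\eqref{defcube}=\CE^2_\kk(\Gamma^{\rm st},A)$, completing the proof. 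The hardest part is the third step: a naive calculation produces the larger quotient $\bigoplus_i N_\kk/(L_i+N_{e_{v'_i}})_\kk$ as the target of $\alpha$, and one must carefully track the quasi-isomorphism in Corollary~\ref{cor:computationofexts}, exploiting both the slope-compatibility relations defining $\CE^1_\kk(\Gamma^{\rm st})$ and the simple-constraint splitting $N_\kk=L_i\oplus(\text{complement})\oplus N_{e_{v'_i},\kk}$-type decomposition, to show that $\alpha$ actually factors through the correct target $\bigoplus_i(N/L_i)_\kk$ used in Proposition~\ref{prop:E1E2GammaAGamma}.
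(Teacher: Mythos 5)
Your route is the paper's own: verify the two hypotheses of Proposition~\ref{prop:defexun}, feed in Corollary~\ref{cor:computationofexts}, and conclude via Proposition~\ref{prop:E1E2GammaAGamma}. One correction on the first step: the injectivity of $di_\kk$ is not a genericity statement about the positions of the remaining special points. By \eqref{eq:morphismRESTRICTEDtoCOMPONENTS} the restriction of $f_\kk$ to $\GG_m\subset C_{v'_i}$ is $x^m\mapsto\chi(m)\,y^{(n,m)}$ with $n$ a nonzero multiple of the primitive vector of the common slope, and its differential at the interior point $q_i$ is nonzero precisely because the integral length of $n$ is prime to $\chara(\kk)$, i.e.\ because of \eqref{cond:DM}; this is how the paper argues, and the hypothesis is genuinely needed.

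The genuine gap is your third step, and the mechanism you propose for closing it is not the one that works. You are right that the honest target of $\alpha$ in \eqref{eq:alpha} is $\bigoplus_i N_\kk/\bigl((L_i)_\kk+(N_{e_{v'_i}})_\kk\bigr)$, but $\alpha$ does not ``factor through'' $\bigoplus_i(N/L_i)_\kk$, and no tracking of the quasi-isomorphism in Corollary~\ref{cor:computationofexts} will change this: the restriction of a class at $q_i$ lands in $\EExt^1\bigl(i_\kk^*L_{\CC_\kk/\CX_\kk^{\rm tr}},\CO_{q_i}\bigr)={\rm coker}(di_\kk)=N_\kk/(N_{e_{v'_i}})_\kk$ before one even quotients by $h(T_{D^k_\kk}Y_\kk)$, and geometrically an infinitesimal translation of $\CC_{v'_i}$ along its own image direction preserves the incidence with $Y_\kk$ to first order (one moves the marked point along the curve). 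What must be proved instead is a kernel--cokernel comparison: inside $\CE^1_\kk(\Gamma^{\rm st})$ the a priori kernel $\{\,x_{v'_i}\in(L_i+N_{e_{v'_i}})_\kk\ \forall i\,\}$ decomposes as $\CE^1_\kk(\Gamma^{\rm st},A)\oplus\bigoplus_i(N_{e_{v'_i}})_\kk$, where the second summand consists of the classes supported at $v'_i$ translating it along the common slope of its two bounded edges (they lie in $\CE^1_\kk(\Gamma^{\rm st})$ by the balancing condition, and the sum is direct exactly because $A$ is simple, $L_i\cap N_{e_{v'_i}}=0$); one must then check that this summand is precisely the ambiguity absorbed by moving the section $q_i$ inside $\CC_\kk$ (equivalently, it disappears once the marked point is kept as part of the data, which is also where the tension between the three log points used in Lemma~\ref{lem:componentcomputations}(2)(a) at $v'_i$ and the definition of $\partial\CC_\rho$, which does not contain $q_i$, gets resolved), and that the same translation classes map onto the kernel of $\bigoplus_i(N/L_i)_\kk\to\bigoplus_i N_\kk/(L_i+N_{e_{v'_i}})_\kk$, so that the two candidate cokernels agree; only then does Proposition~\ref{prop:E1E2GammaAGamma} give both identities. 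The paper disposes of all of this with the single observation that the right-hand side of \eqref{eq:alpha} ``is just'' $\bigoplus_i(N/L_i)_\kk$, so you have correctly located where the content of the corollary lies; but your proposal asserts the key identification rather than proving it, and as stated (a factorization of $\alpha$) it would fail.
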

\begin{proof}
Pick $l\le k$. Let $v\in V^f(\Gamma^{\rm tr})$ be the unique vertex connected to $v_{q_l}$, and $\rho$ be the corresponding ray in $\Sigma^1_{\Gamma^{\rm tr}}$. Since $v$ has valency three it follows from the balancing condition that it is connected to two finite vertices, and the slopes of the bounded edges containing $v$ coincide. Let us denote them by $N_v$. Consider the map $\CC_v\to \CX_\rho$ and its restriction $\GG_m\to T_N\subset \CX_\rho$. As we have seen above, it is given by $x^m\mapsto\chi_{v,y}(m)y^{(n,m)}$, where $y$ is a coordinate on $\GG_m$ and $n\in N_v$. Furthermore, the integral length of $n$ is not divisible by the characteristic of $\kk$. Thus, in the notation of Proposition~\ref{prop:defexun}, we have $di_\kk(T_{q_l}\CC_\kk)=(N_v)_\kk$, hence $di_\kk$ is injective since the slope $N_v\ne 0$. Moreover, $di_\kk(T_{q_l}\CC_\kk)\cap dj_\kk(T_{q_l}(Y_\kk))=0$ since $A$ is a simple constraint for $\Gamma^{\rm tr}$. Then,  by Proposition~\ref{prop:defexun}, $Def^1\eqref{defcube}=\ker(\alpha)$ and the obstruction space $Ob\eqref{defcube}$ fits naturally into the exact sequence $0\to {\rm coker}(\alpha)\to Ob\eqref{defcube}\to \EExt^2(L_{\CC_\kk/\CX_\kk^{\rm tr}}, \CO_{\CC_\kk})\to 0$. Observe, that in our case the right-hand side of \eqref{eq:alpha} is just $\bigoplus_{i=1}^k (N/L_i)_\kk$, and, by Corollary~\ref{cor:computationofexts}, $\EExt^i(L_{\CC_\kk/\CX_\kk^{\rm tr}}, \CO_{\CC_\kk})=\CE^i_\kk(\Gamma^{\rm st})$. The corollary now follows from Proposition~\ref{prop:E1E2GammaAGamma}.
\end{proof}

\begin{remark}\label{rem:representabilityOfregularcurves}
If in Lemma~\ref{lem:componentcomputations}, one assumes only that $C_\rho$ is rational (but, probably, singular) for any $\rho$ then, by repeating the same computations we did in Lemma~\ref{lem:componentcomputations} and Corollaries~\ref{cor:computationofexts}-\ref{cor:computationofconstrainedexts}, one can show that there exists an exact sequence
$$
\begin{array}{lll}
  0\to \CE^1(\overline{\Gamma}^{\rm st}, A) &\hspace{-0.25cm}  \to & \hspace{-0.25cm} \EE^1(L_{\CC_\kk/\CX_\kk},\CO_{\CC_\kk})\to \\
   & \hspace{-0.25cm} \to &\hspace{-0.25cm}  \bigoplus_{e\in E^b(\Gamma), N_e=0}\kk\to\CE^2(\overline{\Gamma}^{\rm st}, A)\to \EE^2(L_{\CC_\kk/\CX_\kk},\CO_{\CC_\kk})\to 0,
\end{array}
$$
where $\overline{\Gamma}$ denotes the tropical curve obtained from $\Gamma$ by contraction of the maximal connected subgraphs of finite vertices connected by edges with trivial slopes (cf. Proposition~\ref{prop:contOfTrCurve}). In particular, if $(\overline{\Gamma}^{\rm st},A)$ is $\kk$-regular then it is representable.
\end{remark}

Assume now, that $\Gamma^{\rm tr}$ has genus one, $val(v)\le 3$ for all $v\in V(\Gamma^{\rm tr})$, $\CC_\kk$ has only rational components, and no bounded edge of $\Gamma^{\rm tr}$ has trivial slope.
Consider the stabilizations $\Gamma^{\rm st}$ and $\CC^{\rm st}_\kk$ of $\Gamma^{\rm tr}$ and $\CC_\kk$. Denote by $v_1,\dotsc, v_k, v_{k+1}=v_1$ and $e_i\in E_{v_i,v_{i+1}}$ the vertices and the edges in the cycle of minimal length generating the first homology of $\Gamma^{\rm st}$. Consider the following  deformation problem:
\begin{equation}\label{defcurve}
\xymatrix@!0{
(\CC^{\rm st}_\kk, D_\kk)\ar@{-->}[rrr]\ar[dd] & & & (\CC_{R_\LL}, D_{R_\LL})\ar@{-->}[dd]\\
& & & \\
\Spec\kk\ar[rrr] & & & \Spec{R_\LL}
}
\end{equation}

Since the components of $\CC^{\rm st}_\kk$ are smooth, and each component contains at most three special points, the deformations of $(\CC^{\rm st}_\kk, D_\kk)$ are induced from the deformations of the nodes. Thus, the solutions of \eqref{defcurve} are given by the $R_\LL$-homomorphisms $R_\LL[[x_e]]_{e\in E^b(\Gamma^{\rm st})}\to R_\LL$ mapping the coordinate functions to the maximal ideal $(t_\LL)$.

Let us blow up $\Spec\left(R_\LL[[x_e]]_{e\in E^b(\Gamma^{\rm st})}\right)$ along $x_e=t_\LL=0$ for any $e\in E^b(\Gamma^{\rm st})$, and proceed with $\sum (e_\LL|e|-1)$ more blow ups till we get the chart with the coordinates $(t_\LL^{-e_\LL|e|}x_e,t_\LL)_{e\in E^b(\Gamma^{\rm st})}$, which we denote by $\widetilde{Def}\eqref{defcurve}$. By the construction, the projection $Def\eqref{defcube}\to Def\eqref{defcurve}$ lifts to a map $\pi_1\colon Def\eqref{defcube}\to \widetilde{Def}\eqref{defcurve}$. Let $\widetilde{Def}\eqref{defcurve}\to \overline{\CM}_{1,1}\times_{\Spec\ZZ}\Spec R_\LL$ be a projection forgetting all but one marked point, and contracting the unstable components. Its image belongs to the chart $\AAA^1_{R_\LL}$, where the origin corresponds to the infinity of $\overline{\CM}_{1,1}$, in other words the coordinate on $\AAA^1_{R_\LL}$ is $\frac{1}{j}$.
Let us blow up $\AAA^1_{R_\LL}$ along $\frac{1}{j}=t_\LL=0$ and proceed with $r-1=\sum_{i=1}^ke_\LL|e|-1$  more blow ups till we get a chart with coordinates $(\frac{1}{jt_\LL^r}, t_\LL)$. Denote this chart by $\widetilde{\CM}_{1,1}$. By the construction, the map $\widetilde{Def}\eqref{defcurve}\to \overline{\CM}_{1,1}\times_{\Spec\ZZ}\Spec R_\LL$ lifts to a map $\pi_2\colon\widetilde{Def}\eqref{defcurve}\to \widetilde{\CM}_{1,1}$. Finally, consider the composition $Def\eqref{defcube}\to \widetilde{\CM}_{1,1}$. The following lemma is a straightforward computation:
\begin{lemma}\label{lem:diffjinv}
Let $L$ be an $O$-constrained Mumford stacky $\Gamma^{\rm tr}$-reduction. Then, under the assumptions of Corollary~\ref{cor:computationofconstrainedexts}, the following hold:
$T_LDef\eqref{defcube}=\CE_\kk^1(\Gamma^{\rm st}, A),$ $T_{\pi_1(L)}\widetilde{Def}\eqref{defcurve}=\bigoplus_{i=1}^k\kk\simeq\bigoplus_{i=1}^k (N_{e_i})_\kk,$ $d\pi_1\colon\CE_\kk^1(\Gamma^{\rm st}, A)\to \bigoplus_{i=1}^k (N_{e_i})_\kk$ is the natural projection, $T_{\pi_2(\pi_1(L))}\widetilde{\CM}_{1,1}=\kk,$ and if the orientation on $\Gamma^{\rm st}$ is such that the cycle $e_1,\dotsc, e_k$ is oriented then $d\pi_2\colon \bigoplus_{i=1}^k \kk\to \kk$ is given by $d\pi_2(x_i)=\sum_{i=1}^k x_i$.
\end{lemma}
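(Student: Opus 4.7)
My plan is to handle the four identities in turn. The first, $T_LDef\eqref{defcube} = \CE^1_\kk(\Gamma^{\rm st}, A)$, is an immediate restatement of Corollary~\ref{cor:computationofconstrainedexts}; no further work is needed. The remaining three I would deduce by unwinding the two explicit blowup sequences in coordinates, identifying the distinguished $\kk$-points, and differentiating.

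First I would record the local picture on $\widetilde{Def}\eqref{defcurve}$. By construction, after iterated blowups centered on $x_e=t_\LL=0$, the chart of interest carries coordinates $(u_e,t_\LL)$ with $u_e := t_\LL^{-e_\LL|e|}x_e$. Near the node $p_e$ of the stable model, an \'etale-local equation is of the form $xy = a_e\,t_\LL^{e_\LL|e|}$ modulo higher order terms, so the closed point $\pi_1(L)$ corresponds to $u_e \mapsto a_e \in \kk^*$. The identification $a_e = \chi_e$, where $\chi_e$ is the multiplicative datum attached to $e$ by the stacky tropical reduction $L$, is exactly what the paragraph preceding Lemma~\ref{lem:leadtermofj} extracts from formula~\eqref{eq:morphismRESTRICTEDtoCOMPONENTS}. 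With this in hand, the explicit $\CE^1_{\kk^*}(\Gamma^{\rm st},A)$-action recorded in Remark~\ref{rem:actiononstackylimits} shows that a first-order deformation $\xi = ((\xi_v),(\xi_e)) \in \CE^1_\kk(\Gamma^{\rm st},A)$ moves $\chi_e$ infinitesimally by $\xi_e$. This simultaneously yields the identification of the relevant tangent directions at $\pi_1(L)$ with $\bigoplus_{i=1}^k (N_{e_i})_\kk$ and the statement that $d\pi_1$ is the projection onto the cycle summand.

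For $\widetilde{\CM}_{1,1}$, the same blowup bookkeeping produces a chart with coordinates $(v,t_\LL)$, $v := 1/(jt_\LL^r)$ and $r = \sum_i e_\LL|e_i|$, so the relative tangent space at $\pi_2(\pi_1(L))$ is spanned by $\partial_v$ and hence one-dimensional. To compute $d\pi_2$, I would combine Theorem~\ref{thm:jtrjalg}, which fixes $\val(j(C)) = -\sum_i|e_i|$, with the leading-order formula of Lemma~\ref{lem:leadtermofj} (promoted from the regular model used there to the stable model used here by tracking the product through the intermediate contractions), to obtain $j_\kk(C) = \prod_{i=1}^k \chi_{e_i}^{\epsilon_i}$ with signs $\epsilon_i \in \{\pm 1\}$ depending on the orientation of $e_i$ relative to the cycle. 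Under the assumption that $e_1,\dots,e_k$ is an oriented cycle all $\epsilon_i$ coincide, so $v$ agrees with $\prod_i u_{e_i}$ on the relevant chart up to a non-zero constant. Differentiating $\log v = \sum_i \log u_{e_i}$ at $\pi_1(L)$, where each $u_{e_i}$ is a non-zero scalar, immediately gives $d\pi_2(x_1,\dots,x_k) = \sum_i x_i$.

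The main obstacle is the sign and constant bookkeeping in the previous paragraph: verifying that the regular-model computation of Lemma~\ref{lem:leadtermofj} is compatible with the stable-model setup via contraction of the exceptional chains, and that the chosen cycle orientation forces all signs $\epsilon_i = +1$. Once the identity $j_\kk(C) = \prod_i \chi_{e_i}$ is established with the correct sign, the other three assertions of the lemma are a line-by-line readout of the blowup coordinates, as the author indicates by calling this a ``straightforward computation.''
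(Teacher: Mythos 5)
The paper offers no argument for this lemma beyond the construction of the charts $\widetilde{Def}\eqref{defcurve}$ and $\widetilde{\CM}_{1,1}$ preceding it --- the lemma is simply declared a straightforward computation --- so there is no written proof for you to diverge from; your unwinding (Corollary~\ref{cor:computationofconstrainedexts} for the first identity, the blowup coordinates $u_e=t_\LL^{-e_\LL|e|}x_e$ and $v=1/(jt_\LL^r)$, the identification $a_e=\chi_e$ extracted from \eqref{eq:morphismRESTRICTEDtoCOMPONENTS}, and $j_\kk(C)=\prod_i\chi_{e_i}^{-1}$ via Lemma~\ref{lem:leadtermofj}) is exactly the computation the text intends. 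The one soft spot is your justification of $d\pi_1$: the $\CE^1_{\kk^*}(\Gamma^{\rm st},A)$-action of Remark~\ref{rem:actiononstackylimits} acts on the set of stacky reductions (i.e.\ on central fibers), whereas $T_LDef\eqref{defcube}$ is identified with $\CE^1_\kk(\Gamma^{\rm st},A)$ through the Ext-theoretic computation of Corollary~\ref{cor:computationofconstrainedexts}; to conclude that under this identification a tangent vector changes the leading coefficients $a_e$ of the node-smoothing parameters exactly by its edge components (hence that $d\pi_1$ is the natural projection), you should differentiate the relation $a_e^{l(e)}=\chi_v\chi_v^0/(\chi_{v'}\chi_{v'}^0)$ using the explicit local description of the $\EExt^1$-classes on two- and three-valent components given in Lemma~\ref{lem:componentcomputations}, rather than appeal to the group action on reductions. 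Since that is just the additive (Lie-algebra) form of the same character computation, this is a presentational tightening rather than a different argument; with it in place the remainder of your sketch, including the log-derivative computation of $d\pi_2$ (which is only needed up to nonzero scalars for Corollary~\ref{cor:smoothproj}), goes through.
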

\begin{corollary}\label{cor:smoothproj} Under the assumptions of Lemma~\ref{lem:diffjinv}, the map $\pi_2\circ\pi_1$ is smooth if and only if $\CE_\kk^2(\Gamma^{\rm st}, A, j)\to \CE_\kk^2(\Gamma^{\rm st}, A)$ is an isomorphism. In particular, if $(\Gamma^{\rm st}, A)$ is elliptically $\kk$-regular then $\pi_2\circ\pi_1$ is smooth.
\end{corollary}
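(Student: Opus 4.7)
My plan mirrors the structure of the unconstrained and $O$-constrained cases: use Lemma~\ref{lem:diffjinv} to identify the tangent map $d(\pi_2 \circ \pi_1)$ with the map $\delta_\kk$ of Claim~\ref{claim:EvsEJ}, then combine this with the exact sequence there to convert ``smooth'' into the stated isomorphism.

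First, Lemma~\ref{lem:diffjinv} gives $T_L Def\eqref{defcube} = \CE^1_\kk(\Gamma^{\rm st}, A)$, $T_{\pi_2(\pi_1(L))}\widetilde{\CM}_{1,1} = \kk$, and identifies $d(\pi_2 \circ \pi_1)$ as the composition $\CE^1_\kk(\Gamma^{\rm st}, A) \twoheadrightarrow \bigoplus_{i=1}^k (N_{e_i})_\kk \xrightarrow{\sum} \kk$, which is precisely the map $\delta_\kk$ of Claim~\ref{claim:EvsEJ}. The tail of the exact sequence there reads
\[
\CE^1_\kk(\Gamma^{\rm st}, A) \xrightarrow{\delta_\kk} \kk \xrightarrow{\partial} \CE^2_\kk(\Gamma^{\rm st}, A, j) \to \CE^2_\kk(\Gamma^{\rm st}, A) \to 0,
\]
so the rightmost map is always surjective, and it is injective (hence an isomorphism) iff $\partial = 0$ iff $\delta_\kk$---equivalently $d(\pi_2 \circ \pi_1)$---is surjective.

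Next, I would show that smoothness of $\pi_2 \circ \pi_1$ at $L$ is equivalent to surjectivity of its differential. The target $\widetilde{\CM}_{1,1}$ is regular of relative dimension one over $\Spec R_\LL$ by construction, so applying Illusie's distinguished triangle of cotangent complexes for the morphism $Def\eqref{defcube} \to \widetilde{\CM}_{1,1}$ and taking $\RHom(\cdot, \CO)$ yields an exact sequence
\[
T_L Def\eqref{defcube} \xrightarrow{d(\pi_2\circ\pi_1)} \kk \to Ob_{\mathrm{rel}} \to \CE^2_\kk(\Gamma^{\rm st}, A) \to 0,
\]
using $Ob(\widetilde{\CM}_{1,1}) = 0$ (smoothness of the base) and $Ob(Def\eqref{defcube}) = \CE^2_\kk(\Gamma^{\rm st}, A)$ (Corollary~\ref{cor:computationofconstrainedexts}). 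Matching this sequence term-by-term with the tail of Claim~\ref{claim:EvsEJ} naturally identifies the relative obstruction $Ob_{\mathrm{rel}}$ with $\CE^2_\kk(\Gamma^{\rm st}, A, j)$; smoothness of $\pi_2 \circ \pi_1$ then amounts to vanishing of the kernel of $\CE^2_\kk(\Gamma^{\rm st}, A, j) \to \CE^2_\kk(\Gamma^{\rm st}, A)$, i.e., to that map being an isomorphism.

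The ``in particular'' statement is immediate: elliptic $\kk$-regularity gives $\CE^2_\kk(\Gamma^{\rm st}, A, j) = 0$, which by the surjectivity in the exact sequence forces $\CE^2_\kk(\Gamma^{\rm st}, A) = 0$ as well, and the zero map between zero spaces is trivially an isomorphism. The main obstacle I anticipate is making the identification of $Ob_{\mathrm{rel}}$ with $\CE^2_\kk(\Gamma^{\rm st}, A, j)$ fully canonical and unambiguous; this requires pinning down how the $j$-constraint is reflected in the cotangent complex of the morphism to $\widetilde{\CM}_{1,1}$, for which the explicit blow-up description producing Lemma~\ref{lem:diffjinv} together with the definition of $\CE^\bullet_G(\Gamma, A, j)$ via the map $\delta_G$ from Section~\ref{sec:trcur} provide the right framework.
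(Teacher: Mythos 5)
Your argument is correct and follows essentially the same route as the paper: its proof likewise identifies $\delta_\kk$ (up to a non-zero scalar) with $d\pi_2\circ d\pi_1$ via Lemma~\ref{lem:diffjinv} and reads off from the exact sequence of Claim~\ref{claim:EvsEJ} that $\CE^2_\kk(\Gamma^{\rm st},A,j)\to\CE^2_\kk(\Gamma^{\rm st},A)$ is an isomorphism exactly when $\delta_\kk$ is surjective. Your extra identification of the relative obstruction space with $\CE^2_\kk(\Gamma^{\rm st},A,j)$ only spells out the final step that the paper leaves implicit ("which implies the corollary").
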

\begin{proof} By Claim~\ref{claim:EvsEJ}, $\CE_\kk^2(\Gamma^{\rm st}, A, j)\to \CE_\kk^2(\Gamma^{\rm st}, A)$ is an isomorphism if and only if $\delta_\kk\colon \CE_\kk^1(\Gamma^{\rm st}, A)\to \kk$ is surjective, and under the identifications of Lemma~\ref{lem:diffjinv}, $\delta_\kk$ is a non-zero multiple of $d\pi_2\circ d\pi_1\colon T_LDef\eqref{defcube}\to T_{\pi_2(\pi_1(L))}\widetilde{\CM}_{1,1}$, which implies the corollary.
\end{proof}

\section{Correspondence Theorems}\label{sec:CorrThm}
\begin{definition}
A smooth complete curve with marked points $(C, D)$ is called a {\it simple Mumford curve} if it is stable, the graph $\Gamma_{C,D,f}^{\rm st}$ is trivalent, i.e., all finite vertices have valency three, and $g(\Gamma_{C,D,f}^{\rm st})=g(C)$.
\end{definition}

\begin{theorem}[Correspondence Theorem]\label{thm:corrthm}
Let $\Gamma$ be a stable $N_\QQ$-parame\-terized $\QQ$-tropical curve, $O$ be a toric constraint, and $A$ be the corresponding affine constraint. Assume that
\begin{enumerate}
\item $\Gamma$ is trivalent, i.e., all finite vertices have valency three,
\item $\Gamma$ satisfies $A$,
\item $(\Gamma, A)$ is $\kk$-regular,
\item $\codim A=\rank(\Gamma)$,
\item All bounded edges $e\in E^b(\Gamma)$ have non-trivial slopes,
\item The multiplicities of all edges are not divisible by the characteristic.
\end{enumerate}
Then, there exist precisely $|\CE^1_{\kk^*}(\Gamma, A)|=|\EE^1_{\kk^*}(\Gamma, A)|\cdot\prod_{e\in E^b(\Gamma)}l(e)$ isomorphism clas\-ses of triples $(C, D, f)$ satisfying the toric constraint $O$ such that
$(C,D)$ is a simple Mumford curve of genus $g$ and $\Gamma^{\rm st}_{C,D,f}=\Gamma$. Moreover, there is a one-to-one correspondence between such curves and the isomorphism classes of $O$-constrained stacky $\Gamma$-reductions.
\end{theorem}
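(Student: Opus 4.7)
The proof has three parts: enumerate the $O$-constrained Mumford stacky $\Gamma$-reductions, produce a unique algebraic lift of each, and check that this lifting exhausts the triples described in the theorem. Since $\Gamma$ is stable and trivalent, $\Gamma^{\rm st}=\Gamma$ and $\CM_{0,val(v)}=\CM_{0,3}$ reduces to a point for each $v\in V^f(\Gamma)$. By $\kk$-regularity, $\CE^2_\kk(\Gamma,A)=0$, and Claim \ref{cl:e1k*} then yields $\EE^2_{\kk^*}(\Gamma,A)=\CE^2_{\kk^*}(\Gamma,A)=1$, so the hypotheses of Propositions \ref{prop:ParamOfTrLimitsConstr} and \ref{prop:ParamOfConstrStakyTrLimits} are satisfied. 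Together with Remark \ref{rem:actiononstackylimits}, they identify the set of isomorphism classes of $O$-constrained Mumford stacky $\Gamma$-reductions with a $\CE^1_{\kk^*}(\Gamma,A)$-torsor over a point. Since all $l(e)$ are invertible in $\kk$ and every bounded edge has non-trivial slope, Proposition \ref{prop:relEECEGammaConstraints} with $G=\kk^*$ supplies the short exact sequence
\[
0\to\bigoplus_{e\in E^b(\Gamma)}\mu_{l(e)}(\kk^*)\to\CE^1_{\kk^*}(\Gamma,A)\to\EE^1_{\kk^*}(\Gamma,A)\to 0,
\]
from which $|\CE^1_{\kk^*}(\Gamma,A)|=|\EE^1_{\kk^*}(\Gamma,A)|\cdot\prod_{e\in E^b(\Gamma)}l(e)$, matching the count asserted in the theorem.

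Next I fix an $O$-constrained Mumford stacky $\Gamma$-reduction $L=(\CC_\kk,D_\kk,\varphi_\kk,\CX_\kk^{\rm tr})$ and regard it as the special fibre of a putative stacky tropical degeneration; the task is to solve the deformation problem \eqref{defcube}. Since $\Gamma$ is trivalent and $A$ is simple (the latter being built into the $\kk$-regularity hypothesis), Corollary \ref{cor:computationofconstrainedexts} applies and yields
\[
Def^1\eqref{defcube}=\CE^1_\kk(\Gamma,A),\qquad Ob\eqref{defcube}=\CE^2_\kk(\Gamma,A).
\]
The obstruction space vanishes by $\kk$-regularity, while the argument in the proof of Claim \ref{cl:e1k*} shows that $c(\Gamma)=0$ together with $\codim A=\rank(\Gamma)$ forces $\CE^1(\Gamma,A)=0$; hence $\CE^1_\kk(\Gamma,A)=0$ as well. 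The formal deformation space of $L$ is therefore a reduced point, so $L$ lifts uniquely to a formal family over $\Spf R_\LL$. Since $\CC_{R_\LL}^{\rm tr}$ and $\CX_{R_\LL}^{\rm tr}$ are proper over $R_\LL$, Grothendieck's formal existence theorem (applied to the coarse moduli and then lifted to the stacks through their root-stack presentations) effectivizes this formal family to an honest $R_\LL$-family. Passing to the generic fibre produces a triple $(C,D,f)$ over $\overline{\FF}$ satisfying the toric constraint $O$ with $\Gamma^{\rm st}_{C,D,f}=\Gamma$; the pair $(C,D)$ is a simple Mumford curve because its stable reduction is the trivalent rational nodal curve $C_\kk^{\rm tr}$.

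To close the loop, the canonical tropicalization of Sections \ref{sec:trdegtrlim}--\ref{sec:toricstacksStackLimits} assigns to every triple $(C,D,f)$ with $\Gamma^{\rm st}_{C,D,f}=\Gamma$ the $O$-constrained Mumford stacky $\Gamma$-reduction $(\CC_\kk^{\rm tr},D_\kk,\varphi_\kk^{\rm tr},\CX_\kk^{\rm tr})$. Combined with the lifting above, this produces mutually inverse maps on isomorphism classes: one composition is the identity by the uniqueness in the lift step, and the other is the identity because the stacky tropical reduction of the lift equals $L$ by construction.

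The hard part is the simultaneous vanishing of $\CE^1_\kk(\Gamma,A)$ and $\CE^2_\kk(\Gamma,A)$. The latter is precisely the regularity hypothesis; the former, by contrast, genuinely depends on replacing the singular toric models of \cite{NS06,Sh05} by their Deligne--Mumford stacky refinements, because were one to work with $\EE^\bullet$ in place of $\CE^\bullet$, the discrepancy measured by Proposition \ref{prop:relEECEGammaConstraints} would materialise as torsion in the normal sheaf of $f_\kk$ and obstruct the lift. The technically delicate step is therefore the construction of the stacky degeneration and the compatibility of the root-stack algebraization with the toric constraint, both of which are engineered in Sections \ref{sec:trdegtrlim} and \ref{sec:toricstacksStackLimits} exactly for this purpose.
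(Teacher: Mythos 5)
Your proposal is correct and follows essentially the same route as the paper: it counts the $O$-constrained (Mumford) stacky $\Gamma$-reductions via the torsor structure of Propositions~\ref{prop:ParamOfTrLimitsConstr}--\ref{prop:ParamOfConstrStakyTrLimits} and Remark~\ref{rem:actiononstackylimits} together with the comparison sequence of Proposition~\ref{prop:relEECEGammaConstraints}, and then lifts each reduction uniquely by the vanishing $Def^1\eqref{defcube}=\CE^1_\kk(\Gamma,A)=0$ and $Ob\eqref{defcube}=\CE^2_\kk(\Gamma,A)=0$ from Corollary~\ref{cor:computationofconstrainedexts}, exactly as in the paper's proof. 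The only deviations are presentational (you derive the product formula from the $\mu_{l(e)}$-sequence rather than multiplying the two counts, and you spell out the effectivization step that the paper leaves inside Proposition~\ref{prop:defexun}), so no gap.
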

\begin{remark} Under the assumptions of the theorem, we have $|\EE^1_{\kk^*}(\Gamma, A)|=|\EE^2(\Gamma, A)|$ and $|\CE^1_{\kk^*}(\Gamma, A)|=|\CE^2(\Gamma, A)|$ by Claim~\ref{cl:e1k*}.
\end{remark}
\begin{proof}
First, note that $|\EE^1_{\kk^*}(\Gamma, A)|$ is finite and $\EE^2_{\kk^*}(\Gamma, A)=0$ since $\codim A=\rank(\Gamma)$ and $(\Gamma, A)$ is $\kk^*$-regular by Claim~\ref{cl:e1k*}. Then, by Proposition~\ref{prop:ParamOfTrLimitsConstr}, the number of isomorphism classes of $O$-constrained $\Gamma$-reductions is equal to $|\EE^1_{\kk^*}(\Gamma, A)|$ since $\Gamma$ is trivalent. Hence, by Proposition~\ref{prop:ParamOfConstrStakyTrLimits}, the number of isomorphism classes of $O$-constrained stacky $\Gamma$-reductions is equal to the product $|\EE^1_{\kk^*}(\Gamma, A)|\cdot\prod_{e\in E^b(\Gamma)}l(e)$. Thus, it is sufficient to prove the ``moreover part'' of the theorem.

Pick a finite extension $\FF\subseteq\LL\subseteq\overline{\FF}$ sufficiently ramified for any triple $(C, D, f)$ satisfying $O$, and fix a uniformizer $t_\LL\in R_\LL$.
Plainly, any triple $(C, D, f)$ satisfying $O$ defines an $O$-constrained stacky $\Gamma$-reduction.  Vice versa, any $O$-constrained stacky $\Gamma$-reduction $(\CC_\kk, D_\kk, \varphi_\kk, \CX_\kk^{\rm tr})$ defines a unique isomorphism class of triples $(C, D, f)$ satisfying the toric constraint $O$: by Corollary~\ref{cor:computationofconstrainedexts}, $Def^1\eqref{defcube}=\EE^1_\kk(\Gamma^{\rm st}, A)=0$ and $Ob\eqref{defcube}=\EE^2_\kk(\Gamma^{\rm st}, A)=0$, since $\codim A=\rank(\Gamma)$ and $(\Gamma, A)$ is $\kk$-regular. Hence, there exists a unique solution to the deformation problem \eqref{defcube} for any $O$-constrained stacky $\Gamma$-reduction, and we are done.
\end{proof}

\begin{theorem}[Yet Another Correspondence Theorem]\label{thm:yacs}
Let $\Gamma$ be a stable $N_\QQ$-para\-meterized $\QQ$-tropical curve of genus one, $O$ be a toric constraint, and $A$ be the corresponding affine constraint. Assume that
\begin{enumerate}
\item $\Gamma$ is trivalent,
\item $\Gamma$ satisfies $A$,
\item $(\Gamma, A)$ is elliptically $\kk$-regular,
\item $\rank(\Gamma)=\codim A+1$,
\item All bounded edges $e\in E^b(\Gamma)$ have non-trivial slopes,
\item The multiplicities of all edges are not divisible by the characteristic.
\end{enumerate}
Then for any $J\in \overline{\FF}$ with $\val(J)=-j(\Gamma)$ there exist precisely $|\CE^1_{\kk^*}(\Gamma, A, j)|$ isomorphism classes of triples $(C, D, f)$ satisfying the toric constraint $O$ such that $(C,D)$ is a simple Mumford curve of genus one, $j(C)=J$, and $\Gamma^{\rm st}_{C,D,f}=\Gamma$. Moreover, there is a one-to-one correspondence between such curves and the isomorphism classes of stacky $O$-constrained Mumford $\Gamma$-reductions  with $j_\kk(\CC_\kk,D_\kk,\varphi_\kk,\CX_\kk^{\rm tr})=j_\kk^{-1}(C)$.
\end{theorem}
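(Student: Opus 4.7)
The plan is to mirror the proof of Theorem~\ref{thm:corrthm}, replacing $\kk$-regularity with elliptic $\kk$-regularity and tracking the tropical $j$-invariant throughout. Fix a sufficiently ramified extension $\LL$ and a uniformizer $t_\LL$, and for $J\in\overline{\FF}$ with $\val(J)=-j(\Gamma)$ write $j_\kk(J):=t_\LL^{-e_\LL\val(J)}J\bmod t_\LL\in\kk^*$. By the explicit local computation preceding Proposition~\ref{prop:leadingcoefofj}, every triple $(C,D,f)$ with $j(C)=J$ yields a stacky tropical reduction satisfying $j_\kk(\CC_\kk,D_\kk,\varphi_\kk,\CX_\kk^{\rm tr})=j_\kk(J)^{-1}$. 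It therefore suffices to establish a bijection between isomorphism classes of such triples and $O$-constrained Mumford stacky $\Gamma$-reductions with this prescribed $j_\kk$-value.

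Counting the reductions is immediate from the preceding machinery. Hypotheses (3)--(5) combined with Claim~\ref{cl:e1jk*} show that $\CE^1_{\kk^*}(\Gamma,A,j)$ is finite of order $|\CE^2(\Gamma,A,j)|$, and they also imply that $(\Gamma,A)$ is elliptically $\kk^*$-regular. Since $\Gamma$ is trivalent the product $\prod_{v\in V^f(\Gamma)}\CM_{0,val(v)}$ is a single point, and Proposition~\ref{prop:leadingcoefofj} identifies the set of $O$-constrained Mumford stacky $\Gamma$-reductions with fixed $j_\kk$-value as a torsor under $\CE^1_{\kk^*}(\Gamma,A,j)$, giving exactly $|\CE^1_{\kk^*}(\Gamma,A,j)|$ such reductions.

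For the bijection itself, I would lift each stacky reduction $L$ with the prescribed $j_\kk$-value uniquely to a triple $(C,D,f)$ with $j(C)=J$ via the deformation theory of Section~\ref{sec:deftheory}. Corollary~\ref{cor:smoothproj} implies via elliptic $\kk$-regularity that the map $\CE^2_\kk(\Gamma,A,j)\to\CE^2_\kk(\Gamma,A)$ is an isomorphism, so $\CE^2_\kk(\Gamma,A)=0$, and Corollary~\ref{cor:computationofconstrainedexts} then shows that deformation problem~\eqref{defcube} is unobstructed, with $Def\eqref{defcube}$ smooth of tangent space $\CE^1_\kk(\Gamma,A)$. The composition $\pi_2\circ\pi_1\colon Def\eqref{defcube}\to\widetilde{\CM}_{1,1}$ is smooth by Corollary~\ref{cor:smoothproj}, and by Lemma~\ref{lem:diffjinv} together with Claim~\ref{claim:EvsEJ} the tangent space of its fiber at $L$ is $\CE^1_\kk(\Gamma,A,j)$. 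Under the rank hypothesis this space vanishes, because the surjectivity of $\delta_\kk\colon\CE^1_\kk(\Gamma,A)\to\kk$ combined with $\rank(\Gamma)=\codim(A)+1$ forces $\rank\CE^1_\kk(\Gamma,A)=1$; hence $\pi_2\circ\pi_1$ is \'etale over the relevant chart of $\widetilde{\CM}_{1,1}$, and each stacky reduction with the prescribed $j_\kk$-value lifts to a unique triple $(C,D,f)$ with $j(C)=J$.

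The main obstacle is the dimension count that makes slicing by the $j$-constraint a clean operation: unlike Theorem~\ref{thm:corrthm}, where the formal deformation space is already zero-dimensional, here it is one-dimensional and one must verify that $\widetilde{\CM}_{1,1}$ is blown up in Section~\ref{sec:deftheory} precisely so that fixing $j(C)=J$ becomes a transverse condition on $Def\eqref{defcube}$ with reduced $0$-dimensional fibers. Once this is in place, the smoothness of $\pi_2\circ\pi_1$ combined with the vanishing of the fiber's tangent space promotes the formal deformation to a unique integral triple and completes the bijection.
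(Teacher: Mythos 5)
Your proposal is correct and follows essentially the same route as the paper: counting the $O$-constrained stacky Mumford $\Gamma$-reductions with fixed $j_\kk$ via Proposition~\ref{prop:leadingcoefofj} and Claim~\ref{cl:e1jk*}, sending triples to reductions via Lemma~\ref{lem:leadtermofj}, and lifting each reduction uniquely using Corollary~\ref{cor:smoothproj} together with the one-dimensionality of $Def\eqref{defcube}$ and of $\widetilde{\CM}_{1,1}$. The only cosmetic difference is that you phrase the final step as \'etaleness with fiber tangent space $\CE^1_\kk(\Gamma,A,j)=0$ (and slightly misattribute the vanishing of $\CE^2_\kk(\Gamma,A)$ to Corollary~\ref{cor:smoothproj} rather than to Claim~\ref{claim:EvsEJ}), whereas the paper simply invokes smoothness of $Def\eqref{defcube}\to\widetilde{\CM}_{1,1}$ between one-dimensional spaces; the content is the same.
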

\begin{proof}
First, note that $\codim A+1=\rank(\Gamma)$ and $(\Gamma, A)$ is elliptically $\kk^*$-regular by Claim~\ref{cl:e1jk*}. Thus, $|\CE^1_{\kk^*}(\Gamma, A,j)|$ is finite and $\CE^2_{\kk^*}(\Gamma, A)=0$. Then, by Proposition~\ref{prop:leadingcoefofj}, the number of isomorphism classes of $O$-constrained stacky $\Gamma$-reductions with given $j_\kk(\CC_\kk,D_\kk,\varphi_\kk,\CX_\kk^{\rm tr})$ is equal to $|\CE^1_{\kk^*}(\Gamma, A, j)|$. Thus, it is sufficient to prove the ``moreover part'' of the theorem.

Pick a finite extension $\FF\subseteq\LL\subseteq\overline{\FF}$ sufficiently ramified for any triple $(C, D, f)$ satisfying $O$ and having $j(C)=J$. Fix a uniformizer $t_\LL\in R_\LL$. Plainly, any such triple $(C, D, f)$ defines a stacky $O$-constrained Mumford $\Gamma$-reduction $(\CC_\kk,D_\kk,\varphi_\kk,\CX_\kk^{\rm tr})$. Furthermore, $j_\kk(\CC_\kk,D_\kk,\varphi_\kk,\CX_\kk^{\rm tr})=j^{-1}_\kk(C)$ by Lemma~\ref{lem:leadtermofj}. Vice versa, any stacky $O$-cons\-trained $\Gamma$-reduction $(\CC_\kk, D_\kk, \varphi_\kk, \CX_\kk^{\rm tr})$ defines a unique isomorphism class of triples $(C, D, f)$ with $j(C)=J$ that satisfy the toric constraint $O$. Indeed, by Corollary~\ref{cor:smoothproj}, the projection $Def\eqref{defcube}\to\widetilde{\CM}_{1,1}$ is smooth, and both spaces have dimension one. Hence there exists a unique solution to the deformation problem \eqref{defcube} with given $j$-invariant for any $O$-constrained stacky $\Gamma$-reduction.
\end{proof}

\section{Appendix}
In this appendix we summarize well known facts about nodal and (semi-)stable models of algebraic curves that we use in our paper.
Let $(C,D)$ be as in the introduction. By the nodal reduction theorem, one can find a finite extension $\LL$ of $\FF$ and a nodal model $(C_{R_\LL},D_{R_\LL})$
$$\xymatrix{
  D_{R_\LL}\ar@{^{(}->}[rr]\ar[dr] & &C_{R_\LL}\ar[dl] \\
  & \Spec R_\LL &
}$$
i.e., a triple consisting of a proper curve $C_{R_\LL}\to\Spec R_\LL$, a finite ordered set $D_{R_\LL}$ of $R_\LL$-points in $C_{R_\LL}$, and an isomorphism $(C_{R_\LL}, D_{R_\LL})\times_{\Spec R_\LL}\Spec\overline{\FF}\simeq (C,D)$ such that the {\em reduction} $(C_{R_\LL}, D_{R_\LL})\times_{\Spec R_\LL}\Spec\kk$ is a reduced nodal curve with marked points, and the total space $C_{R_\LL}$ is normal. In particular, $(C,D)$ is defined over $\LL$. A model is called {\em regular} if the total space $C_{R_\LL}$ is regular.

It is well known that the singularities of a nodal model are concentrated at the nodes of the reduction. Moreover, any singular point is of type $A_r$, i.e., \'etale locally, it is given by an equation $xy=t_\LL^{r+1}$. It is also known that any nodal model is dominated by a (minimal) regular nodal model and the preimage of a singular point of type $A_r$ is a chain of $r$ lines of self-intersection $-2$.

\begin{algorithm}\label{alg:models}
Let $(C'_{R_\LL}, D_{R_\LL})\to (C_{R_\LL}, D_{R_\LL})$ be nodal models. Then the model $C_{R_\LL}$ can be obtained from $C'_{R_\LL}$ using the following three steps:
\begin{enumerate}
\item blow down the maximal forest of trees of relative unstable components, such that each tree intersects the remaining components at one point - the root of the tree, and no marked point belongs to the forest;
\item  blow down the relative unstable chains of projective lines containing a unique marked point, which belongs to the first line of the chain, and intersecting the remaining curve at a unique point, which belongs to the last line of the chain; and
\item blow down the remaining relative unstable chains of projective lines.
\end{enumerate}
\end{algorithm}
Note that if one starts with a regular nodal model $C'_{R_\LL}$, and proceeds as above, then after the second step one obtains the minimal regular nodal model dominating $C_{R_\LL}$. Note also that if $(C,D)$ is stable then by applying the algorithm to {\it all} unstable components of $C'_{R_\LL}$ one obtains the stable model $(C^{\rm st}_{R_\LL}, D_{R_\LL})$ over $\Spec R_\LL$.

\end{document}